\definecolor{emph}{HTML}{FFC107}
\definecolor{zero}{HTML}{7CA1CC}
\newcommand{\cx}{\cellcolor{zero}0} 
\newcommand{\cp}{\cellcolor{emph}}
\numberwithin{equation}{section}
\setlist[enumerate,1]{label=\textnormal{(\roman*)}}
\newtheorem{thm}{Theorem}[section]
\newtheorem{proposition}[thm]{Proposition}
\newtheorem{lemma}[thm]{Lemma}
\newtheorem{corollary}[thm]{Corollary}
\newtheorem{definition}[thm]{Definition}
\newtheorem{example}[thm]{Example}
\newtheorem{remark}[thm]{Remark}
\newtheorem{question}[thm]{Question}
\newcommand{\defas}{\coloneqq}
\newcommand{\pff}{\phi}
\newcommand{\Gra}{\mathsf{Gra}}
\newcommand{\Li}[1]{\operatorname{Li}_{#1}} 
\newcommand{\asyO}{O} 
\newcommand{\dimeps}{\varepsilon}
\newcommand{\FIdim}{D} 
\newcommand{\pFq}[5]{{}_{#1}F_{#2}\left[\genfrac{}{}{0pt}{}{#3}{#4} ; #5\right]}
\newcommand{\iu}{\mathrm{i}} 
\newcommand{\ipi}{\iu\pi}
\newcommand{\Catalan}{\mathcal{C}} 
\newcommand{\EM}{\gamma_{\text{E}}} 
\newcommand{\Q}{\mathbb{Q}} 
\newcommand{\R}{\mathbb{R}} 
\newcommand{\C}{\mathbb{C}} 
\newcommand{\HC}{\mathbb{H}} 
\newcommand{\Z}{\mathbb{Z}} 
\newcommand{\Pro}{\mathbb{P}} 
\newcommand{\td}[1][]{\mathrm{d}^{#1}} 
\newcommand{\Transpose}{\intercal}
\newcommand{\Graph}[2][1.0]{\vcenter{\hbox{\includegraphics[scale=#1]{graphs/#2}}}}
\newcommand{\pInt}{\mathcal{I}} 
\newcommand{\fr}[1]{\mathbf{#1}} 
\newcommand{\ESP}{S} 
\DeclareMathOperator{\ExtA}{\bigwedge} 
\newcommand{\SymA}{\mathcal{S}} 
\newcommand{\UE}{\mathcal{U}} 
\DeclareMathOperator{\Prim}{Prim} 
\newcommand{\ctp}{\mathbin{\hat{\otimes}}}
\newcommand{\dR}{\mathrm{dR}} 
\newcommand{\rt}{\mathrm{rt}} 
\newcommand{\sslash}{\mathord{/\mkern-6mu/}}
\newcommand{\IntProd}[2]{#1 \lrcorner #2}
\newcommand{\hGC}{\mathcal{GC}}
\newcommand{\cGC}{\mathsf{GC}}
\newcommand{\MCdip}{\mathfrak{m}} 
\newcommand{\gcdeg}[1]{\left|#1\right|} 
\newcommand{\inner}[1]{\left\langle #1 \right\rangle} 
\newcommand{\Symanzik}{\Psi} 
\newcommand{\Laplacian}{\Lambda}
\newcommand{\FP}{\widetilde{\sigma}} 
\newcommand{\FS}{\sigma} 
\newcommand{\oFS}{\FS} 
\newcommand{\cFS}{\overline{\FS}} 
\newcommand{\sqX}{q}
\newcommand{\To}{\longrightarrow}
\newcommand{\Quad}{\mathcal{Q}}
\newcommand{\trop}{\mathrm{trop}}
\newcommand{\perf}{\mathrm{perf}}
\newcommand{\LA}{L\mathcal{A}}
\newcommand{\ob}{\mathcal{O}}
\newcommand{\BB}{\mathcal{B}}
\newcommand{\AdR}[1][\bullet]{\mathsf{A}^{#1}} 
\newcommand{\Evf}{\mathcal{E}} 
\newcommand{\bds}{b} 
\newcommand{\IdMat}{I}
\newcommand{\SL}{\mathsf{SL}}
\newcommand{\GL}{\mathsf{GL}}
\newcommand{\St}{\mathrm{St}} 
\newcommand{\OG}{\mathsf{O}}
\newcommand{\SO}{\mathsf{SO}}
\newcommand{\SPD}[1]{\mathcal{P}_{#1}}
\newcommand{\Perms}[1]{\mathfrak{S}_{#1}}
\newcommand{\id}{\mathrm{id}}
\renewcommand{\Im}{\mathop{\mathrm{Im}}}
\renewcommand{\Re}{\mathop{\mathrm{Re}}}
\DeclareMathOperator{\Hom}{Hom}
\DeclareMathOperator{\Aut}{Aut}
\DeclareMathOperator{\Pf}{Pf}
\DeclareMathOperator{\im}{im}
\DeclareMathOperator{\gr}{gr}
\DeclareMathOperator{\Gr}{Gr}
\DeclareMathOperator{\sgn}{sgn}
\DeclareMathOperator{\tr}{tr}
\newcommand{\cfs}[1][\bullet]{\Omega_{\mathrm{can}}^{#1}} 
\DeclarePairedDelimiter{\abs}{\lvert}{\rvert}
\DeclarePairedDelimiter{\norm}{\lVert}{\rVert}
\newcommand{\Maple}{\href{http://www.maplesoft.com/products/Maple/}{\textsf{\textup{Maple}}}}
\newcommand{\nauty}{\href{http://pallini.di.uniroma1.it/}{\texttt{\textup{nauty}}}}
\newcommand{\HyperInt}{\href{https://bitbucket.org/PanzerErik/hyperint}{\texttt{HyperInt}}}
\newcommand{\HyperlogProcedures}{\href{https://www.math.fau.de/person/oliver-schnetz/}{\texttt{HyperlogProcedures}}}
\newcommand{\HypExpTwo}{\texttt{HypExp~2}}
\newcommand{\pySecDec}{\textup{pySecDec}}
\newcommand{\LiteRed}{\href{http://www.inp.nsk.su/~lee/programs/LiteRed/}{\texttt{LiteRed}}}
\title{Unstable cohomology of $\GL_{2n}(\Z)$ and the  odd commutative graph complex}
\newcommand{\email}[1]{\href{mailto:#1}{#1}}
\author{
 \thanks{All Souls College, Oxford, OX1 4AL, UK, \email{francis.brown@all-souls.ox.ac.uk}}
    Francis Brown
    \orcidlink{0000-0002-9295-2572}
    \and
    \thanks{Mathematical Institute, University of Oxford, OX2 6GG, UK, \email{simone.hu@maths.ox.ac.uk}}
    Simone Hu
    \and
    \thanks{Mathematical Institute, University of Oxford, OX2 6GG, UK, \email{erik.panzer@maths.ox.ac.uk}}
    Erik Panzer
    \orcidlink{0000-0002-9897-5812}
}
\begin{document}

\maketitle

\begin{abstract}
    We study a closed differential form on the symmetric space of positive definite matrices, which is defined using the Pfaffian and is  $\GL_{2n}(\Z)$ invariant up to a sign. It gives rise to an infinite family of unstable classes in the compactly-supported cohomology of the locally symmetric space for $\GL_{2n}(\Z)$ with coefficients in the orientation bundle. Furthermore, by applying the Pfaffian forms to the dual Laplacian of graphs, and integrating them over the space of edge lengths, we construct an infinite family of cocycles for the odd commutative graph complex. By explicit computation, we show that the first such cocycle gives a non-trivial class in $H^{-6}(\cGC_3)$.
\end{abstract}

\setcounter{tocdepth}{1}
\tableofcontents
\setcounter{tocdepth}{3}

\section{Introduction}

The differential forms studied  in this paper are constructed out of what we call  the invariant Pfaffian form. It is defined on the space $\SPD{2n}$ of positive definite matrices  $X$ of even rank $2n$ by the formula
\begin{equation*}
\pff^{2n}_X = \frac{\Pf (\td X\cdot X^{-1} \cdot \td X)}{\sqrt{\det(X)}}  \ ,
\end{equation*}
where $\Pf$ denotes the Pfaffian of a skew-symmetric matrix.
It is a closed  smooth differential form  of degree $2n$ with many good properties. In particular, it transforms by
\begin{equation} \label{intro: transformationgamma}
  \pff^{2n}_{P^\Transpose X P } = \sgn \left(\det(P)\right)  \pff^{2n}_X
\end{equation}
for any $P \in \GL_{2n}(\R)$. It  therefore defines a differential form on the locally symmetric space $\SPD{2n}/\GL_{2n}(\Z)$ with coefficients in the orientation bundle $\ob$, which is the rank one real vector bundle  induced by the determinant representation of $\GL_{2n}(\Z)$.  We call such a form an orientation form.
 Importantly,  $\pff^{2n}$  is  also invariant under the action of $\R^{\times}_{>0}$ on $\SPD{2n}$ by scalar multiplication, and therefore descends to the quotient $\R^{\times}_{>0} \setminus  \SPD{2n}$. 

By taking exterior products with the invariant differential forms
\begin{equation} \label{intro: omegadef}
\beta^{4k+1}_X = \tr \left((X^{-1} \td X)^{4k+1}\right)
\end{equation}
for $k\geq 1$, we  obtain  many further orientation forms on $\SPD{2n}/\GL_{2n}(\Z)$.  For example, we show that the   volume form (a non-vanishing orientation form of top degree)
on the space  of projective classes of positive definite symmetric matrices
is proportional to the product $  \beta^5 \wedge \ldots \wedge \beta^{4n-3} \wedge \pff^{2n}$. The invariant  Pfaffian therefore gives a factorisation of the volume form.

In this paper, we use these orientation forms for two different purposes. On the one hand, we show that those of \emph{compact type} give rise to infinitely many non-zero  classes in the  compactly-supported cohomology of $\SPD{2n}/\GL_{2n}(\Z)$ with coefficients in $\ob$. We show, furthermore, that they are primitive with respect to a Hopf algebra structure on $\bigoplus_{n} H^{\bullet}_c(\SPD{2n}/\GL_{2n}(\Z);\ob)$ recently introduced in \cite{AMP}, and deduce that the latter contains the  symmetric algebra generated by the orientation forms of compact type.

On the other hand, we use the Pfaffian orientation forms which are \emph{not of compact type} to construct cocycles on the odd commutative graph complex by interpreting them as orientation forms on a moduli space of metric graphs. By integrating over cells indexed by graphs, we assign transcendental  invariants to elements in the odd graph complex and compute them explicitly for a closed linear combination of graphs with $6$ loops and $12$ edges. This proves that the first of our cocycles defines a non-trivial class in $H^{-6}(\cGC_3)$.

We emphasize that the orientation forms we use to construct cohomology classes for $\GL_{2n}(\Z)$ (which are of `compact type'), and for the odd graph complex (which are of `non-compact type'), are not the same, with exactly one exception (namely $\pff^2$).

\subsection{Unstable cohomology of \texorpdfstring{$\GL_{2n}(\Z)$}{GL\_2n(Z)}}
The Pfaffian forms can be used to write down explicit representatives  for the  Poincar\'e duals of the    cohomology classes for $\SPD{2n}/\GL_{2n}(\Z)$  generated by the invariant forms $\beta$ (of non-compact type) in the unstable range. More precisely, let
\begin{equation*}
\Omega_{nc}^\bullet(2n-1) =  \ExtA^\bullet   \bigg(\bigoplus_{1\leq k< n-1}  \Q\, \beta^{4k+1}\bigg)
\end{equation*}
denote the graded exterior algebra generated by formal symbols $\beta^{4k+1}$  in degree $4k+1$ corresponding to the  invariant differential forms \eqref{intro: omegadef}. The subscript $nc$ stands for `non-compact type'. It was proven in \cite{BrBord} that
\begin{equation}  \label{intro: stableclasses}
\Omega^\bullet_{nc}(2n-1) \To H^{\bullet}(\SPD{m}/\GL_m(\Z);\R) \quad \hbox{ is injective  for all } m\geq 2n-1\ 
\end{equation}
having been announced in  \cite{LeeUnstable} (see also \cite{Franke}). 
Together with results of Matsushima and Garland this implies Borel's theorem (see \cite{Borel} and references therein)  which states that the stable cohomology of $\GL(\Z)$ is generated by the graded exterior algebra with one element in degree $4k+1$, for each $k\geq 1$.

Now define a bigraded vector space   $\Omega^{\bullet}_{c}[\pff](2n)$   of orientation forms, which we call  of `compact type', which is
generated by the symbols $\pff^2$ and
\begin{equation} \label{generatorcompacttype}  \beta^{4i_1+1}\wedge \beta^{4i_2+1} \wedge \ldots  \wedge \beta^{4i_k+1}\wedge \pff^{2n}  \end{equation}
for all $1\leq i_1<i_2<\ldots<i_{k}$  where   $i_k=n-1.$ The grading is by genus and degree minus genus, where the generator \eqref{generatorcompacttype} has degree $(4i_1+1)+\ldots + (4i_k+1)+(2n+1)$ and genus $2n$, and $\pff^2$ has degree $3$ and genus $2$.

As a first application of Pfaffian forms, we show that they can be interpreted as compactly supported cohomology classes: we define classes denoted
\begin{align*}
    [(\pff^2)_c] &\in H_c^{3}(\SPD{2}/\GL_2(\Z);\ob) & &\text{and} \\
    [( \beta^{4n-3} \wedge \pff^{2n} )_c] &\in H^{6n-2}_c(\SPD{2n} /\GL_{2n}(\Z) ; \ob) & & \text{for all $n>1$,}
\end{align*}
and we write $(\pff^2)_c$, $(\beta^{4n-3} \wedge \pff^{2n} )_c$ for any compactly supported forms (representatives) in such a class. The key point of our construction is that while the orientation forms themselves do \emph{not} have compact support, they can be interpreted as canonical \emph{relative} forms on a compactification of $\SPD{2n}/\GL_{2n}(\Z)$. In particular, integrals like
\begin{equation*}
    \int_\sigma \beta^{4n-3} \wedge \pff^{2n}
\end{equation*}
converge even for locally finite, $\ob$-oriented chains $\sigma$ on $\SPD{2n} /\GL_{2n}(\Z)$.

All the cohomology classes thus obtained are non-trivial and independent:
\begin{thm}  \label{intro: thmcohomology}
For all $n>1$, we obtain a canonical injective map of graded vector spaces
\begin{eqnarray} \label{intro:gammaformsinject}
\Omega^{\bullet}_{c}[\pff](2n) & \To &  H^{\bullet}_c(\SPD{2n} /\GL_{2n}(\Z) ; \ob)   \\
 \beta^{4i_1+1}\wedge \beta^{4i_2+1} \wedge \ldots  \wedge \beta^{4n-3}\wedge \pff^{2n}    & \mapsto & [  \beta^{4i_1+1}\wedge \beta^{4i_2+1} \wedge \ldots  \wedge (\beta^{4n-3}\wedge \pff^{2n})_c  ] \nonumber
\end{eqnarray}
The images of these classes pair non-trivially, under Poincar\'e duality,  with the image of \eqref{intro: stableclasses} when $m=2n$. For $n=1$, the class $[(\pff^2)_c]$  generates $ H_c^{3}(\SPD{2}/\GL_2(\Z);\ob)\cong \R$.
\end{thm}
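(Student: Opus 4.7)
The plan is to construct the compactly-supported representatives via a boundary analysis on the Borel--Serre compactification of $M = \SPD{2n}/\GL_{2n}(\Z)$, and then use Poincar\'e duality against the stable classes of \eqref{intro: stableclasses} to establish non-trivial pairings and injectivity simultaneously.

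First, I would examine the restriction of $\omega = \beta^{4n-3} \wedge \pff^{2n}$ to the Borel--Serre boundary of $\overline{M}$. The boundary faces are indexed by proper rational parabolic subgroups of $\GL_{2n}$; along each such face the positive-definite matrix $X$ acquires an approximate block structure with a degenerating off-diagonal block, so that the skew-symmetric matrix $\td X \cdot X^{-1} \cdot \td X$ inherits a corresponding block form whose Pfaffian factors into Pfaffians of the diagonal blocks. The key dimensional observation is that $\beta^{4n-3}$ is the top-degree stable generator available for $\GL_{2n}$ but not for any proper Levi factor, which forces the restriction of $\omega$ to each boundary stratum to be cohomologically trivial. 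The long exact sequence of the pair $(\overline{M}, \partial \overline{M})$ then delivers the compactly-supported representative $(\omega)_c$; the degree shift built into the grading of $\Omega^{\bullet}_c[\pff](2n)$, where $\pff^{2n}$ is assigned degree $2n+1$, reflects the connecting morphism in this sequence.

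Second, I would prove injectivity and non-trivial pairing together by choosing a complementary stable class for each generator. For
\[
a_I = \beta^{4i_1+1} \wedge \cdots \wedge \beta^{4i_{k-1}+1} \wedge \beta^{4n-3} \wedge \pff^{2n}
\]
indexed by $I = \{i_1<\cdots<i_{k-1}<n-1\}\subseteq\{1,\ldots,n-1\}$, set
\[
b_{I^c} = \beta^{4j_1+1} \wedge \cdots \wedge \beta^{4j_\ell+1} \quad\text{with}\quad \{j_1,\ldots,j_\ell\} = \{1,\ldots,n-2\}\setminus\{i_1,\ldots,i_{k-1}\}.
\]
The product $(a_I)_c \wedge b_{I^c}$ reassembles, up to sign and the radial $1$-form $\beta^1 = d\log\det X$ supplied by the compactly-supported construction, into the $\GL_{2n}$-invariant volume form of $\SPD{2n}$, via the factorisation $\beta^1\wedge \beta^5\wedge\cdots\wedge\beta^{4n-3}\wedge \pff^{2n}\propto \mathrm{vol}_{\SPD{2n}}$ recorded in the introduction. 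Therefore $\int_{M} (a_I)_c \wedge b_{I^c}$ evaluates to a non-zero constant times $\mathrm{vol}(\GL_{2n}(\Z)\backslash\SPD{2n}/\R^{\times}_{>0})$, a finite non-zero classical covolume. For $J \neq I$, the wedge $(a_I)_c \wedge b_{J^c}$ contains a repeated factor $\beta^{4m+1}$ and hence vanishes, so the pairing matrix is diagonal with non-zero entries, forcing injectivity.

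The case $n=1$ is separate: $H^3_c(\SPD{2}/\GL_2(\Z);\ob)\cong H^0(\SPD{2}/\GL_2(\Z);\R)^*\cong\R$ is one-dimensional by Poincar\'e duality, and pairing $(\pff^2)_c$ against the constant class $1\in H^0$ reduces to a non-zero multiple of $\mathrm{vol}(\SL_2(\Z)\backslash\HC)$, showing that $(\pff^2)_c$ generates this group. The main technical obstacle lies in Step~1: rigorously controlling the asymptotics of the Pfaffian along each parabolic degeneration, and showing that $\omega|_{\partial\overline{M}}$ is cohomologically trivial, requires a systematic treatment of how Pfaffians of positive-definite block matrices decompose under degeneration; once this is in hand, Step~2 becomes essentially a volume computation invoking the non-vanishing of classical covolumes.
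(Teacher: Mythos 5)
Your overall architecture matches the paper's: extend the form to a Borel--Serre-type compactification, show its restriction to the boundary dies, obtain a relative (= compactly supported) class, and then detect it by cup product against the complementary monomial in the stable generators using the factorisation of the volume form $\beta^5\wedge\cdots\wedge\beta^{4n-3}\wedge\pff^{2n}\propto\eta^{2n}$ (the paper packages your explicit complement $b_{I^c}$ as the Hodge star $\star(\pff^{2n}\wedge\omega)$, which is the same thing up to a positive scalar, and which also handles arbitrary elements of $\Omega_c[\pff](2n)$ rather than just monomials). Your diagonal pairing matrix and the $n=1$ case are fine.

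However, the step you defer as ``the main technical obstacle'' is a genuine gap, and it is where the paper's proof actually lives. The point is that $\pff^{2n}$ does \emph{not} extend smoothly to the boundary: in the degeneration \eqref{eq:SPD-asy-chart} with odd block sizes $n_1,n_2$, one gets $\pff^{2n}=\sqrt{z}\,\alpha+(\td\sqrt{z})\wedge\beta$ (\cref{lem:pff-asy}), so the long exact sequence of the pair for the usual smooth de Rham complex is not available. The paper must introduce a complex $\AdR$ of forms with integrable square-root singularities, prove it still resolves the constant sheaf and that restriction to the boundary is surjective, and only then run the relative-cohomology argument. Relatedly, your stated reason for boundary vanishing (``$\beta^{4n-3}$ is not available for any proper Levi factor'') is not quite the mechanism: the Levi $\GL_{2n-1}\times\GL_1$ does support $\beta^{4n-3}$, and on those strata it is the Pfaffian factor that vanishes (odd blocks $\Rightarrow$ the $\sqrt z$ behaviour kills the restriction); only on strata with two even blocks, each of rank $\leq 2n-2$, is it $\beta^{4n-3}$ (being of compact type) that vanishes. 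The restriction is in fact identically zero as a form, not merely cohomologically trivial. Finally, a small but real error: the degree shift making $\pff^{2n}$ contribute $2n+1$ to the grading is not the connecting morphism of the pair sequence; it is the suspension isomorphism $H^k_c(\SPD{2n}/\GL_{2n}(\Z);\ob)\cong H^{k-1}_c(L\SPD{2n}/\GL_{2n}(\Z);\ob)$ coming from the projectivisation by $\R_{>0}$, which is also why your pairing integral must be taken over the link (or else wedged with $\beta^1$) to reach top degree.
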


Note that  the compactly supported   twisted  cohomology of $\SPD{2n}/\GL_{2n}(\Z)$ is Poincar\'e dual to the ordinary cohomology of $\GL_{2n}(\Z)$ via
\begin{equation*}
H^k(\GL_{2n}(\Z);\R) \cong H^k(\SPD{2n}/\GL_{2n}(\Z);\R) \cong \left( H_c^{d_{2n}-k}( \SPD{2n}/\GL_{2n}(\Z);\ob)\right)^{\vee}
\end{equation*}
where $d_m= m(m+1)/2$
and furthermore   appears as a summand in the cohomology  of the special linear group  by an application of Shapiro's lemma which implies that: 
\begin{equation*}
H^k(\SL_{2n}(\Z);\Q) \cong  H^k(\GL_{2n}(\Z);\Q) \oplus   H^k(\GL_{2n}(\Z);\det)
\end{equation*}
where $\det$ denotes the determinant representation.

The proof of  theorem  \ref{intro: thmcohomology} involves studying   the asymptotic behaviour of the forms $ \beta^{4n-3} \wedge \pff^{2n} $  and proving that they extend to  an algebro-geometric incarnation of the  Borel-Serre compactification \cite{BrBord}, and vanish along the boundary.

\subsubsection{Four spaces of differential forms}\label{sec:formspaces}
The relationship between  canonical forms, the Pfaffian invariant form, and different cohomology groups associated to  the locally symmetric space of $\GL_g(\Z)$ may be summarised as follows.
Let $g>1$ be odd, and consider the  four graded vector spaces:
\begin{enumerate}
\item  $\Omega^{\bullet}_{nc}(g) = \ExtA^{\bullet} \left( \Q \beta^5 \oplus \ldots \oplus \Q \beta^{2g-5}\right)$
\item $\Omega^{\bullet}_c(g) = \Omega^{\bullet}_{nc}(g) \wedge \beta^{2g-1}$
\item $\Omega^{\bullet}_{nc}[\pff](g+1) = \Omega^{\bullet}_{nc}(g) \wedge \pff^{g+1} $
\item $\Omega^{\bullet}_{c}[\pff](g+1) = \Omega^{\bullet}_{c}(g) \wedge \pff^{g+1} $
\end{enumerate}
The first space, $\Omega^{\bullet}_{nc}(g)$ has the structure of a graded algebra, but the others are to be viewed merely  as graded vector spaces.  The subscript $c$ stands for `compact type', and $nc$ for `non-compact type'.
When $n=g$ is odd, the locally  symmetric space $\SPD{g}/\GL_g(\Z)$ is orientable and we have embeddings of graded vector spaces \cite{BrBord}:
\[ \Omega_{nc}^{\bullet}(g)   \hookrightarrow    H^{\bullet}(\SPD{g}/\GL_g(\Z);\R) \qquad \ , \qquad
 \Omega_{c}^{\bullet}(g)     \hookrightarrow    H_c^{\bullet+1}(\SPD{g}/\GL_g(\Z);\R) \ .
\]
The two spaces $H^{\bullet}(\SPD{g}/\GL_g(\Z);\R)$ and $H_c^{\bullet}(\SPD{g}/\GL_g(\Z);\R)$ are related by Poincar\'e duality,  which is manifested by the Hodge star operator which relates $\Omega^{\bullet}_{nc}$ to $\Omega^{\bullet}_{c}$.

When $2n=g+1$ is even, the orbifold $\SPD{2n}/\GL_{2n}(\Z)$ is not orientable and all spaces (i)--(iv)  are implicated.   We have   a pair of injections of graded vector spaces
\[   \Omega^{\bullet}_{nc}(2n-1) \hookrightarrow   H^{\bullet}(\SPD{2n}/\GL_{2n}(\Z);\R)  \qquad  , \qquad
  \Omega^{\bullet}_{c}[\pff](2n)    \hookrightarrow    H_c^{\bullet}(\SPD{2n}/\GL_{2n}(\Z);\ob)
\]
which are related by Poincar\'e duality. The first injection is a consequence of \eqref{intro: stableclasses}, the second is the statement of \cref{intro: thmcohomology}. In addition, we have maps
\[   \Omega^{\bullet}_{c}(2n-1) \hookrightarrow   H_c^{\bullet+2}(\SPD{2n}/\GL_{2n}(\Z);\R)  \qquad  , \qquad
  \Omega^{\bullet}_{nc}[\pff](2n)   \mapsto 0 \in   H^{\bullet}(\SPD{2n}/\GL_{2n}(\Z);\ob)
\]
where the first map is the  inflation morphism of \cite{BBCMMW},  and  was shown to be injective  in \cite{BrBord}. The fact that $\Omega_{nc}^{\bullet}[\pff](2n)$  maps to zero in  $   H^{\bullet}(\SPD{2n}/\GL_{2n}(\Z);\ob)$ follows from the fact \cite{Sullivan:EulerSLZ} that the cohomology class of $[\pff^{2n}]$ in $H^{2n}(\SPD{2n}/\GL_{2n}(\Z);\ob)$ vanishes.

\subsubsection{Symmetric algebra on Pfaffian invariant forms of compact type}

It was recently shown in \cite{AMP} that  the bigraded $\Q$-vector space
\[ H^{\det} =  \bigoplus_{k,n\geq 0}       H_k (\GL_n(\Z); \St_n(\Q)\otimes \det) \]
where $\St_n(\Q)$ denotes the Steinberg module and $\det$  the determinant representation, has the structure of a bigraded commutative Hopf algebra.  Its graded linear dual  $H_{\det}=(H^{\det})^{\vee}$ is  a cocommutative bigraded Hopf algebra over $\Q$.  Since the dualising module for $\GL_{2n}(\Z)$ is $\St_{2n}(\Q)\otimes \det$ (\cite[\S11.4]{BorelSerre:Corners}, see, e.g. \cite[\S7.2]{ElbazVincentGanglSoule}, \cite{PutmanStudenmund:DualGLnO}), we obtain  an isomorphism
\[ H_{\det} \otimes_{\Q} \R = \bigoplus_{n\geq 0}     H^{\bullet}_c\left( \SPD{2n}/\GL_{2n}(\Z) ;\ob\right)  \]
since the local system $\ob$ is given by the representation $\det\otimes \R$ in the even rank case.

\begin{table}
\centering
\setlength{\tabcolsep}{3pt}
\begin{tabular}{r@{\hskip 10pt}cccc}
\toprule
$k-2n$ & $\GL_8(\Z)$ & $\GL_6(\Z)$ & $\GL_4(\Z)$ &  $\GL_2(\Z)$   \\
\midrule
1 & & & &  $[\pff^2] $  \\
6 & &  &$[\beta^5 \wedge \pff^4]$  &   \\
7 & &$\cp [\pff^2]\cdot [\beta^5 \wedge \pff^4]$  & &   \\
10 & &$[\beta^9 \wedge \pff^6]$  & &   \\
11 &   $\cp [\pff^2] \cdot [\beta^9\wedge \pff^6]$   & & &  \\
12 &$\cp [\beta^5 \wedge \pff^4]^2$  &  & &   \\
14 &$[\beta^{13} \wedge \pff^8] $  & & &   \\
15 & &$[\beta^5\wedge \beta^9 \wedge \pff^6]$  & &   \\
16 & $\cp [\pff^2] \cdot [\beta^5 \wedge \beta^9\wedge \pff^6]$  & & &   \\
19 & $[\beta^5\wedge \beta^{13} \wedge \pff^8]$ & & &   \\
23 & $[\beta^9\wedge \beta^{13} \wedge \pff^8]$ & & &   \\
28 & $[\beta^5\wedge \beta^9\wedge \beta^{13} \wedge \pff^8]$  & & &   \\
\bottomrule
\end{tabular}%
\caption{Cohomology classes in $H_c^{k}(\SPD{2n}/\GL_{2n}(\Z); \ob)$ obtained from \cref{thm:introSymembeds}, for small $n$.  By Poincar\'e duality, which  reverses the degrees in each column, one deduces classes in the cohomology of $\GL_{2n}(\Z)$. The highlighted classes denote non-trivial products with respect to the Hopf algebra structure of \cite{AMP}.
}%
\label{table:cohom}%
\end{table}

By showing that the Pfaffian forms of compact type are primitive for the commutative coproduct, we deduce from the Milnor-Moore and Poincar\'e-Birkhoff-Witt theorems   that there  exists  a
large amount of new unstable cohomology for $\GL_n(\Z)$ for even $n$.
\begin{thm}\label{thm:introSymembeds}
    There is an embedding of bigraded vector spaces
    \begin{equation} \label{intro: SymAtoHSPD}
        \SymA^{\bullet} \left( \Omega_{c}[\pff] \right) \otimes \R  \To   \bigoplus_{n\geq 0}     H^{\bullet}_c\left( \SPD{2n}/\GL_{2n}(\Z) ;\ob\right)
    \end{equation}
    from the symmetric algebra on the bigraded vector space  $\Omega_{c}[\pff]= \bigoplus_{n} \Omega_c[\pff](2n)$  into the compactly supported cohomology of the locally symmetric space for $\GL_{2n}(\Z)$ with twisted coefficients.  The former is isomorphic (as a bigraded vector space) to the tensor product of the free commutative polynomial ring on its generators in even degrees, with the graded exterior algebra on those in odd degrees.

    Dually, there is a surjective map of bigraded vector spaces
    \begin{equation} \label{intro: dualSymAtoHSPD}
    \bigoplus_{n\geq 1}     H^{d_{2n}-\bullet}\left( \GL_{2n}(\Z) ;\R\right)  \To     \SymA(  \Omega_{c}[\pff])^{\vee} \otimes \R   \qquad \text{where}\  d_{2n} =\binom{2n+1}{2} \ .
    \end{equation}
\end{thm}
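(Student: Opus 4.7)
The plan is to combine the linear injection from \cref{intro: thmcohomology} with the cocommutative Hopf-algebra structure on $H_{\det}\otimes \R \cong \bigoplus_{n}H^\bullet_c(\SPD{2n}/\GL_{2n}(\Z);\ob)$ dual to the commutative Hopf algebra $H^{\det}$ of \cite{AMP}, and then to invoke the Milnor--Moore and Poincaré--Birkhoff--Witt theorems.

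First, \cref{intro: thmcohomology} supplies a linear injection $V\defas \Omega_{c}[\pff]\otimes \R \hookrightarrow H_{\det}\otimes \R$ sending each generator \eqref{generatorcompacttype} to its compactly-supported cohomology class. The crucial second step is to verify that every class in the image of $V$ is \emph{primitive} for the cocommutative coproduct of \cite{AMP}, i.e.\ that $\Delta([\omega])=[\omega]\otimes 1+1\otimes [\omega]$ for each of the generating orientation forms $\omega$. Since the coproduct is dual to the commutative product on $H^{\det}$, its non-trivial component on a genus-$2n$ class decomposes the genus as $2n=2n_1+2n_2$ with $n_1,n_2\geq 1$, so this amounts to checking that the image of the product map in $H^{\det}$ does not hit the image of $V$. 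I expect this to follow from the same Borel--Serre boundary analysis used to prove \cref{intro: thmcohomology}: the compactly-supported representatives $(\beta^{4n-3}\wedge \pff^{2n})_c$ already vanish along the boundary strata that compute the coproduct, reflecting the fact that the invariant Pfaffian form does not factor as a product of lower-rank Pfaffians. This primitivity verification is the main obstacle.

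With primitivity established, Milnor--Moore applies: $H_{\det}\otimes \R$ is graded-connected, cocommutative, and of characteristic zero with finite-dimensional bigraded components, so it is isomorphic to $U\bigl(\Prim(H_{\det}\otimes \R)\bigr)$ as a Hopf algebra. The Poincaré--Birkhoff--Witt theorem then supplies an isomorphism of bigraded coalgebras $U(L)\cong \SymA(L)$ for any bigraded Lie algebra $L$ in characteristic zero. Applying $\SymA$ to the linear inclusion $V\hookrightarrow \Prim(H_{\det}\otimes \R)$ yields an embedding of bigraded vector spaces
\[
\SymA^\bullet(V) \hookrightarrow \SymA^\bullet\bigl(\Prim(H_{\det}\otimes \R)\bigr)\cong U\bigl(\Prim(H_{\det}\otimes \R)\bigr)\cong H_{\det}\otimes \R,
\]
which is exactly \eqref{intro: SymAtoHSPD}. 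The identification of $\SymA(V)$ with a free graded-commutative algebra (polynomial on the even-degree generators, exterior on the odd-degree ones) is forced by the Koszul sign rule once the bidegrees are recorded. Finally, the surjection \eqref{intro: dualSymAtoHSPD} follows formally by dualising the vector-space injection \eqref{intro: SymAtoHSPD} and applying the Poincaré-duality identification $H^{d_{2n}-\bullet}(\GL_{2n}(\Z);\R)\cong H^\bullet_c(\SPD{2n}/\GL_{2n}(\Z);\ob)^\vee$ recalled just after \cref{intro: thmcohomology}.
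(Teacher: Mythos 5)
Your overall architecture matches the paper's: the linear injection from \cref{intro: thmcohomology}, primitivity of its image in the Hopf algebra of \cite{AMP}, then Milnor--Moore and Poincar\'e--Birkhoff--Witt, and finally dualisation. The Milnor--Moore/PBW and duality steps are carried out essentially as in the paper (the paper is slightly more careful that PBW only gives $\SymA(\Prim H)\cong\Gr\UE(\Prim H)$ as algebras, so one must choose a splitting of the length filtration to get the vector-space isomorphism, but this is minor).

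The genuine gap is the primitivity step, which you correctly flag as the main obstacle but then sketch via an argument that would not work. First, the coproduct on $H_{\det}\otimes\R$ is \emph{not} computed by restriction to boundary strata of the Borel--Serre (or tropical) compactification: it is the pullback $b_{m,n}^*$ along the proper block-direct-sum map $\SPD{2m}/\GL_{2m}(\Z)\times\SPD{2n}/\GL_{2n}(\Z)\to\SPD{2m+2n}/\GL_{2m+2n}(\Z)$, whose image consists of positive definite, hence \emph{interior}, matrices. Vanishing of $(\beta^{4n-3}\wedge\pff^{2n})_c$ at infinity is what makes the class well defined in compactly supported cohomology; it says nothing about its pullback to the block-diagonal locus. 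Second, your heuristic that ``the invariant Pfaffian form does not factor as a product of lower-rank Pfaffians'' is false: by \cref{lem:pfprops}~(v) one has exactly $\pff_{X\oplus Y}=\pff_X\wedge\pff_Y$. The correct mechanism, used in the paper, is that the accompanying compact-type factor kills the pullback. Reducing to $\omega=\beta^{4m+4n-3}$, one computes
\[
\left(\pff^{2m+2n}\wedge\beta^{4m+4n-3}\right)_{X\oplus Y}
=\left(\pff^{2m}\wedge\beta^{4m+4n-3}\right)_X\wedge\pff^{2n}_Y
+\pff^{2m}_X\wedge\left(\pff^{2n}\wedge\beta^{4m+4n-3}\right)_Y ,
\]
and both terms vanish because $\beta^{4m+4n-3}=\beta^{4(m+n-1)+1}$ vanishes identically on matrices of size $2m$ and $2n$, both smaller than $2(m+n)-1$ (\cite[Proposition~4.5]{invariant}). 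Without this short argument the primitivity, and hence the whole theorem, remains unproved.
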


See \cref{table:cohom} for an illustration.
We note that \cref{thm:introSymembeds} explains all homology classes of $\GL_{2n}(\Z)$ in the range $2n\leq 6$ where it has been computed in all degrees  \cite{ElbazVincentGanglSoule}, and consistent with \cite{GL8}.  Note that there is some freedom in the choice of  map \eqref{intro: SymAtoHSPD}: it can be any choice of splitting of the length grading which arises in the Poincar\'e-Birkhoff-Witt theorem. If one chooses the map \eqref{intro: SymAtoHSPD} to be given by (graded) symmetrised products of length $n$ with respect to the Hopf algebra structure on $H_{\det} \otimes_{\Q} \R$ (with the appropriate prefactor $1/n!$) then it may be promoted to  a map of bigraded coalgebras. The corresponding dual map \eqref{intro: dualSymAtoHSPD} is then a map of bigraded algebras. 

In particular, it follows that the  graded Lie bracket (antisymmetrised product) of $[\phi^2]$ and $[\beta^5 \wedge \phi^4]$ necessarily vanishes in $H^7_c(\SPD{6}/\GL_6(\Z);\ob)$ since it has rank 1, i.e., they commute. 
It is an interesting  open question, therefore, whether the Lie algebra generated by the classes in $\Omega_c[\phi]$ is abelian or not: we do not know of any non-trivial Lie brackets at present.

\subsection{Cochains in the odd commutative graph complex}\label{sec:intro-gc}
The commutative graph complexes $\hGC_N$ introduced by Kontsevich \cite{kontfeynman} have numerous applications in deformation theory \cite{Willwacher:GCgrt}, geometry \cite{CGP18}, and topology.
Thanks to isomorphisms $\hGC_N\cong \hGC_{N+2}$, there are essentially two cases: $N$ even and $N$ odd.

In this paper we consider the odd graph complex $\hGC_3$.\footnote{
Geometrically, $H_\bullet(\hGC_3)$ can be realized as equivariant  relative homology of the simplicial closure of outer space twisted by the determinant representation \cite[Proposition~27]{onatheorem}, or as twisted homology of the moduli space of metric graphs \cite[Example~5.9]{Berghoff:GCgeom} (see also \cite[bottom of p.~6]{kontsympl}).}
A connected combinatorial graph $G$ with $n$ vertices and $m$ edges has \emph{loop number} $\ell=m-n+1$ and \emph{degree} $k=m-3\ell$.
As a $\Q$-vector space, $\hGC_3$ is spanned by isomorphism classes of such graphs, equipped with an  orientation. As a chain complex, its boundary $\partial G=\sum_e G/e$  contracts edges, which is homogeneous of degrees $(0,-1)$ for the bigrading by $(\ell,k)$.
The dual cochain complex $\cGC_3\cong\Hom(\hGC_3,\Q)$ has a coboundary $\delta$ and a compatible (graded) Lie bracket.
The cohomology $H^k(\cGC_3)=\prod_{\ell} \gr_{\ell} H^{k}(\cGC_3)$ is known by \cite[Lemma~1.4]{bounds} to vanish outside the region
\begin{equation}\label{eq:H(GC_3)-bound}
    -\ell \leq k \leq -3,
\end{equation}
with the sole exception of the class $\gr_2 H^{-3}(\cGC_3)=\Q[D_3]$ spanned by the theta graph, which we denote $D_3$. It provides the first class in the cohomology group $H^{-3}(\cGC_3)$ at the upper bound $k=-3$. The  group  $H^{-3}(\cGC_3)$ has an additional algebraic structure and is also called the \emph{algebra of 3-graphs} \cite{3algebra} and has been studied extensively due to its role in the enumeration of Vassiliev knot invariants. In lower degrees, however, only very few classes are known, as summarized in \Cref{table:GC3}, even though there must exist many  cohomology classes for large $\ell$  \cite{Borinsky:EulerGC}.

\begin{table}
\centering
\setlength{\tabcolsep}{3pt}
\begin{tabular}{r@{\hskip 10pt}ccccccccc>{\columncolor{emph}}c}
\toprule
$\ell$ & $H^{-12}$ & $H^{-11}$ & $H^{-10}$ & $H^{-9}$ & $H^{-8}$ & $H^{-7}$ & $H^{-6}$ & $H^{-5}$ & $H^{-4}$ & \cellcolor{white} $H^{-3}$ \\
\midrule
 2 &     &     &     &     &     &     &     &     & \cx & 1 \\
 3 &     &     &     &     &     &     & \cx & \cx & \cx & 1 \\
 4 &     &     &     &     & \cx & \cx & \cx & \cx &  0  & 1 \\
 5 &     &     & \cx & \cx & \cx & \cx & \cx &  0  &  0  & 2 \\
 6 & \cx & \cx & \cx & \cx & \cx & \cx &  1  &  0  &  0  & 2 \\
 7 & \cx & \cx & \cx & \cx & \cx &  0  &  1  &  0  &  0  & 3 \\
 8 & \cx & \cx & \cx & \cx &  0  &  0  &  2  &  0  &  0  & 4 \\
 9 & \cx & \cx & \cx &  0  &  0  &  0  &  3  &  0  &  0  & 5 \\
10 & \cx & \cx &  0  &  0  &  0  &  0  &  5  &  0  &  0  & 6 \\
11 & \cx &  ?  &  ?  &  ?  &  ?  &  ?  &  ?  &  ?  &  ?  & 8 \\
12 &  ?  &  ?  &  ?  &  ?  &  ?  &  ?  &  ?  &  ?  &  ?  & 9 \\
\bottomrule
\end{tabular}%
\caption{
  Dimensions of the cohomology $\gr_{\ell}H^{k}(\cGC_3)$, bigraded by degree $k$ (columns) and loop number $\ell$ (rows), known from computer calculations \cite{BNM,diff,GHcode}.
  Empty cells indicate trivial zeroes (no graphs), whereas \colorbox{zero}{~$0$~} indicates vanishing in the range $k<-\ell$ known from \eqref{eq:H(GC_3)-bound}.
  The column $k=-3$ highlights the algebra of 3-graphs; e.g.\ $\gr_2 H^{-3}(\cGC_3)\cong\Q$ is spanned by the theta graph.
}%
\label{table:GC3}%
\end{table}

Generalizing a key idea in \cite{invariant}, we construct linear functionals $I(\omega)\colon \hGC_3\longrightarrow \R$, in other words cochains (elements of $\cGC_3\ctp \R$), by integrating the pullback of differential forms on $\SPD{\ell}$ to a moduli space of metric graphs via the tropical Torelli map \cite{BrannettiMeloViviani:TropicalTorelli}.

Concretely, to a graph $G$ we associate the space $\R_+^m$ parametrizing positive edge lengths. For any basis $\mathcal{C}$ of the cycle space $H_1(G)$, the dual graph Laplacian $\Laplacian_\mathcal{C}\colon \R_+^m\longrightarrow \SPD{\ell}$ is a positive definite symmetric $\ell \times \ell$ matrix whose entries are linear in the edge lengths.
Pulling back under $\Laplacian_{\mathcal{C}}$, we obtain smooth differential forms
\[
\omega_G  \wedge \pff_G = \omega_{\Laplacian_\mathcal{C}}  \wedge \pff^{\ell}_{\Laplacian_{\mathcal{C}}}
\]
on $\R_+^m$, for any polynomial $\omega$ in the forms \eqref{intro: omegadef}. In fact, these pullbacks are projectively invariant and descend to smooth forms on the open simplex $\oFS_G=\R_+^m/\R_+ \subset\Pro(\R^m)$.

For example, the theta graph has $n=2$ vertices and $m=3$ edges, hence $\ell=2$ loops and degree $k=-3$. The dual Laplacian for the cycle basis $\mathcal{C}=(C_1,C_2)$ as indicated in
\begin{equation*}
    D_3=\Graph[0.8]{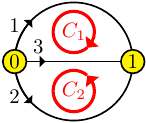}
    \qquad\text{is}\qquad
    \Laplacian_{\mathcal{C}} =
  \begin{pmatrix}
    x_1 + x_3 & x_3 \\
    x_3 & x_2 + x_3
  \end{pmatrix}
\end{equation*}
in terms of the edge length variables $x_1,x_2,x_3$. Computing the invariant Pfaffian form $\pff^2$ for this family of positive definite matrices provides the projective 2-form
\begin{equation*}
    \pff_{D_3} = -\frac{x_1 \td x_2 \wedge \td x_3 - x_2 \td x_1 \wedge \td x_3 + x_3 \td x_1 \wedge \td x_2}{(x_1x_2 + x_2x_3 + x_1x_3)^{3/2}}
    .
\end{equation*}

\begin{thm}\label{thm:intro-convergence}
	For any homogeneous polynomial $\omega$ of degree $m-\ell-1$ in the canonical forms \eqref{intro: omegadef}, and any oriented graph $G\in\hGC_3$ with loop number $\ell$, the integral
  \begin{equation} \label{intro: IGdefn}
  I_G(\omega) = \frac{1}{(-2\pi)^{\ell/2}}\int_{\oFS_G}  \omega_G   \wedge \pff_G
  \end{equation}
  is absolutely convergent and defines a linear function $I(\omega)\colon \hGC_3\longrightarrow \R, G\mapsto I_G(\omega)$.
\end{thm}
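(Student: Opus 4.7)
The plan is to establish absolute convergence via a stratified analysis of the boundary of the closed simplex $\cFS_G$. On the open interior $\oFS_G$, the Kirchhoff polynomial $\Symanzik_G=\det\Laplacian_{\mathcal{C}}$ is strictly positive, so $\omega_G\wedge\pff_G$ is smooth and its integral over any relatively compact open subset is finite. The question therefore reduces to asymptotic control at each boundary face $F_\gamma\subset\partial\cFS_G$ indexed by a non-empty subset $\gamma\subsetneq E(G)$ of edges whose lengths collapse to zero.

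Near $F_\gamma$, I would introduce local coordinates $x_e=ty_e$ for $e\in\gamma$ with $\sum_{e\in\gamma}y_e=1$ and $t\to 0^{+}$, holding the remaining edge-length variables fixed. Choosing a cycle basis $\mathcal{C}$ adapted to the decomposition of $G$ into the subgraph spanned by $\gamma$ and its contraction $G/\gamma$, the Laplacian $\Laplacian_{\mathcal{C}}$ becomes block-triangular with blocks of sizes $\ell_\gamma$ and $\ell-\ell_\gamma$ scaling as $t$ and $1$ respectively, and off-diagonal entries of order at most $t$. The standard Kirchhoff factorization $\Symanzik_G = t^{\ell_\gamma}\Symanzik_\gamma(y)\Symanzik_{G/\gamma}(x_{G\setminus\gamma}) + O(t^{\ell_\gamma+1})$ together with a Schur-complement expansion of $\Laplacian^{-1}$ then determines the leading order in $t$ of both $\Pf(\td\Laplacian\cdot\Laplacian^{-1}\cdot\td\Laplacian)/\sqrt{\det\Laplacian}$ and $\tr((\Laplacian^{-1}\td\Laplacian)^{4k+1})$ along the face.

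Combining these ingredients, the pulled-back integrand should assume the local form $t^{a(\gamma)}\,\td t\wedge(\text{smooth form in }y,x_{G\setminus\gamma})$ near $F_\gamma$, and the goal is to check $a(\gamma)>-1$ for every non-empty $\gamma$. The input to this power count is the form-degree identity $\deg\omega+\deg\pff = m-1 = \dim\oFS_G$ and the singular factor $(\det\Laplacian)^{-1/2}\sim t^{-\ell_\gamma/2}$. Where several faces meet, I would replace $\cFS_G$ by an iterated blowup along the subgraph stratification so that the boundary is simple normal crossings and Fubini applies stratum by stratum. Once the integral is shown to converge absolutely, linearity in $G$ and the sign rule under reversal of the orientation of $G$ are immediate from the $\GL$-transformation \eqref{intro: transformationgamma}, producing a well-defined map $I(\omega)\colon\hGC_3\longrightarrow\R$.

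The main obstacle will be the sharp power counting for the Pfaffian factor: while $(\det\Laplacian)^{-1/2}$ contributes an overt divergence of order $t^{-\ell_\gamma/2}$, the numerator $\Pf(\td\Laplacian\cdot\Laplacian^{-1}\cdot\td\Laplacian)$ is a sum over monomials in the $\td x_e$ with non-trivial cancellations, and an entrywise bound on this matrix of $2$-forms is too weak to establish integrability. To make the estimate sharp I would exploit the $\GL$-covariance of $\pff^{\ell}$ to put the leading $t$-block of $\Laplacian$ into normal form and thereby factor the leading behavior of $\pff_G$ as the product of a graph-Pfaffian for the subgraph $\gamma$ with one for the quotient $G/\gamma$. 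Convergence at the face $F_\gamma$ then reduces to a purely combinatorial inequality of the type $m_\gamma-\ell_\gamma>0$ for $\ell_\gamma\geq 1$, which holds for any subgraph admitting a loop; the case $\ell_\gamma=0$ (a forest) is automatic because $\Symanzik_G$ does not vanish along $F_\gamma$.
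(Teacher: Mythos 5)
Your overall strategy coincides with the paper's: compactify the simplex by an iterated blowup with normal crossings boundary, write the dual Laplacian in block form $\left(\begin{smallmatrix} tA & tB \\ tB^\Transpose & C\end{smallmatrix}\right)$ near the face $F_\gamma$, and control the degeneration of $\pff_G\wedge\omega_G$ there (this is the content of \cref{lem:pff-asy,lem:can-asy} and the proof of \cref{thm:convergence}). However, your endgame has a genuine gap. You propose to ``factor the leading behavior of $\pff_G$ as $\pff_\gamma\wedge\pff_{G\sslash\gamma}$'' and then conclude via a combinatorial inequality. When $\ell_\gamma$ is \emph{odd}, both $\pff_\gamma$ and $\pff_{G\sslash\gamma}$ vanish identically (odd-size Pfaffians are zero), so this factorization only tells you that the leading term is zero; it gives no bound on the subleading behaviour, and ``vanishing of the leading term'' is not an integrability estimate. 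The paper's \cref{lem:pff-asy} supplies exactly the missing information: for odd $\ell_\gamma$ the form is $\sqrt{t}\,\alpha+(\td\sqrt{t})\wedge\beta$ with $\alpha,\beta$ smooth, i.e.\ the singularity is precisely $\td t/\sqrt{t}$, independent of $m_\gamma$. Your proposed criterion $m_\gamma-\ell_\gamma>0$ is therefore not the relevant inequality (and note it even fails for self-loops, where $m_\gamma=\ell_\gamma$); the correct dichotomy is governed by the parity of $\ell_\gamma$, and the exponent is $-1/2>-1$ in all cases.

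Two further points of comparison. First, rather than power-counting an integrable singularity, the paper substitutes squares of edge lengths, $x_e\mapsto x_e^2$, after which the pullbacks $\widetilde{\pff}_G,\widetilde{\omega}_G$ extend \emph{smoothly} to the compact Feynman polytope $\FP_G$; absolute convergence is then immediate from compactness, and this smooth extension is reused later for Stokes' theorem (\cref{thm:Stokes}), so it buys more than convergence alone. Second, the blowup must be taken along the \emph{bridgeless} subgraphs specifically, so that $\FP_G$ avoids the strict transform of $\{\Symanzik_G=0\}$; ``the subgraph stratification'' is too vague, and blowing up the wrong strata leaves the polar locus touching the domain. Finally, well-definedness of $I(\omega)$ on $\hGC_3$ is not quite ``immediate from \eqref{intro: transformationgamma}'': one must match the sign of $\pff_G$ under change of cycle basis (via $\det P$ for $P\in\GL_\ell(\Z)$) with the sign of the simplex orientation under edge permutations, using the identification of graph orientations with generators of $\det\Z^{E(G)}\otimes\det H_1(G)$.
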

The convergence follows from the asymptotic behaviour of the forms at the boundary of a suitable compactification $\FP_G$ of the simplex $\oFS_G$. To define $I_G(\omega)$, we also have to specify the orientation of $\oFS_G$ and fix the sign of $\pff_G$, since the latter depends implicitly on the cycle basis $\mathcal{C}$. These ambiguities are resolved by the notion of orientation of a graph $G$ in $\hGC_3$, which by \cite{onatheorem} amounts to a choice of generator of $\det \Z^m\otimes\det H_1(G)$ where, for a free $\Z$-module $M$ of finite rank $n$, $\det(M)$ denotes the rank one $\Z$-module $\ExtA^n M$. The sign in \eqref{intro: transformationgamma} is therefore crucial for our construction.

We thus obtain canonical integrals, and hence numbers, associated to oriented graphs in $\hGC_3$. In the simplest case, $\omega=1$ so we integrate only the bare Pfaffian form. For example, $\pff_{D_3}$ defined above integrates to $I_{D_3}(1)=1$ for the theta graph. Our normalization of $I_G(\omega)$ by powers of $-2\pi$ ensures that $I_G(1)\in\{-1,0,1\}$ for all graphs, since we show that   $I_G(1)=\pm1$ if and only if $G\cong \pm D_{2k+1}$ is isomorphic to a dipole graph $D_{2k+1}$ with even loop number $2k$.
These are the graphs on $2$ vertices with $2k+1$ edges:
\begin{equation*}
    D_3=\Graph{gcD3},\qquad
    D_5=\Graph{gcD5},\qquad
    D_7=\Graph{gcD7},\qquad\ldots
\end{equation*}
For non-trivial $\omega$, the integrals \eqref{intro: IGdefn} are much more complicated. An example for an integral of the 11-form $\beta^5\wedge\pff^6$ is (this graph is labelled $G_{234}$ in \cref{sec:integration6})
\begin{equation*}
   G
   =\Graph[0.8]{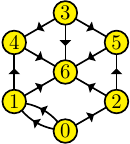}
   \quad\mapsto\quad
   \begin{aligned}
    I_G
    (\beta^5)
    &=
    -80
    +\tfrac{140}{9}\pi^2
    -\tfrac{4}{3}\pi^4
    +210\zeta(3)-20\pi^2\ln 2
    \\ & \quad
    +\tfrac{20}{3}\left[
            (\ln 2)^4+24\Li{4}\big(\tfrac{1}{2}\big)
    \right]
    -\tfrac{40}{9}\left[
            \pi^2\Catalan + 24 \Im \Li{4}(\iu)
    \right]
    \end{aligned}
\end{equation*}
where $\Catalan=\Im\Li{2}(\iu)$ denotes Catalan's constant and $\Li{s}(z)=\sum_{k=1}^{\infty} z^k/k^s$ is the polylogarithm function.

\subsubsection{Stokes' relations and cocycles}
In the context of graphs, the relevant algebra of `canonical' forms  is  $\cfs$, 
 the exterior algebra generated by formal symbols $\beta^{4k+1}$, one for each positive integer $k$. The above integrals of orientation forms constitute a linear map
\begin{equation*}
    I\colon \cfs\longrightarrow \Hom(\hGC_3,\R).
\end{equation*}
Since $\Hom(\hGC_3,\R)\cong\cGC_3\ctp \R$, we can apply the coboundary $\delta$ and the Lie bracket of $\cGC_3$ to the cochains $I(\omega)$. Stokes' theorem for the manifold with boundary $\FP_G$ relates the coboundary of such cochains to Lie brackets. To state this result, we declare
\begin{equation*}
    \Delta\beta^{4k+1}=1\otimes\beta^{4k+1}+\beta^{4k+1}\otimes 1
\end{equation*}
to endow $\cfs$ with the structure of a graded cocommutative Hopf algebra. This coproduct encodes the restrictions of canonical forms to the boundary faces at infinity of $\FP_G$.
We use Sweedler's notation $\sum_{(\omega)}\omega'\otimes\omega''$ to denote the reduced coproduct, such that
\begin{equation*}
    \Delta \omega = 1\otimes \omega + \omega\otimes 1 + \sum_{(\omega)} \omega'\otimes \omega''
    .
\end{equation*}
\begin{thm}\label{thm:intro-Stokes}
Let $\MCdip\in\cGC_3$ denote the following sum of all even loop number dipoles:
\begin{equation*}
    \MCdip = \sum_{k=1}^{\infty} \frac{D_{2k+1}}{2\cdot(2k+1)!}.
\end{equation*}
Write $\abs{\omega}$ for the degree of $\omega$. Then, for any canonical form $\omega\in\cfs$, we have the relation
\begin{equation}\label{eq:intro-Stokes}
	0 = \delta I(\omega) + [I(\omega),\MCdip] + \frac{1}{2}\sum_{(\omega)} (-1)^{|\omega'|} [I(\omega''),I(\omega')].
\end{equation}
\end{thm}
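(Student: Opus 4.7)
The strategy is to apply Stokes' theorem to the closed form $\omega_G \wedge \pff_G$ on the compactification $\FP_G$ of $\oFS_G$ provided by \cref{thm:intro-convergence}. Since every canonical form $\beta^{4k+1}$ and the invariant Pfaffian $\pff^{2n}$ is closed on $\SPD{2n}$, its pullback to $\FP_G$ is closed as well, and Stokes' theorem gives
\[
  0 = \frac{1}{(-2\pi)^{\ell/2}}\sum_F \int_F \omega_G \wedge \pff_G,
\]
where the sum ranges over the codimension-one boundary faces of $\FP_G$. The key step is to classify these faces and match their contributions with the three terms of \eqref{eq:intro-Stokes}.

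The \emph{finite} faces of $\FP_G$, obtained by sending a single non-loop edge $e$ to zero, are naturally identified with $\FP_{G/e}$. In a cycle basis adapted to the contraction, $\Laplacian_G\vert_{x_e=0}=\Laplacian_{G/e}$, so that the pullback of the form restricts to $\omega_{G/e}\wedge\pff_{G/e}$. Summed over $e$, with signs coming from the induced orientation on the boundary, these contributions recombine into $\delta I(\omega)(G)=I(\omega)(\partial G)=\sum_e \pm I_{G/e}(\omega)$. The contributions from collapsing a loop edge vanish by the standard orientation argument in $\hGC_3$ (a tadpole has a $\Z/2$-automorphism reversing its orientation).

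The remaining \emph{exceptional} faces of $\FP_G$ are those introduced to resolve the singularities of $1/\sqrt{\det\Laplacian_G}$ along the loci $\{x_\gamma=0\}$, for connected subgraphs $\gamma\subset G$ of positive loop number. They correspond to pinching $\gamma$ off $G/\gamma$, and on each such face a cycle basis adapted to the decomposition $H_1(G)=H_1(\gamma)\oplus H_1(G/\gamma)$ makes $\Laplacian_G$ block diagonal. Consequently the Pfaffian form factorises as $\pff_G=\pff_\gamma\wedge\pff_{G/\gamma}$, and the cocommutative coproduct on $\cfs$ splits $\omega_G=\sum\omega'_\gamma\wedge\omega''_{G/\gamma}$, including the extremal terms $1\otimes\omega$ and $\omega\otimes 1$. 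Integration over such a face thus yields a factorised contribution proportional to $I_\gamma(\omega')\,I_{G/\gamma}(\omega'')$, the normalisations $(-2\pi)^{-\ell/2}$ combining correctly because $\ell(G)=\ell(\gamma)+\ell(G/\gamma)$. The terms with $\omega'=1$ (or $\omega''=1$) are non-zero only when $\gamma$ is a dipole $D_{2k+1}$, by the classification $I_{D_{2k+1}}(1)=\pm 1$ already stated in the text; summed over all dipole subgraphs, the combinatorial factor $1/(2(2k+1)!)$ in $\MCdip$ cancels labellings of the edges and the two orderings of endpoints, producing precisely $[I(\omega),\MCdip]$. The residual factorisations, with neither $\omega'$ nor $\omega''$ trivial, pair up under the involution $\gamma\leftrightarrow G/\gamma$ of pinching loci into the symmetric sum $\tfrac12\sum_{(\omega)}(-1)^{|\omega'|}[I(\omega''),I(\omega')]$, the Koszul sign $(-1)^{|\omega'|}$ tracking the reordering $\omega'\wedge\omega''\mapsto \omega''\wedge\omega'$ needed to match the orientation of the factorised face with the product orientation on $\FP_\gamma\times\FP_{G/\gamma}$.

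The principal obstacle is the careful bookkeeping of signs: one must harmonise the orientation of $\FP_G$ induced by the chosen orientation of $G\in\hGC_3$ (a generator of $\det\Z^m\otimes\det H_1(G)$), the induced orientation on each boundary face, the sign in the transformation law \eqref{intro: transformationgamma} of $\pff$ under a change of cycle basis adapted to $\gamma$, and the Koszul signs arising from the coproduct of $\omega$ and the graded Lie bracket of $\cGC_3$. A secondary technical point is to verify that exceptional strata with $\ell(\gamma)$ odd contribute zero, either because the Pfaffian factorisation degenerates in odd rank, or because the corresponding insertion vanishes in $\cGC_3$ by the orientation conventions; together these guarantee that the sum of boundary integrals produces exactly the three terms in \eqref{eq:intro-Stokes} and nothing else.
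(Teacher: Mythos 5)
Your overall strategy coincides with the paper's: Stokes' theorem on the Feynman polytope $\FP_G$, with edge facets producing $\delta I(\omega)$ and exceptional facets producing the bracket terms via the factorization $\pff_G=\pff_\gamma\wedge\pff_{G/\gamma}$ and the coproduct on $\cfs$. However, two steps as you describe them would not go through. First, you apply Stokes' theorem directly to $\pi^*(\omega_G\wedge\pff_G)$ on $\FP_G$, but this form is \emph{not} smooth on the boundary: by \cref{lem:pff-asy} the pullback of $\pff_G$ acquires singularities of the type $(\td\lambda)/\sqrt{\lambda}$ along the exceptional divisors with $\ell(\gamma)$ even. The paper first pulls back along the squaring map $\sqX\colon x_e\mapsto x_e^2$, shows that $\pi^*(\sqX^*(\pff_G\wedge\omega_G))$ extends smoothly over $\partial\FP_G$ (\cref{thm:convergence}), and only then applies Stokes. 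Without this (or an explicit argument that Stokes holds for forms with integrable square-root singularities and that the "restriction" to a facet is the correct one, namely setting $\sqrt{\lambda}$ and $\td\sqrt{\lambda}$ to zero), the boundary terms are not justified. Relatedly, the blowup is indexed by \emph{bridgeless} subgraphs, which may be disconnected; restricting to connected $\gamma$ of positive loop number misdescribes the face poset, although the discrepancy is harmless in the end because $I_\gamma(\omega')=0$ for disconnected or bridged $\gamma$.

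Second, your mechanism for producing the antisymmetrized Lie bracket is wrong: there is no involution $\gamma\leftrightarrow G/\gamma$ on the set of boundary facets of $\FP_G$, since $G/\gamma$ is not a subgraph of $G$. Each facet $D_\gamma$ contributes the \emph{full} coproduct sum $\sum_{(\omega)}(-1)^{m_\gamma-1}I_\gamma(\omega_{(1)})I_{G/\gamma}(\omega_{(2)})$, and summing over $\gamma$ yields the insertion (pre-Lie) product $\sum_{(\omega)}(-1)^{|\omega_{(1)}|}I(\omega_{(2)})\circ I(\omega_{(1)})$ via the contraction--insertion duality of \cref{thm:pairing}; the symmetrization into $\tfrac12\sum_{(\omega)}(-1)^{|\omega'|}[I(\omega''),I(\omega')]$ then comes from the graded cocommutativity \eqref{eq:graded-cocom} of $\Delta$ on $\cfs$, not from pairing faces. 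Likewise, the sign $(-1)^{|\omega'|}$ originates from the induced boundary orientation $(-1)^{m_\gamma-1}$ in \eqref{eq:facet-orientation} together with the degree-matching condition $m_\gamma-1=\ell(\gamma)+|\omega'|$ and the evenness of $\ell(\gamma)$, rather than from a Koszul reordering of the coproduct. Finally, invoking \cref{thm:pairing} requires checking that the orientation conventions for $(\gamma,G/\gamma)$ defined via edges and cycle bases agree with those defined via vertex orderings for the insertion $G_1\circ_\rho G_2$; this compatibility (the block-triangular determinant computation in the paper's proof) is precisely the sign bookkeeping you defer, and it is where most of the actual content of the proof lies.
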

The dipole sum $\MCdip$ arises as the integral $I(1)$ of the bare Pfaffian form in the trivial piece $1\otimes \omega+\omega\otimes 1$ of the coproduct $\Delta\omega$.
Specializing to $\omega=1$, Stokes' relation is equivalent to the Maurer-Cartan equation
\begin{equation*}
    \delta \MCdip + \frac{1}{2}[\MCdip,\MCdip] = 0
\end{equation*}
which was discovered combinatorially in \cite{diff}. We thus obtain a geometric explanation for the origin of the corresponding twisted differential $\delta+[\cdot,\MCdip]$ on $\cGC_3$, tracing it back to the structure of the boundary of the compactifications $\FP_G$.

Specializing to primitive canonical forms $\omega=\beta^{4i+1}$, the Stokes' relation \eqref{eq:intro-Stokes} shows that the corresponding cocycle is closed with respect to the twisted differential:
\begin{equation*}
    \delta I(\beta^{4i+1})=-[I(\beta^{4i+1}),\MCdip].
\end{equation*}
To obtain \emph{bona fide} cocycles for the original differential $\delta$ in $\cGC_3$, let $\gr_\ell I(\omega)$ denote the restriction of the cochain $I(\omega)$ from $\hGC_3=\bigoplus_\ell \gr_\ell \hGC_3$ to the part in loop number $\ell$.
\begin{lemma}\label{lem:intro-gr-canint=0}
    Write $\lceil x\rceil$ for the smallest integer above or equal to $x$. For a canonical form $\omega\in\cfs[d]$ of degree $d$, the integrals $\gr_\ell I(\omega)=0$ vanish in all loop orders $\ell<2+2\lceil d/4 \rceil$.
\end{lemma}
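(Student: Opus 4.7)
The plan relies on two simple ingredients: the vanishing of the Pfaffian in odd rank, and the standing bound $k\leq -3$, equivalently $m\leq 3\ell-3$, on graphs in $\hGC_3$ (which follows from the trivalence convention on vertices).

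Odd loop numbers are handled immediately. The dual Laplacian $\Laplacian_\mathcal{C}$ is an $\ell\times\ell$ matrix, so for odd $\ell$ the skew-symmetric matrix $\td\Laplacian_\mathcal{C}\cdot\Laplacian_\mathcal{C}^{-1}\cdot\td\Laplacian_\mathcal{C}$ has odd size and its Pfaffian vanishes. Hence $\pff^\ell=0$ on $\SPD{\ell}$, so $\pff_G=0$, and $I_G(\omega)=0$ for every graph of odd loop number.

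For even $\ell$, I would count degrees. The integrand $\omega_G\wedge\pff_G$ has degree $d+\ell$ on the open simplex $\oFS_G$ of dimension $m-1$, so the integral can be nonzero only when $\omega_G\wedge\pff_G$ is a top form, i.e.\ $m=d+\ell+1$. Combined with $m\leq 3\ell-3$, this forces $d\leq 2\ell-4$, equivalently $\ell\geq (d+4)/2$.

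Putting the two constraints together, $\gr_\ell I(\omega)$ is potentially nonzero only for an even integer $\ell\geq (d+4)/2$, and the smallest such $\ell$ is precisely $2+2\lceil d/4\rceil$. Taking the contrapositive yields the lemma. I do not anticipate a serious obstacle: the only minor point meriting a remark is the convention for $I_G(\omega)$ when $d\neq m-\ell-1$; in that case, for $d>m-\ell-1$ the form $\omega_G\wedge\pff_G$ automatically vanishes on $\oFS_G$, while for $d<m-\ell-1$ the integral is set to zero, consistent with viewing $I\colon\cfs\to\Hom(\hGC_3,\R)$ as extracting the correct homogeneous component of $\omega$ for each graph.
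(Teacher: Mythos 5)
Your proof is correct and follows essentially the same route as the paper's: vanishing of $\pff_G$ for odd $\ell$, the degree-matching condition $m=d+\ell+1$, and the trivalence bound (which you phrase as $m\leq 3\ell-3$ and the paper phrases equivalently as $m\geq 3n/2$ with $n=2+d$), combined to give $\ell\geq 2+d/2$ and hence, by evenness, $\ell\geq 2+2\lceil d/4\rceil$.
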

For example, this result shows that $I_G(1)=0$ for graphs with one loop, and $I_G(\beta^5)=0$ for graphs with $\ell<6$ loops. Since the Lie bracket respects the grading by $\ell$, Stokes' relations simplify drastically in the minimal loop order where $I(\omega)$ might be non-zero.
\begin{corollary}\label{cor:intro-cocycles}
    For every canonical form $\omega\in\cfs[d]$ of degree $d$, the cochain $I(\omega)$ restricts in loop number $2+2\lceil d/4\rceil$ to a cocycle $\gr_{2+2\lceil d/4\rceil} I(\omega)\in \cGC_3\otimes \R$ with respect to $\delta$.
\end{corollary}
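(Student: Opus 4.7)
The plan is to deduce the corollary by restricting Stokes' relation \eqref{eq:intro-Stokes} to the loop number $\ell_0 = 2 + 2\lceil d/4 \rceil$, and to use \Cref{lem:intro-gr-canint=0} to show that every term except $\delta I(\omega)$ drops out. The structural fact I will use throughout is that both $\delta$ and the Lie bracket on $\cGC_3$ are homogeneous with respect to the loop-number grading: contracting an edge decreases both the vertex count and the edge count by one, so $\delta$ preserves $\ell$; while inserting a graph $G_2$ at a vertex of $G_1$ produces a graph with $n_1+n_2-1$ vertices and $e_1+e_2$ edges, so the bracket is additive in $\ell$. Consequently $\gr_{\ell_0}$ commutes with $\delta$, and brackets split as $\gr_{\ell_0}[\alpha,\beta] = \sum_{\ell_1+\ell_2=\ell_0}[\gr_{\ell_1}\alpha,\gr_{\ell_2}\beta]$.

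Next I will apply $\gr_{\ell_0}$ to the right-hand side of \eqref{eq:intro-Stokes}. For $[I(\omega),\MCdip]$, note that $\MCdip$ is supported in loop numbers $\geq 2$ (its smallest dipole $D_3$ has two loops), so a nonzero contribution in loop number $\ell_0$ would require $\gr_{\ell_1}I(\omega)\neq 0$ for some $\ell_1 \leq \ell_0-2$, which is forbidden by \Cref{lem:intro-gr-canint=0}. For each term in the reduced coproduct $\sum_{(\omega)}\omega'\otimes\omega''$, we have $|\omega'|,|\omega''|\geq 1$ with $|\omega'|+|\omega''|=d$, so \Cref{lem:intro-gr-canint=0} forces $I(\omega')$ and $I(\omega'')$ into loop numbers $\geq 2+2\lceil|\omega'|/4\rceil$ and $\geq 2+2\lceil|\omega''|/4\rceil$ respectively. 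Hence $[I(\omega''),I(\omega')]$ lives in loop numbers
\[
\geq 4 + 2\lceil|\omega'|/4\rceil + 2\lceil|\omega''|/4\rceil \geq 4+2\lceil d/4\rceil = \ell_0 + 2,
\]
using the elementary inequality $\lceil a\rceil+\lceil b\rceil\geq \lceil a+b\rceil$, so its $\gr_{\ell_0}$-component vanishes as well.

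Combining these vanishings with $\gr_{\ell_0}\delta I(\omega) = \delta\gr_{\ell_0}I(\omega)$, the $\gr_{\ell_0}$-component of \eqref{eq:intro-Stokes} reduces to $\delta\gr_{\ell_0}I(\omega) = 0$, which is the claim. No real obstacle remains beyond the ingredients already stated: all of the substantive work sits in Stokes' relation (whose proof relies on the boundary analysis of the compactifications $\FP_G$) and in \Cref{lem:intro-gr-canint=0}; the corollary itself is a short bookkeeping argument whose only delicate point is the ceiling arithmetic showing that both the Maurer-Cartan twist $[I(\omega),\MCdip]$ and the self-interaction terms fall strictly above the minimal loop order.
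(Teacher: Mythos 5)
Your argument is correct and follows essentially the same route as the paper's proof: restrict Stokes' relation to loop number $2+2\lceil d/4\rceil$, use the additivity of the Lie bracket in the loop grading together with \cref{lem:intro-gr-canint=0} and the inequality $\lceil a\rceil+\lceil b\rceil\geq\lceil a+b\rceil$ to kill every bracket term. The only cosmetic difference is that you treat the $[I(\omega),\MCdip]$ term separately, whereas the paper folds it into the same estimate by taking $\omega'=1$ (so $c'=0$); the content is identical.
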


We have thus explicitly defined infinitely many classes in the cohomology of $\cGC_3$, one for each monomial in the forms $\beta^{4i+1}$. They lie in degrees $-6\leq k\leq -3$ related to the degree $d$ of the form $\omega$ by $k\equiv d+1 \mod 4$. We do not know in general which of these classes are zero, but we give examples of zero- and non-zero cases.

The simplest such cocycle is $\gr_2 I(1)$, that is, the integral of the Pfaffian form $\pff^2$, in degree $k=-3$. By $I_{D_3}(1)=1$, the corresponding cohomology class
\begin{equation*}
    [\gr_2 I(1)] \in H^{-3}(\cGC_3)
\end{equation*}
is non-trivial and dual to the class $[D_3]\in H_{-3}(\hGC_3)$ of the theta graph in homology.

For primitive $\omega=\beta^{4i+1}$, the cocycles lie in degree $k=-6$ and define classes
\begin{equation*}
    \big[\gr_{4+2i} I(\beta^{4i+1})\big] \in H^{-6}(\cGC_3)\otimes \R.
\end{equation*}
We computed a cycle $X\in\hGC_3$ whose class spans $\gr_6 H_{-6}(\hGC_3)\cong \Q$. It consists of a linear combination of 20 graphs with 12 edges, 6 loops, and 7 vertices each (\cref{fig:cycle6}). Calculating explicitly each of the corresponding integrals $I_G(\beta^5)$, we found that $I_X(\beta^5)\neq 0$:
\begin{thm}\label{thm:intro-gr6I(beta5)<>0}
    The class $[\gr_6 I(\beta^5)] \in H^{-6}(\cGC_3)\otimes\R$ is non-zero.
\end{thm}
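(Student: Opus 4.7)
The strategy is to verify non-triviality by pairing the cocycle with an explicit homology class. In bidegree $(\ell,k)=(6,-6)$, any graph has $m=12$ edges and $n=7$ vertices, so $\gr_6\hGC_3$ in degree $-6$ is finite-dimensional. Since $\delta$ preserves loop order, $\gr_6 H^{-6}(\cGC_3) \hookrightarrow H^{-6}(\cGC_3)$, and the natural pairing
\[
    \gr_6 H^{-6}(\cGC_3) \otimes \R \;\times\; \gr_6 H_{-6}(\hGC_3) \longrightarrow \R, \qquad ([c],[X]) \longmapsto \langle c,X\rangle
\]
is non-degenerate. The tabulated computer calculations give $\dim_\Q \gr_6 H_{-6}(\hGC_3)=1$, so it is enough to exhibit a generator $X$ of this line and verify that $I_X(\beta^5)\neq 0$.

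First I would produce $X$ explicitly. Enumerate the finitely many oriented isomorphism classes of graphs contributing to $\gr_6 \hGC_3$ in degrees $-6$ and $-5$, form the matrix of $\partial$ with signs consistent with the orientation convention (a generator of $\det \Z^{12}\otimes \det H_1(G)$, cf.\ \eqref{intro: transformationgamma}), and pick $X$ as an integer vector spanning $\ker\partial$ modulo $\im\partial$ in degree $-6$. This produces a $\Z$-linear combination $X = \sum_G c_G G$ of $20$ graphs, as displayed in the cited figure; its non-triviality modulo the image is a finite linear-algebra check.

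The central computation is the evaluation of the $20$ real numbers
\[
    I_G(\beta^5) = \frac{1}{(-2\pi)^3}\int_{\oFS_G} \beta^5_G \wedge \pff^6_G.
\]
After gauge-fixing one edge-length, each integrand on the open $11$-cell is a rational differential form in the remaining edge-length variables divided by a power of the square root $\sqrt{\det \Laplacian_{\mathcal C}}$ coming from the Pfaffian prefactor. A change of variables that rationalises this square root converts the problem into an iterated integral in the algebra of hyperlogarithms, yielding $I_G(\beta^5)$ as a $\Q$-linear combination of the constants $1$, $\pi^{2j}$, $\zeta(3)$, $\pi^2 \ln 2$, $(\ln 2)^4$, $\Li{4}(\tfrac{1}{2})$, $\pi^2 \Catalan$ and $\Im \Li{4}(\iu)$, exactly as in the sample evaluation of $G_{234}$ displayed in the introduction. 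Summing with the signs $c_G$ then expresses $I_X(\beta^5)$ as a specific such combination, and non-vanishing reduces to the elementary check that at least one coefficient in this combination is non-zero.

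The main obstacle is clearly this symbolic integration step: twenty $11$-dimensional integrals of forms with algebraic weights must be evaluated individually, a task requiring substantial computer algebra. Convergence of each integral is already guaranteed by \cref{thm:intro-convergence}, and \cref{lem:intro-gr-canint=0} together with \cref{cor:intro-cocycles} ensure that loop order $6$ is the first loop order in which $I(\beta^5)$ can contribute, so no additional boundary corrections in lower loop order need to be addressed. All other ingredients -- the reduction to a pairing, the construction of $X$, and the final linear combination of transcendental constants -- are essentially routine once the twenty integrals are in hand.
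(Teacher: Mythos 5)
Your proposal is correct and follows essentially the same route as the paper: exhibit the explicit $20$-graph cycle $X$ spanning $\gr_6 H_{-6}(\hGC_3)\cong\Q$, use the duality $\inner{\delta Q,X}=\inner{Q,\partial X}=0$ to reduce non-exactness to $I_X(\beta^5)\neq 0$, and evaluate the individual integrals. The only material difference is in the integration step, where the paper does not rationalise the square root into hyperlogarithms directly but instead converts the integrals to momentum-space Feynman propagators and attacks them with hypergeometric expansions, IBP reduction to master integrals, and Gegenbauer sums (with two integrals fixed only numerically to $41$ digits plus PSLQ); this does not affect the validity of your argument.
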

Since $\gr_6 H^{-6}(\cGC_3)\cong\Q$ is known to have dimension one (see \cref{table:GC3}), it is spanned by this class. We have thus explained the first non-trivial cohomology group of $\cGC_3$ in degrees below $-3$ in terms of the non-compactly supported orientation form $\beta^5\wedge\pff^6$. Explicitly, we can represent this graph cohomology class using only two graphs:\footnote{%
Some integrals were  calculated numerically, see \cref{sec:integration6}. The identification of the numerical factor $2\pi^2\log 2-13\zeta(3)\approx 4.112$ is thus strictly confirmed only to 40 digits of precision, which we consider ample for this purpose. To establish the non-vanishing (\cref{thm:intro-gr6I(beta5)<>0}), two digits would have sufficed.}
\begin{equation*}
    \left[\gr_6 I(\beta^5)\right]
    = 10\cdot\Big(2\pi^2\ln 2-13\zeta(3)\Big)\cdot\left[2\times \Graph[0.8]{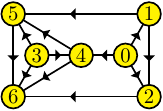} + 3\times \Graph[0.8]{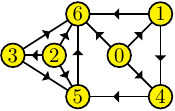}\right]
    .
\end{equation*}
This class is in the image of the Lie bracket $[H^{-3},H^{-3}]\subseteq H^{-6}$ on the cohomology of $\cGC_3$. It is proportional to the bracket $[ [K_4],[K_4] ]=-2[ [Y_3],[D_3]]$ of the complete graph $K_4$ with itself, as well as the bracket of the triangular prism $Y_3$ with the theta graph $D_3$.

The first product, $\omega=\beta^5\wedge\beta^9$, yields a cocycle $\gr_{10} I(\beta^5\wedge\beta^9)$ supported in $10$ loops and degree $k=-5$. This cocycle is exact, since the cohomology in this bidegree is trivial (see \cref{table:GC3}). However, we will explain how this exactness can be exploited to adjust the cochain $\gr_{12} I(\beta^5\wedge\beta^9)$ so as to obtain a cocycle in bidegree $(\ell,k)=(12,-9)$. While we cannot presently infer if this class is non-trivial, this kind of mechanism suggests that out of the cochains $I(\omega)$, one can construct a large number of cocycles (going beyond the restrictions of $I(\omega)$ to graphs of  smallest loop order as in  \cref{lem:intro-gr-canint=0}). We hope that those will prove useful to construct further classes in $H^\bullet(\cGC_3)$.

\subsection{Comments}
The orientation forms $\Omega_c[\pff]$ of compact type, and those of non-compact type $\Omega_{nc}[\pff]$, play opposite roles in our construction of cohomology classes for $\GL_{2n}(\Z)$ on the one hand, and for the odd graph complex $\cGC_3$ on the other hand:
\begin{itemize}
\item Compact type forms $\omega\wedge\beta^{4n-3}\wedge\pff^{2n} \in\Omega_c[\pff](2n)$ provide lots of classes in $H^{\bullet}_c(\SPD{2n}/\GL_{2n}(\Z);\ob)$ by \cref{thm:introSymembeds}, but the corresponding integrals $\gr_{2n} I(\omega\wedge\beta^{4n-3})$ in $\cGC_3$ are all zero by \cref{lem:intro-gr-canint=0}. 
\item Non-compact type forms $\omega\wedge\pff^{2n}\in\Omega_{nc}[\pff](2n)$ map to zero in $H^\bullet(\SPD{2n}/\GL_{2n}(\Z);\ob)$ by \cite{Sullivan:EulerSLZ,BismutCheeger:TransgressedEuler}, but their integrals $\gr_{2n} I(\omega)$ can give rise to non-trivial classes in $H^\bullet(\cGC_3)$, see \cref{thm:intro-gr6I(beta5)<>0}.
\end{itemize}

This picture is starkly different from the case of the \emph{even} graph complex $\cGC_2$.
There, an infinite family of explicit non-zero classes in $H^0(\cGC_2)$ was constructed using integrals over vertex positions in \cite{RossiWillwacher:Etingof}, and independently as integrals $\int_{\FS_G} \omega_G$ over edge lengths in \cite{invariant,BrownSchnetz:WheelCan}.
In the latter construction, the integrands $\omega$ used for graphs with $\ell=2i+1$ loops are the primitive forms of \emph{compact} type $\beta^{4i+1}\in\Omega_c(2i+1)$. These cocycles pair non-trivially with the wheel cycles \cite{BrownSchnetz:WheelCan} and conjecturally generate the whole of $H^0(\cGC_2)\otimes \R$ as a Lie algebra. One expects that the entirety of $H^0(\cGC_2)$ is pulled back from the compactly supported cohomology of $\GL_\ell(\Z)$, for $\ell$ odd \cite{BrownChanGalatiusPayne:HopfAGLSL}.

The even and odd cases also seem to differ in the kind of numbers that emerge from the integrals. As the example above shows, integrals of graphs in $\hGC_3$ can produce polylogarithms at 4th roots of unity and mix different weights (see \cref{sec:integration6}). In contrast, all canonical integrals computed to date for $\hGC_2$ are multiple zeta values, of homogeneous weight. However, in the odd case, we observed cancellations such that the canonical integral of the \emph{cycle} $X\in\hGC_3$ is much simpler, with homogeneous weight 3 and no 4th roots (see \cref{thm:tau1(X)}).

\subsection*{Acknowledgements}
Erik Panzer is funded as a Royal Society University Research Fellow through grant {URF{\textbackslash}R1{\textbackslash}201473}. Francis Brown thanks the University of Geneva and Trinity College Dublin for hospitality, where some of this work was carried out.
Simone Hu acknowledges the support of the Natural Sciences and Engineering Research Council of Canada (NSERC), [funding reference number 578060-2023]. Cette recherche a \'et\'e financ\'ee par le Conseil de recherches en sciences naturelles et en g\'enie du Canada (CRSNG), [num\'ero de r\'ef\'erence 578060-2023]. Francis Brown is grateful to Thomas Willwacher for suggesting the problem of defining differential forms for the odd graph complex,  to Peter Patzt for explanations  regarding \cite{AMP}, and M.\ Chan, S.\ Galatius and S.\ Payne for discussions on related topics. 

For the purpose of open access, the authors have applied a CC BY public copyright licence to any author accepted manuscript arising from this submission.
The data and code for the calculations of the cocycle in \cref{sec:gr6H-6}, as summarized in \cref{sec:integration6}, are available under
\href{https://dx.doi.org/10.5287/ora-ngnborbr4}{\texttt{doi:10.5287/ora-ngnborbr4}}.

\section{The Pfaffian form on positive definite matrices}

We denote $\SPD{\ell}\subset \R^{\ell(\ell+1)/2}$ the subset of symmetric $\ell\times\ell$ matrices $X=X^\Transpose$ which are positive definite. This space is homeomorphic to the space of right cosets of the general linear group $\GL_\ell(\R)$ by its compact subgroup $\OG_\ell(\R)$ of orthogonal matrices, via
\begin{equation*}
\OG_{\ell}(\R)  \backslash  \GL_{\ell}(\R)  \cong \SPD{\ell},\qquad
    \OG_\ell(\R) g \mapsto  g^\Transpose g. 
\end{equation*}
Thus $\SPD{\ell}$ is equipped with a transitive right action $ X\mapsto g^\Transpose X g$ by $\GL_\ell(\R)$.
The \emph{primitive canonical forms} $\beta^{4k+1}\in\Omega^{4k+1}(\SPD{\ell})$ studied in \cite[\S 4]{invariant} are smooth differential forms on $\SPD{\ell}$ of degree $4k+1$ defined for every integer $k\geq 0$ by
\begin{equation}\label{eq:primcan}
    \beta^{4k+1} \defas \tr\left( (X^{-1} \td X)^{4k+1} \right).
\end{equation}
They are closed and invariant under $\GL_\ell(\R)$. In fact, by the Hopf-Koszul-Samelson theorem
they generate the entire subalgebra of invariant forms, which is the graded exterior algebra
\begin{equation*}
    \Omega^\bullet(\SPD{\ell})^{\GL_\ell(\R)}
    = \ExtA^\bullet\left(
        \R\beta^1\oplus\R\beta^5\oplus\ldots\oplus\R\beta^{4\lfloor\frac{\ell-1}{2}\rfloor+1}
    \right).
\end{equation*}
Except for $\beta^1=\td\log\det X$, these forms are also projective, by which we mean the vanishing $\IntProd{\Evf}{\beta^{4k+1}}=0$ of the interior product with the Euler vector field
\begin{equation}\label{eq:Evf}
    \Evf = \sum_{1\leq i\leq j\leq \ell} X_{ij} \frac{\partial}{\partial X_{ij}}
\end{equation}
for all $k\geq 1$. This vector field is tangent to the orbits of the centre $\R^\times \subset\GL_\ell(\R)$, which consists of multiples $\lambda\IdMat$ of the identity and acts by positive scalings $X\mapsto \lambda^2 X$. The forms $\beta^{4k+1}$ with $k\geq 1$ therefore descend to (i.e.\ are pullbacks of) forms on the quotient
\begin{equation}\label{eq:link}
    L\SPD{\ell} \defas \SPD{\ell}/\R^\times
    \cong \SL_\ell(\R)/\SO_\ell(\R)
    .
\end{equation}
We call this quotient ``the \emph{link} of $\SPD{\ell}$''. It follows that its invariant forms are
\begin{equation}\label{eq:LGL-invariants}
    \Omega^\bullet(L\SPD{\ell})^{\GL_\ell(\R)}
    = \ExtA^\bullet\left(
        \R\beta^5\oplus\R\beta^9\oplus\ldots\oplus\R\beta^{4\lfloor\frac{\ell-1}{2}\rfloor+1}
    \right)
    .
\end{equation}

The purpose of this section is to enlarge this picture by allowing forms which are invariant under $X\mapsto g^\Transpose X g$ if $\det g>0$, but change sign if $\det g<0$.

\subsection{The invariant Pfaffian}\label{sect: InvPfaff}

The Pfaffian of a skew-symmetric $2n\times 2n$ matrix $M$  with entries in a commutative ring $R$ is  the following homogeneous polynomial of degree $n$ in the entries of $M$:
\begin{equation} \label{eqn: PfaffDefn}
    \Pf(M) = \frac{1}{2^n n!}
    \sum_{\pi
        \in\Perms{2n}
    }
    \sgn \pi
    \cdot M_{\pi(1)\pi(2)}\cdots M_{\pi(2n-1)\pi(2n)}
\end{equation}
where the sum is over the group $\Perms{2n}$ of permutations of $\{1,\ldots,2n\}$.
For a square matrix of odd dimension $(2n+1)\times(2n+1)$, we set $\Pf(M)=0$.

We will use several well-known properties of the Pfaffian:
\begin{enumerate}[label=(P\arabic*)]
  \item $\det(M)=\Pf(M)^2$, \label{P:pf2}
  \item $\Pf(\lambda M) = \lambda^n \Pf(M)$ for any $\lambda \in R$, \label{P:scale}
  \item $\Pf(P^{\Transpose}MP) = \det(P)\Pf(M)$ for any $2n \times 2n$ matrix $P$ with entries in $R$, \label{P:conj}
  \item $\Pf(M_1 \oplus M_2) = \Pf(M_1)\Pf(M_2)$, where $M_1\oplus M_2=\left(\begin{smallmatrix} M_1 & 0 \\ 0 & M_2 \end{smallmatrix}\right)$ denotes a block diagonal matrix. \label{P:block}
\end{enumerate}

Since positive definite matrices $X\in\SPD{2n}$ are invertible, we can consider on $\SPD{2n}$ the matrix
\[ \td X \cdot X^{-1} \cdot \td X. \]
Its entries are smooth $2$-forms, so they commute. This matrix is also skew-symmetric (which follows from the symmetry $X=X^\Transpose$). Therefore, its Pfaffian is defined.

\begin{definition}\label{def:pff}
Let $\SPD{n}$ denote the space of positive definite symmetric $n\times n$ matrices $X$ with real entries. We define a smooth differential form $\pff^n\in\Omega^n(\SPD{n})$ of degree $n$ on $\SPD{n}$, called the \textbf{Pfaffian form}, as follows: If $n$ is odd, we set $\pff^n=0$. For $n$ even, set
  \begin{equation}\label{eq:pff}
      \pff^n_X = \pff_X \defas \frac{1}{\sqrt{\det{X}}} \Pf\left( \td X \cdot X^{-1} \cdot \td X \right).
  \end{equation}
  Here $\sqrt{\det X}$ denotes the positive square root of $\det X>0$.
\end{definition}

\begin{lemma}\label{lem:pfprops}
  The Pfaffian form satisfies the following properties:
  \begin{enumerate}
    \item $\pff_X = 0$ if $X$ is odd-dimensional.
    \item $\pff_X \wedge \pff_X = 0$.
    \item $\pff_{P^\Transpose XP} = \sgn(\det{P})\cdot\pff_{X}$ for any matrix $P \in \GL_{n}(\R)$ with real  entries.
    \item $\pff_{X^{-1}} = \pff_{X}$.
    \item $\pff_{X_1 \oplus X_2} = \pff_{X_1} \wedge \pff_{X_2} = \pff_{X_2} \wedge \pff_{X_1}$.
    \item $\pff_X=\pff_{\lambda X}$ is basic for the scaling action $X\mapsto\lambda X$ of $\lambda\in\R_+$ on $\SPD{n}$. \label{pfprop-proj}
    \item $\mathrm{d} \pff = 0$.
  \end{enumerate}
\end{lemma}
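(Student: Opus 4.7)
My plan is to verify the seven properties in order of increasing difficulty, all as direct consequences of the definition \eqref{eq:pff} and the elementary Pfaffian properties \ref{P:pf2}--\ref{P:block}. Property (i) is immediate since the Pfaffian vanishes on odd-dimensional matrices. For the transformation property (iii), substituting $X\mapsto P^\Transpose X P$ turns $M\defas \td X\cdot X^{-1}\cdot \td X$ into $P^\Transpose MP$; \ref{P:conj} then contributes $\det(P)$ to the Pfaffian while the denominator contributes $\abs{\det P}$, combining to $\sgn(\det P)$. For (iv), one computes $\td(X^{-1})\cdot X\cdot \td(X^{-1}) = X^{-1}MX^{-1}$ (the two minus signs from $\td(X^{-1})=-X^{-1}\td X X^{-1}$ cancel) and applies \ref{P:conj} with the symmetric matrix $P = X^{-1}$, combined with $\sqrt{\det(X^{-1})}=1/\sqrt{\det X}$. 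For (v), $X=X_1\oplus X_2$ forces $M=M_1\oplus M_2$ with $M_i=\td X_i\cdot X_i^{-1}\cdot \td X_i$, so \ref{P:block} and multiplicativity of $\det$ finish the argument. Property (vi) follows because $\td(\lambda X)\cdot (\lambda X)^{-1}\cdot \td(\lambda X) = \lambda M$ and \ref{P:scale} provides a factor $\lambda^{n/2}$ exactly cancelling the $\lambda^{n/2}$ from $\sqrt{\det(\lambda X)}$.

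For (ii), I would first use (iii) to reduce the vanishing to the point $X=\IdMat$, since $\pff\wedge\pff$ is $\GL_n(\R)$-invariant (the sign $\sgn(\det P)$ squares to $1$). By the Hopf-Koszul-Samelson theorem recalled immediately after this lemma, the $\GL_n(\R)$-invariant forms on $\SPD{n}$ form the exterior algebra on the $\beta^{4k+1}$, whose degrees are all $\equiv 1\pmod 4$. A short parity-and-degree count then shows that for $n<14$ no wedge of distinct generators $\beta^{4k+1}$ has degree $2n$, so $\pff\wedge\pff=0$ in these cases; for larger $n$ one has to argue that the coefficient of $\pff\wedge\pff$ in the (now non-trivial) space of invariant $2n$-forms vanishes, e.g.\ by a direct computation in the Grassmann algebra generated by the entries of $\td X$ at $X=\IdMat$.

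For (vii), the key identity is the Maurer-Cartan-type relation $\td M = M\omega$ with $\omega \defas X^{-1}\td X$, which follows from $\td(X^{-1})=-X^{-1}\td X\cdot X^{-1}$ and $\td(\td X)=0$. Combining the Jacobi formula $\td\det M = \tr(\mathrm{adj}(M)\cdot \td M)$ with $\mathrm{adj}(M)\cdot M = \det M\cdot \IdMat = \Pf(M)^2\cdot\IdMat$ (by \ref{P:pf2}) gives
\begin{equation*}
    \td\det M = \tr(\mathrm{adj}(M)\cdot M\omega) = \Pf(M)^2\tr\omega.
\end{equation*}
On the other hand $\det M = \Pf(M)^2$ yields $\td\det M = 2\Pf(M)\cdot \td\Pf(M)$. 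Cancelling one factor of $\Pf(M)$---legitimate when regarding the equation as a polynomial identity in the entries of $X$ and $\td X$, where $\Pf(M)$ is not a zero divisor---gives $\td\Pf(M) = \tfrac{1}{2}\Pf(M)\tr\omega$. Finally $\td(\det X)^{-1/2}=-\tfrac{1}{2}(\det X)^{-1/2}\tr\omega$ cancels this exactly, so $\td\pff = 0$. The main obstacle is precisely this cancellation step, which requires care because the ring of differential forms has zero divisors; a secondary difficulty is property (ii) for $n\geq 14$, where pure invariant-theoretic degree counting no longer suffices.
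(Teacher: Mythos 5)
Your treatments of (i) and (iii)--(vi) are correct and essentially identical to the paper's. The two places where your proposal genuinely breaks down are (ii) and, more seriously, (vii).

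For (vii), the cancellation of $\Pf(M)$ is not legitimate, and the escape route you suggest does not exist. The entries of $M=\td X\cdot X^{-1}\cdot\td X$ are $2$-forms, and the skew-symmetry of $M$ (without which $\Pf(M)$ is meaningless) is itself a consequence of the anticommutativity of the $\td X_{ij}$; so the only ring in which your identity lives is the Grassmann algebra generated by the $\td X_{ij}$ over the functions of $X$. In that ring $\Pf(M)^2=\det M=0$ --- this is exactly property (ii) --- so $\Pf(M)$ is nilpotent, hence a zero divisor, and both of your expressions for $\td \det M$ are identically zero: the relation $0=2\Pf(M)\wedge\td\Pf(M)$ carries no information about $\td\Pf(M)$. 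The paper instead proves a genuine Pfaffian analogue of Jacobi's formula: from the cofactor expansion of the Pfaffian one builds the ``Pfaffian adjugate'' $M'$ satisfying $M'\cdot M=\Pf(M)\cdot\IdMat$ and $\partial\Pf(M)/\partial M_{ij}=M'_{ji}$, whence $\td\Pf(M)=\tfrac12\tr(M'\cdot\td M)=\tfrac12\tr(M'M\cdot X^{-1}\td X)=\tfrac12\Pf(M)\wedge\tr(X^{-1}\td X)$ with no division required. You need this (or an equivalent first-order identity for $\Pf$) to complete (vii).

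For (ii), your invariant-theory argument works for $n<14$, but, as you note, the space of $\GL_n(\R)$-invariant $2n$-forms becomes nonzero once $2n\geq 1+5+9+13=28$, and the ``direct computation in the Grassmann algebra'' you defer to is not supplied; as written the claim is unproven for $n\geq 14$. The paper's argument is uniform in $n$ and shorter: $\pff_X\wedge\pff_X=\det(X^{-1}\td X\cdot X^{-1}\td X)$, the determinant of a matrix with commuting entries is a polynomial in the traces of its powers, and $\tr\big((X^{-1}\td X)^{2k}\big)=\beta_X^{2k}=0$ for all $k$ by \cite[Lemma~4.3]{invariant}. You may want to adopt that route, not least because the vanishing $\det M=0$ it establishes is precisely what invalidates the division you attempt in (vii).
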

\begin{proof}
  Most of these properties follow directly from those of the Pfaffian.
  \begin{enumerate}
    \item  By definition.
    \item As the Pfaffian squares to the determinant \ref{P:pf2},
      \[
      \pff_X \wedge \pff_X
      = (\det{X})^{-1} \det \left( \td X \cdot X^{-1} \cdot \td X\right)
      = \det \left( X^{-1} \cdot \td X \cdot X^{-1} \cdot \td X\right).
      \]
      Let $M = X^{-1} \cdot \td X \cdot X^{-1} \cdot \td X$ which has commuting entries.
      Since $\det M$ is a polynomial in the traces of powers of $M$, it suffices to show that $\tr(M^k) = 0$ for all $k$.
      This is \cite[Lemma~4.3]{invariant} with $\tr(M^k ) = \beta_X^{2k}$.
    \item  Such an invertible real matrix  $P$  is in particular constant ($\td P=0$). Therefore
      \[ \td (P^{\Transpose}XP) \cdot (P^{\Transpose}XP)^{-1} \cdot \td(P^{\Transpose}XP) = P^{\Transpose}\left(\td X \cdot X^{-1} \cdot \td X\right) P \]
      and so by applying \ref{P:conj} we obtain
      \[ \pff_{P^{\Transpose}XP} = \frac{\det{P}}{\sqrt{(\det{P})^2\det{X}}} \Pf\left(\td X \cdot X^{-1}\cdot \td X\right) = \sgn(\det{P}) \cdot \pff_X. \]
    \item Notice that $0 = \td(X\cdot X^{-1}) = \td X\cdot X^{-1} + X\cdot \td X^{-1}$ and thus
      \[ \td X^{-1} \cdot X \cdot \td X^{-1} = X^{-1} \left(\td X \cdot X^{-1} \cdot \td X \right) X^{-1}. \]
      As $X$ is symmetric and $\sqrt{(\det{X})^{-1}} = (\det{X})^{-1} \sqrt{\det{X}}$, \ref{P:conj} implies that
      \[ \pff_{X^{-1}} = \frac{\det{X^{-1}}}{\sqrt{\det{X^{-1}}}} \Pf\left(\td X \cdot X^{-1} \cdot \td X\right) = \pff_X. \]
    \item It follows immediately from \ref{P:block} that
      \[
      \pff_{X_1 \oplus X_2}
      = \frac{\Pf(\td X_1 \cdot X_1^{-1} \cdot \td X_1 \oplus \td X_2 \cdot X_2^{-1} \cdot \td X_2)}{\sqrt{\det{X_1}\det{X_2}}}
      = \pff_{X_1} \wedge \pff_{X_2}.
      \]
    \item  Let $\Phi_X=\td X\cdot X^{-1} \cdot \td X$. Then for any $\lambda>0$, we have
    \[ \Phi_{\lambda X} = (X\, \td\lambda  + \lambda\, \td X) \lambda^{-1} X^{-1} (X\, \td\lambda  + \lambda \, \td X) =\lambda \,  \Phi_X + \td\lambda \cdot  \td X + \td X \cdot \td\lambda \ . \]
Since $\td X$ is a matrix of one forms, $\td X\, \td\lambda = - \td\lambda\, \td X$ and hence $\Phi_{\lambda X} = \lambda\, \Phi_X$. It follows from property \ref{P:scale} that $\pff_{\lambda X} = (\lambda^{n}/ \sqrt{\lambda^{2n}} )\pff_X = \pff_X$.

The absence of $\td\lambda$ (thus $\frac{\partial}{\partial \lambda} \pff_{\lambda X}=0$) means that the form $\pff_X$ on $\SPD{n}$ is horizontal for the projection $\SPD{n}\longrightarrow \SPD{n}/\R_+$ with the vertical vector field $\sum_{1\leq i\leq j\leq n} X_{ij} \frac{\partial}{\partial X_{ij}}$.

    \item To show that the Pfaffian form is closed, we use
      \begin{equation*}
      \td\pff_{X} = \frac{1}{\sqrt{\det{X}}}\left[
        \td\Pf\left(\td X \cdot X^{-1} \cdot \td X\right) - \Pf\left(\td X \cdot X^{-1} \cdot \td X\right) \wedge \frac{\td\det{X}}{2\det{X}}
        \right]
    \end{equation*}
      and prove an analogue to Jacobi's formula.
      Let $M^{ij}$ be the matrix $M$ with both rows and columns $i,j$ deleted and let  $\theta(i-j) = 1$ if $i > j$ and $0$ otherwise.
      The Pfaffian expands along row $i$ (analogous to the cofactor expansion of the determinant) as
      \begin{equation}\label{eq:pff-cofactors}
      \Pf(M) = \sum_{j\neq i} (-1)^{i+j+1+\theta(i-j)} M_{ij} \Pf(M^{ij}).
      \end{equation}
      In terms of the Pfaffian adjugate matrix $M'$ defined by
      \[ M'_{ij} = \begin{cases}(-1)^{i+j+\theta(i-j)} \Pf(M^{ij}) & \text{if $i \neq j$ and} \\ 0 & \text{if $i = j$,} \end{cases}\]
      the expansion shows that the diagonal entries of $M'\cdot M$ are $\Pf(M)$. Off-diagonal entries $\sum_j M'_{ij}M_{jk}$ vanish (consider replacing the $i$th row and column of $M$ with a copy of the $k$th). Hence we get a multiple of the identity matrix:
      \[ M' \cdot M = \Pf(M) \cdot \IdMat_n. \]
      It follows already from \eqref{eq:pff-cofactors} that
      $\partial (\Pf{M})/\partial M_{ij} = M'_{ji}$
      and thus
      \[ \td \Pf(M) = \sum_{i < j} M'_{ji} \td M_{ij} = \frac{1}{2} \tr(M'\cdot\td M). \]
      Letting $M = \td X\cdot X^{-1} \cdot\td X$ and using $\td X^{-1} = -X^{-1}\cdot \td X \cdot X^{-1}$ (see above), we note
      $
      \td M = -\td X \cdot \td X^{-1} \cdot \td X
      = M \cdot X^{-1} \cdot \td X
      $
      and therefore
      \begin{equation*}
        \td\Pf(M)
          =\tr(M'\cdot M \cdot X^{-1}\cdot \td X)/2
          = \Pf(M) \wedge \tr(X^{-1} \cdot \td X)/2.
      \end{equation*}
      With Jacobi's formula $\tr(X^{-1} \cdot\td X) = (\td \det{X})/(\det X)$, this proves $\td \pff_X = 0$.
  \end{enumerate}
\end{proof}
The multiples $g=\lambda\IdMat_n\in\GL_n(\R)$ of the identity matrix constitute the central subgroup $\R^\times\subset\GL_n(\R)$. They act as scalings on $\SPD{n}$, defining a map
\begin{equation*}
    \SPD{n}  \times \R^\times  \longrightarrow \SPD{n}, \qquad
    (X,g)\mapsto \lambda^2 X.
\end{equation*}
Part \ref{pfprop-proj} above says that under this scaling action, the pullback of the Pfaffian is independent of $\lambda$ (i.e.\ no $\td \lambda$ differential). Hence $\pff$ descends to a form on the quotient
\begin{equation*}
    L\SPD{n}=\SPD{n}/\R^\times
    \quad\subset\quad
    \Pro\big(\R^{n(n+1)/2}\big)
    .
\end{equation*}

\begin{example}\label{ex:pf_2}
The $2\times 2$ symmetric, positive definite matrices are
\begin{equation*}
\SPD{2}=\left\{
X = \begin{pmatrix} a & b \\ b & c \end{pmatrix}
\colon
a,c>0\ \text{and}\ b^2<ac
\right\}.
\end{equation*}
In terms of $\Omega_3 = -a \cdot \td b \wedge \td c + b \cdot \td a \wedge \td c - c \cdot \td a \wedge \td b$, we have
\begin{equation*}
  \td X \cdot X^{-1} \cdot \td X = \frac{1}{ac - b^2}
    \begin{pmatrix}
      0 & -\Omega_3 \\
      \Omega_3  & 0
    \end{pmatrix}
\quad\text{and thus}\quad
  \pff^2_X = \frac{-\Omega_3}{(ac - b^2)^{3/2}}.
\end{equation*}
\end{example}
Notice that $\det X=ac-b^2$ in the denominator of $\pff^2$ becomes the elementary symmetric polynomial $x_1x_2+x_1x_3+x_2x_3=\det X$ in the coordinates $(a,b,c)=(x_1+x_3,x_3,x_2+x_3)$, and $\Omega_3=x_1\td x_2\wedge\td x_3-x_2\td x_1\wedge\td x_3+x_3 \td x_1\wedge \td x_2$. This formula generalizes to higher $n$, if we restrict to the subspace of matrices where all off-diagonal entries are equal:
\begin{lemma} \label{lem:pff(diag+const)}
For any $i \geq 1$, consider the $(2i+1)$-dimensional subspace of $\SPD{2i}$ consisting of matrices that can be written as the sum of a diagonal matrix with positive entries and a matrix with all entries equal and positive. This subspace is parametrized by
\begin{equation*}\setlength{\arraycolsep}{2pt}
    \Laplacian\colon \R_{>0}^{2i+1} \hookrightarrow \SPD{2i},\qquad
    (x_1,\ldots,x_{2i+1})\mapsto \begin{pmatrix}
        x_1 & & \\[-4pt]
            & \ddots & \\[-4pt]
        & & x_{2i} \\
    \end{pmatrix}
    +x_{2i+1} \begin{pmatrix}
        1 & \cdots & 1 \\[-3pt]
        \vdots &  & \vdots \\
        1 & \cdots & 1 \\
    \end{pmatrix}.
\end{equation*}
Set $\Omega_{2i+1}=\sum_{a=1}^{2i+1} x_a(-1)^{a-1} \td x_1\wedge\ldots\wedge \td x_{a-1}\wedge \td x_{a+1} \wedge \ldots\wedge \td x_{2i+1}$ 
and furthermore denote the $2i$-th elementary symmetric polynomial as $\ESP_{2i}(x)=\sum_{a=1}^{2i+1} \prod_{b\neq a} x_b$.

Then the restriction of the Pfaffian form $\pff^{2i}$ to this  subspace equals
\begin{equation}\label{eq:pff(diag+const)}
  \Laplacian^\ast \pff^{2i} = (-1)^{i}\frac{(2i)!}{2^{i}\cdot i!}\frac{(x_1\cdots x_{2i+1})^{i-1}}{[\ESP_{2i}(x)]^{i+1/2}}
\Omega_{2i+1}
  .
\end{equation}
\end{lemma}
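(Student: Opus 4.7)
The plan is to compute $\Laplacian^\ast \pff^{2i}$ directly from the definition \eqref{eq:pff}, exploiting the fact that $\Laplacian = D + x_{2i+1}\, uu^\Transpose$ is a diagonal-plus-rank-one matrix, so that $\Laplacian^{-1}$ has an explicit Sherman--Morrison form, and the matrix $\td\Laplacian \cdot \Laplacian^{-1} \cdot \td \Laplacian$ collapses to a rank-one outer product of logarithmic one-forms.

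First I would write $D = \mathrm{diag}(x_1, \ldots, x_{2i})$, $u = (1,\ldots,1)^\Transpose$, $y = x_{2i+1}$, $P = x_1\cdots x_{2i}$, and $S = \ESP_{2i}(x)$. By the matrix tree theorem (or direct Sherman--Morrison), $\det \Laplacian = S$ and $(\Laplacian^{-1})_{jk} = \delta_{jk}/x_j - yP/(Sx_jx_k)$. Since $(\td\Laplacian)_{ja} = \delta_{ja}\,\td x_j + \td y$, the row sums of $\Laplacian^{-1}$ are $r_j = \sum_b (\Laplacian^{-1})_{jb} = P/(x_j S)$. Substituting and exploiting $\td y \wedge \td y = 0$, a direct expansion yields
\begin{equation*}
  (\td \Laplacian \cdot \Laplacian^{-1} \cdot \td\Laplacian)_{jk}
  = -\frac{yP}{S}\, b_j \wedge b_k,
  \qquad
  b_j \defas \frac{\td x_j}{x_j} - \frac{\td y}{y} = \td \log(x_j/y).
\end{equation*}
The crucial simplification here is that the three terms $-(y/x_jx_k)\,\td x_j\wedge\td x_k$, $(1/x_j)\,\td x_j\wedge\td y$, $-(1/x_k)\,\td x_k\wedge\td y$ combine into the single wedge $-y\,b_j\wedge b_k$; this is the main observation of the proof and is the only step where the specific structure of $\Laplacian$ is essential.

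Next I would compute the Pfaffian of the skew matrix $B_{jk} = b_j \wedge b_k$ with commuting $2$-form entries. Using the expansion \eqref{eqn: PfaffDefn},
\begin{equation*}
  \Pf(B) = \frac{1}{2^i i!}\sum_{\pi \in \Perms{2i}} \sgn(\pi)\, b_{\pi(1)}\wedge b_{\pi(2)}\wedge \cdots \wedge b_{\pi(2i-1)}\wedge b_{\pi(2i)}.
\end{equation*}
Since the $b_j$ are one-forms, reordering them gives $b_{\pi(1)}\wedge\cdots\wedge b_{\pi(2i)} = \sgn(\pi)\, b_1\wedge\cdots\wedge b_{2i}$, so the two signs cancel and $\Pf(B) = \frac{(2i)!}{2^i\, i!}\, b_1 \wedge \cdots \wedge b_{2i}$. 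By property \ref{P:scale}, scaling the matrix by $-yP/S$ contributes $(-yP/S)^i$.

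Finally, I would compute $b_1\wedge\cdots\wedge b_{2i}$. Expanding $\bigwedge_{j=1}^{2i}(\td x_j - (x_j/y)\,\td y)$ and discarding terms with two or more factors of $\td y$, one term has no $\td y$ (namely $\td x_1\wedge\cdots\wedge \td x_{2i}$), and the other $2i$ terms each contribute exactly one factor of $\td y$; collecting signs from anticommuting $\td y$ into its natural slot reproduces precisely the alternating sum $\Omega_{2i+1}/y$. Hence $b_1\wedge\cdots\wedge b_{2i} = \Omega_{2i+1}/(yP)$. Putting everything together,
\begin{equation*}
\Pf\bigl(\td\Laplacian \cdot \Laplacian^{-1}\cdot\td\Laplacian\bigr)
= (-1)^i \frac{(2i)!}{2^i\, i!}\, \frac{(yP)^{i-1}}{S^i}\, \Omega_{2i+1},
\end{equation*}
and dividing by $\sqrt{\det\Laplacian} = \sqrt{S}$ and using $(yP)^{i-1} = (x_1\cdots x_{2i+1})^{i-1}$ gives \eqref{eq:pff(diag+const)}. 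The whole argument is essentially a bookkeeping computation once the rank-one reduction of the integrand is spotted; the main danger is sign errors in the orientation conventions when identifying $\bigwedge_j(\td x_j - (x_j/y)\td y)$ with $\Omega_{2i+1}/y$, which I would double-check by specializing to $i=1$ against \cref{ex:pf_2}.
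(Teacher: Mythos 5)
Your proposal is correct and follows essentially the same route as the paper: compute $\Laplacian^{-1}$ explicitly, observe that $\td\Laplacian\cdot\Laplacian^{-1}\cdot\td\Laplacian$ is $-\tfrac{x_1\cdots x_{2i+1}}{\ESP_{2i}}$ times the rank-one wedge matrix $b_a\wedge b_b$ of logarithmic one-forms $b_a=\td\log(x_a/x_{2i+1})$, pull out the scalar via \ref{P:scale}, and evaluate the Pfaffian by noting all $(2i)!$ permutation terms coincide. The only (immaterial) difference is the last bookkeeping step: the paper invokes projectivity to work in the chart $x_{2i+1}=1$, whereas you expand $b_1\wedge\cdots\wedge b_{2i}=\Omega_{2i+1}/(x_1\cdots x_{2i+1})$ in all variables, which you do correctly.
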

\begin{proof}
    The determinant and inverse of the matrix $\Laplacian_{a,b}=x_{2i+1}+x_a\delta_{a,b}$ are
    \begin{equation*}
        \det \Laplacian=\ESP_{2i}=\sum_{a=1}^{2i+1} \prod_{b\neq a} x_b
        \quad\text{and}\quad
        \left(\Laplacian^{-1}\right)_{a,b} = \frac{1}{\ESP_{2i}}
        \begin{cases}
            \partial \ESP_{2i}/\partial {x_a} & \text{if $a = b$,} \\[3pt]
            -\displaystyle\frac{x_1 \cdots x_{2i+1}}{x_a x_b} & \text{if $a\neq b$}
      \end{cases}
    \end{equation*}
    and from there one computes that
    \begin{equation*}
      \left(\td \Laplacian \cdot \Laplacian^{-1} \cdot \td \Laplacian\right)_{a,b} = -\frac{x_1\cdots x_{2i+1}}{\ESP_{2i}} \left[
        \frac{\td x_a}{x_a}\wedge\frac{\td x_b}{x_b} - \frac{\td x_a}{x_a}\wedge \frac{\td x_{2i+1}}{x_{2i+1}} + \frac{\td x_b}{x_b}\wedge \frac{\td x_{2i+1}}{x_{2i+1}}
        \right].
    \end{equation*}
    Since $\pff$ is projective, we only need to establish \eqref{eq:pff(diag+const)} in the dense affine chart $x_{2i+1}=1$. So apart from the factor $(-x_1\cdots x_{2i+1}/\ESP_{2i})^i$ due to the homogeneity property \ref{P:scale} of the Pfaffian, we only need to apply \eqref{eqn: PfaffDefn} to obtain 
    \begin{equation*}
        \Pf\left( \frac{\td x_a}{x_a}\wedge\frac{\td x_b}{x_b} \right)_{1\leq a,b\leq 2i}
        =\frac{1}{2^i\cdot i!} \sum_{\pi\in\Perms{2i}} (\sgn\pi) \frac{\td x_{\pi(1)}}{x_{\pi(1)}}\wedge\ldots\wedge\frac{\td x_{\pi(2i)}}{x_{\pi(2i)}}
    \end{equation*}
    and note that all $(2i)!$ summands are the same, namely $(\td x_1)/x_1\wedge\ldots\wedge (\td x_{2i})/x_{2i}$.
\end{proof}
\begin{corollary}\label{cor:pff<>0}
    For even $n$, the differential form $\pff^{n}$ on $\SPD{n}$ does not vanish anywhere.
\end{corollary}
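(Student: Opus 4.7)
My plan is to reduce the statement to a single-point check using the transitive $\GL_n(\R)$-action on $\SPD{n}$, and then to invoke Lemma~2.3 to exhibit such a point explicitly. First, I would observe that the action $\phi_P(X) = P^\Transpose X P$ is transitive on $\SPD{n}$: writing any two positive definite matrices as $X = R^\Transpose R$ and $Y = S^\Transpose S$ with $R, S$ invertible, the matrix $P = R^{-1} S$ satisfies $\phi_P(X) = Y$. Since each $\phi_P$ is a diffeomorphism, the transformation law from Lemma~2.2(iii), namely $\phi_P^* \pff^n = \sgn(\det P)\, \pff^n$, shows that $\pff^n$ vanishes at $X$ if and only if it vanishes at $\phi_P(X)$. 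Hence it suffices to find one single $X_0 \in \SPD{n}$ at which $\pff^n_{X_0} \neq 0$.

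For such a point, I would appeal to Lemma~2.3. With $n = 2i$, the formula gives $\Lambda^* \pff^n = c(x)\, \Omega_{n+1}$ along the embedding $\Lambda \colon \R_{>0}^{n+1} \hookrightarrow \SPD{n}$, where the scalar $c(x)$ is a nonzero rational multiple of $(x_1 \cdots x_{n+1})^{i-1} / \ESP_n(x)^{i+1/2}$, and both of these factors are strictly positive on the positive orthant. The form $\Omega_{n+1}$ is itself nowhere-vanishing on $\R_{>0}^{n+1}$: in the basis $\{ \td x_1 \wedge \cdots \wedge \widehat{\td x_a} \wedge \cdots \wedge \td x_{n+1} \}_a$, its coefficients are $\pm x_a$, so at any point of the positive orthant each coefficient is nonzero. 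Thus $\Lambda^* \pff^n$ is nowhere zero on $\R_{>0}^{n+1}$; since the pullback of the zero covector is zero, this forces $\pff^n_{\Lambda(x)} \neq 0$ for every $x \in \R_{>0}^{n+1}$. Combined with the transitivity argument, $\pff^n$ is non-vanishing throughout $\SPD{n}$.

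I anticipate no substantial obstacle: both inputs are already established in the preceding lemmas. The only minor verification required is that $\Omega_{n+1}$ itself has no zeros on the positive orthant, which is an immediate inspection of its coordinate expression. The key conceptual observation is that, while $\pff^n$ is only invariant up to a sign, this sign ambiguity is irrelevant for the question of non-vanishing, so the homogeneous-space argument applies unchanged.
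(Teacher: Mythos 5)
Your proof is correct and follows the same route as the paper: exhibit a point where $\pff^n\neq 0$ via \cref{lem:pff(diag+const)}, then propagate non-vanishing everywhere using the transformation law of \cref{lem:pfprops}(iii) and transitivity of the $\GL_n(\R)$-action. The extra verifications you supply (non-vanishing of $\Omega_{n+1}$ on the positive orthant, and that a non-zero pullback forces the form itself to be non-zero) are exactly the details the paper leaves implicit.
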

\begin{proof}
    By \cref{lem:pff(diag+const)}, $\pff^n$ is non-vanishing on a non-empty subset of $\SPD{n}$. Using part (iii) of \cref{lem:pfprops}, the claim follows from transitivity of the $\GL_n(\R)$-action on $\SPD{n}$.
\end{proof}

\subsection{Asymptotics} \label{sect: Asymptotics}

We now consider the behaviour of the invariant Pfaffian $\pff_X^n$ when the matrix $X$ becomes singular.
Specifically, we study degenerations parametrized by block matrices
\begin{equation}\label{eq:SPD-asy-chart}
    X=\begin{pmatrix} z A & z B \\ z B^\Transpose & C \end{pmatrix}
\end{equation}
with positive definite $A\in\SPD{n_1}$, $C\in\SPD{n_2}$, and some $n_1\times n_2$ matrix $B$ with real entries. For $0<z$ sufficiently small, such a matrix $X$ is positive definite.\footnote{The matrix $X$ is positive definite if and only if both $zA$ and $C-z B^\Transpose A^{-1} B$ are positive definite, hence for $0<z<z_0(A,B,C)$ where $z_0$ denotes the smallest positive zero of $\det(C-z B^\Transpose A^{-1}B)$.}
Therefore, $\pff^n_X$ is a smooth form on an open domain in $(A,C,B,z)\in \SPD{n_1}\times\SPD{n_2}\times\R^{n_1 n_2}\times \R$. We may study its behaviour as $z\rightarrow 0$.
\begin{example}\label{ex:pff2-asy}
    For $n_1=n_2=1$, the blocks are scalar ($1\times 1$ matrices), and the form from \cref{ex:pf_2} becomes, in the chart \eqref{eq:SPD-asy-chart} where $a=zA$, $b=zB$, $c=zC$:
    \begin{equation*}
        \pff^2_{X}=\frac{\td z}{\sqrt{z}}\wedge \frac{C(A\td B-B\td A)}{(AC-zB^2)^{3/2}}+\sqrt{z}\cdot\frac{A\td B\wedge\td C-B\td A\wedge\td C+C\td A\wedge\td B}{(AC-zB^2)^{3/2}}
        .
    \end{equation*}
\end{example}
\begin{lemma}\label{lem:pff-asy}
    Let $n_1,n_2>0$ be integers with $n=n_1+n_2$ even and let $X$ as in \eqref{eq:SPD-asy-chart}. If $n_1$ and $n_2$ are even, then $\pff_{X}^n$ extends to a holomorphic form in a neighbourhood of $\{z=0\}$, and its pullback to $\{z=0\}$ is the product $\pff_A^{n_1}\wedge\pff_C^{n_2}$. If $n_1$ and $n_2$ are odd, then there exist forms $\alpha$ and $\beta$, holomorphic in a neighbourhood of $\{z=0\}$, such that
\begin{equation*}
    \pff^n_{X}=\sqrt{z}\cdot\alpha+(\td \sqrt{z})\wedge\beta.
\end{equation*}
\end{lemma}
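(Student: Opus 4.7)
My plan is to exploit the block LDL decomposition
\begin{equation*}
    X = L^\Transpose D L,\qquad
    L = \begin{pmatrix} I & A^{-1} B \\ 0 & I \end{pmatrix},\qquad
    D = \begin{pmatrix} zA & 0 \\ 0 & S \end{pmatrix},
\end{equation*}
where $S = C - z B^\Transpose A^{-1} B$ is the Schur complement, which is analytic in $z$ and positive definite for $z$ small. Since $L$ is independent of $z$ and $C$ and has $\det L = 1$, setting $\omega = dL \cdot L^{-1}$, a short calculation yields
\begin{equation*}
    dX\cdot X^{-1}\cdot dX = L^\Transpose E L,\qquad
    E = \bigl(\omega^\Transpose + dD\cdot D^{-1} + D\omega D^{-1}\bigr)\bigl(\omega^\Transpose D + dD + D\omega\bigr).
\end{equation*}
Property \ref{P:conj} applied over the commutative ring of even-degree forms (together with $\det L = 1$ and $\det X = z^{n_1}\det A\det S$) reduces matters to analyzing the $z$-expansion of $\Pf(E)$ via the identity $\pff^n_X = \Pf(E)/\sqrt{z^{n_1}\det A\det S}$.

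To do so, I would pass to the local coordinates $(A,K,S,z)$ with $K = A^{-1} B$, in which $\omega = \bigl(\begin{smallmatrix} 0 & dK \\ 0 & 0 \end{smallmatrix}\bigr)$ becomes a simple nilpotent building block independent of $dz$ and $dS$. A blockwise computation gives
\begin{align*}
    E_{11} &= z\, dA A^{-1} dA + z^2 A\, dK\, S^{-1} dK^\Transpose A, \\
    E_{12} &= dz\wedge (A\,dK) + z\bigl(dA\cdot dK + A\,dK\,S^{-1} dS\bigr), \\
    E_{22} &= z\, dK^\Transpose A\, dK + dS\, S^{-1} dS,
\end{align*}
so the 11-block vanishes to order $z$, the 22-block is $O(1)$, and the 12-block has a leading $dz$-factor at order $z^0$. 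Expanding $\Pf(E)$ as a signed sum over perfect matchings, classified by the numbers $(a,b,c)$ of pairs within the 11-block, mixing the two blocks, and within the 22-block (with $2a+b=n_1$ and $b+2c=n_2$), one observes that each 11-pair contributes a factor $\geq z^1$, each 22-pair contributes $1 + O(z)$, and each 12-pair contributes either its $dz$-part (no $z$) or its $O(z)$-part without $dz$. Since $dz^2 = 0$, at most one 12-pair may supply its $dz$-term; writing $r\in\{0,1\}$ for this count, the $z$-order of such a matching is $a+b-r$. Minimizing under the constraints shows the lowest $z$-order of $\Pf(E)$ is $n_1/2$ when $n_1,n_2$ are both even (attained both by $b=0,\,r=0$ matchings with no $dz$ and by $b=2,\,r=1$ matchings carrying a $dz$), or $(n_1-1)/2$ when both are odd (attained only by $b=1,\,r=1$ matchings, necessarily with a $dz$).

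In the even case, divisibility of $\Pf(E)$ by $z^{n_1/2}$ shows that $\pff^n_X$ extends holomorphically across $z=0$; the $b=0$ contribution equals $\Pf(z\, dA A^{-1} dA)\cdot\Pf(dS\, S^{-1} dS)$ by property \ref{P:block}, so the restriction to $\{z=0\}$ (where $S = C$ and $dS = dC$) becomes $\pff^{n_1}_A\wedge \pff^{n_2}_C$. In the odd case, $\Pf(E) = z^{(n_1-1)/2}\bigl(dz\wedge G + z\cdot H\bigr)$ with $G,H$ holomorphic in $z$; combining $z^{n_1/2} = z^{(n_1-1)/2}\cdot\sqrt z$ and $dz/\sqrt z = 2\,d\sqrt z$ yields the claimed decomposition $\pff^n_X = \sqrt z\,\alpha + d\sqrt z\wedge \beta$ with $\alpha = H/\sqrt{\det A\det S}$ and $\beta = 2G/\sqrt{\det A\det S}$ both holomorphic. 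The main technical obstacle is the combinatorial $z$-order analysis of the Pfaffian expansion—rigorously ruling out matchings contributing below the stated minimum, and identifying the leading coefficient in the odd case as a bordered Pfaffian. The translation from the new coordinates back to $(A,B,C,z)$ via $K=A^{-1}B$, $S=C-zB^\Transpose A^{-1}B$ is a routine analytic change of variables near $z=0$ and preserves the form of the decomposition.
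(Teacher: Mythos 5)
Your proposal is correct and follows essentially the same route as the paper: both reduce the claim to tracking the minimal order in $z$ (and whether a factor of $\td z$ must occur) in the expansion \eqref{eqn: PfaffDefn} of the Pfaffian of a block skew-symmetric matrix whose top-left, off-diagonal, and bottom-right blocks are respectively $O(z)$, $O(z,\td z)$, and $O(1)$, and then divide by $\sqrt{\det X}=z^{n_1/2}\cdot(\text{unit})$. The only differences are organizational — you conjugate by the unipotent factor of the block $LDL^\Transpose$ decomposition and count matchings by type $(a,b,c,r)$, whereas the paper computes $X^{-1}$ via the Schur complement directly and phrases the same count as membership in powers of the ideal $I=(z,\td z)$ — and your parenthetical that the $b=0$ contribution "equals" $\Pf(E_{11})\Pf(E_{22})$ should read that it agrees with $z^{n_1/2}\Pf(\td A\,A^{-1}\td A)\wedge\Pf(\td S\,S^{-1}\td S)$ only to leading order in $z$, which is all that is needed.
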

\begin{proof}
    Since $C$ is invertible, the matrix $S=C-zB^\Transpose A^{-1} B$ is also invertible (for small $z$) and $z\mapsto S^{-1}$ is a holomorphic function of $z$ at $z=0$. The inverse of $X$ can be written as
    \begin{equation*}
        X^{-1} = \begin{pmatrix}
            z^{-1}A^{-1}+A^{-1} B S^{-1}B^{\Transpose} A^{-1} & - A^{-1} B S^{-1} \\
            -S^{-1} B^\Transpose A^{-1} & S^{-1} \\
        \end{pmatrix}
        .
    \end{equation*}
    For matrices $T,U$ and an ideal $I$, write $T\equiv U\mod I$ to indicate that all entries of $T-U$ lie in $I$. For example, $S\equiv C \mod (z)$ and $S^{-1}\equiv C^{-1} \mod (z)$. Let $I=(z,\td z)$ denote the ideal  generated by $z$ and $\td z$, inside the ring of differential forms that are holomorphic at $z=0$. Note that $I^k=(z^k,z^{k-1}\td z)$. Multiplying the matrices, one finds
    \begin{equation*}
        M\defas(\td X)X^{-1}(\td X)
        \equiv
        \begin{pmatrix}
            z (\td A)A^{-1}(\td A) \mod I^2 & 0 \mod I \\
            0 \mod I & (\td C)C^{-1}(\td C) \mod I \\
        \end{pmatrix}
        .
    \end{equation*}
    The lowest order of $\Pf(M)$ in $I$ arises by taking as many factors of the summand in \eqref{eqn: PfaffDefn} as possible from the bottom-right block. If $n_1$ and $n_2$ are even, we can take $n_2/2$ such factors from the bottom-right, and are then forced to take all remaining factors from the top-left. Using property \ref{P:block} of the Pfaffian, this shows that
    \begin{equation*}
        \Pf(M)
        \equiv z^{n_1/2} \Pf(\td A\cdot A^{-1}\cdot \td A)\wedge \Pf(\td C\cdot C^{-1}\cdot \td C)
        \mod I^{n_1/2+1}.
    \end{equation*}
    Analogous reasoning shows that  $\det X\equiv \det (zA) (\det C)\mod z^{n_1+1}$, thus $\sqrt{\det X}=z^{n_1/2}R$, where $R=\sqrt{(\det X)/z^{n_1}}\equiv \sqrt{(\det A)(\det C)} \mod z$ is analytic and invertible at $z=0$. Therefore, dividing $\Pf(M)$ by $\sqrt{\det X}$ shows that
\begin{equation*}
    \pff^n_{X}\equiv \pff_A^{n_1}\wedge\pff_C^{n_2} \mod I.
\end{equation*}

    For $n_1$ and $n_2$ odd, we can take at most $(n_2-1)/2$ factors from the bottom-right block of $M$. Hence we must have at least $(n_1+1)/2$ factors from elsewhere, so that $\Pf(M)\equiv 0 \mod I^{(n_1+1)/2}$. Thus, there are holomorphic forms $\alpha',\beta'$ such that
    \begin{equation*}
        \Pf(M)=z^{(n_1+1)/2} \alpha' + z^{(n_1-1)/2} \td z \wedge \beta'.
    \end{equation*}
    Dividing by $\sqrt{\det X}= z^{n_1/2}R$ as before, we are left with $\pff^n_{X}=\sqrt{z}\cdot \alpha+(\td \sqrt{z})\wedge \beta$ where the forms $\alpha= \alpha'/R$ and $\beta=2\beta'/R$ are holomorphic at $z=0$.
\end{proof}

For the asymptotics of canonical forms \eqref{eq:primcan}, we quote from \cite[Theorem~7.4]{Brown:FeynmanAmplitudesGalois}:
\begin{lemma}\label{lem:can-asy}
    For integers $i,n_1,n_2>0$ and $X$ as in \eqref{eq:SPD-asy-chart}, the form $\beta_{X}^{4i+1}$ extends holomorphically over $z=0$, and its pullback to $\{z=0\}$ is $\beta^{4i+1}_A + \beta^{4i+1}_C$.
\end{lemma}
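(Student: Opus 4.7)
The plan is to carry out a direct block-matrix computation of $(X^{-1}\td X)^{4i+1}$ and its trace, reusing the explicit inverse
\begin{equation*}
    X^{-1} = \begin{pmatrix}
            z^{-1}A^{-1}+A^{-1} B S^{-1}B^{\Transpose} A^{-1} & - A^{-1} B S^{-1} \\
            -S^{-1} B^\Transpose A^{-1} & S^{-1} \\
        \end{pmatrix}
\end{equation*}
from the proof of the previous lemma (with $S=C-zB^\Transpose A^{-1}B$, analytic and invertible at $z=0$). Multiplying out, one finds that $M\defas X^{-1}\td X$ has $M_{21}=O(z)$, while $M_{11}$ and $M_{12}$ each have a \emph{simple} pole of the form $(\td z/z)\cdot(\text{analytic})$. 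A direct calculation shows that the pole of $M$ can be written as a rank-one correction
\begin{equation*}
    M = N + \varphi J,\qquad \varphi\defas \frac{\td z}{z},\qquad
    J \defas \begin{pmatrix} I_{n_1} & A^{-1}B \\ 0 & 0 \end{pmatrix},
\end{equation*}
where $J^2=J$ is idempotent and independent of $z$, and $N$ extends holomorphically to $z=0$. (The $\td z$ terms outside this pole conspire to cancel in $M_{11}$, as they must so that $M_{11} - \varphi I_{n_1}$ be holomorphic.)

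Next, I would exploit the algebraic identity $\varphi^2 = 0$ to expand
\begin{equation*}
    M^{4i+1} = N^{4i+1} + \sum_{k=0}^{4i} N^{k}\,(\varphi J)\, N^{4i-k}.
\end{equation*}
Pulling the scalar $\varphi$ out of each summand incurs a sign $(-1)^k$, and then graded cyclicity of the matrix trace incurs a further sign $(-1)^{k(4i-k)}$. Since $k(4i-k+1)\equiv k(k-1)\equiv 0 \pmod 2$ for every $k$, all signs are $+1$ and the $4i+1$ terms collapse to
\begin{equation*}
    \tr\bigl(M^{4i+1}\bigr) = \tr\bigl(N^{4i+1}\bigr) + (4i+1)\,\varphi\,\tr\bigl(J N^{4i}\bigr).
\end{equation*}

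The main obstacle is the apparent $\td z/z$ singularity of the second term. To show it disappears, I would use that $N|_{z=0}$ is block upper-triangular (since $N_{21}=M_{21}=O(z)$), with diagonal blocks
\begin{equation*}
    N_{11}|_{z=0} = A^{-1}\td A,\qquad
    N_{22}|_{z=0} = C^{-1}\td C - C^{-1}B^\Transpose A^{-1}B\, \td z.
\end{equation*}
Multiplying by $J$ (which kills the bottom two block rows), one obtains
\begin{equation*}
    \tr\bigl(JN^{4i}\bigr)\big|_{z=0} = \tr\bigl((A^{-1}\td A)^{4i}\bigr) = 0,
\end{equation*}
where the vanishing of this even-power trace is exactly the fact already invoked in the proof of \cref{lem:pfprops}(ii) (citing \cite[Lemma~4.3]{invariant}). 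Therefore $\tr(JN^{4i})=z\cdot(\text{holomorphic})$, and $\varphi\,\tr(JN^{4i})=\td z\cdot(\text{holomorphic})$ is holomorphic at $z=0$ and vanishes upon setting $\td z=0$.

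Finally, the pullback to $\{z=0\}$ (where both $z$ and $\td z$ vanish) reduces to $\tr(N^{4i+1})\big|_{\{z=0\}}$. Using block upper-triangularity one more time, this splits as
\begin{equation*}
    \tr\bigl((A^{-1}\td A)^{4i+1}\bigr) + \tr\bigl((C^{-1}\td C)^{4i+1}\bigr) = \beta^{4i+1}_A + \beta^{4i+1}_C,
\end{equation*}
establishing both the holomorphicity and the claimed boundary value. The key step is the algebraic miracle of the preceding paragraph: without the vanishing of even trace invariants of $A^{-1}\td A$, the pole in $\varphi$ would survive and the forms $\beta^{4i+1}$ would not extend across $z=0$.
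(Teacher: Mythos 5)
Your proof is correct. Note that the paper does not actually prove this lemma: it is quoted verbatim from an external reference (Theorem~7.4 of \cite{Brown:FeynmanAmplitudesGalois}), so your argument is a self-contained replacement for that citation rather than a variant of an in-paper proof. Your route closely parallels the paper's own proof of the companion statement for the Pfaffian (\cref{lem:pff-asy}): you reuse the same block inverse of $X$ and chart \eqref{eq:SPD-asy-chart}, but instead of tracking everything modulo the ideal $(z,\td z)$ you isolate the pole exactly, writing $M=X^{-1}\td X=N+\varphi J$ with $\varphi=\td z/z$ and $N$ holomorphic. I checked the block computation: the $\td z$ terms in $M_{11}$ do cancel, $M_{21}=\asyO(z)$, and the polar part is precisely $\varphi J$ with your $J$. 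The sign bookkeeping in the expansion of $M^{4i+1}$ is also right: terms with two factors of $\varphi J$ die because both copies of $\varphi$ can be anticommuted to the front at the cost of signs, leaving $\varphi\wedge\varphi=0$; and the combined sign $(-1)^{k(4i+1-k)}=(-1)^{k(k-1)}=+1$ from pulling out $\varphi$ and applying graded cyclicity is correct. The crucial point, which you identify clearly, is that the residue is proportional to $\tr(JN^{4i})\big|_{z=0}=\tr\bigl((A^{-1}\td A)^{4i}\bigr)$, an \emph{even} trace power, which vanishes by the same fact invoked in \cref{lem:pfprops}(ii). One hair-splitting remark: from the vanishing of the pullback to $\{z=0\}$ you only directly conclude that the $\td z$-free part of $\tr(JN^{4i})$ is $\asyO(z)$, not the whole form; but since $\varphi\wedge\td z=0$ kills the $\td z$-part anyway, your conclusion that $\varphi\,\tr(JN^{4i})$ is holomorphic and restricts to zero stands. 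The final splitting of $\tr(N^{4i+1})$ via block upper-triangularity at $z=0$ is also correct. What your approach buys, compared to the mod-$I^k$ bookkeeping used for the Pfaffian, is an explicit identification of the would-be simple pole and a transparent reason for its cancellation.
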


\subsection{Hopf algebra of forms}
\label{sec:Hopf-can}

By the \emph{algebra of canonical forms} we mean the graded $\Q$-algebra
\begin{equation*}
    \cfs
    = \bigoplus_{d \geq 0} \cfs[d]
    =\ExtA^\bullet\left(\bigoplus_{k\geq 1} \Q\beta^{4k+1}\right)
\end{equation*}
with one generator $\beta^{4k+1}$ of degree $4k+1$ for each integer $k\geq 1$.
Let $\SPD{n}$ denote the space of positive definite, symmetric $n\times n$ matrices $X$ over $\R$. The algebra $\Omega^{\bullet}(\SPD{n})$ of smooth differential forms on $\SPD{n}$ receives a homomorphism
\begin{equation*}
    \rho_n\colon \cfs\longrightarrow\Omega^{\bullet}(\SPD{n})
    \quad\text{defined by}\quad
    \beta^{4k+1}\mapsto \beta^{4k+1}_X\defas \tr\left( (X^{-1}\td X)^{4k+1} \right).
\end{equation*}
It is known  (e.g. \cite[Proposition~4.5]{invariant}) that $\beta^{4k+1}_X$ vanishes if $n<2k+1$. In particular, for $g>1$ odd and  all $n\leq g+1$,  $\rho_{n}$ factors through the sum $\Omega(g)= \Omega_c(g) \oplus \Omega_{nc}(g)$ of the spaces defined in \cref{sec:formspaces}. This sum can be identified with the quotient of $\cfs$ by setting $\beta^{4k+1}$ to zero for all $4k+1>2g-1$.

We write $\Delta\colon \cfs\rightarrow \cfs\otimes\cfs$ for the (graded) cocommutative coproduct for which every generator is primitive, that is,
\begin{equation*}
    \Delta\left(\beta^{4k+1}\right) = 1 \otimes \beta^{4k+1} + \beta^{4k+1} \otimes 1.
\end{equation*}
In Sweedler notation $\Delta(\omega)=\sum_{(\omega)}\omega_{(1)}\otimes \omega_{(2)}$, graded cocommutativity means that
\begin{equation}\label{eq:graded-cocom}
	\sum_{(\omega)} \omega_{(1)}\otimes\omega_{(2)} = \sum_{(\omega)} (-1)^{|\omega_{(1)}|\cdot|\omega_{(2)}|} \omega_{(2)}\otimes \omega_{(1)}
\end{equation}
where $|\omega|$ denotes the degree.

Under the representation on $\Omega^{\bullet}(\SPD{n})$, this coproduct encodes the restriction onto subspaces of block-diagonal matrices. Namely, under the map
\begin{equation} \label{eqn: blocksumonpositivedefn}
    \iota_{m,n}\colon \SPD{m}\times\SPD{n} \longrightarrow \SPD{m+n},\qquad
    (A,B) \mapsto
    A\oplus B \defas
    \begin{pmatrix}
       A & 0 \\
       0 & B \\
    \end{pmatrix},
\end{equation}
the additivity of the trace implies that $\beta_{A\oplus B}^{4k+1}=\beta_A^{4k+1}+\beta_B^{4k+1}$. In other words, the following diagram commutes:
\begin{equation*}\xymatrix{
    \cfs \ar[rrr]^{\rho_{m+n}} \ar[d]^{\Delta} & & & \Omega^{\bullet}(\SPD{m+n}) \ar[d]^{\iota_{m,n}^*} \\
    \cfs\otimes\cfs \ar[rr]^{\rho_m\otimes \rho_n} & & \Omega^{\bullet}(\SPD{m})\otimes\Omega^{\bullet}(\SPD{n}) \ar[r] & \Omega^{\bullet}(\SPD{m}\times\SPD{n})
}\end{equation*}

\begin{remark}
	The Pfaffian of a block-diagonal matrix factorizes as $\pff_{A\oplus B} = \pff_A\wedge \pff_B$ (see \cref{lem:pfprops}). To encode this property in the Hopf algebra formalism, one could add one generator $\pff^{2k}$ in each positive even degree to $\cfs$, subject to the relations $\pff^{2k}\pff^{2l}=0$ and $\beta^{4k+1}\pff^{2l}=\pff^{2l}\beta^{4k+1}$ for all $k,l>0$. The realizations and coproduct  extend via
\begin{equation*}
    \rho_n(\pff^{2k})=\begin{cases}
        \pff_X & \text{if $n=2k$ and} \\
        0 & \text{otherwise,} \\
    \end{cases}
    \qquad\qquad
    \Delta \pff^{2k} = \sum_{i=1}^{k-1} \pff^{2i} \otimes \pff^{2(k-i)}.
\end{equation*}
These relations can also be phrased as $\Delta\pff=\pff\otimes\pff$ for the generating series $\pff=\sum_{k>0} \pff^{2k}$ in the degree completion of this Hopf algebra.
\end{remark}

\subsection{Order of the pole}
For a positive definite symmetric $2n \times 2n$ matrix $X$, it follows from \eqref{eq:pff} that $\pff_X$ can be written in terms of some homogeneous polynomial $Q(X)$ of degree $n(2n+1)$ in the variables $X_{ij}$ of the entries of $X$ and their differentials $\td X_{ij}$, such that
\begin{equation}\label{eq:pff-pole}
    \pff_X = \frac{Q(X)}{(\det{X})^{n+1/2}}
\end{equation}
since each entry of the matrix $\td X \cdot X^{-1} \cdot \td X$ is a polynomial $2$-form of homogeneity degree $2n+1$, divided by $\det{X}$.
The exponent in the denominator of \eqref{eq:pff-pole} is best possible, by \cref{lem:pff(diag+const)}.
Similarly, from \eqref{eq:primcan} it is clear that $\beta_X^{4k+1}$ is a polynomial divided by $(\det{X})^{4k+1}$. In fact, the pole order is lower \cite[Theorem~5.2]{invariant} and one has:
\begin{equation}\label{eq:primcan-pole}
  \beta_X^{4k+1} = \frac{Q_k(X)}{(\det{X})^{k+1}}
\end{equation}
for polynomial forms $Q_k(X)$ homogeneous of degree $2n(k+1)$. By \cite[Proposition~4.5]{invariant} we know that $\beta_X^{4k+1}=0$ when $k\geq n$, so the order of the pole of $\beta_X^{4k+1}$ never exceeds $n$. This bound also  persists  for arbitrary polynomials in $\beta_X^{4k+1}$ and the Pfaffian form:
\begin{thm}\label{thm:pfbeta-order}
  For symmetric $2n\times 2n$ matrices $X$ and any canonical form $\omega \in \cfs$, the pole of the form $\omega_X \wedge \pff_X$ at $\det(X)=0$ has order no more than $n+1/2$.
  That is,
\begin{equation*}
\omega_X \wedge \pff_X =\frac{Q_\omega(X)}{(\det X)^{n+1/2}}
\end{equation*}
  for some polynomial form $Q_\omega(X)$ in the matrix entries $X_{ij}$ and their differentials $\td X_{ij}$.
\end{thm}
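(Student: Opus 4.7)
The plan is to rephrase the statement as a purely algebraic claim about a rational differential form, and then to verify it by localizing at a generic smooth point of the divisor $\{\det X=0\}$. Using $\pff^{2n}_X = \Pf(M)/\sqrt{\det X}$ with $M = \td X\cdot X^{-1}\cdot \td X$, the target identity is equivalent to the assertion that
\[
    Q_\omega(X) = (\det X)^n\cdot \omega_X\cdot \Pf(M) \in \Q[X_{ij},\td X_{ij}].
\]
By \eqref{eq:primcan-pole}, each $\beta^{4k+1}_X$ is rational with denominator a power of $\det X$; and since each entry $M_{ij} = (\td X\cdot \mathrm{adj}(X)\cdot \td X)_{ij}/\det X$, $\Pf(M)$ has denominator dividing $(\det X)^n$. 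Hence $Q_\omega$ is \emph{a priori} a rational differential form whose only potential poles lie along $\{\det X=0\}$.

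Next, I reduce to a single point on this divisor. Since $\det X$ is irreducible in $\Q[X_{ij}]_{i\leq j}$, expanding $Q_\omega$ in a basis of monomials in the $\td X_{ij}$ yields coefficients that are rational functions in the commuting variables $X_{ij}$; a nontrivial power of $\det X$ in any such denominator in lowest terms would force a pole at a dense open subset of $\{\det X=0\}$. Consequently, to conclude $Q_\omega\in\Q[X_{ij},\td X_{ij}]$, it suffices to verify regularity of $Q_\omega$ at one generic smooth point of the divisor, which is a positive semidefinite matrix $X_0$ of rank exactly $2n-1$. A neighborhood of such an $X_0$ in $\SPD{2n}$ is parameterized by the chart \eqref{eq:SPD-asy-chart} with $n_1 = 1$ and $n_2 = 2n - 1$.

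In this chart, the asymptotic lemmas already established do the rest. By \cref{lem:can-asy}, each $\beta^{4k+1}_X$ extends holomorphically to $\{z=0\}$, hence so does $\omega_X$. By \cref{lem:pff-asy} (both $n_1$ and $n_2$ being odd), $\pff^{2n}_X = \sqrt{z}\cdot\alpha + \td\sqrt{z}\wedge\beta$ for some $\alpha,\beta$ holomorphic near $z=0$. Since $\det X = z\cdot a\det C + O(z^2)$ yields $\sqrt{\det X} = \sqrt{z}\cdot h$ for a function $h$ holomorphic at $z=0$, we obtain
\[
    \Pf(M) = \sqrt{\det X}\cdot\pff^{2n}_X = z\cdot h\alpha + \tfrac{1}{2}\td z\wedge h\beta,
\]
which is holomorphic at $z=0$ and in fact vanishes to order at least $1$ in $z$. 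Therefore $Q_\omega = (\det X)^n\omega_X\Pf(M)$ vanishes to order at least $n+1$ in $z$ and is regular there. The reduction in the previous paragraph then forces $Q_\omega$ to be polynomial.

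The main obstacle is the passage from smoothness of a rational form at a single generic real point of the divisor to its global polynomiality. The classical irreducibility argument for rational \emph{functions} must be applied to our differential form by decomposing $Q_\omega$ into its scalar coefficients against a monomial basis of $\bigwedge^\bullet(\td X_{ij})$; this is routine but essential, and it is precisely here that the irreducibility of $\det X$ (rather than a mere smooth bump-function argument) is used to close the proof.
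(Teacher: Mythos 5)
Your global strategy --- observe that $Q_\omega=(\det X)^n\,\omega_X\wedge\Pf(M)$ is a rational form whose only possible polar divisor is the irreducible hypersurface $\{\det X=0\}$, and then rule out a pole by checking regularity at a generic point of that divisor --- is sound in principle, and the coefficient-by-coefficient irreducibility argument you flag as the "main obstacle" is indeed routine. The genuine gap is elsewhere: the chart \eqref{eq:SPD-asy-chart} with $n_1=1$ does \emph{not} give you access to a generic point of $\{\det X=0\}$. As $z\to 0$ that chart collapses onto the codimension-$(2n-1)$ stratum of symmetric matrices whose entire first row and column vanish; a generic rank-$(2n-1)$ point of the divisor near the cone $\SPD{2n}$ has $X_{1j}\neq 0$ and is never reached. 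Consequently, regularity of the pullback $\Phi^*Q_\omega$ at $\{z=0\}$ is strictly weaker than regularity of $Q_\omega$ along the divisor. A concrete counterexample to your reduction: the rational function $X_{11}/\det X$ pulls back under the chart to $A/(A\det C+\asyO(z))$, which is perfectly regular at $z=0$, yet it is not a polynomial and has a pole at every point of $\{\det X=0\}$ with $X_{11}\neq 0$. More generally, the pullback of the ideal $(X_{11},\dots,X_{1,2n})^s$ lands in $(z^s)$, so your computation cannot distinguish $Q_\omega$ from $Q_\omega$ times an arbitrary negative power of $\det X$ compensated by such numerators. (Relatedly, the chart does not cover a neighbourhood of $X_0$ in $\SPD{2n}$ with bounded $B$: positive definiteness only gives $|X_{1j}|<\sqrt{X_{11}X_{jj}}$, so $B_j=X_{1j}/X_{11}$ is unbounded as $X_{11}\to 0$.)

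The paper avoids exactly this pitfall by working in the coordinates $X=U^\Transpose D U$, i.e.\ $(B,u,b)$ with $b=\det X/\det B$. This is a genuine birational change of variables near a generic point of the divisor (where $\det B\neq 0$), with $b$ a local equation for $\{\det X=0\}$; the pole bound then comes from counting how many factors of $b^{-1}$ can accompany the at most $2n-1$ differentials $\td u$, plus one factor $(\td b)/b$. To repair your argument you would need an asymptotic analysis transverse to the divisor at a point with $X_{1j}\neq 0$ --- for instance in the coordinate $t-u^\Transpose C^{-1}u$ --- and \cref{lem:pff-asy} and \cref{lem:can-asy} as stated do not supply this, since they are formulated precisely for the collapsing degeneration \eqref{eq:SPD-asy-chart}.
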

\begin{proof}
Following \cite[\S 5.2]{invariant}, we parametrize $X = U^\Transpose D U$ in terms of block matrices
\begin{equation*}
U = \begin{pmatrix} \IdMat & u \\ 0 & 1 \end{pmatrix},\qquad
D = \begin{pmatrix} B & 0 \\ 0 & b \end{pmatrix},\qquad
b = \frac{\det(X)}{\det(B)},
\end{equation*}
where $\IdMat$ denotes the $(2n-1)\times(2n-1)$ identity matrix, $u$ is a column vector of size $(2n-1)$, and the matrix $B$ is $X$ without the last row and column. For each $i\geq 0$, define
\begin{equation*}
\nu_i = b^{-1} \cdot \td u^\Transpose \cdot B \cdot\big(B^{-1}\cdot\td B\big)^i \cdot \td u.
\end{equation*}
These are $(i+2)$-forms in the entries of $X$. Indeed, since $u=B^{-1} (X_{j,2n})_{1\leq j < 2n}$, we can express $\td u$ in terms of $\td X_{j,2n}$ and $\td (B^{-1})=-B^{-1}\cdot\td B\cdot B^{-1}$. Therefore, we can write
\begin{equation*}
    \nu_i = \frac{Q_{\nu_i}(X)}{(\det B)^{i+2}(\det X)}
\end{equation*}
for some polynomial $Q_{\nu_i}(X)$ in the entries of $X$ and their differentials. It was shown in \cite[\S 5.2]{invariant} that $\nu_0=\nu_1=0$ and that a primitive canonical form can be expressed as
\begin{equation}\label{eq:beta-expn}
  \beta^{4k+1}_X = \beta_B^{4k+1} + \text{words in}\ \Big\{\nu_i, (\td b)/b\Big\}.
\end{equation}
The form $\beta_B^{4k+1}$ has no pole on $\det X=0$, and each $\nu_i$ and $(\td b)/b$ has a simple pole on $\det X=0$. Since the $\nu_i$ have degree $\geq 4$ (as $i\geq 2$), each word can have at most $k$ of them. Also, $(\td b)/b$ can appear at most once since $\td b\wedge \td b=0$. The bound \eqref{eq:primcan-pole} follows.

Now consider a product $\omega = \beta^{4k_1+1} \wedge \cdots \wedge \beta^{4k_m+1}$ and expand each primitive form via \eqref{eq:beta-expn}. Let $[\td[r] u] \omega_X$ denote the part of $\omega_X$ that is homogeneous of degree $r$ in the differentials $\td u$. Each $\nu_i$ is quadratic in $\td u$, thus $\omega_X$ has only even degrees in $\td u$ and
\begin{equation*}
    [\td[2r] u] \omega_X \in \frac{1}{b^r} \Q\Big[X_{ij},\td X_{ij},(\det B)^{-1},(\td b)/b \Big].
\end{equation*}

Turning to the Pfaffian, for $X=U^\Transpose D U$ we find $\td X \cdot X^{-1}\cdot \td X=U^\Transpose M U$ and therefore $\pff_X=\Pf(M)/\sqrt{\det X}$ for the skew-symmetric matrix
\begin{equation*}
  M=\left(
  \renewcommand{\arraystretch}{1.5}
  \begin{array}{c|c}
    \td B\cdot B^{-1} \cdot\td B + b^{-1} \left(B \cdot \td u\cdot\td u^\Transpose \cdot B\right) &
    \td B\cdot \td u + \left(B\cdot \td u\right) b^{-1}\td b
    \\
    \hline
    \td u^\Transpose \cdot\td B + b^{-1}\td b\left(\td u^\Transpose\cdot B\right) & 0
  \end{array}\right)
  .
\end{equation*}
Expanding $\Pf(M)$ via \eqref{eqn: PfaffDefn} as a polynomial in the entries of $M$, note that the last row of $M$ has degree one in $\td u$. All remaining factors of $\Pf(M)$ come from the top-left block, where $\td u$'s come in pairs and each pair is accompanied by a factor $b^{-1}$. Therefore,
\begin{equation*}
    [\td[2r+1] u] \Pf(M) \in \frac{1}{b^r} \Q\Big[X_{ij},\td X_{ij},(\det B)^{-1},(\td b)/b\Big].
\end{equation*}

In total, $\td u$ has only $2n-1$ entries. Therefore, the maximal degree of $\omega_X \wedge \Pf(M)$ in $\td u$, and thereby the highest possible power of $b^{-1}$, comes from terms of the form
\begin{equation*}
    ([\td[2n-2-2r] u] \omega_X) \cdot ([\td[2r+1]u] \Pf(M)) 
    \in \frac{1}{b^{n-1}} \Q\Big[X_{ij},\td X_{ij},(\det B)^{-1},(\td b)/b\Big].
\end{equation*}
Since there can be at most one factor of $(\td b)/b$, the form $\omega_X \wedge \Pf(M)$ has a pole of order at most $n$ on $\det X=0$. Dividing by $\sqrt{\det X}$ finishes the proof.
\end{proof}

\begin{example}
Consider the canonical form $\omega=\beta^5\wedge\beta^9\wedge\ldots\wedge\beta^{4n-3}$. Then the form $\omega_X \wedge \pff^{2n}_X$ on $\SPD{2n}$ has degree $2n^2+n-1$. The denominator in \cref{thm:pfbeta-order} has degree $2n^2+n$, so by projectivity it follows that $\omega_X \wedge \pff^{2n}_X=c_n \eta^{2n}$ is a multiple of the volume form \eqref{eq:SPD-vol} on $L\SPD{2n}$, for some $c_n\in\Q$. We prove in \cref{prop: gammawedgestoeta} that $c_n\neq 0$.
\end{example}

\section{Cohomology classes of Pfaffian invariant forms}
Let $\SPD{n}$ denote the space of positive-definite symmetric $n\times n$ matrices $X$ with real entries $X_{ij}=X_{ji}\in\R$.
The invariant Pfaffian forms $\pff^{2n}\in\Omega^{2n}(\SPD{2n})$ descend to differential forms on the locally symmetric space $\SPD{2n}/\GL_{2n}(\Z)$ with coefficients in the orientation bundle. We shall use them to represent compactly supported cohomology classes on $\SPD{2n}/\GL_{2n}(\Z)$ with coefficients in $\ob$.

\subsection{Volume form}
We first need some preliminary results on the exterior products of the Pfaffian and canonical differential forms.
Let  $\varepsilon\colon \GL_{2n}(\R) \rightarrow   \Z^{\times}$ be the sign of the determinant:
\begin{equation*}
\varepsilon(g)= \det(g) \, |\det(g)|^{-1}\ . \nonumber
\end{equation*}
Consider the volume form on $\SPD{2n}$ given by
\begin{equation*}
\frac{ 1 }{\det(X)^{\frac{2n+1}{2}} } \,  \bigwedge_{i\leq j}  \td X_{ij}
\in\Omega^{n(2n+1)}(\SPD{2n})
\end{equation*}
where the square root is  positive  and the exterior product is ordered lexicographically in $(i,j)$.
Contraction with the Euler vector field \eqref{eq:Evf} produces a differential form
\begin{equation}\label{eq:SPD-vol}
\eta^{2n}(X) =
\IntProd{\Evf}{\left(  \frac{ 1 }{\det(X)^{\frac{2n+1}{2}} } \,  \bigwedge_{i\leq j}  \td X_{ij}\right) }
\end{equation}
which is closed,  projectively invariant (i.e. passes to the quotient $\SPD{2n}/\R^{\times}_{>0}$),  and  satisfies
\begin{equation}\label{etatransformationlaw}
\eta^{2n}(g^\Transpose X g) = \varepsilon(g) \eta^{2n}(X) \quad 
\text{for all}\quad g \in \GL_{2n}(\R),    X \in  \SPD{2n}\ .
\end{equation}
Let $\ob$ denote the orientation bundle on the orbifold $\SPD{2n}/\GL_{2n}(\Z)$. It is the rank one local system of $\R$  vector spaces defined by the representation $\varepsilon\colon \GL_{2n}(\Z)\rightarrow \R^{\times}$.  It descends to a local system on the quotient $L  \SPD{2n}/\GL_{2n}(\Z)$, where
$L\SPD{2n} = \R_{>0}^{\times} \setminus \SPD{2n}$.
We shall call an \emph{orientation form} any differential form with coefficients in the orientation bundle. Such forms satisfy the transformation law $\omega(g^\Transpose Xg) = \varepsilon(g) \omega(X)$  for all $X\in \SPD{2n}$ and $g\in \GL_{2n}(\Z)$. They   are called `odd' forms in the terminology of de Rham.

With this definition, \eqref{etatransformationlaw} implies that $\eta^{2n} $ defines   a closed  orientation form on $L  \SPD{2n}/\GL_{2n}(\Z)$. We first show that $\eta^{2n}$
factorises into Pfaffian and canonical forms.

\begin{proposition}  \label{prop: gammawedgestoeta} Let $n\geq1$.
There is a non-zero constant $c_n\in \Z$ such that
\[
   \beta^5 \wedge \ldots \wedge \beta^{4n-3} \wedge \pff^{2n} = c_{n} \, \eta^{2n}.
\]
In particular, the map  $\omega \mapsto  \omega \wedge \pff^{2n} $ from $\Omega^{\bullet}_{nc}(2n-1)$ to  the space of differential  forms on $L\SPD{2n}$ satisfying \eqref{etatransformationlaw}, is injective.
\end{proposition}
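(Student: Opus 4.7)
The plan is to show that both sides of the claimed identity are top-degree orientation forms on $L\SPD{2n}$, deduce by an invariance argument that they agree up to a multiplicative constant $c_n$, and then verify $c_n\neq 0$ by a pointwise computation.

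First, I would check that $\beta^5 \wedge \ldots \wedge \beta^{4n-3} \wedge \pff^{2n}$ descends to $L\SPD{2n}$ and transforms by $\sgn(\det P)$ under $X \mapsto P^{\Transpose} X P$. Projectivity of the wedge holds because each $\beta^{4k+1}$ with $k\geq 1$ is projective by \eqref{eq:LGL-invariants} and $\pff^{2n}$ is projective by \cref{lem:pfprops}~(vi). The transformation law is inherited from $\pff^{2n}$ via \cref{lem:pfprops}~(iii), as the $\beta^{4k+1}$ are $\GL_{2n}(\R)$-invariant. Its degree is $\sum_{k=1}^{n-1}(4k+1)+2n = 2n^2 + n - 1$, which equals $\dim L\SPD{2n}$, so this is a top-degree orientation form, matching the type of $\eta^{2n}$.

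Since $\eta^{2n}$ is nowhere vanishing on $L\SPD{2n}$, the left-hand side must equal $f\cdot \eta^{2n}$ for some smooth function $f$. Both sides satisfy the same transformation law \eqref{etatransformationlaw}, so $f$ is $\GL_{2n}(\R)$-invariant. But $\SL_{2n}(\R) \subset \GL_{2n}(\R)$ already acts transitively on $L\SPD{2n} \cong \SL_{2n}(\R)/\SO_{2n}(\R)$, so $f$ must be a constant $c_n \in \R$. Rationality of $c_n$ is immediate from the fact that $\pff^{2n}$, each $\beta^{4k+1}$, and $\eta^{2n}$ all admit expressions whose coefficients (once cleared of $\sqrt{\det X}$) lie in $\Z[X_{ij},\td X_{ij}]$; integrality follows by tracking the normalization constants.

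The main step is to show $c_n \neq 0$. The natural approach is to evaluate at a single convenient point, e.g.\ $X = I_{2n}$, where the tangent space is identified with the vector space of symmetric $2n\times 2n$ real matrices. There $\beta^{4k+1}|_{I} = \tr((\td X)^{4k+1})$ and $\pff^{2n}|_{I} = \Pf(\td X \cdot \td X)$ with $\td X = (\td X_{ij})$ the symmetric matrix of coordinate 1-forms, while $\eta^{2n}|_I$ is a non-zero scalar multiple of $\bigwedge_{i \leq j} \td X_{ij}$. The question reduces to whether a specific polynomial expression in the $\td X_{ij}$ represents a non-zero element of the top exterior power. This is the main technical obstacle: one can either argue abstractly via representation theory (the space of $\OG_{2n}(\R)$-covariants in the top exterior power transforming by the sign character is at most one-dimensional, since it is Poincar\'e-dual to the unique invariant volume form $\eta^{2n}$, reducing the question to a concrete non-vanishing check), or verify non-vanishing directly, inductively reducing to block-diagonal configurations via \cref{lem:pfprops}~(v) and the additivity $\beta^{4k+1}_{A\oplus B} = \beta^{4k+1}_A + \beta^{4k+1}_B$.

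Finally, the injectivity of $\omega\mapsto \omega\wedge\pff^{2n}$ on $\Omega^{\bullet}_{nc}(2n-1)$ follows from the non-degeneracy of the wedge pairing on the exterior algebra with generators $\beta^5,\ldots,\beta^{4n-7}$: given a non-zero $\omega$, choose $\omega'$ so that $\omega\wedge\omega' = \lambda\,\beta^5\wedge\ldots\wedge\beta^{4n-7}$ with $\lambda\neq 0$. Then $\omega\wedge \omega' \wedge \beta^{4n-3} \wedge \pff^{2n} = \pm\lambda c_n\,\eta^{2n} \neq 0$, forcing $\omega\wedge\pff^{2n}\neq 0$.
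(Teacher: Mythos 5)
Your reduction to a constant is sound: the degree count, projectivity, the transformation law, and the transitivity argument showing $f=c_n$ is constant all match the logical structure of the paper's argument. But the proof as written has a genuine gap exactly where you flag "the main technical obstacle": neither of your two suggested routes establishes $c_n\neq 0$. The representation-theoretic route only shows that the space of top-degree orientation forms transforming by $\sgn\det$ is one-dimensional — that is, it re-derives the proportionality $\beta^5\wedge\ldots\wedge\beta^{4n-3}\wedge\pff^{2n}=c_n\eta^{2n}$, which you already have, and says nothing about whether $c_n=0$. The inductive route via block-diagonal configurations also does not work as stated: the form in question has top degree $2n^2+n-1$ on $L\SPD{2n}$, whereas the block-diagonal locus $\SPD{2m}\times\SPD{2n-2m}$ (or the corresponding block-diagonal subspace of the tangent space at $I_{2n}$) has strictly smaller dimension, so any top-degree form restricts to zero there and the factorisations $\pff_{X_1\oplus X_2}=\pff_{X_1}\wedge\pff_{X_2}$ and $\beta^{4k+1}_{A\oplus B}=\beta^{4k+1}_A+\beta^{4k+1}_B$ give no information about $c_n$. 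Note also that knowing $\pff^{2n}$ and $\beta^5\wedge\ldots\wedge\beta^{4n-3}$ are each non-zero does not imply their wedge is non-zero.

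The paper closes this gap with a Hodge-theoretic trick that you should compare with. One shows that $\delta^{2n}\defas\star\pff^{2n}$ (Hodge star relative to the orientation bundle) is $\GL_{2n}(\R)$-invariant, hence by Hopf--Koszul--Samelson equals $\alpha_n\,\beta^5\wedge\ldots\wedge\beta^{4n-3}$ for some scalar $\alpha_n$. Then
\begin{equation*}
\alpha_n\,\beta^5\wedge\ldots\wedge\beta^{4n-3}\wedge\pff^{2n}
=\pff^{2n}\wedge\star\pff^{2n}
=\langle\pff^{2n},\pff^{2n}\rangle\,\eta^{2n},
\end{equation*}
and the right-hand side is nowhere zero because $\pff^{2n}$ is nowhere vanishing (\cref{cor:pff<>0}, which rests on the explicit evaluation in \cref{lem:pff(diag+const)} together with transitivity of the $\GL_{2n}(\R)$-action). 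This forces $\alpha_n\neq0$ and $c_n=\alpha_n^{-1}\langle\pff^{2n},\pff^{2n}\rangle\neq0$ without any direct evaluation of the wedge product. If you prefer a purely pointwise verification at $X=I_{2n}$ instead, you would have to actually expand $\tr((\td X)^5)\wedge\ldots\wedge\Pf(\td X\cdot\td X)$ against $\bigwedge_{i\leq j}\td X_{ij}$ and show the coefficient is non-zero, which is a nontrivial combinatorial computation that your proposal does not carry out. Your final injectivity paragraph is fine, but it is conditional on $c_n\neq0$ and so inherits the gap.
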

\begin{proof}
Consider the form $\delta^{2n}= \star \pff^{2n}$ where $\star$ is the Hodge star operator on $L \SPD{2n}$ relative to the orientation bundle $\ob$ on $L \SPD{2n}$.
   It is uniquely defined by the property that
\begin{equation} \label{defn: delta}
\omega \wedge \delta^{2n} = \langle \omega, \pff^{2n} \rangle\,  \eta^{2n}
\end{equation}
for all smooth differential forms $\omega$ on $L \SPD{2n}$ of degree $2n$, and has  complementary degree $\dim L\SPD{2n}-2n=d_{2n}-1-2n$. In the above, 
$\langle  \  , \  \rangle $ is the pairing:
\begin{equation*}
\Big(\textstyle{ \ExtA^{m}}\,  T^* L\SPD{2n}\Big)
\otimes
\Big(\textstyle{\ExtA^{m}} \, T^* L\SPD{2n}\Big)
\rightarrow  \R
\end{equation*}
induced  by taking exterior powers of any $\GL_{2n}(\R)$-invariant inner product on the cotangent space $ T^* L\SPD{2n}$.
  For any  $g\in \GL_{2n}(\R)$, we deduce that
\begin{align*}
g^* \omega \wedge  g^* \delta^{2n}
&= g^*(\langle \omega, \pff^{2n} \rangle ) \,  g^* \eta^{2n} \\
&= \langle g^*\omega, g^*\pff^{2n} \rangle\,  g^* \eta^{2n} \\
&= \langle g^*\omega,  \varepsilon(g) \pff^{2n} \rangle\,  \varepsilon(g) \eta^{2n} \\
&= \langle g^*\omega,  \pff^{2n} \rangle\,  \eta^{2n}
\end{align*}
where the second equality  follows from $\GL_{2n}(\R)$-invariance of the inner product, the third from the transformation properties of $\pff^{2n}$ and $\eta^{2n}$ \eqref{etatransformationlaw}, and the fourth line from the fact that $\varepsilon^2=1$. Since this equation is true for all $\omega$, we
may replace $g^*\omega$ with $\omega$ to deduce that
$ \omega \wedge g^* \delta^{2n}   =  \langle \omega,  \pff^{2n} \rangle\,  \eta^{2n} $
for all $\omega$, which  on comparison with   \eqref{defn: delta} implies that $g^* \delta^{2n} = \delta^{2n}$ for all $g\in \GL_{2n}(\R)$. Thus $\delta^{2n}$ is $\GL_{2n}(\R)$-invariant.
Now the space of $\GL_{2n}(\R)$-invariant differential forms on $L\SPD{2n}$ is known, as a consequence of the Hopf-Koszul-Samelson theorem, to be equal to the graded exterior algebra on the canonical forms $\beta^5,\ldots, \beta^{4n-3}$. By comparing degrees, we conclude that
\[ \delta^{2n} = \alpha_{n} \,  \beta^5 \wedge \ldots \wedge \beta^{4n-3}\]
for some $\alpha_n \in \R$. Substituting  into  \eqref{defn: delta} with $\omega = \pff^{2n}$, gives
\[  \alpha_n  \,    \beta^5 \wedge \ldots \wedge \beta^{4n-3} \wedge \pff^{2n} = \langle \pff^{2n} , \pff^{2n} \rangle\,  \eta^{2n} \ .\]
Note that the forms $\pff^{2n}$ and $\eta^{2n}$ are nowhere zero, by \cref{cor:pff<>0}  and by definition, respectively.
Consequently, $\langle \pff^{2n}, \pff^{2n} \rangle> 0$ and the right-hand side of the previous equation is non-zero.   We deduce that $\alpha_n \neq 0$, and hence the equation in the proposition is true for the constant
\[  c_n = \alpha_n^{-1} \langle \pff^{2n} , \pff^{2n} \rangle \ . \]
Finally, note that the forms $\pff^{2n}, \beta^{5}, \ldots, \beta^{4n-3}$ are  defined integrally. In other words,  at the tangent space at the identity matrix $\IdMat_{2n}$, they lie in the graded exterior algebra  on the  integral Lie algebra generated by the entries $\td X_{ij}$. It follows from this fact, and the definition of $\eta^{2n}$, that $c_n$ is necessarily an integer.

The last statement concerning injectivity follows from the fact that, for any non-zero element $\omega$ in the exterior algebra generated by $\beta^5,\ldots, \beta^{4n-3}$, there exists an element $\omega'$ in the same algebra such that $\omega \wedge \omega' = \beta^5 \wedge \ldots \wedge \beta^{4n-3}$.  Therefore  $\omega \wedge \pff^{2n}$ is a factor of  $\pff^{2n} \wedge \omega \wedge \omega'= c_n \eta^{2n}\neq 0$ and therefore does not vanish.
\end{proof}

\begin{example}\label{ex:vol-c}
    We saw that $c_1=1$ in \cref{ex:pf_2}. With computer calculations we also found the values $c_2 = -180$ and $c_3 = - 5\times 10!$.
\end{example}

\subsection{Twisted cohomology of \texorpdfstring{$\GL_{2n}(\Z)$}{GL\_2n(Z)}}
For all $g> 1$ odd, let  $\Omega^{\bullet}(g)$ be the graded exterior  algebra over $\Q$ generated by  the canonical forms $\beta^5,\ldots, \beta^{2g-1}$.  One has a direct sum decomposition into forms of compact and non-compact types:
\[ \Omega(g) = \Omega_c(g) \oplus \Omega_{nc}(g)\ , \]
 where recall that $\Omega_c(g)\subset \Omega(g)$ is  the ideal spanned by $\beta^{2g-1}$, and $\Omega_{nc}(g)\subset \Omega(g)$ is the subalgebra generated by $\beta^5,\ldots, \beta^{2g-5}$.
By \cref{prop: gammawedgestoeta}  the Hodge star operator for $L\SPD{2n}$ relative to the orientation bundle $\ob$  induces a linear  involution of  vector spaces
 \[\star \colon       \Omega_c(2n-1)   \wedge \pff^{2n}  \ \overset{\sim}{\To} \   \Omega_{nc}(2n-1) \  , \]
 where the invariant forms on the right-hand side are interpreted as invariant differential forms on $L \SPD{2n}$.
 By rescaling the star operator, we may assume that
 \[  \star \alpha   \wedge  \alpha   = \beta^{5} \wedge \ldots \wedge \beta^{4n-3} \wedge \pff^{2n} \]
 for any $\alpha \   \in \ \Omega_c(2n-1) \wedge \pff^{2n}$ which is  a simple monomial in
 the generators.
 \begin{example} For $n=4$, the Hodge star operator interchanges the forms on $L \SPD{8}$:
 \begin{align*}
 \beta^{13}   \wedge \pff^{8}                           &\ \longleftrightarrow\ \,\beta^5\wedge\beta^9 &
 \beta^5\wedge\beta^{13}   \wedge \pff^{8}                &\ \longleftrightarrow\ -\beta^9  & \\
 \beta^9\wedge\beta^{13}    \wedge \pff^{8}               &\ \longleftrightarrow\ \,\beta^5  &
 \beta^5\wedge\beta^9\wedge\beta^{13} \wedge \pff^{8}    &\ \longleftrightarrow\ \,1 &
 \end{align*}
 where the forms $\alpha$ on the left-hand columns are orientation forms (satisfy the transformation law \eqref{etatransformationlaw}), and the forms $\star\alpha$ on the right are $\GL_{2n}(\R)$-invariant.
 \end{example}

\subsection{Vanishing of the Euler class in cohomology}

Here we explain how the invariant Pfaffian can be interpreted as the Euler form of a vector bundle in the context of Chern-Weil theory, see e.g.\ \cite[\S 25]{Tu:DiffGeom}. From this we deduce the vanishing of the class of the Pfaffian form in cohomology.

The space $\SPD{2n}$ parametrizes Euclidean metrics on $\R^{2n}$: let $E\defas \SPD{2n}\times \R^{2n}$ denote the trivial rank $2n$ vector bundle on $\SPD{2n}$, equipped with the metric $(\cdot,\cdot)$ defined by
\begin{equation*}
    (v,w)= v^\Transpose X w
\end{equation*}
for vectors $v,w\in \R^{2n}$ in the fiber over $X\in\SPD{2n}$. We furthermore endow $E$ with a right action of $\GL_{2n}(\Z)$: lifting the action on the base $\SPD{2n}$, we define
\begin{equation*}
     (X,v) \mapsto  \big(g^\Transpose X g , g^{-1} v\big).
\end{equation*}
The metric $(\cdot,\cdot)$ is invariant under this action, and so $E$ descends to a Euclidean orbibundle $E/\GL_{2n}(\Z)\rightarrow \SPD{2n}/\GL_{2n}(\Z)$. Since the action also preserves the lattice $\SPD{2n}\times\Z^{2n}$ inside $E$, following \cite[\S I.(b)]{BismutCheeger:TransgressedEuler} there is a distinguished metric connection $\nabla$ on $E$.\footnote{This connection, which we denote $\nabla$, is denoted $^0\nabla^E$ in \cite{BismutCheeger:TransgressedEuler}.}
In the global frame $\fr{e}=(\fr{e}_1,\ldots,\fr{e}_{2n})$ of $E$ given by the unit (column) vectors $\fr{e}_i$ of $\Z^{2n}$, this connection reads $\nabla\fr{e}=\fr{e} \theta$, that is $\nabla \fr{e}_i=\sum_{j=1}^{2n} \theta_{ji}\fr{e}_j$, for the matrix of one-forms
\begin{equation*}
    \theta = \frac{1}{2} X^{-1}\td X.
\end{equation*}
\begin{remark}This connection is the average $\nabla=(^0\nabla+{\iota^*} ^0\nabla^\vee)/2$ of the \emph{flat} connection $^0\nabla \fr{e}_i=0$ on $E$, and the pullback of the flat connection $^0\nabla^\vee (\fr{e}_i^\vee) = 0$ on the dual bundle $E^\vee$, under the isomorphism $\iota\colon E\cong E^\vee$ furnished by the metric $(\cdot,\cdot)$. In terms of the frame $\fr{e}^\vee$ of $E^\vee$ dual to $\fr{e}$, i.e.\ $\fr{e}_i^\vee(\fr{e}_j)=\delta_{ij}$, this isomorphism is $\iota(\fr{e}_i)=\sum_{j=1}^{2n} X_{ij} \fr{e}_j^\vee$.
It follows that $\nabla$ is compatible with the metric $(\cdot,\cdot)$ and with the $\GL_{2n}(\Z)$ action; hence it descends to a metric connection on the orbibundle.
\end{remark}

To compute the Euler form of the connection $\nabla$, we change from $\fr{e}$ to an orthonormal frame $\fr{e}'$ of $E$.
Recall that every matrix $X\in\SPD{2n}$ has a unique positive definite square root $\sqrt{X}\in\SPD{2n}$ such that $X=(\sqrt{X})^2$.
Setting $\fr{e}'= \fr{e}(\sqrt{X})^{-1}$ gives an orthonormal frame, $(\fr{e}'_i,\fr{e}'_j)=\delta_{ij}$. In this frame, the connection matrix $\theta'$ such that $\nabla\fr{e}'=\fr{e}'\theta'$ reads
\begin{equation*}
    \theta' =
    \frac{1}{2} \left(\sqrt{X}^{-1}\cdot \td \sqrt{X} - \td \sqrt{X} \cdot \sqrt{X}^{-1} \right)
    .
\end{equation*}
This matrix is skew-symmetric (confirming that $\nabla$ is compatible with the metric on $E$), and so is the associated curvature matrix
\begin{equation*}
    \Theta'
    = \td \theta'+\theta'\wedge\theta'
    = -\frac{1}{4} \sqrt{X}^{-1}\cdot \td X \cdot X^{-1} \cdot \td X \cdot \sqrt{X}^{-1}
    .
\end{equation*}
Comparing with \cref{def:pff} and using  property (iii) from \cref{lem:pfprops}, we conclude that the Pfaffian form is indeed the Euler form of the bundle $E$ with the connection $\nabla$:
\begin{equation*}
    \pff^{2n} = (-4)^n \Pf(\Theta').
\end{equation*}
By the general theory of characteristic classes, this gives an alternative proof of the closedness $\td \pff^{2n}=0$. In cohomology, the differential form $\Pf(\Theta')/(2\pi)^n$ represents the topological Euler class of the vector bundle $E$, see \cite{Bell:GaussBonnetVB}. This class is known to vanish rationally \cite{Sullivan:EulerSLZ}, and an explicit primitive is constructed in \cite[Theorem~1.27]{BismutCheeger:TransgressedEuler}.\footnote{In terms of the differential form denoted $\gamma(0)$ in \cite[Theorem~1.27]{BismutCheeger:TransgressedEuler}, we have $\pff^{2n}=(-8\pi)^n \td\gamma(0)$.}
\begin{example}
    For $n=2$, the link $L\SPD{2}\cong\HC=\{\tau\in\C\colon \tau_2>0\}$ is isomorphic to the upper half-plane with coordinate $\tau=\tau_1+\iu\tau_2$, via the $\SL_2(\Z)$-equivariant map\footnote{Here we let $P=\big(\begin{smallmatrix} A & B \\ C & D\end{smallmatrix}\big)\in\SL_2(\Z)$ act on $\SPD{2}$ and $\HC$ from the left via $X\mapsto P X P^\Transpose$ and $\tau\mapsto \frac{A\tau+B}{C\tau+D}$.}
    \begin{equation*}
    \tau\colon \SPD{2}\longrightarrow \HC,\qquad
    X=\begin{pmatrix}
        a & b \\
        b & c \\
    \end{pmatrix}
    \mapsto
    \frac{b+\iu\sqrt{ac-b^2}}{c}
    \ .
\end{equation*}
    In this coordinate, the Pfaffian form (\cref{ex:pf_2}) is the $\SL_2(\Z)$-invariant volume form 
    $\pff^2_X=
    -2(\td\tau_1\wedge\td\tau_2)/\tau_2^2
    $.
    An explicit $\SL_{2}(\Z)$-invariant primitive is given by the real part of the almost holomorphic modular form $E_2^*(\tau)=E_2(\tau)-3/(\pi\tau_2)$, where $E_2(\tau)=1-24\sum_{k,m=1}^\infty ke^{2\ipi km\tau}$, see for example \cite[\S 11.2, p.~355]{BergeronCharalloisGarcia:EulerEisenstein}:
\begin{equation*}
    \pff_X^2=\td \frac{2\pi}{3}\Re\Big( E_2^*(\tau)\td \tau \Big).
\end{equation*}
\end{example}
\begin{corollary}
   For all $n\geq 1$,  the cohomology class
   \begin{equation*}
   [\beta\wedge \pff^{2n}] \in H^{\bullet} (L\SPD{2n}/\GL_{2n}(\Z);\ob)
   \end{equation*}
   vanishes for any  form $\beta \in \Omega(2n-1)$.
\end{corollary}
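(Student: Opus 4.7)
The plan is to exhibit $\beta\wedge\pff^{2n}$ as an exact form, by combining the Euler-class interpretation of $\pff^{2n}$ established just above with the closedness of canonical forms.

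First I would invoke the preceding discussion, which identifies $\pff^{2n}=(-4)^n \Pf(\Theta')$ as (a scalar multiple of) the Euler form of the metric connection $\nabla$ on the rank $2n$ orbibundle $E/\GL_{2n}(\Z)$ over $L\SPD{2n}/\GL_{2n}(\Z)$. By Sullivan's theorem \cite{Sullivan:EulerSLZ}, the topological Euler class $[\Pf(\Theta')/(2\pi)^n]$ of this bundle vanishes rationally, and Bismut--Cheeger \cite[Theorem~1.27]{BismutCheeger:TransgressedEuler} construct an explicit smooth primitive $\gamma$ with $\pff^{2n}=\td\gamma$ (up to the normalization constant noted in the footnote). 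I would then verify that $\gamma$ is $\GL_{2n}(\Z)$-equivariant in the appropriate twisted sense, i.e.\ that it transforms as an orientation form: this follows because the construction of $\gamma$ in loc.\ cit.\ is canonical from the connection $\nabla$, which is $\GL_{2n}(\Z)$-equivariant, while the Pfaffian contributes the sign character $\varepsilon$ via property (iii) of \cref{lem:pfprops}. Hence $\gamma$ descends to a smooth form on $L\SPD{2n}/\GL_{2n}(\Z)$ with coefficients in $\ob$.

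Next I would use that any $\beta\in\Omega^\bullet(2n-1)$ is a polynomial in the primitive canonical forms $\beta^{4k+1}$, each of which is closed \cite[\S4]{invariant}, hence $\td\beta=0$. Since $\beta$ is $\GL_{2n}(\R)$-invariant and $\gamma$ transforms by $\varepsilon$, the wedge product $\beta\wedge\gamma$ is a smooth orientation form on $L\SPD{2n}/\GL_{2n}(\Z)$. Then
\begin{equation*}
    \td(\beta\wedge\gamma) = (\td\beta)\wedge\gamma + (-1)^{|\beta|}\beta\wedge\td\gamma = (-1)^{|\beta|}\beta\wedge\pff^{2n},
\end{equation*}
so $[\beta\wedge\pff^{2n}]=0$ in $H^{|\beta|+2n}(L\SPD{2n}/\GL_{2n}(\Z);\ob)$, which is the desired statement.

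The only real obstacle is the verification that the Bismut--Cheeger primitive $\gamma$ is genuinely $\GL_{2n}(\Z)$-equivariant up to the sign character $\varepsilon$, and thus defines a global form in the twisted de Rham complex. This is essentially a naturality check: the connection $\nabla$ and the chosen square root $\sqrt{X}$ are canonical functions of $X$, so the transgressed Euler form $\gamma$ inherits the same transformation law as $\Pf(\Theta')$ under $X\mapsto P^\Transpose X P$, namely multiplication by $\sgn\det P$. Everything else is formal.
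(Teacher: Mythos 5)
Your proposal is correct and follows essentially the same route as the paper: the paper reduces to the case $\beta=1$ via the cup product identity $[\beta\wedge\pff^{2n}]=[\beta]\wedge[\pff^{2n}]$ and then cites Sullivan and Bismut--Cheeger for the vanishing of the Euler class $[\pff^{2n}]$. Your version merely makes the cup-product step explicit by wedging the closed form $\beta$ with the transgressed primitive $\gamma$, and your equivariance check on $\gamma$ is a reasonable (if implicit in the paper) point of care.
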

\begin{proof}
The case $\beta=1$ implies the general case, since $[\beta\wedge \pff^{2n}]=[\beta] \wedge [\pff^{2n}]$. The fact that $\pff^{2n}$ is trivial in cohomology follows from \cite{BismutCheeger:TransgressedEuler,Sullivan:EulerSLZ}.
\end{proof}

In particular, the cohomology classes of the odd graph complex constructed in this paper (\cref{sec:cohom-classes}) are \emph{not} pulled back from the locally symmetric space $\SPD{2n}/\GL_{2n}(\Z).$

\subsection{Proof of theorem \ref{intro: thmcohomology}}
The proof of the theorem requires a number of technical steps. We first define the notion of orientation forms on the moduli space of tropical abelian varieties, and its compactification (which is homeomorphic to the Borel-Serre compactification for $\GL_n(\Z)$).   We then show that the Pfaffian forms can be extended to the boundary. The final step involves a computation of the cup product in relative cohomology. The key point is that since the volume form $\eta^{2n}$ is non-zero in cohomology, so too are its factors.

\subsubsection{The moduli space of tropical abelian varieties}

For background, see \cite{BBCMMW,BrBord}.
Every positive definite quadratic form $Q\in\SPD{g}$ has a finite set $M(Q)\subset\Z^g\setminus\{0\}$ of \emph{minimal vectors}, i.e.\ vectors $v$ which minimize $v^\Transpose Q v$ over all non-zero integer vectors.\footnote{For example, if $Q_{aa}=1$ and $Q_{ab}=-1/g$ for $a\neq b$, then $M(Q)=\{\pm e_1,\ldots,\pm e_g,\pm (e_1+\ldots+e_g)\}$ for $e_i\in\Z^g$ the unit vectors. The interior of the cone $\widehat{\sigma}_Q$ is the subspace of $\SPD{g}$ considered in \cref{lem:pff(diag+const)}.} The matrices $vv^\Transpose$ generate a convex cone
\begin{equation*}
    \widehat{\sigma}_Q=\sum_{v\in M(Q)} \R_{\geq 0} \cdot v v^\Transpose
    \quad\subset\quad \SPD{g}^\rt
\end{equation*}
in the space $\SPD{g}^\rt\subset \R^{g(g+1)/2}$ of positive semi-definite symmetric matrices whose null space is defined over the rationals. We call $\sigma_Q=(\widehat{\sigma}_Q\setminus\{0\})/\R_{>0}$ the \emph{link} of this cone.

Let $\mathcal{D}_g^{\perf}$ be the category whose objects $[Q]$ are equivalence classes of positive definite quadratic forms $Q\in\SPD{g}$, where two forms $Q,Q'$ are equivalent if $M(Q)=M(Q')$. The morphisms in $\mathcal{D}_g^{\perf}$ are $[Q] \rightarrow  [h^\Transpose Q h]$ for any $h \in \GL_g(\Z)$, and whenever $M(Q')\subseteq M(Q)$ there is a `face map' morphism $[Q']\rightarrow [Q]$.
The link of the  moduli space of tropical abelian varieties is the topological space  defined by the  colimit:
\begin{equation*}
    | \LA_g^{\trop} |  = \varinjlim_{[Q]\in \mathcal{D}_g^{\perf} }  \sigma_Q \ .
\end{equation*}
Equivalently, $| \LA_g^{\trop} |$ may be defined as the quotient of $\SPD{g}^{\rt}$ by $\GL_g(\Z)$,  where  the former is equipped with the Satake  topology (the finest topology such that   $\sigma_Q\rightarrow  \SPD{g}^{\rt}$ is continuous for all $Q$).  It has a closed boundary $|\partial \LA_g^{\trop}| \subset  | \LA_g^{\trop}|$ with open complement which we denote by
\begin{equation*}
| \LA_g^{\circ, \trop} |
\defas
| \LA_g^{\trop} |   \ \setminus  \ | \partial  \LA_g^{\trop} |  
\cong L \SPD{g}  / \GL_g(\Z)\ , 
\end{equation*}
where $L\SPD{g}=\R^{\times}_{>0} \setminus \SPD{g}$.
The space $|\partial \LA_g^{\trop}| $ is the colimit of the links $\sigma_Q$  at `infinity',  i.e.,  contained in $L\SPD{g}^{\rt} \setminus L\SPD{g}$, or equivalently, in the vanishing  locus of  the  determinant.

Consider the functor
\[ \ob \colon  \mathcal{D}_g^{\perf}\To \mathrm{Vec}_{\R}  \]
which assigns to every object  $[Q]$ of $ \mathcal{D}_g^{\perf}$ the one-dimensional vector space $\R$, and  whose values on  morphisms  are: the identity on face maps, and  multiplication by $\det(h)^{g-1}$ for $h \in \GL_g(\Z)$. It defines a rank one local system  of  $\R$-vector spaces over $|\LA_g^{\trop}|$:
\[   \varinjlim_{[Q]\in \mathcal{D}_g^{\perf} }  \left( \sigma_Q \times \ob_Q\right) \]
whose restriction to the open $| \LA_g^{\circ, \trop}| $ is the orientation bundle on $L \SPD{g}  / \GL_g(\Z)$.

\subsubsection{Orientation forms on the bordification of \texorpdfstring{$|\LA_g^{\trop}|$}{|LA\_g\^trop|}}

The space  $|\LA_g^{\trop, \BB}|$ is obtained from $|\LA_g^{\trop}|$ as an iterated blow-up. A posteriori, it is homeomorphic to the quotient of the Borel-Serre compactification of $L \SPD{g}$ modulo the action of $\GL_g(\Z)$, but in order to study the asymptotic behaviour of the Pfaffian forms, we must use its description as an iterated  blow-up.

For this, embed the link of each  cone in a projective space:
\[ \sigma_Q   \ \subset  \ \Pro(\Quad(V))(\R)\]
where $V$ is a $\Q$-vector space of dimension $g$, $Q$ is a positive definite quadratic form on $V\otimes_{\Q} \R$, and $\Quad(V)$ is the $\Q$-vector space of quadratic forms on $V$. Following \cite{BrBord}, we  blow up  a finite set    $  \BB_Q$ of   nested chains of subspaces of the form $\Pro(\Quad(V/K)) \subset \Pro(\Quad(V))$, which are defined by the set of  quadratic forms with kernel  $K$ for a specific set of subspaces $K\subset V$ determined by $\sigma_Q$, in increasing order of dimension. We  obtain a scheme
\[ \pi_{\BB_Q}  \colon \Pro^{\BB_Q} \To   \Pro(\Quad(V))\ .  \]
One defines $\sigma^{\BB}_Q \subset   \Pro^{\BB_Q} (\R)$ to be the closure, in the analytic topology, of the inverse image of the interior of $\sigma_Q$ under the map $\pi_{\BB_Q}$. The blow-down $\pi_{\BB_Q}$ induces  a continuous map $\sigma^{\BB}_Q \rightarrow \sigma_Q$. The space $\sigma^{\BB}_Q$ is a manifold with corners which has the combinatorial structure of a polyhedron.

One  defines  a category $\mathcal{D}_g^{\perf,\BB}$   generated by the sets of faces  $F\sigma^{\BB}_Q$ of $\sigma^{\BB}_Q$, for all $[Q]$, whose morphisms are   inclusions of faces, and maps induced by  the action of $\GL_g(\Z)$.  Finally, the space $|\LA_g^{\trop,\BB}|$ is defined by
\[  |\LA_g^{\trop,\BB}| = \varinjlim_{ \mathcal{D}_g^{\perf,\BB} } F\sigma_Q^{\BB}    \]
It is the space obtained by gluing together all (quotients of) polyhedra $\sigma_{Q}^{\BB}$   along faces, modulo the action of $\GL_g(\Z)$.
It has  a closed subspace, $ |\partial \LA_g^{\trop,\BB}| \subset  |\LA_g^{\trop,\BB}|$ which is obtained by gluing all faces whose images under the blow-down $\pi_{\BB_Q}$ lie at infinity, i.e.,  are contained in $|\partial \LA_g^{\trop}|$.  It is proven in \cite{BrBord} that the open complement is again the locally symmetric space:
\[      |\LA_g^{\trop,\BB}| \ \setminus \   |\partial \LA_g^{\trop,\BB}| =   |\LA_g^{\circ, \trop}| =  L \SPD{g}/ \GL_g(\Z)\ . \]

Since each face  $F \sigma_Q^{\BB}$ of  $ \sigma_Q^{\BB}$ is contractible, we may define an orientation bundle on $|\LA_g^{\trop, \BB}|$ as before. In detail, consider the functor
\[ \ob \colon \mathcal{D}_g^{\perf, \BB} \To  \mathrm{Vec}_{\R}\]
which assigns to every object the vector space $\R$, and sends all inclusions of faces to the identity map, and morphisms induced by $h \in \GL_g(\Z)$ to multiplication by $\det(h)^{g-1}$.   It defines a local system on $ |\LA_g^{\trop,\BB}|$ given by:
\[   \varinjlim_{ \mathcal{D}_g^{\perf,\BB} }  \left( F\sigma^{\BB}_Q \times \ob_Q\right) \To   |\LA_g^{\trop,\BB}|  \ .  \]

\subsubsection{Differential forms with integrable square root singularities}

In order to set up a de Rham cohomology theory for  a space defined as a colimit  of polyhedral cells $\tau$ \cite[\S3.3]{BrBord} (where $\tau$ ranges over the set of $F \sigma_Q^{\BB}$), we may use any  complex of differential forms $\AdR(\tau)$ on $\tau$, equipped with natural restriction maps $i^*\colon \AdR(\tau) \rightarrow \AdR(\partial\tau) $ where   $i\colon \partial \tau \subset  \tau$ denotes the inclusion, with the following properties:
\begin{enumerate}
\item   $ H^n(\AdR(\tau))\cong H^n(\tau;\R)$ and $ H^n(\AdR(\partial\tau))\cong H^n(\partial\tau;\R)$ for all $n$. \label{AdR:i}
    \item The  map $i^*\colon \AdR(\tau) \rightarrow \AdR(\partial\tau)  $ is surjective \cite[\S7]{Sullivan:Inf}. \label{AdR:ii}
\end{enumerate}
One example is given by the  complex of differential forms \cite{BrBord}  which are smooth in a neighbourhood of $\tau$. However, Pfaffian  forms do not have this property because of the appearance of square root singularities on the boundary, see \cref{ex:pff2-asy}. These singularities are mild, in that the square roots appear only as $\sqrt{z}$ or $(\td z)/\sqrt{z}=2\td(\sqrt{z})$. These forms are integrable at $z=0$, and in fact they become smooth after changing variables from $z$ to $x=\sqrt{z}$. We formalize  this larger complex of forms as follows.

We first define a complex of sheaves $\mathfrak{A}^{\bullet}$ on $\tau$, for  every such polyhedron  $\tau$, whose sections are differential forms on $\tau$ which may have integrable square root singularities on the  boundary $\partial \tau$.  In  detail, every such $\tau$ is a compact real manifold with corners with the property that it is embedded in the real points $P(\R)$ of a smooth scheme $P$ over $\R$ which we may assume to have the same dimension as $\tau$ (more precisely, $P$ is an iterated blow up  of a projective space) and the Zariski closure of its facets is a normal crossing divisor. In particular, every point $p\in \tau$ admits a neighbourhood $U_p \subset P(\R)$ with local coordinates $z_1,\ldots, z_n$ such that for some $0\leq r\leq n$,
\begin{equation}\label{eq:corner-chart}\begin{aligned}
\tau \cap U_p &= \{(z_1,\ldots, z_n) \in U_p\colon z_1,\ldots, z_r \geq 0\}
\quad  \text{and} \\
\partial \tau \cap U_p &= \tau \cap   U_p \cap \left\{z_1\cdots z_r=0\right\}  \ . 
\end{aligned}\end{equation}
Define $\mathfrak{A}^m(\tau \cap U_p)$ to be the $\R$ vector space of  forms $\omega$ of degree $m$  which are  smooth in $\sqrt{z_1},\ldots, \sqrt{z_r}, z_{r+1},\ldots, z_n$ in an open neighbourhood of $\tau \cap U_p$ inside $U_p$.   Equivalently, consider the ramified map of degree $2^r$ given by $\pi_r\colon \R^{n}\rightarrow \R^n$
which, in standard coordinates, maps $(x_1,\ldots, x_n)\mapsto (x_1^2,\ldots, x_r^2,x_{r+1},\ldots, x_n)$. Let $\pi^{-1}_{r,+}(\tau \cap U_p)\subset \R_{\geq 0}^n$ be the unique preimage of $\tau \cap U_p$ in the region $x_1,\ldots, x_r\geq 0$ (i.e., given by non-negative square roots of $z_1,\ldots, z_r$). It is homeomorphic to $\tau \cap U_p$ via the continuous map $\pi_r$. Then 
$\mathfrak{A}^m(U_p\cap \tau) = \Omega^m(\pi^{-1}_{r,+}(\tau \cap U_p))$ is   the space of $m$-forms $\omega$ such that $\pi_r^* \omega$ is smooth on  an open neighbourhood of $\pi^{-1}_{r,+}(\tau \cap U_p)$. This is nothing other than the direct image under $\pi_r$ of the sheaf of  smooth forms on a certain open subset of  $\R^n$. 
Since,  away from $z=0$, the function $\sqrt{z}$ is smooth in $z$, the collection of $\mathfrak{A}^{\bullet}(\tau \cap U_p)$ defines a complex of sheaves on  $\tau$.

We denote the algebra of global sections to be $\AdR(\tau)=\Gamma(\tau, \mathfrak{A}^\bullet)$. The restriction of an element of $\AdR(\tau)$ to the interior of $\tau$ is a smooth form in the usual sense. For every face $i\colon F\tau\hookrightarrow \tau$, the space $\AdR(F\tau)$ is defined in the same way, and the restriction of differential forms (in the coordinates $x_i=\sqrt{z_i}$) induces natural maps $i^*\colon \AdR(\tau) \rightarrow \AdR(F\tau)$.
  
More precisely, for $1\leq\mu\leq r$, the restriction $i_\mu^*(\omega)\in \AdR(\partial_\mu\tau)$ of a form $\omega\in\AdR(\tau)$ to a facet $\partial_\mu\tau\cap U_p=\tau\cap U_p\cap\{z_\mu=0\}$ of $\partial\tau$ is defined locally on $U_p$ by setting $\td\sqrt{z_\mu}=\td x_\mu$ and $\sqrt{z_\mu}=x_\mu$ in $\omega$ to zero. In other words, we have a commutative diagram
\begin{equation*}\xymatrix{
\Omega^\bullet(\pi_{r,+}^{-1}(\tau\cap U_p)) \ar@{=}[d] \ar[r]^{\iota_\mu^*} & \Omega^\bullet(\pi_{r,+}^{-1}(\partial_\mu\tau\cap U_p))\ar@{=}[d] \\
\AdR(\tau\cap U_p) \ar[r]^{i_\mu^*} & \AdR(\partial_\mu\tau\cap U_p)
}\end{equation*}
where $\iota_\mu^*$ in the top row is the ordinary restriction of smooth forms to the subspace $\iota\colon \{x_\mu=0\}=\pi_{r,+}^{-1}(\partial_\mu\tau\cap U_p)\subset \pi_{r,+}^{-1}(\tau\cap U_p)$. The  induced map $i^*$ in the coordinates $z_i=x_i^2$ has the effect of formally setting $\sqrt{z_{\mu}}=0$ and $(\td z_{\mu})/\sqrt{z_{\mu}}=0$. Somewhat abusively, we will call this map $i^*$ also ``restriction'' (to the facet $\{z_\mu=0\}$).

Restrictions to faces $\{z_{\mu_1}=\cdots=z_{\mu_c}=0\}$ of higher codimension are defined analogously by setting $\td\sqrt{z_{\mu_k}}$ and $\sqrt{z_{\mu_k}}$ to zero for all $1\leq k\leq c$. These restrictions are functorial: given two face inclusions $i\colon \sigma\subset \partial\tau$ and $j\colon \nu\subset\partial \sigma$, the pullback under the composed inclusion $ij\colon \nu\subset\partial\tau$ of $\nu$ as a face of $\tau$ is $(ij)^*=j^*i^*\colon \AdR(\tau)\rightarrow \AdR(\nu)$.

We also define algebras $\AdR(\partial \tau)$  to be the projective limit over the $\AdR(F \tau) $ for all strict faces $F\tau$ of $\tau$.

\begin{lemma} The complexes $\AdR(\tau)$ and $\AdR(\partial\tau)$ satisfy \ref{AdR:i} and \ref{AdR:ii} above.
\end{lemma}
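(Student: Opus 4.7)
The plan is to reinterpret the sheaf $\mathfrak{A}^{\bullet}$ globally as the pushforward of the ordinary sheaf of smooth differential forms under a ramified cover which is a homeomorphism onto $\tau$, and then to reduce both claims \ref{AdR:i} and \ref{AdR:ii} to well-known statements for smooth forms on a manifold with corners.

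First I would observe that the very definition of $\mathfrak{A}^{m}(\tau\cap U_p)$ identifies it with $\Omega^{m}\bigl(\pi_{r,+}^{-1}(\tau\cap U_p)\bigr)$, the space of smooth $m$-forms on the standard corner chart $\pi_{r,+}^{-1}(\tau\cap U_p)\subset \R_{\geq 0}^{r}\times \R^{n-r}$, with $\pi_r$ restricting to a homeomorphism onto $\tau\cap U_p$. Next I would verify that these local identifications glue consistently across overlapping charts: if $(z'_{\mu})$ is a second system of defining equations for the same facets through a point, then $z_{\mu}=f_{\mu} z'_{\mu}$ for some smooth positive function $f_{\mu}$, hence $\sqrt{z_{\mu}}=\sqrt{f_{\mu}}\sqrt{z'_{\mu}}$ with $\sqrt{f_{\mu}}$ smooth and positive. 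Consequently the local corner charts $\pi_{r,+}^{-1}(\tau\cap U_p)$ glue to a smooth manifold with corners $\widetilde{\tau}$, equipped with a continuous map $\pi\colon \widetilde{\tau}\to\tau$ that is a homeomorphism, and under which $\AdR(\tau)\cong \Omega^{\bullet}(\widetilde{\tau})$.

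Granting this identification, the isomorphism $H^{n}(\AdR(\tau))\cong H^{n}(\tau;\R)$ in \ref{AdR:i} follows from the classical de Rham theorem for the smooth manifold with corners $\widetilde{\tau}$, combined with the fact that $\pi$ induces an isomorphism on singular cohomology. Applying the same construction to each strict face $F\tau$ produces homeomorphic manifolds-with-corners $\widetilde{F\tau}$, which assemble compatibly along face inclusions into a space $\widetilde{\partial\tau}$ with $\AdR(\partial\tau)\cong \Omega^{\bullet}(\widetilde{\partial\tau})$, giving the corresponding statement on the boundary. For the surjectivity \ref{AdR:ii}, I would reduce to the classical extension statement for smooth forms: every smooth form on the boundary of a manifold with corners admits a smooth extension to the whole, via a collar of $\widetilde{\partial\tau}$ in $\widetilde{\tau}$ and a cutoff function, glued globally using a partition of unity subordinate to the charts $U_p$.

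The main technical obstacle is the gluing step: one must verify that the local ramified covers $\pi_{r,+}^{-1}(\tau\cap U_p)$ patch together into a \emph{smooth} manifold with corners $\widetilde{\tau}$, which amounts to checking that the transition functions between two systems of facet-defining coordinates induce smooth transitions in the square-root variables. This is guaranteed by the positivity and smoothness of the cocycle $\{f_{\mu}\}$ above, and once it is secured, everything else is either standard de Rham theory or the standard extension theorem for smooth forms on a manifold with corners.
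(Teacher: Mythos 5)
Your global reformulation is a legitimate alternative to the paper's argument, and the key gluing step is correct: two defining equations of the same facet differ by a smooth positive factor, so the square roots transform smoothly and the local covers $\pi_{r,+}^{-1}(\tau\cap U_p)$ patch into a smooth manifold with corners $\widetilde{\tau}$ homeomorphic to $\tau$, with $\AdR(\tau)\cong\Omega^\bullet(\widetilde{\tau})$. For \ref{AdR:i} the paper instead argues sheaf-theoretically: the complex of sheaves $\mathfrak{A}^\bullet$ is acyclic by the Poincar\'e lemma and hence resolves $\underline{\R}$ on $\tau$; your version replaces this by the classical de Rham theorem on $\widetilde{\tau}$. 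These are essentially equivalent, and since each cell is contractible either works. Note, however, that the statement for $\AdR(\partial\tau)$ is not covered by a bare de Rham theorem, because $\partial\tau$ is not a manifold but a colimit of faces; the paper handles it by induction on cell dimension (and remarks it is not actually needed), whereas your "assemble compatibly into $\widetilde{\partial\tau}$" would still require a Mayer--Vietoris or \v{C}ech argument over the face poset to conclude.

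The one genuine gap is in your treatment of \ref{AdR:ii}. The boundary of a manifold with corners is not a smooth hypersurface and does not admit a collar neighbourhood in the usual sense, so "collar plus cutoff" does not produce an extension: an element of $\AdR(\partial\tau)$ is a \emph{compatible collection} of forms $\omega_K$ on all faces, and near a corner $\{z_{\mu_1}=\cdots=z_{\mu_c}=0\}$ one must extend simultaneously off several facets in a way consistent with the prescribed restrictions to their intersections. Naively adding the facet-wise extensions overcounts on the overlaps. This is exactly where the paper's proof does its real work, via the inclusion--exclusion formula
\begin{equation*}
    \widetilde{\omega} = \sum_{\emptyset\neq K\subseteq\{1,\ldots,r\}} (-1)^{|K|-1}\,(\pi_K^*\omega_K)\prod_{\mu\in K}\psi(z_\mu),
\end{equation*}
whose alternating signs cancel the double-counting and yield $i_\mu^*\widetilde{\omega}=\omega_{\{\mu\}}$ on every facet. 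Your appeal to "the standard extension theorem for smooth forms on a manifold with corners" is pointing at the right statement (it is Sullivan's extension lemma, which the paper cites), but as written your argument does not prove it; you should either quote that result precisely or supply the inclusion--exclusion construction.
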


\begin{proof} Property \ref{AdR:i} follows since the complex  of sheaves $\mathfrak{A}^{\bullet}$  is acyclic by the usual `Poincar\'e lemma' (due to Volterra) and therefore defines a resolution of the constant sheaf $\underline{\R}$ on $\tau$.
The statement for $\partial \tau$ is  not actually  necessary, but follows by induction on the dimension of cells and repeated application of \cite[Theorem 3.3]{BrBord} to $\partial \tau$, which is the inductive limit over all faces of $\tau$.

For property~\ref{AdR:ii} we show that a section of $\AdR[m](\partial \tau)$ on the boundary $\partial \tau$ may be extended to $\tau$. Using a partition of unity subordinate to a cover by charts $U_p$ as above (with $p\in\partial\tau$), it suffices to show the extendability of forms $\omega\in\AdR(\partial\tau)$ that have compact support contained in some $U_p$. So let $z_1,\ldots,z_n$ be coordinates as in \eqref{eq:corner-chart}. For any non-empty subset $K\subseteq\{1,\ldots,r\}$, let $\pi_K\colon\tau\cap U_p\rightarrow \partial\tau\cap U_p$ denote the orthogonal projection of $z$ onto the face $\partial_K\tau=\tau\bigcap_{\mu\in K}\{z_\mu=0\}$. Recall that $\omega$ is by definition a collection of forms $\omega_K\in\AdR(\partial_K\tau)$ such that $\omega_{K\cup\{\mu\}}=i^*_\mu \omega_K$ for $\mu\notin K$, where we abuse notation $i_\mu^*$ for the ``restriction'' to $\{z_\mu=0\}$ irrespective of the domain. Let $\psi\colon\R\rightarrow \R$ be a smooth function with compact support and equal to $1$ in a neighbourhood of zero. Now define a form on $\tau$ with compact support by
\begin{equation*}
    \widetilde{\omega} \defas \sum_{\substack{K\subseteq\{1,\ldots,r\} \\ K\neq\emptyset}} (-1)^{|K|-1} (\pi_K^* \omega_K) \prod_{\mu\in K}\psi(z_\mu)
    \quad\in\quad \AdR(\tau).
\end{equation*}
Since $i_\mu^* (\psi(z_\mu)\pi_K^*\omega_K)=\pi_{K\setminus\{\mu\}}^*\omega_K$ for $\mu\in K$ and $i^*_\mu(\pi^*_K\omega_K)=\pi^*_K \omega_{K\cup\{\mu\}}$ for $\mu\notin K$, all contributions to $i_\mu^*\widetilde{\omega}$ from $K$ not containing $\mu$ cancel with the contribution from the summand $K\cup\{\mu\}$. The only term left over is $K=\{\mu\}$, proving $i_\mu^*\widetilde{\omega}=\omega_{\{\mu\}}$ for all $\mu\leq r$. It follows that $i^* \widetilde{\omega}=\omega$, since $\omega_K=i^*_{K\setminus\{\mu\}}\omega_{\{\mu\}}=i^*_{K\setminus\{\mu\}}i^*_\mu\widetilde{\omega}=i^*_K\widetilde{\omega}$ for any $\mu\in K$.
\end{proof}

For every face $F \sigma_Q^{\BB}$ of $\sigma_Q^{\BB}$,  we define $\AdR(F \sigma_Q^{\BB};\ob)$ to be the complex of differential forms with integrable square root singularities as considered above. They are to be interpreted as taking values in the trivial bundle $\ob_Q=\R$ on $F \sigma_Q^{\BB}$.

\begin{definition}
Define the complex of  smooth orientation forms  on  $  | \LA_g^{\trop,\BB} |$  to be  the projective limit:
\begin{equation*}
\AdR\left(   | \LA_g^{\trop,\BB} | ; \ob      \right)= \varprojlim_{\mathcal{D}_g^{\perf,\BB} }  \AdR\left(     F\sigma^{\BB}_Q, \ob   \right) \ . 
\end{equation*}
By taking the limit over the faces contained in $|\partial \LA_g^{\trop,\BB}|$, we may similarly define    smooth orientation forms on the boundary $  |\partial \LA_g^{\trop,\BB} |$. The relative de Rham complex  $\AdR( | \LA_g^{\trop,\BB} | ,| \partial\LA_g^{\trop,\BB} | ; \ob )$  is defined to be the mapping cone of the restriction map  $  \AdR( | \LA_g^{\trop,\BB} | ; \ob )\rightarrow  \AdR(| \partial\LA_g^{\trop,\BB} | ; \ob )$.
\end{definition}

An element of  $\AdR (   | \LA_g^{\trop,\BB} | ; \ob  )$ is a compatible collection of differential forms on every face of every polyhedron $\sigma_Q^{\BB}$, allowing integrable square root singularities along its boundary. The compatibility means that they glue together consistently with respect to inclusions of faces and transform with the correct sign rule with respect to elements of $\GL_g(\Z)$.

One may show,  as in \cite[\S3.4-3.5]{BrBord},  that the complex $ \AdR(|\LA_g^{\trop,\BB} | ; \ob )$ computes the cohomology of  $ | \LA_g^{\trop,\BB} | $ with coefficients in $\ob$, and we deduce that
\begin{equation*}
 H^n_{\dR, c}    ( L \SPD{g}  / \GL_g(\Z); \ob) \cong   H^n(  \AdR( | \LA_g^{\trop,\BB} | ,| \partial\LA_g^{\trop,\BB} | ; \ob ))\ 
\end{equation*}
by interpreting relative (twisted) cohomology as compactly supported cohomology.
Closed elements in $ \AdR(|\LA_g^{\trop,\BB} | ; \ob )$ pair with twisted cellular chains on $|\LA_g^{\trop,\BB} |$
in the usual manner via integration: the key point being that the square root singularity $(\td z)/\sqrt{z}$ is integrable for $z\in [0,1]$.\footnote{In particular, after pulling back along $\pi_r$, the forms are smooth in the coordinates $x$ and the usual Stokes' theorem for smooth forms applies. In the coordinates $z$ on $\tau$ it takes the form $\int_\sigma \td \omega=\int_{\partial \sigma} i^* \omega$ where $i^*\colon \AdR(\tau)\rightarrow \AdR(\partial \tau)$ is the ``restriction'' which locally sets $\sqrt{z_i}$ and $(\td z_i)/\sqrt{z_i}$ to zero for $i\leq r$, and $\sigma$ is any singular chain on $\tau$ with boundary contained in $\partial\tau$.}

\subsubsection{Extending the Pfaffian invariant forms to the boundary}

The Pfaffian invariant forms $\pff^{2n}$  define  closed orientation forms on $L \SPD{2n}/\GL_{2n}(\Z)$.
We  first show that they extend  to the compactification.

\begin{proposition}  \label{prop: gammaextendstoboundaryvanishing} Let $n\geq 1$. The collection $\{\pi^*_{\BB_Q} \pff^{2n}\}$ of the pullbacks of the Pfaffian invariant forms defines a closed orientation form
\begin{equation} \label{gammaextends}
\widetilde{\pff}^{2n}  \ \in \    \AdR\left(   | \LA_{2n}^{\trop,\BB} | ; \ob      \right) .
\end{equation}
Furthermore, the restriction\footnote{namely, applying the  restriction map  $i^*$ to facets of each polyhedron in the sense defined above}  of the form   $\widetilde{\pff}^2$ to the boundary $|\partial \LA_{2}^{\trop, \BB}|$, or of
\[ \widetilde{\pff}^{2n} \wedge \widetilde{\beta}^{4n-3} \ , \ \text{for}\ n>1 \]
 to the boundary $|\partial \LA_{2n}^{\trop, \BB}|$, vanishes.
\end{proposition}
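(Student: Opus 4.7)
My plan is to split the argument into two parts: assembling the collection $\{\pi_{\BB_Q}^*\pff^{2n}\}$ into a global section $\widetilde{\pff}^{2n}\in\AdR(|\LA_{2n}^{\trop,\BB}|;\ob)$, and then analysing its restriction (wedged with $\widetilde{\beta}^{4n-3}$ when $n>1$) to each boundary divisor of $|\partial\LA_{2n}^{\trop,\BB}|$. Both parts reduce to the local analyses of \cref{lem:pff-asy} and \cref{lem:can-asy}. For the extension, each boundary divisor of $\sigma_Q^{\BB}$ arises from a blow-up of a stratum $\Pro(\Quad(V/K))$ parametrising quadratic forms with rational kernel $K\subset V$. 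Choosing a basis of $V$ adapted to $K$, and using $z$ as local coordinate on the normal direction to the exceptional divisor, the quadratic form $X$ takes the shape \eqref{eq:SPD-asy-chart} with $n_1=\dim K$ and $n_2=2n-n_1$. By \cref{lem:pff-asy}, the pullback $\pi_{\BB_Q}^*\pff^{2n}$ either extends smoothly across $\{z=0\}$ (when $n_1,n_2$ are both even) or has the form $\sqrt{z}\,\alpha+(\td\sqrt{z})\wedge\beta$ with $\alpha,\beta$ smooth in $z$ (when $n_1,n_2$ are both odd); in both cases this is precisely the integrable square-root singularity that is allowed in $\AdR$ after passing to the coordinate $x=\sqrt{z}$. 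Iterating along a chain of blow-ups handles higher-codimension faces. Closedness is inherited from the interior of $\SPD{2n}$, and the orientation-bundle transformation law follows from part (iii) of \cref{lem:pfprops} together with the $\GL_{2n}(\Z)$-equivariance of the bordification construction of \cite{BrBord}.

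For the vanishing, I would argue one boundary divisor at a time, since vanishing on a single divisor through a face implies vanishing on the whole face. Since $n_1+n_2=2n$ is even, $n_1$ and $n_2$ necessarily have the same parity. In the odd--odd case, $\pi_{\BB_Q}^*\pff^{2n}=\sqrt{z}\,\alpha+(\td\sqrt{z})\wedge\beta$ restricts to zero on $\{z=0\}$ under the rule of $\AdR$ that formally sets $\sqrt{z}=\td\sqrt{z}=0$. When $n=1$ this is the only possibility (forcing $n_1=n_2=1$), which proves the first vanishing claim. For $n>1$, what remains is the even--even case. Here $n_1,n_2\geq 2$ are both even with $n_1+n_2=2n$, so $n_1,n_2\leq 2n-2<2n-1=2(n-1)+1$. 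By \cref{lem:can-asy} the pullback $\pi_{\BB_Q}^*\beta^{4n-3}$ restricts to $\beta_A^{4n-3}+\beta_C^{4n-3}$, and both summands vanish identically on $\SPD{n_1}$ and $\SPD{n_2}$ respectively by the general fact (cited just above \cref{ex:vol-c}) that $\beta^{4k+1}_X\equiv 0$ on $\SPD{m}$ whenever $m<2k+1$. Hence $\widetilde{\beta}^{4n-3}$ already restricts to zero on every even--even boundary divisor, and so the product $\widetilde{\pff}^{2n}\wedge\widetilde{\beta}^{4n-3}$ vanishes on every boundary divisor, and therefore on every face of $|\partial\LA_{2n}^{\trop,\BB}|$.

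The main technical difficulty will be to verify that the local model \eqref{eq:SPD-asy-chart} genuinely captures the geometry in every chart of the iterated blow-up $\sigma_Q^{\BB}$, and that the local extensions glue consistently in the colimit $\varinjlim_{\mathcal{D}_{2n}^{\perf,\BB}}$ both under face inclusions and under the $\GL_{2n}(\Z)$-action. Concretely, one has to check that changes of basis adapted to different kernels $K$, and the ramified covers $\pi_r$ used to introduce $x=\sqrt{z}$, are compatible in the sense that the square-root data defining $\AdR$ is preserved under transition. This should follow from the naturality of $\pff^{2n}$ under $X\mapsto g^\Transpose X g$ and from the $\GL_{2n}(\Z)$-equivariance of the blow-ups constructed in \cite{BrBord}, but requires some careful bookkeeping about the order in which exceptional divisors are introduced along nested chains of subspaces.
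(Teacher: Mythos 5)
Your proposal is correct and follows essentially the same route as the paper: the extension rests on \cref{lem:pff-asy} applied in the block chart \eqref{eq:SPD-asy-chart} for a single blow-up along $\Pro(\Quad(V/K))$ and then iterated, and the vanishing splits by the parity of $\dim K$, with the odd--odd case killed by the $\sqrt{z}$-restriction rule of $\AdR$ and the even--even case killed because $\beta^{4n-3}$ vanishes on matrices of size $\leq 2n-2<2(n-1)+1$ (via \cref{lem:can-asy} and \cite[Proposition~4.5]{invariant}). The only presentational difference is that you argue facet by facet and isolate the $n=1$ case by noting only the odd--odd splitting occurs, whereas the paper phrases the boundary as products $\sigma_1^{\BB}\times\cdots\times\sigma_r^{\BB}$ and reduces to $r=2$; the substance is the same.
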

\begin{proof}
 The proof follows \cite[Theorem 13.10]{BrBord}. By an inductive argument, it will  suffice in the first instance to consider a single blow-up
\[ \pi \colon \Pro \To \Pro(\Quad(V))\]
along a subspace $\Pro(\Quad(V/K))$, where $V$ is a vector space of dimension $2n$ and $K\subset V$ a non-trivial subspace. The exceptional divisor  $\mathcal{E}$ is canonically isomorphic to $\Pro(\Quad(V/K)) \times \Pro( \Quad(V)/\Quad(V/K))$.
Following \emph{loc. cit.} Proposition 13.7,  we verify that the form $\pi^* \pff^{2n} $ has no poles along the exceptional divisor. To see this, note that  since the orientation bundle is trivial on the contractible region $\det(M)>0$ for $M \in   \Pro(\Quad(V))(\R)$, we may treat $\pff^{2n}$ as an ordinary differential form for the purposes of the following argument.
Now consider the local coordinates for $\pi$  given in  \emph{loc. cit.}, Proposition 9.10, relative to a choice of splitting
$V\cong  K \oplus C$.
The map $\pi^*$   acts on matrix entries of  matrices in block matrix form as follows:
\[ \pi^* \begin{pmatrix} M_1 & M_{12} \\ M^\Transpose_{12} & M_{2} \end{pmatrix} =  \begin{pmatrix} z M_1 &  z M_{12} \\ z M^\Transpose_{12} & M_{2} \end{pmatrix} \ \]
where $M_i$ is an $n_i \times n_i$ square matrix for $i=1,2$, and $n_1+n_2=2n.$
 The exceptional locus  $\mathcal{E}$ is given by  $z=0$.  Taking $z>0$, and invoking \cref{lem:pff-asy}, we deduce that
\begin{equation*}
\pi^* \pff^{2n} =  \begin{cases}
\pff^{n_1}(M_1) \wedge \pff^{n_2} (M_2)  + \asyO(z) \ , &\quad \hbox{ if } \dim K = n_1 \hbox{ is even} \\
\sqrt{z} \wedge \alpha  + (\td \sqrt{z}) \wedge \beta, & \quad  \hbox{ if } \dim K = n_1 \hbox{ is odd}
\end{cases}
\end{equation*}
where  $\alpha$, $\beta$ are smooth. Thus, $\pi^*\pff^{2n}$ is either smooth on $z=0$ (for $n_1$ even), or becomes smooth on the cover $z=x^2$ (for $n_1$ odd). 

 Let  $X_{V/K} \subset \Pro(\Quad(V/K)) (\R)$ denote the locus consisting of quadratic forms with positive determinant, and let
 $X\big|_K \subset   \Pro( \Quad(V)/\Quad(V/K)) $ denote the subspace of quadratic forms whose restriction to $K$ is positive definite.
 The computation  above implies that  the restriction of $\pi^* \pff^{2n}$ to the set
 \[  X_{\mathcal{E}}\defas X\big|_K   \times  X_{V/K}      \quad  \subset         \quad     \Pro( \Quad(V)/\Quad(V/K))(\R)  \times  \Pro(\Quad(V/K)) (\R)  \cong    \mathcal{E}(\R)   \]  vanishes if   $\dim K$ is odd, and equals
 \[ \pi^*\pff^{2n}\big|_{  X_{\mathcal{E}} } =\pi_K^* (\pff^{n_1}) \wedge  \pff^{n_2}   \]
 if $\dim K= n_1$ is even, 
 where $\pi_K \colon X\big|_K\rightarrow \Pro(\Quad(K)) (\R)$ denotes restriction of quadratic forms to $K$ (on block matrices of the form described above, it sends $M\mapsto M_1$). This completes the description of the behaviour of $\pff^{2n}$ with respect to a single blow-up.

 Recall that $\pi_{\BB_Q}$ is a composition of blow-ups.
 By   \cite[Remark 5.3]{BrBord} it is enough to consider a sequence of blow ups of the type considered above in order to understand the behaviour of $\widetilde{\pff}^{2n}$ along exceptional divisors. By iterating,
  we deduce that the restriction of $\widetilde{\pff}^{2n} = \pi_{\BB_Q}^*
 \pff^{2n}$ to any face of $\sigma_Q^{\BB}$ either vanishes, or equals an exterior   product of a certain number of forms $\pff^{2k}$ for $k<n$. In particular, it is finite and $\widetilde{\pff}^{2n} $ extends  to the boundary of $\sigma_Q^{\BB}$. It defines a section of the trivial bundle $\sigma_Q^{\BB} \times \ob_Q = \sigma_Q^{\BB} \times \R $ over $\sigma_Q^{\BB}$ and similarly for all its faces.
   The functoriality of   $\pff^{2n}$ implies that the forms $\widetilde{\pff}^{2n} $ glue together consistently for all such $\sigma^{\BB}_{Q}$ and are compatible with both face maps, and morphisms induced by $\GL_{2n}(\Z)$ with the transformation rule \eqref{etatransformationlaw}.
    It follows from these calculations and   \cref{lem:pff-asy}  that $\widetilde{\pff}^{2n}$ defines an element of 
$ \AdR (   | \LA_g^{\trop,\BB} | ; \ob )$. 
   The claim \eqref{gammaextends} follows.

 For the last part, consider the  restriction of the form $\widetilde{\pff}^{2n} \times \widetilde{\beta}^{4n-3}$ to $|\partial \LA_{2n}^{\trop,\BB}|$. The boundary facets of $|\partial \LA_g^{\trop,\BB}|$ are isomorphic to  products of blow-ups $\sigma^{\BB}_1 \times \ldots \times \sigma_{r}^{\BB}$.  By iterating,  it suffices to consider the case  $r=2$. By computing a single blow-up of $V$ along $K$,  as above, the   restriction of this differential form satisfies
\[   \left( \widetilde{\pff}^{2n} \times \widetilde{\beta}^{4n-3} \right)\Big|_{\sigma^{\BB}_1 \times \sigma^{\BB}_2}  =    \left( \widetilde{\pff}^{2n_1} \wedge \widetilde{\beta}^{4n-3} \right)\Big|_{\sigma^{\BB}_1}  \wedge \widetilde{\pff}^{2n_2}\Big|_{\sigma^{\BB}_2}  +   \widetilde{\pff}^{2n_1}\Big|_{\sigma^{\BB}_1} \wedge     \left( \widetilde{\pff}^{2n_2} \wedge \widetilde{\beta}^{4n-3} \right) \Big|_{\sigma^{\BB}_2}
 \]
 if the dimension  $\dim K=2n_1$ is even, and vanishes if not (since if $\dim K$ is odd, so too is $\dim (V/K)$).      In the former case, $\widetilde{\beta}^{4n-3}$ already vanishes on matrices of rank   $ 2n_1, 2n_2 \leq n-2$ because it is of compact type \cite[Theorem 13.10]{BrBord}. Therefore  in all cases the restriction of $\widetilde{\pff}^{2n} \times \widetilde{\beta}^{4n-3}$ to faces vanishes.  The case   $\widetilde{\pff}^{2}$ is similar.  \end{proof}

\begin{corollary}  Let $n\geq 2$. The orientation forms $\pff^{2n}\wedge \beta^{4n-3}$ admit  compactly supported representatives on $L \SPD{2n} /\GL_{2n}(\Z)$ which we denote by
\begin{equation*}
\left(\pff^{2n} \wedge \beta^{4n- 3}\right)_c 
\end{equation*}
The same holds for $\pff^2$ on $L\SPD{2}/\GL_2(\Z)$.
\end{corollary}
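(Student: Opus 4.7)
The plan is to exploit the identification of $H^\bullet_c(L\SPD{2n}/\GL_{2n}(\Z);\ob)$ with the relative de Rham cohomology of the pair $(|\LA_{2n}^{\trop,\BB}|, |\partial\LA_{2n}^{\trop,\BB}|)$, defined as the cohomology of the mapping cone of the restriction map on $\AdR$. By \cref{prop: gammaextendstoboundaryvanishing}, the form $\omega \defas \widetilde{\pff}^{2n}\wedge\widetilde{\beta}^{4n-3}$ (respectively $\omega \defas \widetilde{\pff}^{2}$ when $n=1$) lies in $\AdR(|\LA_{2n}^{\trop,\BB}|;\ob)$, is closed, and restricts to zero on the boundary. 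Thus the pair $(\omega,0)$ is a cocycle in the mapping cone, producing a class $[\omega] \in H^\bullet_c(L\SPD{2n}/\GL_{2n}(\Z);\ob)$.

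To upgrade this cocycle to a form genuinely compactly supported in the interior, I would modify $\omega$ by an exact form supported near infinity. In a collar neighbourhood $U$ of $|\partial\LA_{2n}^{\trop,\BB}|$, one works locally in the charts \eqref{eq:corner-chart}, pulls back along the branched cover $\pi_r$, and applies a relative Poincar\'e lemma: a closed form on a neighbourhood of $\R_{\geq 0}^r\times\R^{n-r}$ whose pullback to every facet $\{x_\mu = 0\}$ vanishes admits a primitive with the same vanishing property. These local primitives glue, using property~\ref{AdR:ii} (surjectivity of restrictions to faces) and a $\GL_{2n}(\Z)$-equivariant partition of unity subordinate to a cover of the boundary, into a global primitive $\xi \in \AdR(U;\ob)$ with $d\xi = \omega|_U$ and $i^*\xi = 0$. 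Choosing a smooth cutoff $\chi$ equal to $1$ in a smaller neighbourhood of the boundary and supported in $U$, the form
\[
(\pff^{2n}\wedge\beta^{4n-3})_c \defas \omega - d(\chi\xi)
\]
is closed, cohomologous to $\omega$, and vanishes where $\chi = 1$. Hence it has compact support in the open stratum $|\LA_{2n}^{\circ,\trop}| = L\SPD{2n}/\GL_{2n}(\Z)$. The construction for $\pff^2$ in the case $n=1$ is identical.

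The main technical point is the relative Poincar\'e lemma within the complex $\AdR$, which permits square root singularities along the boundary divisor. Since by construction $\AdR(\tau)$ is the pushforward under $\pi_r$ of the sheaf of smooth forms on an open neighbourhood of $\pi_{r,+}^{-1}(\tau\cap U_p)\subset\R^n$, after passing to the coordinates $x_i = \sqrt{z_i}$ the claim reduces to the classical Poincar\'e lemma with vanishing boundary values for smooth forms on a manifold with corners, provable by iterating the standard cone homotopy in the variables $x_1,\ldots,x_r$ one at a time. The coefficients in $\ob$ introduce no additional difficulty, as $\ob$ is trivial over each contractible polyhedral face, and the equivariance under $\GL_{2n}(\Z)$ with the appropriate sign rule is preserved throughout the construction by functoriality of the complex $\AdR$ on $\mathcal{D}_{2n}^{\perf,\BB}$.
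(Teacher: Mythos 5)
Your first paragraph is exactly the paper's proof: by \cref{prop: gammaextendstoboundaryvanishing} the form extends to $|\LA_{2n}^{\trop,\BB}|$ and restricts to zero on $|\partial\LA_{2n}^{\trop,\BB}|$, hence defines a cocycle in the mapping cone computing $H^\bullet_{\dR}(|\LA_{2n}^{\trop,\BB}|,|\partial\LA_{2n}^{\trop,\BB}|;\ob)\cong H^\bullet_{\dR,c}(L\SPD{2n}/\GL_{2n}(\Z);\ob)$, which is all the paper means by a ``compactly supported representative'' (and all that is used later, in the cup-product argument). Your second and third paragraphs, which upgrade the relative cocycle to a literally compactly supported form, are therefore superfluous; they are also slightly imprecise at the gluing step, since $\sum_\alpha\chi_\alpha\xi_\alpha$ built from local primitives via a partition of unity is not itself a primitive of $\omega|_U$ (one picks up error terms $\td\chi_\alpha\wedge\xi_\alpha$), so one should instead invoke a global homotopy operator coming from a collar retraction of $U$ onto the boundary, or a \v{C}ech--de Rham descent. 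This is fixable and does not affect the validity of the core argument.
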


\begin{proof}Use the fact that
$H_{\dR, c}^{\bullet} (  L\SPD{2n} /\GL_{2n}(\Z); \ob ) \cong   H_{\dR}^{\bullet}  ( | \LA_{2n}^{\trop, \BB} | ,  | \partial \LA_{2n}^{\trop, \BB} | ,\ob)$. By \cref{prop: gammaextendstoboundaryvanishing} the forms $\widetilde{\pff}^{2n} \wedge \widetilde{\beta}^{4n-3}$ define relative cohomology classes on $| \LA_{2n}^{\trop, \BB} | $ since they vanish along $| \partial \LA_{2n}^{\trop, \BB} |$.
\end{proof}

\subsubsection{Proof of theorem \ref{intro: thmcohomology}}
The strategy follows that of \cite{BrBord}.
The relative cohomology class of the volume  form $\eta^{2n}$ is non-zero in the group
\begin{equation*}
H^{d_{2n}-1}_{\dR} \Big( |\LA_{2n}^{\trop,\BB} |, |\partial\LA_{2n}^{\trop,\BB} | , \ob\Big)
\cong
H^{d_{2n}-1}_{c,\dR} \Big( |\LA_{2n}^{\circ, \trop} | ,\ob  \Big)
\end{equation*}
  (where $d_{2n}$ defined in \cref{thm:introSymembeds} is the dimension of $\SPD{2n}$),  since it pairs non-trivially with the relative fundamental class.
 Let $\omega \in \Omega_c(2n-1)$ be of compact type and let $\beta= \star (\pff^{2n} \wedge \omega)$. By \cref{prop: gammawedgestoeta}, the form $\beta$ is the image of an element of $\Omega(2n-1)$.  By definition of the Hodge star operator, we have
\begin{equation} \label{betagammaeta} \beta \wedge (\pff^{2n} \wedge \omega) = \lambda\, \eta^{2n}\end{equation}
where $\lambda=    \langle \pff^{2n} \wedge \omega,   \pff^{2n} \wedge \omega\rangle  $ is non-zero  by the last part of \cref{prop: gammawedgestoeta}. The identity \eqref{betagammaeta} remains true after pulling back to the blow-up $|\LA_{2n}^{\trop,\BB} |$ and extending to the boundary, so we may view the pullbacks as elements in:
\[  \widetilde{\beta} \in \AdR \left(|\LA_{2n}^{\trop,\BB} | \right) \quad , \quad   \widetilde{\pff}^{2n} \wedge \widetilde{\omega} \in  \AdR \left(|\LA_{2n}^{\trop,\BB} | ,  |\partial \LA_{2n}^{\trop,\BB} |   ,\ob\right)    \]
 where the fact that $\widetilde{\beta}$ extends to $|\LA_{2n}^{\trop,\BB} |$ was established in \cite{BrBord}, and the statement for $ \widetilde{\pff}^{2n} \wedge \widetilde{\omega}$ follows from \cref{prop: gammaextendstoboundaryvanishing}, as does the fact that
 \begin{equation*} 
 \widetilde{\eta}_{2n}  \in \AdR\left( |\LA_{2n}^{\trop,\BB} | ,  |\partial \LA_{2n}^{\trop,\BB} |   ,\ob \right) .
 \end{equation*}

The exterior product  of  forms induces a cup product in  relative   de Rham cohomology
\begin{equation*}
H^{m}_{\dR} \Big( |\LA_{2n}^{\trop,\BB} |\Big)    \otimes   H^{\ell}_{\dR} \Big( |\LA_{2n}^{\trop,\BB} |, |\partial\LA_{2n}^{\trop,\BB} | , \ob \Big) \rightarrow
H^{m+\ell}_{\dR} \Big( |\LA_{2n}^{\trop,\BB} |, |\partial\LA_{2n}^{\trop,\BB} | , \ob \Big)
\end{equation*}
Identity \eqref{betagammaeta} implies the relation on cohomology classes:
\begin{equation*}
     \big[\widetilde{\beta} \big] \wedge  \big[ (\pff^{2n} \wedge \omega)_c\big] = \lambda \big[(\eta^{2n})_c\big]  \ .
\end{equation*}
Since $\lambda [(\eta^{2n})_c]$ is non-zero, this implies that both
\begin{equation*}
\big[\widetilde{\beta}\big] \in   H^{m}_{\dR} ( |\LA_{2n}^{\trop,\BB} |)
\qquad \text{and} \qquad  
\big[(\pff^{2n} \wedge \omega)_c\big]  \in     H^{\ell}_{\dR} ( |\LA_{2n}^{\trop,\BB} |, |\partial\LA_{2n}^{\trop,\BB} | , \ob)
\end{equation*}
are non-zero. Identifying the latter group with $H_c^{\ell}(|\LA_{2n}^{\circ, \trop}|;\ob)$ completes the proof.

\begin{remark}
  This argument proves in passing that the natural map $\Omega^{\bullet}_{nc}(2n-1) \rightarrow  H^{\bullet} (\SPD{2n}/\GL_{2n}(\Z);\R) $ is injective,
which was also proven in \cite{BrBord}. Whilst the existence of the classes \eqref{intro:gammaformsinject} could be deduced from \emph{loc.\ cit.} using Poincar\'e duality, their description using Pfaffian invariant forms is new.
\end{remark}

\subsection{Primitivity  and proof of theorem \ref{thm:introSymembeds}}
Block direct sum of matrices \eqref{eqn: blocksumonpositivedefn} defines a map
\begin{equation*}
\bds_{m,n}\colon  \SPD{2m}/\GL_{2m}(\Z) \times \SPD{2n}/\GL_{2n}(\Z) \longrightarrow \SPD{2m+2n}/\GL_{2m+2n}(\Z)\ , 
\end{equation*}
which induces an isomorphism of orientation bundles  $\bds^*_{m,n}\,  \ob =  \ob \boxtimes \ob $
where $\boxtimes$ denotes the external tensor product. Since $\bds_{m,n}$ is proper\footnote{Block direct sum defines a continuous map $\mathcal{A}_{g}^{\trop}\times \mathcal{A}_h^{\trop} \rightarrow \mathcal{A}_{g+h}^{\trop}$, which  restricts to $\bds_{m,n}$ when $g=2m,h=2n$. Since the $\mathcal{A}^{\trop}_{r}$ are compact, the former is a proper map with compact fibers. The fiber of a point in the interior   $\SPD{g+h}/\GL_{g+h}(\Z)$ of $\mathcal{A}_{g+h}^{\trop}$ is contained in  $\SPD{g}/\GL_{g}(\Z) \times \SPD{h}/\GL_{h}(\Z) $ since $\det(X\oplus Y)\neq 0$ implies  $\det(X), \det(Y)\neq  0$. This implies that  $\bds_{m,n}$ has compact fibers. } 
 it  induces a morphism
\begin{multline*}
b_{m,n}^*\colon    H^{\bullet}_c\left( \SPD{2m+2n}/\GL_{2m+2n}(\Z) ;\ob\right) \To   \\ H^{\bullet}_c\left( \SPD{2m}/\GL_{2m}(\Z);\ob\right)  \otimes_{\R}    H^{\bullet}_c\left( \SPD{2n}/\GL_{2n}(\Z);\ob \right)  \ .
\end{multline*}
\begin{proposition}\label{prop:vanish}
For all $m,n\geq 1$, and $\omega \in \Omega_c(2m+2n-1)$ of compact type the class
$b^*_{m,n}  [(\pff^{2m+2n} \wedge \omega)_c]$  vanishes in $ H^{\bullet}_c\left( \SPD{2m}/\GL_{2m}(\Z);\ob\right)  \otimes_{\R}    H^{\bullet}_c\left( \SPD{2n}/\GL_{2n}(\Z);\ob \right) $.
\end{proposition}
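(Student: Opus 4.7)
The strategy is to prove the stronger form-level statement that $\iota_{2m,2n}^*(\pff^{2m+2n}\wedge\omega)$ vanishes identically on $\SPD{2m}\times\SPD{2n}$, where $\iota_{2m,2n}$ is the block direct sum map \eqref{eqn: blocksumonpositivedefn} which descends to $\bds_{m,n}$. This immediately yields the cohomological statement after passing to the bordification.

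By \cref{lem:pfprops}(v) the Pfaffian is multiplicative under direct sum, so $\iota_{2m,2n}^*\pff^{2m+2n}=\pff^{2m}\wedge\pff^{2n}$. By additivity of the trace (the primitivity of $\beta^{4k+1}$ in the Hopf algebra $\cfs$ of \cref{sec:Hopf-can}), $\iota_{2m,2n}^*\beta^{4k+1}=\beta^{4k+1}_{X_1}+\beta^{4k+1}_{X_2}$. Since $\omega\in \Omega_c(2m+2n-1)$ is of compact type, I can factor it as $\omega=\omega'\wedge\beta^{4(m+n)-3}$ with $\omega'\in\Omega_{nc}(2m+2n-1)$ (using $\beta^{4(m+n)-3}\wedge\beta^{4(m+n)-3}=0$). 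The crucial observation is that the top generator pulls back to zero: each summand $\beta^{4(m+n)-3}_{X_i}$ vanishes by the degree bound $\beta^{4k+1}_X=0$ on $\SPD{d}$ whenever $d<2k+1$, applied with $k=m+n-1$ and $d=2m$ (requires $n\geq 1$) or $d=2n$ (requires $m\geq 1$). Hence $\iota_{2m,2n}^*(\pff^{2m+2n}\wedge\omega)=0$ on the open locus.

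To conclude at the level of cohomology, I would transport the vanishing to the bordification. The block direct sum descends to a proper map $\bds_{m,n}$ that extends to a continuous map of bordifications $|\LA^{\trop,\BB}|$, sending boundary to boundary and intertwining the orientation bundle $\ob$ with the external product $\ob\boxtimes \ob$. The compactly supported class $[(\pff^{2m+2n}\wedge\omega)_c]$ is represented in $\AdR(|\LA_{2m+2n}^{\trop,\BB}|,|\partial\LA_{2m+2n}^{\trop,\BB}|;\ob)$ by the extended form $\widetilde{\pff}^{2m+2n}\wedge\widetilde{\omega}$, which vanishes on $|\partial\LA_{2m+2n}^{\trop,\BB}|$ by \cref{prop: gammaextendstoboundaryvanishing}. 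Its pullback vanishes on the dense interior of the product bordification by the previous paragraph; since the pulled-back form has at worst integrable square root singularities (hence is continuous in the coordinates $x_i=\sqrt{z_i}$), this vanishing extends over the entire product bordification. The pullback class is therefore zero in the relative cohomology of the product, which is identified with $H^\bullet_c(\SPD{2m}/\GL_{2m}(\Z);\ob)\otimes_\R H^\bullet_c(\SPD{2n}/\GL_{2n}(\Z);\ob)$ via the Künneth formula.

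The main obstacle will be making the last paragraph rigorous: one must verify that $\iota_{2m,2n}$ lifts functorially to a morphism of the iterated blow-ups $|\LA^{\trop,\BB}|$, that the admissible charts \eqref{eq:corner-chart} on source and target are compatible, and that pullback preserves the complex $\AdR(\cdot;\ob)$ of forms with square root singularities. Given the asymptotic analysis of \cref{sect: Asymptotics} and the blow-up machinery imported from \cite{BrBord}, this should be essentially automatic, but writing it out explicitly is the chief technical task.
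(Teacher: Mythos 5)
Your proof is correct and follows essentially the same route as the paper: the key step in both is that the compact-type factor $\beta^{4(m+n)-3}$ pulls back under the block direct sum to $\beta^{4(m+n)-3}_{X_1}+\beta^{4(m+n)-3}_{X_2}$, each summand of which vanishes by the degree bound of \cite[Proposition~4.5]{invariant}, so the whole form restricts to zero on block-diagonal matrices. The paper treats the passage from this form-level vanishing to the vanishing of the class under $b_{m,n}^*$ as immediate, whereas you flag it as the chief technical task; your extra care is harmless but not needed for the argument the paper intends.
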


\begin{proof} It suffices to consider the case  $\omega = \beta^{4m+4n-3}$.
   The statement follows from the fact that, for any $2m\times 2m$ matrix $X$ and $2n\times 2n$ matrix $Y$, the  form
    \[ \left(\pff^{2m+2n} \wedge  \beta^{4m+4n-3}\right)_{X\oplus Y} =  \left( \pff^{2m}\wedge \beta^{4m+4n-3}\right)_X \wedge \pff^{2n}_Y  +  \pff^{2m}_X \wedge  \left( \pff^{2n}\wedge \beta^{4m+4n-3}\right)_Y   \]
    vanishes since $\beta^{4m+4n-3}_X=\beta^{4m+4n-3}_Y =0$, see \cite[Proposition~4.5]{invariant}.
\end{proof}

By \cite{AMP}, the  bigraded vector space
\[ H_{\det, \R} = \bigoplus_{n\geq 0}     H^{\bullet}_c\left( \SPD{2n}/\GL_{2n}(\Z) ;\ob\right)\]
admits a structure of   cocommutative bigraded Hopf algebra over $\R$.
\begin{corollary} \label{cor: primitives} For all $\omega \in \Omega_c(2n-1)$, the class
$ [(\pff^{2n} \wedge \omega)_c]  \in H_{\det,\R} $
    is primitive.
\end{corollary}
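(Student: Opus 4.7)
The plan is to deduce the corollary almost immediately from the preceding proposition, once the Hopf algebra coproduct on $H_{\det,\R}$ is identified with the collection of block-sum pullbacks $b^*_{m,n}$. In any bigraded coalgebra, a class $x$ is primitive if and only if the reduced coproduct $\overline{\Delta}(x) := \Delta(x) - 1\otimes x - x\otimes 1$ vanishes. So the task reduces to showing that each component of $\overline{\Delta}$ applied to $[(\pff^{2n}\wedge\omega)_c]$ is zero.

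The first step is to recall, from \cite{AMP}, how the coproduct on $H_{\det,\R}$ is constructed. It is induced by the block direct sum operation on positive definite matrices, i.e.\ by the proper maps $\bds_{m,n}\colon \SPD{2m}/\GL_{2m}(\Z)\times\SPD{2n}/\GL_{2n}(\Z)\longrightarrow \SPD{2m+2n}/\GL_{2m+2n}(\Z)$ together with the compatibility $\bds^*_{m,n}\,\ob = \ob\boxtimes \ob$. Under the resulting Künneth map, the $(m,n)$-bigraded component of $\overline{\Delta}$ on $H^\bullet_c(\SPD{2m+2n}/\GL_{2m+2n}(\Z);\ob)$ with $m,n\geq 1$ is (up to a sign convention) precisely the map $b^*_{m,n}$ defined above. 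This is the statement one needs to extract from \cite{AMP}; it follows from the functoriality of the construction of the Hopf structure in terms of the Steinberg module together with direct sums of lattices.

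Granting this identification, the corollary is immediate: by the preceding proposition, for every splitting $n = m + n'$ with $m,n'\geq 1$ and every $\omega\in\Omega_c(2n-1)$ of compact type, we have
\begin{equation*}
    b^*_{m,n'}\bigl[(\pff^{2n}\wedge \omega)_c\bigr] = 0
\end{equation*}
in $H^\bullet_c(\SPD{2m}/\GL_{2m}(\Z);\ob)\otimes_\R H^\bullet_c(\SPD{2n'}/\GL_{2n'}(\Z);\ob)$. Hence every non-trivial bigraded component of $\overline{\Delta}[(\pff^{2n}\wedge\omega)_c]$ vanishes, so $\Delta[(\pff^{2n}\wedge\omega)_c] = 1\otimes[(\pff^{2n}\wedge\omega)_c] + [(\pff^{2n}\wedge\omega)_c]\otimes 1$, which is exactly the definition of primitivity.

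The only genuine work is therefore the identification of the coproduct in \cite{AMP} with the sum of the maps $b^*_{m,n}$, which is the main obstacle but should be a bookkeeping exercise given that both constructions are built out of block direct sum of lattices (compatibly with the determinant / orientation twist).
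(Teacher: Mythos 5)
Your proposal is correct and follows essentially the same route as the paper: the paper likewise identifies the $(m,n)$-components of the reduced coproduct on $H_{\det,\R}$ with the block-sum pullbacks $b^*_{m,n}$ (by noting that the multiplication on $H^{\det}$ is defined via block direct sum in \cite[Definition 2.1]{AMP}, so the dual coproduct is given by $b^*_{m,n}$), and then primitivity is immediate from the preceding vanishing proposition. The one piece you flag as "the only genuine work" is exactly what the paper dispatches with that citation.
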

\begin{proof}
 By \cite[Definition~2.1]{AMP} the multiplication on $H^{\det}$ is induced by block direct sum of matrices, and therefore the $(m,n-m)$th graded components of the coproduct
$\Delta\colon H_{\det,\R} \rightarrow H_{\det,\R} \otimes H_{\det,\R}$ dual to it are given by $b_{m,n-m}^*$. By \cref{prop:vanish} all components with $0<m<n$ vanish, hence only the trivial pieces $b_{0,n}$ and $b_{n,0}$ remain such that $\Delta [(\pff^{2n} \wedge \omega)_c] = 1\otimes  [(\pff^{2n} \wedge \omega)_c]+ [(\pff^{2n} \wedge \omega)_c]\otimes 1$.
\end{proof}
We deduce the existence of a  large amount of new unstable cohomology for $\GL_n(\Z)$ for even $n$. Indeed, the Milnor-Moore theorem  states that for a connected, cocommutative Hopf algebra $H$ over a field of characteristic zero which is finite-dimensional in each degree, there is a canonical isomorphism  $\UE(\Prim(H))\overset{\sim}{\rightarrow} H$ with the universal enveloping algebra of the Lie algebra of primitive elements. The Poincar\'e-Birkhoff-Witt theorem states that $\SymA(\Prim(H))\cong \Gr \UE(\Prim(H))$ is an isomorphism of algebras, where $\Gr$ is the grading associated to the  filtration by length of tensors. By choosing a splitting of this grading, it follows that there exists an isomorphism of graded vector spaces $\SymA(\Prim(H))\cong \UE(\Prim(H))$. There is a choice of splitting for which it is a morphism of coalgebras. It follows from \cref{intro: thmcohomology} and
\cref{cor: primitives} that
\begin{equation*}
\SymA \left( \Omega_{c}[\pff] \right) \otimes \R  \To   H_{\det,\R}
\end{equation*}
is injective. The rest of \cref{thm:introSymembeds} follows by duality.

\begin{remark} \cite{AMP} have previously shown for dimension reasons that  the volume form classes $\eta^{2n} = \beta^5 \wedge \ldots \wedge \beta^{4n-3} \wedge \pff^{2n}$ are primitive. They are denoted by $t_{2n}$ in \emph{loc. cit.} since they are dual to the trivial classes in $H^0(\GL_{2n}(\Z);\R)$.
\end{remark}

\section{The odd commutative graph complex}\label{sec:GC}

The odd graph complex as introduced by Kontsevich \cite[\S 2]{kontsympl} and studied further e.g.\ in \cite{onatheorem} is a chain complex. We denote it by $\hGC_3$. Its dual cochain complex is a differential graded Lie algebra, which we denote as $\cGC_3$, following \cite{Willwacher:GCgrt}. For further details and in particular the underlying operad of graphs, see also \cite[\S 4]{Morand:KontsevichGCuniv}.

A graph $G$ in this paper consists of finite sets of vertices $V(G)$, undirected edges $E(G)$, and half-edges $H(G)$. Each edge $e$ consists of two distinct half-edges $H(e)=\{e',e''\}$, providing a partition $H(G)=\bigsqcup_e H(e)$. Each half-edge is incident to exactly one vertex, amounting to a map $H(G)\rightarrow V(G)$.
Thus, multiple edges may connect the same pair of vertices, and edges with both ends at the same vertex (self-loops) are allowed (\cref{fig:gc-half-edges}).
\begin{figure}
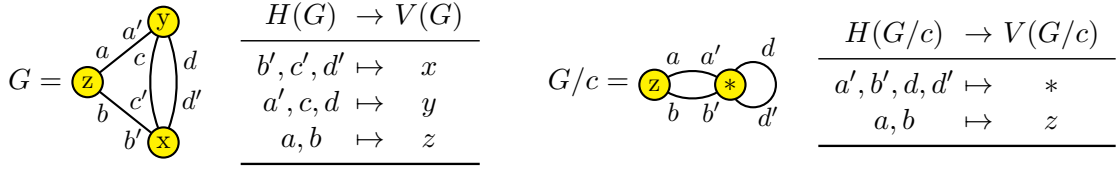
\centering
	$G=\Graph{gcCone}$ \quad
	\begin{tabular}{c@{\ }c@{\ }c}
		$H(G)$ & $\rightarrow$ & $V(G)$ \\
	\midrule
		$b',c',d'$ & $\mapsto$ & $x$ \\
		$a',c,d$ & $\mapsto$ & $y$ \\
		$a,b$ & $\mapsto$ & $z$ \\
	\bottomrule
	\end{tabular}
	\qquad
	$G/c=\Graph{gcConeQuot}$
	\quad
	\begin{tabular}{c@{\ }c@{\ }c}
		$H(G/c)$ & $\rightarrow$ & $V(G/c)$ \\
	\midrule
		$a',b',d,d'$ & $\mapsto$ & $\ast$ \\
		$a,b$ & $\mapsto$ & $z$ \\
	\bottomrule
	\end{tabular}
	\caption{A graph with its half-edges, incidences, and its quotient by edge $c$.}%
	\label{fig:gc-half-edges}%
\end{figure}

\begin{definition}\label{def:GCor}
An \textbf{orientation} of a graph $G$ is one of the two invertible elements in
\begin{equation*}
    \det \Z^{V(G)}\otimes \bigotimes_{e\in E(G)} \det \Z^{H(e)}\cong\Z.
\end{equation*}
An \textbf{oriented graph} is a pair $(G,o)$ of a combinatorial graph and an orientation $o$ of $G$.
\end{definition}
To pick an orientation $o=v_1\wedge\ldots\wedge v_n\otimes \bigotimes_e o_e$, we can specify a total order $v_1<\ldots<v_n$ of the vertices and choose a direction $o_e=e'\wedge e''$ or $o_e=e''\wedge e'$ for each edge. In figures, we indicate orientations by labelling vertices with integers (implying the increasing order) and directing edges $v\rightarrow w$ to mean that in $o_e$ the half-edge at the tail ($v$) comes first and the head ($w$) last. Swapping two vertices, or reversing an edge, changes the sign of $o$.

An isomorphism $\varphi\colon G\cong G'$ is a pair of bijections $H(G)\leftrightarrow H(G')$ and $V(G)\leftrightarrow V(G')$ that preserves the edge partitions and incidence relations. It induces a bijection of edges $E(G)\leftrightarrow E(G')$ and identifies each orientation $o$ of $G$ with an orientation $\varphi_*(o)$ of $G'$.
\begin{definition}\label{def:hGC}
As a vector space, the \textbf{odd commutative graph complex} is a quotient
\begin{equation*}
\hGC_3 \defas \bigoplus_{(G,o)} \Q(G,o)\bigg/{\sim}
\end{equation*}
spanned by all oriented graphs which are connected and have at least 3 half-edges at every vertex. The relations imposed are $(G,o)\sim(G',\varphi_*(o))$ for every isomorphism $\varphi\colon G\cong G'$, and furthermore that $(G,o)\sim-(G,-o)$.
\end{definition}
Any automorphism $\alpha\colon G\cong G$ with $\alpha_*(o)=-o$ forces $(G,o)\sim -(G,o)\sim 0$. We call such automorphisms \emph{odd}. For example, swapping the two half-edges $e'\leftrightarrow e''$ of a self-loop constitutes an odd automorphism. Graphs with self-loops are therefore zero in $\hGC_3$.
We typically do not distinguish an oriented graph from its equivalence class in $\hGC_3$.
\begin{example}\label{ex:dipoles}
The $m$-fold \textbf{multiedge} or \textbf{dipole} is the oriented graph consisting of two vertices, ordered $0<1$, and $m$ edges in between---all directed from $0$ to $1$. For example,
    \begin{equation*}
    D_1=\Graph{gcD1},\quad
    D_2=\Graph{gcD2},\quad
    D_3=\Graph{gcD3},\quad
    D_4=\Graph{gcD4}.
\end{equation*}
Reversing all edges and swapping the vertex order $0\leftrightarrow 1$ is an automorphism of $D_m$, which acts on its orientation as $o\mapsto (-1)^{m+1} o$. Therefore $D_{m}=0\in\hGC_3$ for $m$ even. Note that the graphs $D_1$ and $D_2$ do not belong to $\hGC_3$ (vertices with degree less than $3$).
\end{example}

The vector space $\hGC_3=\bigoplus_{\ell,k} \gr_{\ell,k} \hGC_3$ is bigraded by the \textbf{loop number} $\ell(G)$ and the \textbf{degree} $k=\gcdeg{G}$. For a connected graph with $n$ vertices and $m$ edges, they are
\begin{equation*}
    \ell(G) = m - n + 1
    \qquad\text{and}\qquad
    \gcdeg{G}
    = m - 3\ell(G)
    = 3n-2m-3
    .
\end{equation*}
The dimension of each piece $\gr_{\ell,k} \hGC_3$ is finite and equal to the number of isomorphism classes of graphs in this bidegree that do not have any odd automorphisms.
As each vertex has valence at least 3, necessarily $2m\geq 3n$ and so $\gr_{\ell,k}\hGC_3=0$ if $k>-3$ or $\ell<2$.

\begin{definition}\label{def:cGC}
    We denote the degree completion of $\hGC_3$ as
    \begin{equation*}
\cGC_3 = \prod_{\ell,k} \gr_{\ell,k} \hGC_3.
\end{equation*}
\end{definition}
Accounts on graph complexes typically discuss either the (homological) chain complex $\hGC_3$ \emph{or} the (cohomological) cochain complex $\cGC_3$, but not both. In the following, we first review the relevant combinatorial structures in these two settings separately. Then in \cref{sec:GC-duality} we make explicit the identification $\cGC_3\cong\Hom(\hGC_3,\Q)$ that relates them.

\subsection{Homology}\label{sec:GC-hom}
Given a graph $G$ and a subgraph $\gamma\subset G$, the quotient $G/\gamma$ is a graph with vertex set $V(G/\gamma)=(V(G)\setminus V(\gamma))\sqcup\{v_*\}$, and it is obtained from $G$ by removing all edges in $\gamma$ and identifying, in the remaining edges, all endpoints in $V(\gamma)$ with a single vertex $v_*$.

If $\gamma=e$ is a single edge with distinct endpoints $v_1$ and $v_2$, consider the half-edges $e'$ and $e''$ of $e$ at $v_1$ and $v_2$, respectively. We get a canonical isomorphism
\begin{alignat*}{2}
    \det \Z^{H(e)} &\otimes \det \Z^{V(G)}         & \quad & \cong \quad \det \Z^{V(G/e)} \\
    e'\wedge e''   &\otimes v_1\wedge v_2\wedge w  & \quad & \mapsto\quad v_*\wedge w
\end{alignat*}
where $w=v_3\wedge\ldots\wedge v_n$ for any order of the remaining vertices. Since all edges other than $e$ are in both graphs, we conclude that orientations of $G$ and $G/e$ are in natural bijection with each other. Let $(G,o)/e=(G/e,o/e)$ denote the \emph{oriented} quotient graph obtained in this way from an \emph{oriented} input graph $(G,o)$. Pictorially, $o/e$ is obtained by first moving the tail ($v_1$) and head ($v_2$) of $e$ to the beginning of the vertex order $v_1<v_2<\ldots<v_n$ (which may introduce a sign), then replacing $v_1<v_2$ by $v_*$, and keeping the directions of all edges other than $e$.

\begin{definition}
    The \textbf{boundary} $\partial\colon \hGC_3\longrightarrow \hGC_3$ is the linear map such that, for every oriented graph $(G,o)$ without self-loops,
    \begin{equation*}
  \partial(G, o)
    = \sum_{e \in E(G)} (G,o)/e.
\end{equation*}
\end{definition}
The boundary is homogeneous of bidegree $(0,-1)$ in the grading by $(\ell,k)$, and it turns $\hGC_3$ into a chain complex ($\partial^2=0$). Thus the \textbf{graph homology}
  \begin{equation*}
  H_{\bullet}(\hGC_3) = \frac{\ker \partial}{\im \partial}
  =\bigoplus_{\ell,k} \gr_\ell H_k(\hGC_3)
  \end{equation*}
is bigraded by the loop number $\ell$ and the degree $k$.

\begin{example}\label{ex:dipole}
    For all $i\geq 1$, the dipole $D_{2i+1}$ is closed (contracting any edge creates self-loops). For $i > 1$, it is the boundary of a triangle graph with a simple edge $0\rightarrow 1$, a double edge $0\rightarrow 2$, and $(2i-1)$-fold multiedge $1\rightarrow 2$:
  \begin{equation*}
  \partial\left( \Graph[0.8]{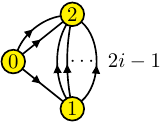} \right) = D_{2i+1}.
  \end{equation*}
  The theta graph $D_3$ has degree $k=-3$ and is therefore not exact, since $\hGC_3$ is concentrated in degrees $\leq -3$ (no graphs in degree $-2$ implies no boundaries in degree $-3$).
  Hence the theta class is non-zero in homology:
  \begin{equation*}
  [D_3] \neq 0 \in \gr_{2} H_{-3}(\hGC_3).
  \end{equation*}
\end{example}

For an arbitrary subgraph $\gamma\subset G$, an orientation $o$ of $G$ does not single out preferred orientations $o'$ and $o''$ of $\gamma$ and $G/\gamma$. But we can define a canonical \emph{pair}, so that the product $(\gamma,o')\otimes (G/\gamma,o'')$ is well-defined in $\hGC_3\otimes \hGC_3$. We use the isomorphism
\begin{alignat*}{2}
    \det \Z^{V(G)} \quad &\cong & \det \Z^{V(\gamma)} &\otimes \det \Z^{V(G/\gamma)} \\
    v_1\wedge\ldots\wedge v_n \quad &\mapsto \quad & v_1\wedge\ldots\wedge v_{n'} &\otimes v_*\wedge v_{n'+1}\wedge\ldots\wedge v_n
\end{alignat*}
where $v_1,\ldots,v_{n'}$ and $v_{n'+1},\ldots,v_n$ are the vertices in $V(\gamma)$ and $V(G)\setminus V(\gamma)$, respectively, in any order. Thus if any $o$ is given, move the vertices of $\gamma$ to the beginning of the vertex order of $G$ (which may produce a sign), and then $o'$ and $o''$ are defined by the induced vertex orders just described, and by keeping all edge directions as in $o$.
In other words, if both $G$ and the subgraph $\gamma$ are oriented, we have an induced orientation on $G/\gamma$.\footnote{The quotient $G/e$ by a single edge, discussed at the beginning of \cref{sec:GC-hom}, is the special case where we endow an edge $e$ with the orientation of $D_1\cong e$ (directing $e$ from the first to the second vertex).}
\begin{example}\label{ex:tri-quotient}
    Let $G$ denote the oriented triangle graph from \cref{ex:dipole} and consider the subgraph $\gamma\cong D_{2i-1}$ consisting of vertices $1$ and $2$ and the multiedge in between. Then
    \begin{equation*}
    G=\Graph[0.8]{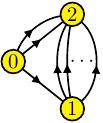}=(-1)^2\cdot \Graph[0.8]{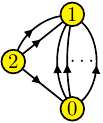}
    ,\quad\text{so}\quad
        G/\gamma = \Graph[0.8]{gcTri12kb}\Bigg/\Graph[0.8]{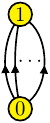}
        = \Graph{gcTri12kq}
        = -D_3.
    \end{equation*}
\end{example}
For an oriented graph $G$ and a subgraph $\gamma\subset G$, the expression $\gamma\otimes G/\gamma$ is therefore well-defined (independent of the orientation put on $\gamma$). We thus have a linear map
\begin{equation}\label{eq:GC-cop}
    \hGC_3\longrightarrow \hGC_3\otimes \hGC_3,\qquad
    G\mapsto \sum_{\gamma\subset G} \gamma\otimes G/\gamma
\end{equation}
by summing only over subgraphs $\gamma$ with the property that both $\gamma$ and $G/\gamma$ are in $\hGC_3$ (i.e.\ are connected and have each vertex of valency 3 or more).
This map is similar, but different, to the coproduct of the core Hopf algebra of Feynman graphs \cite{BlochKreimer:MixedHodge,Kreimer:Core}.\footnote{In the latter, graphs have no orientations, may have 2-valent vertices, may not have bridges, may be disconnected, and the quotient identifies only vertices within each connected component of $\gamma$.}

\subsection{Cohomology}\label{sec:GC-cohom}

Dual to the contraction of a subgraph, we can insert one oriented graph $G_2$ into a vertex $v$ of another oriented graph $G_1$. Let $H(v)$ denote the half-edges of $G_1$ at $v$. Any map $\rho\colon H(v)\rightarrow V(G_2)$ can be interpreted as a rule to re-attach the half-edges $h\in H(v)$ to vertices $\rho(h)$ in $G_2$. Afterwards, the vertex $v$ is left isolated. Starting from the disjoint union $G_1\sqcup G_2$, re-attaching $H(v)$, and then deleting $v$, produces a graph denoted $G_1\circ_{\rho} G_2$. Its vertex set is $V(G_2)\sqcup V(G_1)\setminus\{v\}$. The orientation of $G_1\circ_\rho G_2$ is determined by keeping all edge directions as in $G_1$ and $G_2$, and merging vertex orderings by putting the vertices from $G_2$ before those of $G_1$; i.e.\ exploiting the isomorphism
\begin{alignat*}{2}
	\det \Z^{V(G_1)} &\otimes \det \Z^{V(G_2)}
	& & \quad\cong\quad \det \Z^{V(G_1\circ_{\rho} G_2)}
	\\
	v\wedge w &\otimes w'  & & \quad\mapsto \quad w'\wedge w
\end{alignat*}
where $w\in\det \Z^{V(G_1)\setminus\{v\}}$ is any order of the remaining vertices from $G_1$.

The sum over all such insertions yields a bilinear operation $\circ\colon \cGC_3\otimes\cGC_3\rightarrow\cGC_3$ with
\begin{equation*}
G_1 \circ G_2 = \sum_{v \in V(G_1)}\ \sum_{\rho\colon H(v)\rightarrow V(G_2)} G_1 \circ_\rho G_2.
\end{equation*}
This insertion is not associative, but it gives rise to the graded\footnote{with signs $[x,y]+(-1)^{\gcdeg{x}\cdot\gcdeg{y}}[y,x]=0$ and $(-1)^{\gcdeg{x}\cdot\gcdeg{z}}[x,[y,z]]+(-1)^{\gcdeg{y}\cdot\gcdeg{x}}[y,[z,x]]+(-1)^{\gcdeg{z}\cdot\gcdeg{y}}[z,[x,y]]=0$} \textbf{Lie bracket} of $\cGC_3$,
\begin{equation*}
	[G_1, G_2] = G_1 \circ G_2 - (-1)^{\gcdeg{G_1}\cdot\gcdeg{G_2}} G_2 \circ G_1.
\end{equation*}
In this combination, contributions from attachments $G_1\circ_\rho G_2$ where $\rho(h)=v'$ glues all half-edges $h\in H(v)$ to the same vertex $v'$ of $G_2$, cancel with $G_2\circ_\rho G_1$ where $\rho(h)=v$ for all $h\in H(v')$. So to compute brackets, we only keep $\rho$ that take at least 2 values.

\begin{example}\label{ex:[D3prism]}
  Let $Y_3$ denote the triangular prism graph with the following orientation:
  \begin{equation*}
      Y_3=\Graph[0.8]{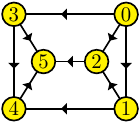}.
  \end{equation*}
  The dipole $D_3$ and $Y_3$ both have odd degree $\gcdeg{Y_3}=\gcdeg{D_3}=-3$, so their bracket is the sum $[Y_3,D_3] = Y_3 \circ D_3 + D_3 \circ Y_3$.
  Both graphs are vertex-transitive, so to compute $D_3\circ Y_3$ (first 5 terms below) and $Y_3\circ D_3$ (last 2 terms below), it suffices to insert only into vertex $0$, and then multiply by the number of vertices ($2$ respectively $6$).
  Explicitly, this gives\footnote{The terms from $D_3\circ Y_3$ where the three edges of $D_3$ are re-attached to vertices like $\{1,3,5\}$ of the prism graph are missing because they have an odd automorphism and thus are $0$ in $\cGC_3$.}
  \begin{align*}
    [Y_3,D_3]
    &= 24\times\Graph[0.65]{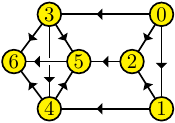}
    + 144\times\Graph[0.65]{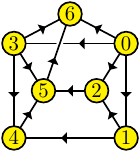}
    +  72\times\Graph[0.65]{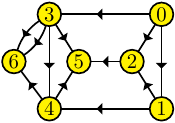}
    \\ &
    +  36\times\Graph[0.65]{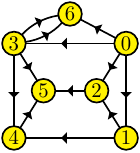}
    +  72\times\Graph[0.65]{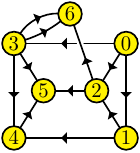}
    +  12\times\Graph[0.65]{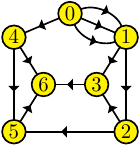}
    +  24\times\Graph[0.65]{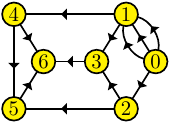}
    .
  \end{align*}
\end{example}

\begin{definition}\label{def:coboundary}
    The \textbf{coboundary} $\delta\colon \cGC_3\longrightarrow \cGC_3$ is the Lie bracket with an edge $D_1$:\footnote{The Lie bracket defined above for $\cGC_3$ extends without any modifications to arbitrary graphs, so we can input $D_1\notin\cGC_3$. One checks easily that when $G\in\cGC_3$, the output will land in $\cGC_3$ as well.}
\begin{equation*}
\delta G
= \frac{1}{2} \left[G, \Graph{gcD1} \right]
.
\end{equation*}
\end{definition}
Combinatorially, $\delta$ splits each vertex $v$ of $G$ in turn into two vertices $\{0,1\}=V(D_1)$, linked by an extra edge.
Flipping the role of $0$ and $1$ in a bipartition $H(v)=\rho^{-1}(0)\sqcup \rho^{-1}(1)$ produces isomorphic attachments $G\circ_\rho D_1\cong G\circ_{1-\rho} D_1$, only one of which is counted due the prefactor $1/2$.
New vertices of degree 2 or less are never created by $\delta$ (those appear twice but with opposite signs) and so it restricts to a well-defined map on $\cGC_3$.

The coboundary is homogeneous of bidegree $(0,+1)$ in the grading $(\ell,k)$, and it turns $\cGC_3$ into a cochain complex ($\delta^2=0$). Thus the \textbf{graph cohomology}
\begin{equation*}
  H^{\bullet}(\cGC_3) = \frac{\ker \delta}{\im \delta}
  =\prod_{\ell,k} \gr_\ell H^k(\cGC_3)
\end{equation*}
is bigraded by loop number ($\ell$) and degree ($k$). The triple $(\cGC_3,\delta,[\cdot,\cdot])$ forms a differential graded Lie algebra\footnote{with our conventions, the graded Leibniz rule reads $\delta[x,y]=[x,\delta y]+(-1)^{\gcdeg{y}}[\delta x,y]$} and so the graded Lie (super-)bracket descends to cohomology.
\begin{example}
    Every connected 3-regular graph has degree $k=-3$ and is therefore a cocycle (its coboundary has degree $-2$, where $\cGC_3$ is zero).
    In small loop orders $2\leq \ell\leq 4$, the cohomology groups are supported in degree $k=-3$ and have rank one, see \cref{table:GC3}. They are generated by the classes of the theta graph $D_3$ ($\ell=2$), the complete graph $K_4$ ($\ell=3$), and the triangular prism $Y_3$ ($\ell=4$). The Lie bracket of the classes of $D_3$ and $Y_3$ is a class in $\gr_6 H^{-6}(\cGC_3)$, for which a representative is given in \cref{ex:[D3prism]}.
\end{example}

\subsection{Duality}\label{sec:GC-duality}

From \cref{def:hGC,def:cGC} it is clear that $\cGC_3$ is the linear dual of $\hGC_3$. In order for the operations $\partial,\delta,G/\gamma,\circ$ recalled above to be exact duals of each other, we identify $\cGC_3\cong\Hom(\hGC_3,\Q)$ through the following pairing $\inner{\cdot,\cdot}\colon \cGC_3\otimes \hGC_3\rightarrow \Q$.\footnote{In \cite{Willwacher:GCgrt,Morand:KontsevichGCuniv}, graphs have a fixed vertex set $\{1,\ldots,n\}$ and cochains $\cGC_3$ are defined as \emph{invariants} for vertex permutations and coinvariants for edge reversals. Instead, we write cochains like the chains $\hGC_3$ \cite{kontsympl,onatheorem}, i.e.\ as \emph{coinvariants} (equivalence classes under oriented graph isomorphisms $\sim$) with respect to vertex permutations \emph{and} edge reversals. The factor of $A=\Aut(G)$ in $\inner{\cdot,\cdot}$ thus comes from identifying $(\Gra_A)^\vee\cong (\Gra^\vee)^A$ with $(\Gra^\vee)_A$, for $\Gra$ the span of labelled graphs.}
\begin{definition}
For any two oriented graphs $G$ and $G'$, set
\begin{equation}\label{eq:GC-duality}
\inner{G, G'} \defas \begin{cases}
    \phantom{-}\abs{\Aut G} & \text{if $G \sim G'$ and $G\not\sim 0$,} \\
    -\abs{\Aut G} & \text{if $G \sim -G'$ and $G\not\sim 0$,} \\
    \phantom{-}0 & \text{otherwise.} \\
\end{cases}
\end{equation}
In other words, $\inner{G,G'}=\inner{G',G}=\epsilon|\Aut G| =\epsilon|\Aut G'|$ where $\epsilon=\pm 1$ if $G$ and $G'$ are isomorphic and don't have any odd automorphisms, and $\epsilon=0$ otherwise.
\end{definition}

Here we denote $\Aut G$ the group of automorphisms of the combinatorial half-edge graph $\underline{G}$, ignoring the orientation $o$ of $G=(\underline{G},o)$. Such an automorphism induces a permutation of the vertices. For a simple graph $\underline{G}$ (no self-loops or multiedges), this permutation determines the automorphism. In general, there is a short exact sequence
\begin{equation*}
    1\rightarrow K_G
    \rightarrow \Aut(G)
    \rightarrow \Aut_V(G)
    \rightarrow 1
\end{equation*}
of groups where $\Aut_V(G)\subseteq\Perms{V(G)}$ is a subgroup of vertex permutations and the kernel $K_G$ is generated by self-loop reversals and multiedge permutations. It has
\begin{equation*}
    \left|K_G\right|= \bigg(\prod_v d_v!\cdot 2^{d_{v}}\bigg) \prod_{v<w} (d_{vw}!)
\end{equation*}
elements, where $d_v$ and $d_{vw}$ denote the number of self-loops at $v$ and the number of edges between two different vertices $v$ and $w$, respectively.
\begin{example}
    Dipole graphs $D_m$ with $m$ edges (\cref{ex:dipoles}) have $|\Aut(D_m)|=m!\cdot 2$ automorphisms, where $m!=d_{01}!$ and $2=|\Aut_V(D_m)|=|\{\id,0\leftrightarrow 1\}|$.
\end{example}
\begin{lemma}\label{lem:inner=sum sgn}
    Given two oriented graphs $G=(\underline{G},o)$ and $G'=(\underline{G}',o')$, write $\varphi\colon G\cong G'$ for isomorphisms of the (un-oriented) graphs $\underline{G}\cong\underline{G}'$. Let $\sgn\varphi\defas (\varphi_* o)/o'\in\{1,-1\}$ denote the relative sign between the orientations. Then the sum over all isomorphisms is
\begin{equation*}
    \inner{G,G'} = \sum_{\varphi\colon G\cong G'} \sgn \varphi.
\end{equation*}
\end{lemma}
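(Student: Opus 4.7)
The plan is to prove the identity by a case analysis based on whether $\underline{G}$ and $\underline{G}'$ are isomorphic and whether $G$ has an odd automorphism. The key observation is that once we fix any one isomorphism $\varphi_{0}\colon \underline{G}\cong\underline{G}'$, every other isomorphism can be written uniquely as $\varphi_{0}\circ \alpha$ for some $\alpha\in\Aut(\underline{G})$. This turns the sum on the right into a sum over $\Aut(\underline{G})$, and the sign $\sgn(\varphi_{0}\circ\alpha)=(\varphi_{0})_{*}(\alpha_{*}o)/o'$ factors as $\sgn(\varphi_{0})\cdot\epsilon(\alpha)$, where $\epsilon(\alpha)\in\{\pm 1\}$ is defined by $\alpha_{*}o=\epsilon(\alpha)\cdot o$.

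With this in hand, I would proceed as follows. First, if $\underline{G}\not\cong\underline{G}'$, then both sides are zero trivially. Next, assume $\underline{G}\cong\underline{G}'$ and suppose $G$ admits an odd automorphism $\beta\in\Aut(\underline{G})$, i.e.\ $\epsilon(\beta)=-1$. Then by \cref{def:hGC} we have $G\sim 0$ and $G'\sim 0$, so $\inner{G,G'}=0$. On the other hand, the map $\alpha\mapsto \alpha\circ\beta$ is a fixed-point-free involution on $\Aut(\underline{G})$ satisfying $\epsilon(\alpha\circ\beta)=-\epsilon(\alpha)$, so the sum $\sum_{\alpha}\sgn(\varphi_{0}\circ\alpha)=\sgn(\varphi_{0})\sum_{\alpha}\epsilon(\alpha)$ vanishes by pairing $\alpha$ with $\alpha\circ\beta$.

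Finally, suppose $\underline{G}\cong\underline{G}'$ and $G\not\sim 0$, i.e.\ $\epsilon(\alpha)=+1$ for all $\alpha\in\Aut(\underline{G})$. Then $\sgn(\varphi_{0}\circ\alpha)=\sgn(\varphi_{0})$ is constant in $\alpha$, so the right-hand side equals $\sgn(\varphi_{0})\cdot |\Aut(\underline{G})|=\sgn(\varphi_{0})\cdot|\Aut G|$. Since $(\varphi_{0})_{*}o=\sgn(\varphi_{0})\cdot o'$, we have $G\sim \sgn(\varphi_{0})\cdot G'$, and this matches $\inner{G,G'}$ exactly by the definition \eqref{eq:GC-duality}.

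There is no substantial obstacle: the argument is pure bookkeeping on the free transitive action of $\Aut(\underline{G})$ on the set of isomorphisms $\underline{G}\cong\underline{G}'$, together with the multiplicativity of $\epsilon$ and the equivalence relation from \cref{def:hGC}. The only point requiring care is ensuring that the definition $\sgn\varphi=(\varphi_{*}o)/o'$ is well-defined on the level of oriented graphs (i.e.\ independent of the chosen representatives), which follows immediately from the relation $(G,o)\sim -(G,-o)$.
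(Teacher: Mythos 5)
Your proof is correct and follows essentially the same route as the paper: both arguments use the fact that the set of isomorphisms is a torsor over $\Aut(\underline{G})$, factor $\sgn(\varphi_0\circ\alpha)=\sgn(\varphi_0)\cdot\epsilon(\alpha)$, and handle the odd-automorphism case via the sign-reversing involution $\alpha\mapsto\alpha\circ\beta$. No gaps.
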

\begin{proof}
    If $G$ and $G'$ are not isomorphic, the sum is trivially zero. So assume there exists at least one isomorphism $\varphi$, hence $G\sim(\sgn\varphi)G'$. The set of all isomorphisms is a torsor over $\Aut(G)$. Since $\sgn(\varphi\circ\alpha)=(\sgn\varphi)(\sgn\alpha)$, the sum thus equals
    \begin{equation*}
        (\sgn\varphi)\sum_{\alpha\in\Aut(G)}\sgn \alpha
        =(\sgn\varphi)\left(|\Aut^+(G)|-|\Aut^-(G)| \right)
    \end{equation*}
    where $\Aut^+$ and $\Aut^-$ denote the even and odd automorphisms. If $G$ has an odd automorphism $\beta$, then $\alpha\mapsto\alpha\circ\beta$ gives a bijection $\Aut^+(G)\leftrightarrow \Aut^-(G)$ and thus the sum vanishes---in agreement with $\inner{G,G'}=0$ since $G\sim0$. If $G$ has no odd automorphism, then $\Aut^+(G)=\Aut(G)$ and so the sum reduces to the definition of $\inner{G,G'}$.
\end{proof}

\begin{thm}\label{thm:pairing}
  For any three oriented graphs $G_1, G_2, G$, we have the identity
  \begin{equation*}
    \inner{G_1 \circ G_2, G} = \sum_{\gamma \subset G}
    \inner{G_2, \gamma}\inner{G_1,G/\gamma}
  \end{equation*}
  where the sum runs over all subgraphs $\gamma$ of $G$.
\end{thm}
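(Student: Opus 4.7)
The plan is to expand both sides as sums over isomorphisms using \cref{lem:inner=sum sgn}, and then exhibit an explicit sign-preserving bijection between the indexing sets. Unwinding the definitions, the left-hand side becomes
\begin{equation*}
    \inner{G_1\circ G_2,G}
    = \sum_{v\in V(G_1)}\ \sum_{\rho\colon H(v)\to V(G_2)}\ \sum_{\varphi\colon G_1\circ_\rho G_2\,\cong\, G} \sgn(\varphi),
\end{equation*}
while the right-hand side, after restricting the sum to those subgraphs $\gamma\subset G$ for which $\gamma\cong G_2$ and $G/\gamma\cong G_1$ (outside of which the inner product vanishes), becomes
\begin{equation*}
    \sum_{\gamma\subset G}\ \sum_{\psi\colon G_2\cong \gamma}\ \sum_{\chi\colon G_1\cong G/\gamma} \sgn(\psi)\cdot\sgn(\chi).
\end{equation*}

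The first step is to define a map $\Phi$ from left-hand triples to right-hand triples. Given $(v,\rho,\varphi)$, observe that by construction of $G_1\circ_\rho G_2$ the underlying combinatorial graph of $G_2$ sits as a subgraph of $G_1\circ_\rho G_2$ (with the same vertex set $V(G_2)$ and the same edge set). Define $\gamma\defas \varphi(G_2)\subset G$, let $\psi\defas \varphi|_{G_2}\colon G_2\cong \gamma$, and note that $G_1$ is naturally identified with $(G_1\circ_\rho G_2)/G_2$, with the deleted vertex $v$ becoming the contracted vertex $v_\ast$; let $\chi\colon G_1\cong G/\gamma$ be the map induced by $\varphi$ under this identification. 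The inverse map reconstructs $(v,\rho,\varphi)$ from $(\gamma,\psi,\chi)$ by setting $v\defas\chi^{-1}(v_\ast)$, defining $\rho\colon H(v)\to V(G_2)$ by transporting the incidence data at $v_\ast$ in $G/\gamma$ back through $\chi$ and $\psi$, and reassembling $\varphi$ from $\psi$ and $\chi$. Checking that $\Phi$ is a bijection is a routine verification using the fact that insertion at a vertex and contraction of a subgraph are mutually inverse operations on the underlying combinatorial data.

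The heart of the argument is matching orientations: one must show that $\sgn(\varphi)=\sgn(\psi)\cdot\sgn(\chi)$ for each corresponding pair of triples. This reduces to comparing the orientation convention used to build $o(G_1\circ_\rho G_2)$ from $o(G_1)$ and $o(G_2)$ with the convention used to split $o(G)$ into $o(\gamma)\otimes o(G/\gamma)$. Both conventions keep all edge directions unchanged, and both move the vertices of the ``inserted/contracted'' subgraph ($G_2$, respectively $\gamma$) to the beginning of the total vertex order, with the vertex $v$ (respectively $v_\ast$) being the pivot. Writing out the isomorphisms
\begin{equation*}
    \det\Z^{V(G_1)}\otimes \det\Z^{V(G_2)}\cong \det\Z^{V(G_1\circ_\rho G_2)}
    \quad\text{and}\quad
    \det\Z^{V(G)}\cong \det\Z^{V(\gamma)}\otimes \det\Z^{V(G/\gamma)}
\end{equation*}
from \cref{sec:GC-hom,sec:GC-cohom} side by side, one sees that they are literal inverses. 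Since the half-edge partitions and incidences are tautologically preserved by the bijection $\Phi$, the tensor-product edge orientation factors match as well. Hence the product $\sgn(\psi)\sgn(\chi)$ equals the sign by which the orientation $\varphi_\ast(o(G_1\circ_\rho G_2))$ differs from $o(G)$, namely $\sgn(\varphi)$.

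The principal obstacle is this final sign bookkeeping: a careful treatment must confirm that no extra sign enters when one views $G_2$ as a subgraph of $G_1\circ_\rho G_2$ (e.g.\ from reordering vertices or from edge-direction conventions on half-edges that get re-attached), and that the factor identifying $(G_1\circ_\rho G_2)/G_2$ with $G_1$ is the identity of orientations. Once these identifications are checked in a chosen vertex order on $V(G_1),V(G_2)$ and then shown to be independent of that choice (both sides change by the same sign under a transposition), the sum on the left matches the sum on the right termwise, completing the proof.
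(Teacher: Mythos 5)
Your proposal is correct and follows essentially the same route as the paper's own proof: both sides are expanded via \cref{lem:inner=sum sgn} into sums of signs over triples, a bijection between the two indexing sets is constructed from the canonical identifications of $G_2$ as a subgraph of $G_1\circ_\rho G_2$ and of $(G_1\circ_\rho G_2)/G_2$ with $G_1$, and the signs match because both orientation conventions place the vertices of the inserted (resp.\ contracted) subgraph first while keeping all edge directions. The only differences are notational (your $\varphi,\psi,\chi$ play the roles of the paper's $\chi,\varphi,\psi$).
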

\begin{proof}
    Expanding both sides using \cref{lem:inner=sum sgn}, the claimed identity has the form
    \begin{equation*}
        \sum_{(v,\rho,\chi)\in L} \sgn \chi
        =
        \sum_{(\gamma,\varphi,\psi)\in R} (\sgn \varphi)(\sgn\psi).
    \end{equation*}
    The left-hand side sums over the set $L$ of triples consisting of a vertex $v\in V(G_1)$, a map $\rho\colon H(v)\rightarrow V(G_2)$, and an isomorphism $\chi\colon G_1\circ_\rho G_2\cong G$.
    The right-hand side sums over the set $R$ of triples consisting of a subgraph $\gamma\subset G$, an isomorphism $\varphi\colon G_2\cong \gamma$, and an isomorphism $\psi\colon G_1\cong G/\gamma$.

    The sets $L$ and $R$ are in bijection: $G_2$ is canonically a subgraph of $G_1\circ_\rho G_2$, and the quotient is canonically isomorphic to $(G_1\circ_\rho G_2)/G_2\cong G_1$. So given $(v,\rho,\chi)$, we can set $\gamma=\chi(G_2)$, $\varphi=\chi|_{G_2}$, and $\psi$ the induced isomorphism $G_1\cong\chi(G_1\circ_\rho G_2)/\chi(G_2)=G/\gamma$. Conversely, given $(\gamma,\varphi,\psi)$, we can recover the special vertex $v=\psi^{-1}(v_*)$; the vertices in $G$ incident to the half-edges at $v_*$ in $G/\gamma$ determine $\rho$ via $\varphi$, and we can glue $\varphi$, $\psi$, and $\rho$ into $\chi$. Under this correspondence, $\sgn \chi=(\sgn\varphi)(\sgn\psi)$ by the definitions of the orientations on $G_1\circ_\rho G_2$ (vertices of $G_2$ come first) and $\gamma\otimes G/\gamma$ (vertices of $\gamma$ come first).
\end{proof}

\begin{corollary}
    For any cochain $Q \in \cGC_3$ and any chain $G \in \hGC_3$, we have
    \begin{equation*}
        \inner{\delta Q, G} = \inner{Q, \partial G}
        .
    \end{equation*}
\end{corollary}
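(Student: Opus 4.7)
The plan is to reduce to \cref{thm:pairing} by expanding the coboundary as a Lie bracket with a single edge. From \cref{def:coboundary} and $\gcdeg{D_1}=1$, we have
\begin{equation*}
  \delta Q = \tfrac{1}{2}[Q, D_1] = \tfrac{1}{2}\bigl( Q\circ D_1 - (-1)^{\gcdeg{Q}} D_1\circ Q\bigr).
\end{equation*}
Pairing with $G\in\hGC_3$ and applying \cref{thm:pairing} to each composition expresses both contributions as sums over subgraphs $\gamma\subset G$.

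I expect the term $\inner{D_1\circ Q, G}=\sum_\gamma \inner{Q,\gamma}\inner{D_1, G/\gamma}$ to vanish identically. Indeed, $\inner{D_1, G/\gamma}$ is non-zero only when $G/\gamma\cong D_1$, i.e.\ the quotient has exactly two vertices joined by a single edge. This forces the unique vertex of $G$ outside $V(\gamma)$ to be incident to precisely one edge, contradicting the valency-$\geq 3$ condition on vertices of $G\in\hGC_3$.

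For the remaining term $\inner{Q\circ D_1, G}=\sum_\gamma \inner{D_1,\gamma}\inner{Q, G/\gamma}$, only subgraphs $\gamma\cong D_1$ contribute, and since $G$ has no self-loops these correspond bijectively to the edges $e\in E(G)$. The key compatibility to verify is that the canonical pair $\gamma_e\otimes G/\gamma_e$ of \cref{sec:GC-hom}, when $\gamma_e$ is chosen to carry the orientation of $D_1$, yields exactly the oriented contraction $G/e$ used to define $\partial G$: both constructions are built from the same isomorphism $\det \Z^{H(e)}\otimes \det \Z^{V(G)}\cong \det \Z^{V(G/e)}$. Given $|\Aut D_1|=2$ with no odd automorphisms, each edge then contributes $2\inner{Q,G/e}$; the prefactor $\tfrac{1}{2}$ cancels, and $\inner{\delta Q, G}=\sum_{e\in E(G)}\inner{Q,G/e}=\inner{Q,\partial G}$.

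The only step requiring genuine care is the orientation bookkeeping in the last paragraph. Because the canonical pair $\gamma_e\otimes G/\gamma_e$ is invariant under redistributing signs between its two factors (as noted before \cref{thm:pairing}), fixing $\gamma_e=D_1$ is permissible and determines $G/\gamma_e$ to be exactly the $G/e$ appearing in $\partial G$, so no stray signs arise.
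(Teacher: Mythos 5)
Your proposal is correct and follows essentially the same route as the paper: expand $\delta Q=\tfrac12[Q,D_1]$, apply \cref{thm:pairing} to both compositions, kill $\inner{D_1\circ Q,G}$ via the valency-$\geq 3$ condition, and identify the surviving sum over single-edge subgraphs with $2\inner{Q,\partial G}$. The extra care you take with the orientation bookkeeping (fixing $\gamma_e\cong D_1$ so that the induced quotient orientation is exactly the $G/e$ of $\partial G$) is precisely the point the paper handles via its footnote on single-edge quotients.
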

\begin{proof}
  Recall that $\abs{D_1} = 1$ for $D_1=\Graph[0.8]{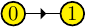}$. For oriented graphs $Q$ and $G$,
  \begin{align*}
    \inner{Q \circ D_1, G}
      = \sum_{e \in E(G)} \inner{D_1, e}\inner{Q, G/e}
      = 2 \inner{Q, \partial G}
  \end{align*}
  by \cref{thm:pairing}.
  Furthermore, $\inner{D_1 \circ Q, G} = 0$, as having a subgraph $\gamma \subset G$ such that  $G/\gamma \cong D_1$ requires a 1-valent vertex in $G$.
  By \cref{def:coboundary}, the claim follows.
\end{proof}

\begin{example}
  Consider the following graphs $Q$ and $G$ and their (co-)boundaries:
  \begin{align*}
    Q &= \Graph[0.8]{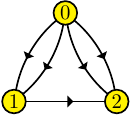},  &   \delta Q &= 2 \times \Graph[0.8]{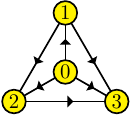} +\;\Graph[0.8]{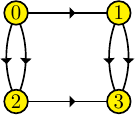},  \\
    G &= \Graph[0.8]{gcK4},    & \partial G &= 6 \times \Graph[0.8]{gcT122}.
\end{align*}
  These graphs have $\abs{\Aut Q} = 8$ and $\abs{\Aut G} = 24$ automorphisms, so the identity reads
  \begin{equation*}
  8 \cdot 6 = \inner{Q, \partial G} = \inner{\delta Q, G} = 24 \cdot 2
  .
  \end{equation*}
\end{example}

\begin{remark}
  The results of this subsection hold in all commutative graph complexes $\hGC_d$. For even $d$, the notion of orientation and hence the induced orientations on $G_1\circ G_2$ and $G/\gamma$ change, but the proof of \cref{thm:pairing} persists verbatim. The contribution of self-loops to $\Aut(G)$ was discussed specifically to clarify the case of even $d$.
\end{remark}

\subsection{Orientations}\label{SS:equivor}

In \cref{def:GCor} we view orientations as equivalence classes of ways to direct edges and order vertices. This convention from \cite{Willwacher:GCgrt} is convenient for computer calculations and allows for simple indication of orientations in figures. To associate integrals of Pfaffian forms to graphs, however, we interpret orientations through canonical isomorphisms
\begin{equation}\label{eq:Hor=Vor}
    \det \Z^{V(G)} \otimes \bigotimes_{e\in E(G)} \det \Z^{H(e)}
    \cong
    \det \Z^{E(G)} \otimes \det H_1(G)
\end{equation}
explained in \cite[Proposition~1]{onatheorem}. Recall that each edge $e\in E(G)$ consists of two half-edges $H(e)=\{e',e''\}$. The 1-simplex corresponding to $e$ is oriented as $e'\wedge e''\in\det\Z^{H(e)}$ or $e''\wedge e'=-e'\wedge e''$ by ordering these half-edges. The group of 1-chains is thus
\begin{equation*}
    C_1(G)=\bigoplus_{e\in E(G)} \det \Z^{H(e)}.
\end{equation*}
This group is non-canonically isomorphic to $C_1(G)\cong \Z^{E(G)}$, by choosing a direction (ordering of the two half-edges) for each edge to identify $\det\Z^{H(e)}\cong\Z$.

The \emph{cycle space} is the subgroup $H_1(G)\subseteq C_1(G)$ given by the kernel of the boundary map $\partial\colon C_1(G)\rightarrow C_0(G)=\Z^{V(G)}$. If $v'$ and $v''$ denote the endpoints of the half-edges $e'$ and $e''$ of edge $e$, respectively, then $\partial (e'\wedge e'')=v''-v'$ (head minus tail). With the augmentation map defined by $\varepsilon(v)=1$ for every vertex $v$, we have the exact sequence
\begin{equation*}
0 \longrightarrow H_1(G) \longrightarrow C_1(G) \overset{\partial}{\longrightarrow} \Z^{V(G)} \overset{\varepsilon}{\longrightarrow} \Z \longrightarrow 0
\end{equation*}
for every connected graph. It implies $\det C_1(G)\cong\det H_1(G)\otimes \det (\ker\varepsilon)$ and $\det \Z^{V(G)}\cong\det(\ker\varepsilon)$, where we fix the latter as
$v_1\wedge\ldots\wedge v_n\mapsto
(v_2-v_1)\wedge(v_3-v_1)\wedge\ldots\wedge(v_n-v_1)
$.
The isomorphism \eqref{eq:Hor=Vor} then arises from the canonical identification
\begin{equation*}
    \det C_1(G)\cong \det \Z^{E(G)} \otimes \bigotimes_{e\in E(G)} \det \Z^{H(e)}
    .
\end{equation*}

Concretely, we can describe the isomorphism \eqref{eq:Hor=Vor} for a connected graph $G$ as follows:
Suppose that generators $o$ of the left- and $o'$ of the right-hand sides of \eqref{eq:Hor=Vor} are given as
\begin{equation*}
o=v_1\wedge\ldots \wedge v_n\otimes \bigotimes_e o_e
\qquad\text{and}\qquad
o'=e_1\wedge\ldots\wedge e_m\otimes C_1\wedge\ldots\wedge C_{\ell}.
\end{equation*}
Here $o_e\in\det\Z^{H(e)}$ are edge directions and $C_1,\ldots,C_\ell\in H_1(G)$ are cycles forming a basis of the cycle space. For each $2\leq i\leq n$, let $P_i\in C_1(G)$ denote a preimage of $v_i-v_1=\partial P_i$, e.g.\ the sum of edges of a directed path from $v_1$ to $v_i$. Since any two such paths differ by a cycle, the element $C_1\wedge\ldots\wedge C_\ell\wedge P_2\wedge\ldots\wedge P_n\in\det C_1(G)$ is independent of the choices of $P_i$. This generator of $\det C_1(G)$ is equal to $(\det A)\cdot o_{e_1}\wedge\ldots\wedge o_{e_m}$, where
\begin{equation*}
A = \begin{pmatrix} C_1 & \cdots & C_{\ell} & P_2 & \cdots & P_n\end{pmatrix}
\in \GL_m(\Z)
\end{equation*}
denotes the square $m\times m$ matrix where 1-chains are written as column vectors, identifying $C_1(G)\cong \Z^m$ such that the $i$th row corresponds to $o_{e_i}\in C_1(G)$.
Thus \eqref{eq:Hor=Vor} is the map
\begin{equation*}
o \mapsto (\det A)\cdot  o'.
\end{equation*}

\begin{example}\label{ex:dipoleor}
The orientation $o=v_0\wedge v_1\otimes\bigotimes_e o_e$ of the dipoles $D_{2i+1}$ from \cref{ex:dipoles} is given by directing all edges from vertex $0$ towards vertex $1$. That is, for every edge, $o_e=e'\wedge e''$ in terms of the half-edges $e'$ and $e''$ at vertex 0 and vertex 1, respectively.

Pick any order of the $m=2i+1$ edges, set $C_j=o_{e_j}-o_{e_m}$ for $1\leq j\leq \ell=2i$ and let $o'=e_1\wedge\ldots\wedge e_m\otimes C_1\wedge\ldots\wedge C_\ell$. Pick the path $P=o_{e_m}$ from $v_0$ to $v_1$. Then clearly
\begin{equation*}
    C_1\wedge\ldots\wedge C_\ell\wedge P=o_{e_1}\wedge\ldots\wedge o_{e_m}
\end{equation*}
and thus $o'\in\det \Z^{E(D_{2i+1})}\otimes\det H_1(D_{2i+1})$ corresponds to $o$ under \eqref{eq:Hor=Vor}.
For the theta graph, this cycle basis and the corresponding matrix (with $\det A=1$) look like
  \begin{equation*}
      D_3=\Graph{gcD3cyc}
      \qquad\text{and}\qquad
      A=\begin{pmatrix}
          C_1 & C_2 & P
      \end{pmatrix}
      =\begin{pmatrix}
           1 &  0 & 0 \\
           0 &  1 & 0 \\
          -1 & -1 & 1
      \end{pmatrix}
      .
  \end{equation*}
\end{example}

\section{Pfaffian forms on graphs}
\label{sect:pff-graphs}

Throughout, let $G$ denote a graph with $n$ vertices and $m$ edges, $E=\{e_1,\ldots,e_m\}$. To each edge, we associate a coordinate $x_e$ of the projective space $\Pro(\R^E)\cong \R\Pro^{m-1}$. The positive part of this projective space is an open simplex, which we denote
\begin{equation*}\label{eq:oFS}
	\oFS_G = \FS_m = \left\{ [x_{e_1}:\cdots : x_{e_m}]\ \text{with all}\  x_e>0 \right\}
	\subset \Pro(\R^E).
\end{equation*}
Its closure $\cFS_G\subset\Pro(\R^E)$ is a compact manifold with corners. Facets $\cFS_G\cap\{x_e=0\}\cong\cFS_{G/e}$ can be identified with closed simplices of quotient graphs.
To fix the fundamental class
\begin{equation*}
    [\cFS_G] \in H_{m-1}\Big(\Pro(\R^E), \bigcup_{e\in E} \{x_e=0\}; \Z\Big),
\end{equation*}
we orient the simplex $\FS_G$ using the $(m-1)$-form on the positive orthant $\R^E_+$ given by
\begin{equation}\label{eq:simplex-volform}
	\Omega_m = \sum_{e=1}^m (-1)^{e-1} x_e \td x_1\wedge\ldots\wedge \td x_{e-1} \wedge\td x_{e+1}\wedge\ldots\wedge\td x_m.
\end{equation}
This form pulls back as $\Omega_m\mapsto \lambda^m\Omega_m$ under the action $x\mapsto \lambda x$ of $\R_+$ on $\R_+^m$, and it is horizontal ($\iota_V\Omega_m=0$) for the Euler vector field $V=\sum_{e=1}^m x_e \partial_e$ since $\Omega_m=\iota_Z (\bigwedge_{e=1}^m \td x_e)$. Thus $\Omega_m$ induces a well-defined orientation on $\sigma_m=\R_+^m/\R_+$. In the affine chart $x_1=1$ of $\R\Pro^{m-1}$, the frame $\partial_2\wedge\ldots\wedge\partial_m$ of the coordinate vector fields is thus positively oriented on $\FS_m$.

Every permutation $\alpha$ of the edges acts on $\R^E$ by a linear automorphism such that
\begin{equation}\label{eq:volform-sign}
    \alpha^* \Omega_m=(\sgn \alpha) \Omega_m.
\end{equation}
Therefore, in particular, the automorphisms of $G$  act on the fundamental class $[\cFS_G]$ via the sign of the corresponding edge permutation.

\subsection{The dual graph Laplacian}
To define differential forms on a graph simplex $\oFS_G$, we map the simplex to the space of positive definite matrices (actually, a quotient of the latter), as follows.

The combinatorial structure of a graph can be encoded by the boundary map
\begin{equation*}
    \partial\colon
    C_1(G)\cong\Z^E \rightarrow \Z^V,
    \qquad
    e\mapsto \text{head}(e)-\text{tail}(e)
\end{equation*}
of the underlying simplicial complex, sending each edge $e\in E$ to the difference of its endpoints.\footnote{The identification $C_1(G)\cong\Z^E$ depends on a choice of direction of each edge, i.e.\ picking which of the endpoints is called head and tail (see \cref{SS:equivor}).} The kernel of this map is the cycle space $H_1(G)$ of $G$.
This abelian group is free with rank $\ell=m-n+1$, the \emph{loop number}. An isomorphism
\begin{equation*}
    H_1(G)\cong\Z^\ell
\end{equation*}
amounts to an integer $m\times\ell$ matrix $\mathcal{C}=(C_1,\ldots,C_{\ell})$ whose columns $C_i\in \Z^E$ constitute an ordered basis of the cycle space.

\begin{definition}\label{def:lapmat}
Let $\mathcal{D}$ denote the $m\times m$ diagonal matrix with diagonal entries $x_1,\ldots,x_m$.
The \textbf{dual Laplacian}\footnote{In graph theory, the Laplacian matrix refers to a similar matrix, where $\mathcal{C}$ is replaced by the edge-vertex incidence matrix. We hence refer to $\Laplacian_\mathcal{C}$ as the \emph{dual} Laplacian. This terminology is reversed in \cite{invariant}.} matrix of a graph $G$, with respect to a basis $\mathcal{C}=(C_1,\ldots,C_{\ell})$ of the cycle space $H_1(G)$, is the symmetric $\ell\times\ell$ matrix
  \begin{equation*}
  \Laplacian_\mathcal{C} \defas \mathcal{C}^\Transpose \mathcal{D} \mathcal{C}.
  \end{equation*}
\end{definition}

More invariantly, the dual Laplacian is the symmetric bilinear form on $H_1(G)$ defined by $(a,b)=\sum_{e=1}^m x_e a_e b_e$ for $a,b\in H_1(G)\subseteq\Z^E$. In the basis $\mathcal{C}$, this form is represented by the matrix $\Laplacian_{\mathcal{C}}$. Any other basis $\mathcal{C}'$ has the form $\mathcal{C}'=\mathcal{C}P$ for some invertible matrix $P\in\GL_\ell(\Z)$, so the dual Laplacian matrix of a graph is defined up to conjugation:
\begin{equation}\label{eq:lapmat-trans}
    \Laplacian_{\mathcal{C}'} = P^\Transpose \Laplacian_{\mathcal{C}} P.
\end{equation}

The entries of the matrix $\Laplacian_\mathcal{C}$ are linear functions of the variables $x\in\R^E$, thus the matrix can be interpreted as a function from $\R^E$ to the space of symmetric matrices. If all coordinates $x_e$ are positive, then the dual Laplacian form is positive definite. Therefore, a cycle basis $\mathcal{C}$ gives rise to a smooth map
\begin{equation*}
    \Laplacian_{\mathcal{C}}\colon \R^E_+ \longrightarrow \SPD{\ell}
\end{equation*}
to the space $\SPD{\ell}$ of positive definite symmetric $\ell\times\ell$ matrices. Under this map, a scaling $x\mapsto\lambda x$ by $\lambda\in\R_+$ amounts to $\Laplacian_{\mathcal{C}}\mapsto \lambda\Laplacian_{\mathcal{C}}=g^\Transpose \Laplacian_{\mathcal{C}} g$ where $g=\sqrt{\lambda}$ is a multiple of the identity matrix. Thus, the above map descends to a smooth map
\begin{equation*}
    [\Laplacian_\mathcal{C}]\colon \oFS_G \longrightarrow L\SPD{\ell}=\SPD{\ell}/\R^\times
\end{equation*}
of the open simplex $\oFS_G\subset\Pro(\R^E)$ to the link of $\SPD{\ell}$. This link, $L\SPD{\ell}$, is the quotient of the symmetric space $\SPD{\ell}=\GL_\ell(\R)/\OG_\ell(\R)$ by the action of the central subgroup $\R^\times\subset \GL_\ell(\R)$ consisting of scalar multiples of the identity matrix.

\begin{definition}\label{def:Symanzik}
    The \textbf{Symanzik polynomial} $\Symanzik_G$ of a graph $G$ is the homogeneous degree $\ell$ polynomial in $(x_e)_{e\in E}$ given by the determinant of any dual Laplacian matrix:
\begin{equation*}
    \Symanzik_G \defas \det \Laplacian_\mathcal{C}.
\end{equation*}
\end{definition}
Since $\det P=\det P^\Transpose=\pm 1$ in \eqref{eq:lapmat-trans}, this determinant is indeed independent of the choice of cycle basis $\mathcal{C}$. By  \cite[Proposition~2.2]{BlochEsnaultKreimer:MotivesGraphPolynomials}, if $G$ is connected, then
\begin{equation*}
    \Symanzik_G = \sum_{T} \prod_{e \notin T} x_e
\end{equation*}
is a sum over all spanning trees $T$ of $G$. Since every connected graph has a spanning tree, $\Symanzik_G$ takes positive values on $x\in\R_+^E$. In particular, $\Symanzik_G$ is not the zero polynomial.
\begin{remark}\label{rem:Symanzik-disconnected}
If $G$ has several connected components $G=G_1\sqcup\ldots\sqcup G_r$, then $H_1(G)=H_1(G_1)\oplus\ldots\oplus H_1(G_r)$. Thus $\Laplacian_\mathcal{C}$ is block diagonal such that
$\Symanzik_G=\Symanzik_{G_1}\!\cdots \Symanzik_{G_r}\neq 0$.
\end{remark}

\subsection{Differential forms associated to graphs}
Pulling back invariant forms along the map $[\Laplacian_\mathcal{C}]\colon \oFS_G\longrightarrow L\SPD{\ell}$ induced by the dual Laplacian, we obtain differential forms on the graph simplex. We denote these pullbacks as $\pff_{\Laplacian_\mathcal{C}}\defas [\Laplacian_\mathcal{C}]^* (\pff)$ and $\beta_{\Laplacian_{\mathcal{C}}}^{4i+1}\defas [\Laplacian_\mathcal{C}]^* (\beta^{4i+1})$, for the Pfaffian and primitive canonical forms, respectively. Whenever $\ell$ is odd, recall that we set $\pff=0$.
\begin{definition}
    For a graph $G$ with $\ell$ loops, and any positive integer $i$, we have smooth closed differential forms on the open simplex $\oFS_G\subset\Pro(\R^E)$, of degrees $\ell$ and $4i+1$, denoted
\begin{equation*}
    \pff_G
    \defas \pff_{\Laplacian_{\mathcal{C}}}
    \in\Omega^\ell(\oFS_G)
    \qquad\text{and}\qquad
    \omega_G^{4i+1}
    \defas \beta_{\Laplacian_{\mathcal{C}}}^{4i+1}
    \in \Omega^{4i+1}(\oFS_G)
    .
\end{equation*}
\end{definition}
Since $\beta^{4i+1}_X$ is $\GL_\ell(\R)$ invariant, it follows from \eqref{eq:lapmat-trans} that $\omega_G^{4i+1}$ is independent of the chosen cycle basis $\mathcal{C}=(C_1,\ldots,C_\ell)$. The Pfaffian $\pff_G$ is only determined up to a sign: for another cycle basis $\mathcal{C}'=(C_1',\ldots,C_\ell') P$ with $P\in\GL_\ell(\Z)$, \cref{lem:pfprops} (iii) shows that
\begin{equation}\label{eq:pff-sign}
    \pff_{\Laplacian_{\mathcal{C}'}} = \pff_{\Laplacian_{\mathcal{C}}} \cdot \det P = \pm \pff_{\Laplacian_{\mathcal{C}}}
\end{equation}
where the ratio $\det P=C_1'\wedge\ldots\wedge C'_\ell / C_1\wedge\ldots\wedge C_\ell=\pm 1$ in $\det H_1(G)$ compares the orientations of both cycle bases. This sign will be fixed later and is in fact crucial to define integrals associated to \emph{oriented} graphs, i.e.\ elements of $\hGC_3$, in \cref{def:canint}.

It follows from \eqref{eq:pff-pole} and \eqref{eq:primcan-pole} that $\pff_G$ and $\omega_G^{4i+1}$ are polynomial forms in $x_e$ and $\td x_e$, divided by $\Symanzik^{(\ell+1)/2}$ and $\Symanzik^{i+1}$, respectively.

As an example, for all odd-fold multiedges (even loop order dipole graphs) we obtain:
\begin{lemma} \label{lem:pff-dipoles}
For any $i \geq 1$, consider the dipole graph $D_{2i+1}$ with edges $\{1,\ldots,2i+1\}$ directed from vertex 0 to vertex 1. For the cycle basis $\mathcal{C}=(e_1-e_{2i+1},\ldots,e_{2i}-e_{2i+1})$,
\begin{equation}\label{eq:pff-dipoles}
  \pff_{D_{2i+1}} = (-1)^{i}\frac{(2i)!}{2^{i}\cdot i!}\frac{(x_1\cdots x_{2i+1})^{i-1} \Omega_{2i+1}}{\Symanzik^{i+1/2}}
\end{equation}
with the $2i$-form $\Omega_{2i+1}$ from \eqref{eq:simplex-volform} and the Symanzik polynomial $\Symanzik=\sum_{a=1}^{2i+1} \prod_{b\neq a} x_b$.
\end{lemma}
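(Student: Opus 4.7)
The proof is essentially a direct application of \cref{lem:pff(diag+const)}. The plan is to identify the dual Laplacian of $D_{2i+1}$ with the parametrization $\Laplacian\colon \R^{2i+1}_{>0}\hookrightarrow \SPD{2i}$ studied there.

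First I would verify the Symanzik polynomial. Since every spanning tree of $D_{2i+1}$ is obtained by deleting all but one edge, \cref{def:Symanzik} (together with the spanning tree formula cited just after it) gives
\begin{equation*}
    \Symanzik_{D_{2i+1}} = \sum_{a=1}^{2i+1} \prod_{b\neq a} x_b = \ESP_{2i}(x),
\end{equation*}
which matches the denominator of \eqref{eq:pff(diag+const)} and hence of \eqref{eq:pff-dipoles}.

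Next I would compute $\Laplacian_{\mathcal{C}}$ explicitly. With the cycle basis $C_a=e_a-e_{2i+1}$ for $1\leq a\leq 2i$, the entries of $(\Laplacian_{\mathcal{C}})_{a,b}=\sum_{e} (C_a)_e (C_b)_e x_e$ work out to $x_a+x_{2i+1}$ on the diagonal and $x_{2i+1}$ off the diagonal. This is exactly the matrix $\Laplacian(x_1,\ldots,x_{2i+1})$ appearing in \cref{lem:pff(diag+const)}; in other words, $[\Laplacian_{\mathcal{C}}]$ as a map $\oFS_{D_{2i+1}}\rightarrow L\SPD{2i}$ is the projectivization of the embedding of that lemma.

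Consequently, $\pff_{D_{2i+1}}= \Laplacian^{\ast}\pff^{2i}$ is given by the right-hand side of \eqref{eq:pff(diag+const)}, which is precisely the right-hand side of \eqref{eq:pff-dipoles}. I do not anticipate any real obstacle, since the identification of $\Laplacian_{\mathcal{C}}$ with the matrix of \cref{lem:pff(diag+const)} is a short linear-algebra check and the Symanzik identity is immediate from the spanning-tree formula; the only care needed is to confirm that the overall sign in \eqref{eq:pff-dipoles} is consistent with the chosen ordering of the cycle basis, since by \eqref{eq:pff-sign} the Pfaffian form depends on this ordering up to $\pm 1$, and the convention in \cref{lem:pff(diag+const)} matches the one declared here.
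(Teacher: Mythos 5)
Your proposal is correct and follows exactly the paper's argument: compute that $(\Laplacian_{\mathcal{C}})_{a,b}=x_{2i+1}+x_a\delta_{a,b}$ for the given cycle basis and then apply \cref{lem:pff(diag+const)}, noting that $\det\Laplacian_{\mathcal{C}}=\ESP_{2i}(x)=\Symanzik$. The extra remarks about the Symanzik polynomial and the sign convention are fine but not needed beyond what the cited lemma already supplies.
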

\begin{proof}
	As any two cycles in this basis intersect only in edge $2i+1$, the dual Laplacian $2i\times 2i$ matrix $\Laplacian=\Laplacian_{\mathcal{C}}$ has entries $\Laplacian_{a,b}=x_{2i+1}+x_a\delta_{a,b}$. Apply \cref{lem:pff(diag+const)}.
\end{proof}

Combining the Pfaffian and canonical forms, we obtain a host of closed smooth differential forms $\pff_G\wedge\omega_G$ on simplices.
As the Pfaffian vanishes for odd-dimensional matrices, these forms are only interesting for graphs $G$ with even loop number $\ell$.

\begin{lemma}\label{lem:pfbeta-props}
  Given a graph $G$ and any canonical form $\omega\in\cfs$, fix any cycle basis $\mathcal{C}$ and let $\pff_G=\pff_{\Laplacian_\mathcal{C}}$ and $\omega_G=\omega_{\Laplacian_\mathcal{C}}$ denote the corresponding forms.
  We have the properties:
  \begin{enumerate}
    \item \textbf{Automorphism.} Let $\alpha$ be any automorphism of $\,G$ and denote by $\alpha^*$ the pullback under the corresponding edge permutation: $\alpha^*(x_e) = x_{\alpha(e)}$. Then there exists a matrix $P\in\GL_\ell(\Z)$ such that $\alpha^* \Laplacian_\mathcal{C} = P^\Transpose \Laplacian_{\mathcal{C}} P$, and with such a matrix, we have
      \begin{equation}\label{eq:pffcan-sign}
          \pff_G \wedge \omega_G = \det{P} \cdot \alpha^* \left(\pff_G \wedge \omega_G \right).
      \end{equation}
    \item \textbf{Contraction.} Let $e$ be an edge that is not a self-loop. Then $H_1(G)\cong H_1(G/e)$ are canonically isomorphic, so $\mathcal{C}$ determines a cycle basis $\mathcal{C}/e$ of $G/e$. Set $\pff_{G/e}=\pff_{\Laplacian_{\mathcal{C}/e}}$. Then restriction to the hyperplane $\{x_e = 0\}$ can be identified with edge contraction:
      \begin{equation*}
      \left(\pff_G \wedge \omega_G \right)\big|_{x_e=0} = \pff_{G/e} \wedge \omega_{G/e}.
      \end{equation*}
    \item \textbf{Series.} Let $G'$ be the graph obtained from $G$ by subdividing an edge $e$ (with a new two-valent vertex) into edges $e'$ and $e''$.
      Denote $s_e\colon \R^{E'}_+ \to \R^{E}_+$ the map $y=s_e(x)$ such that $y_e=x_{e'}+x_{e''}$ and $y_i=x_i$ for $i\neq e$. Replace each occurrence of $e$ in $\mathcal{C}$ by the path $e'e''$, to obtain a cycle basis $\mathcal{C}'$ of $G'$. Set $\pff_{G'}=\pff_{\Laplacian_{\mathcal{C}'}}$. Then
      \begin{equation*}
      \pff_{G'} \wedge \omega_{G'} = s_e^* \left(\pff_G \wedge \omega_{G} \right).
      \end{equation*}
    \item \textbf{Components.} Suppose that the edges of $G$ can be partitioned into two subgraphs $G_1,G_2$ such that $G_1$ and $G_2$ have at most one vertex in common.
      Let $\Delta\omega =\sum_{(\omega)} \omega^{(1)} \otimes \omega^{(2)}$ denote the coproduct from \cref{sec:Hopf-can}.
      Then
  \begin{equation} \label{pffgraph-blocks}
    \pff_{G} \wedge \omega_G
    = \sum_{(\omega)} \left(\pff_{G_1} \wedge \omega^{(1)}_{G_1}\right) \wedge \left(\pff_{G_2} \wedge \omega^{(2)}_{G_2}\right).
  \end{equation}
  \end{enumerate}
\end{lemma}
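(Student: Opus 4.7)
The strategy for all four parts is the same: translate each claim into a statement about the dual Laplacian matrix $\Laplacian_\mathcal{C}(x)=\mathcal{C}^\Transpose\mathcal{D}(x)\mathcal{C}$ on $\R^E$, and then pull back through the map $[\Laplacian_{\mathcal{C}}]$ using the corresponding property of the Pfaffian form from \cref{lem:pfprops} and the naturality of the canonical forms encoded in the diagram of \cref{sec:Hopf-can}. In this way everything reduces to elementary linear algebra on the cycle space $H_1(G)\subset \Z^E$.

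For (i), an automorphism $\alpha$ of $G$ permutes $E$ and therefore acts on $C_1(G)=\Z^E$ preserving $H_1(G)$, so there is a unique $P\in\GL_\ell(\Z)$ with $\alpha_\ast \mathcal{C}=\mathcal{C}P$. A short calculation expanding the entries of $\mathcal{C}^\Transpose\mathcal{D}(\alpha^\ast x)\mathcal{C}$ gives $\alpha^\ast\Laplacian_\mathcal{C}=P^\Transpose\Laplacian_\mathcal{C} P$. Applying part (iii) of \cref{lem:pfprops} yields $\alpha^\ast\pff_G=\sgn(\det P)\cdot\pff_G=\det(P)\cdot\pff_G$, while the $\GL_\ell(\R)$-invariance of canonical forms gives $\alpha^\ast\omega_G=\omega_G$; combining and using $(\det P)^2=1$ proves \eqref{eq:pffcan-sign}. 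For (ii), removing a non-loop edge $e$ identifies $H_1(G/e)\cong H_1(G)$ canonically, so $\mathcal{C}/e$ is obtained from $\mathcal{C}$ simply by deleting the $e$-row, and then inspection of the bilinear form shows $\Laplacian_\mathcal{C}\big|_{x_e=0}=\Laplacian_{\mathcal{C}/e}$. Pullback along the inclusion of $\{x_e=0\}$ commutes with pullback along $\Laplacian_\mathcal{C}$, so $(\pff_G\wedge\omega_G)\big|_{x_e=0}=\pff_{G/e}\wedge\omega_{G/e}$.

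For (iii), the subdivision $G\to G'$ is a simplicial homotopy equivalence, hence induces $H_1(G)\cong H_1(G')$ under which $\mathcal{C}\mapsto \mathcal{C}'$, where each cycle that used $e$ now uses both $e'$ and $e''$ with the same coefficient. Computing the quadratic form $(C,C')\mapsto \sum_{f\in E'}x_f C_f C'_f$ in the basis $\mathcal{C}'$ immediately gives $\Laplacian_{\mathcal{C}'}=s_e^\ast\Laplacian_\mathcal{C}$, from which the formula follows by functoriality of pullback. For (iv), the hypothesis that $G_1$ and $G_2$ share at most one vertex forces $H_1(G)=H_1(G_1)\oplus H_1(G_2)$; choosing a cycle basis $\mathcal{C}=(\mathcal{C}_1,\mathcal{C}_2)$ respecting this splitting makes $\Laplacian_\mathcal{C}=\Laplacian_{\mathcal{C}_1}\oplus\Laplacian_{\mathcal{C}_2}$ block diagonal. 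Part (v) of \cref{lem:pfprops} gives $\pff_G=\pff_{G_1}\wedge\pff_{G_2}$, and the commutative diagram in \cref{sec:Hopf-can} (expressing $\beta^{4k+1}_{A\oplus B}=\beta^{4k+1}_A+\beta^{4k+1}_B$ as the coproduct $\Delta$) gives $\omega_G=\sum_{(\omega)}\omega^{(1)}_{G_1}\wedge\omega^{(2)}_{G_2}$. Multiplying these and reordering the wedge factors yields \eqref{pffgraph-blocks}.

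The main obstacle is sign bookkeeping. In (i), one must check that $P$ realizes the conjugation on the nose (rather than up to $\pm 1$) in order to apply property \ref{P:conj} of the Pfaffian. In (iv), when moving $\pff_{G_2}$ (of degree $\ell_2$) past $\omega^{(1)}_{G_1}$, one picks up the sign $(-1)^{\ell_2\cdot|\omega^{(1)}|}$; since the total degree of $\omega$ is fixed, graded cocommutativity of the Hopf algebra coproduct \eqref{eq:graded-cocom} ensures that summing over Sweedler pairs absorbs these signs, so that \eqref{pffgraph-blocks} holds without additional decorations. Everything else is purely mechanical.
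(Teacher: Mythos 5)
Your proof takes essentially the same route as the paper's: reduce each claim to an identity of dual Laplacian matrices, then invoke \cref{lem:pfprops} for the Pfaffian factor and the known functoriality of the canonical forms (automorphism invariance, restriction to $\{x_e=0\}$, series reduction, and the coproduct under one-vertex joins, quoted from \cite{invariant}) for the $\omega$ factor; all four parts check out. One correction to your sign discussion in (iv): graded cocommutativity of $\Delta$ is not what disposes of the sign $(-1)^{\ell_2\cdot|\omega^{(1)}|}$ incurred by commuting $\pff_{G_2}$ past $\omega^{(1)}_{G_1}$ --- that sign concerns the degree $\ell_2=\ell(G_2)$ of the Pfaffian factor, not the grading of $\omega$, and it is $+1$ simply because $\pff_{G_2}$ vanishes unless $\ell_2$ is even, so the commutation is sign-free in every non-zero term. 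Relatedly, since the lemma fixes an arbitrary $\mathcal{C}$, in (iv) one should choose $\mathcal{C}_1,\mathcal{C}_2$ so that $(\mathcal{C}_1,\mathcal{C}_2)$ induces the same orientation of $H_1(G)$ as $\mathcal{C}$ (i.e.\ $\det P=+1$), which pins down the signs of $\pff_{G_1}$ and $\pff_{G_2}$ appearing on the right-hand side; the paper makes this choice explicit.
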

\begin{proof}
  By \cite[\S6.2,6.3]{invariant}, the canonical forms $\omega_G$ themselves satisfy $\omega_G=\alpha^*(\omega_G)$, $\omega_G|_{x_e=0}=\omega_{G/e}$, $s_e^*(\omega_G)=\omega_{G'}$, and, for primitive forms, $\omega_G^{4i+1} = \omega_{G_1}^{4i+1} + \omega_{G_2}^{4i+1}$.
  It remains only to establish the corresponding relations for the Pfaffian form $\pff_G$:

  Every automorphism of $G$ induces an  automorphism of the cycle space $H_1(G)$, hence the matrix $P$ exists. Then \eqref{eq:pffcan-sign} follows from \eqref{eq:pff-sign}.

  Cycles of $G$ become cycles of $G/e$ by removing all occurrences of $e$, thus $\Laplacian_{\mathcal{C}/e} = \Laplacian_{\mathcal{C}}|_{x_e = 0}$ and therefore $\pff_G|_{x_e=0}=\pff_{G/e}$.

  Subdividing an edge yields a canonical isomorphism $H_1(G)\cong H_1(G')$, replacing $e$ by the path $e'e''$. Thus for the induced cycle basis, $\Laplacian_{\mathcal{C}'} = s^*_e \Laplacian_{\mathcal{C}}$ and we get $\pff_{G'} = s^*_e \pff_{G}$.

  When two graphs $G_1,G_2$ share at most one vertex, then the cycle space of $G=G_1\cup G_2$ is canonically a direct sum $H_1(G) = H_1(G_1) \oplus H_1(G_2)$. We can choose cycle bases $\mathcal{C}_i$ of $G_i$ such that $(\mathcal{C}_1,\mathcal{C}_2)$ defines the same orientation of $H_1(G)$ as $\mathcal{C}$. Then $P^\Transpose\Laplacian_{\mathcal{C}}P = \Laplacian_{\mathcal{C}_1} \oplus \Laplacian_{\mathcal{C}_2}$ for a matrix $P$ with $\det P=1$.
  The identity $\omega_G^{4i+1} = \omega_{G_1}^{4i+1} + \omega_{G_2}^{4i+1}$ generalizes to polynomials in primitive forms by the very definition of the coproduct (see \cref{sec:Hopf-can}) as
  \begin{equation*}
  \omega_G = \sum_{(\omega)} \omega^{(1)}_{G_1} \wedge \omega^{(2)}_{G_2}.
  \end{equation*}
  Since \cref{lem:pfprops} (v) implies $\pff_G = \pff_{G_1} \wedge \pff_{G_2}$, the claimed relation follows.
\end{proof}
A connected graph is biconnected if every pair of distinct edges is contained in a cycle without repeated vertices. A \emph{biconnected component} is an edge-maximal biconnected subgraph. Each edge belongs to exactly one biconnected component. Any two biconnected components have at most one vertex in common (such a vertex is called a \emph{cut vertex}).
\begin{corollary}\label{cor:pfform}
  If $G$ has a biconnected component with odd loop number
  , then $\pff_G=0$.
\end{corollary}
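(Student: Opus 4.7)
The plan is to deduce the corollary by induction on the number of biconnected components of $G$, using part~(iv) of \cref{lem:pfbeta-props} applied to $\omega = 1$. Since $\Delta 1 = 1\otimes 1$ in the Hopf algebra $\cfs$, the formula \eqref{pffgraph-blocks} specializes to $\pff_G = \pff_{G_1}\wedge\pff_{G_2}$ whenever the edges of $G$ partition into two subgraphs meeting in at most one vertex.

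For the base case, suppose $G$ is itself biconnected with odd loop number $\ell=\ell(G)$. Then $\pff_G$ is the pullback along the dual Laplacian of the Pfaffian form on $\SPD{\ell}$, which vanishes identically in odd dimensions by \cref{lem:pfprops}~(i); hence $\pff_G=0$. For the inductive step, assume $G$ has at least two biconnected components, so that its block-cut tree has at least one edge. Removing any such tree edge splits the blocks of $G$ into two non-empty families, inducing a decomposition $G = G_1\cup G_2$ of the edge set into two subgraphs meeting in exactly the cut vertex associated to that edge. We may arrange the decomposition so that the offending biconnected component with odd loop number lies in $G_1$. The inductive hypothesis yields $\pff_{G_1}=0$, and the specialization of \eqref{pffgraph-blocks} above then gives $\pff_G = \pff_{G_1}\wedge\pff_{G_2} = 0$.

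There is no real obstacle here: the argument is purely formal once part~(iv) of \cref{lem:pfbeta-props} is available. An equivalent one-step proof uses the canonical direct sum decomposition $H_1(G)=\bigoplus_B H_1(B)$ over the biconnected components $B$, in which cycles from distinct blocks have disjoint edge supports. Concatenating cycle bases of each $H_1(B)$ into a cycle basis $\mathcal{C}$ of $G$ makes the dual Laplacian $\Laplacian_\mathcal{C}$ block diagonal, with one block $\Laplacian_{\mathcal{C}_B}$ per biconnected component. Then \cref{lem:pfprops}~(v) gives $\pff_G = \bigwedge_B \pff_B$, and \cref{lem:pfprops}~(i) makes the entire product vanish whenever some $\ell(B)$ is odd.
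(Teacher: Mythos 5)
Your proof is correct and takes essentially the same route as the paper: the paper also writes $\pff_G=\pff_{G_1}\wedge\ldots\wedge\pff_{G_c}$ over the biconnected components via \eqref{pffgraph-blocks} and induction, and then invokes the vanishing of $\pff$ in odd loop number. Your block-cut-tree formulation and the one-step block-diagonal Laplacian remark are just more explicit versions of the same argument.
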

\begin{proof}
    Let $G_1,\ldots,G_c$ denote the biconnected components of $G$. Then $\pff_G=\pff_{G_1}\wedge\ldots\wedge\pff_{G_c}$ by \eqref{pffgraph-blocks} and induction. If any $G_i$ has odd loop number, then $\pff_{G_i}=0$ and so $\pff_G=0$.
\end{proof}

\subsection{Whitney flip}

Two graphs may give rise to the same dual Laplacian, and hence the same Pfaffian (and canonical) forms, even if they are not isomorphic. It suffices that they are \emph{2-isomorphic}, that is, their edges can be identified in such a way that cycles are mapped to cycles and vice versa.\footnote{In matroid terms, two graphs are 2-isomorphic precisely when their cycle matroids are isomorphic.} For example, a graph $G=G_1\sqcup G_2$ with two connected components is 2-isomorphic to any graph $G'$ obtained from $G$ by identifying a vertex in $G_1$ with a vertex in $G_2$. In this case, the cycle spaces of $G$ and $G'$ are both canonically identified with $H_1(G_1)\oplus H_1(G_2)$. This was exploited in \cref{lem:pfbeta-props}~(iv).
\begin{figure}
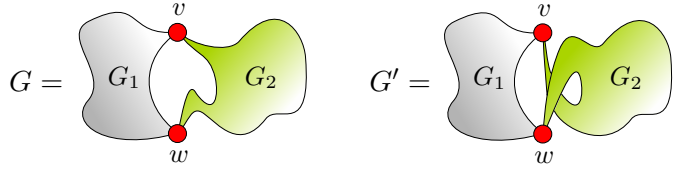

    \centering
    $G=\Graph{flip1labels} \qquad G'=\Graph{flip2labels}$
    \caption{A Whitney flip of a graph.}%
    \label{fig:Whitney}%
\end{figure}

Whitney showed in \cite{Whitney:2isomorphic} that \emph{all} 2-isomorphisms can be explained with the help of only one further transformation of graphs. This \emph{Whitney flip} or \emph{Whitney switch} is illustrated in \cref{fig:Whitney}:
Suppose that $G=G_1\cup G_2$ is a union of two subgraphs which have only 2 vertices $\{v,w\}=V(G_1)\cap V(G_2)$ in common. Then the Whitney flip of $G$ with respect to this bipartition $E(G) = E(G_1) \sqcup E(G_2)$ is the graph $G'$ obtained from $G$ by re-attaching all half-edges in $G_2$ at $v$ to $w$ and vice-versa.

Since the edges and in fact the half-edges of $G$ and $G'$ are by construction in natural bijection, their 1-chains $C_1(G)= C_1(G_1)\oplus C_1(G_2)=C_1(G')$ are canonically identified. This identification itself does not constitute a 2-isomorphism. For example, a cycle $C=P_1+P_2\in H_1(G)$ consisting of a path $P_1$ in $G_1$ from $v$ to $w$, and a path $P_2$ in $G_2$ from $w$ to $v$, becomes a sum of two paths from $v$ to $w$ in $G'$ so that $C\notin H_1(G')$.

\begin{lemma}\label{lem:pfbeta-flip}
  Let $G'$ be the Whitney flip of $G=G_1\cup G_2$. Define a group homomorphism $\varphi\colon C_1(G)\rightarrow C_1(G')$ as the identity on $C_1(G_1)$ and minus the identity on $C_1(G_2)$, so that
  \begin{equation*}
      C_1(G)\supset \det \Z^{H(e)}\ni e'\wedge e''\mapsto \begin{cases}
          e'\wedge e'' & \text{if $e$ belongs to $G_1$,} \\
          e''\wedge e' & \text{if $e$ belongs to $G_2$.}
      \end{cases}
  \end{equation*}
  Then $\varphi$ is a 2-isomorphism and thus maps cycle bases $\mathcal{C}=(C_1,\ldots,C_\ell)$ of $G$ to cycle bases $\mathcal{C}'=(\varphi(C_1),\ldots,\varphi(C_\ell))$ of $G'$. The respective Pfaffian forms $\pff_{G}=\pff_{\Laplacian_{\mathcal{C}}}$ and $\pff_{G'}=\pff_{\Laplacian_{\mathcal{C}'}}$ are equal, and for any canonical form $\omega$, we have
  \begin{equation}\label{eq:pfbeta-flip}
      \pff_G \wedge \omega_{G} = \pff_{G'} \wedge \omega_{G'}.
  \end{equation}
\end{lemma}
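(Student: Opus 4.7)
The plan is to reduce everything to the observation that the dual Laplacian, viewed as a quadratic form on the cycle space, is insensitive to the sign ambiguity introduced by $\varphi$. The verification breaks into two main steps: first check that $\varphi$ maps $H_1(G)$ to $H_1(G')$ (and so is a bona fide 2-isomorphism), then check that it identifies the two dual Laplacian matrices on the nose. Equality of $\pff_G\wedge\omega_G$ and $\pff_{G'}\wedge\omega_{G'}$ then follows immediately from the definitions, with no cycle-basis correction factor to worry about.

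For the first step, I would introduce the vertex involution $s\colon V(G)\rightarrow V(G')$ exchanging $v\leftrightarrow w$ and fixing all other vertices (using the canonical identification of vertex sets outside $\{v,w\}$). The key compatibility between the two boundary maps is that $\partial_{G'}$ agrees with $\partial_G$ on $C_1(G_1)$, while on $C_1(G_2)$ it satisfies $\partial_{G'}=s_\ast\circ \partial_G$, because the Whitney flip is precisely the operation that applies $s$ to the incidences of half-edges in $G_2$. Given a cycle $C=C_1+C_2$ with $C_i\in C_1(G_i)$, the identity $\partial_G C=0$ together with the fact that $\partial_G C_1$ and $\partial_G C_2$ are both supported on $\{v,w\}$ (all other vertices of $G_1$ resp.\ $G_2$ are disjoint from the other subgraph) forces $\partial_G C_1=a(v-w)=-\partial_G C_2$ for some $a\in\Z$. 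Using $s(v-w)=-(v-w)$, a direct computation gives
\begin{equation*}
    \partial_{G'}\varphi(C)=\partial_G C_1 - s_\ast\partial_G C_2 = a(v-w)-s_\ast(-a(v-w)) = a(v-w)-a(v-w)=0,
\end{equation*}
so $\varphi$ sends $H_1(G)$ into $H_1(G')$. Applying the same argument with the roles of $G$ and $G'$ swapped (Whitney flip is an involution, and $\varphi$ squares to the identity) produces the inverse map and establishes that $\varphi$ is a 2-isomorphism; in particular it carries cycle bases to cycle bases.

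For the second step, once $\mathcal{C}'=\varphi(\mathcal{C})=(\varphi(C_1),\ldots,\varphi(C_\ell))$ is known to be a cycle basis of $G'$, the Laplacian matrices can be compared entry by entry. Writing $(C_i)_e\in\Z$ for the coefficient at edge $e$, the map $\varphi$ sends $(C_i)_e\mapsto \epsilon_e (C_i)_e$ with $\epsilon_e=+1$ if $e\in E(G_1)$ and $\epsilon_e=-1$ if $e\in E(G_2)$. Since $\epsilon_e^2=1$, we get
\begin{equation*}
    (\Laplacian_{\mathcal{C}'})_{ij}=\sum_{e\in E(G')} x_e\,\varphi(C_i)_e\,\varphi(C_j)_e=\sum_{e\in E(G)} x_e\, (C_i)_e\,(C_j)_e=(\Laplacian_\mathcal{C})_{ij},
\end{equation*}
so the two Laplacian matrices coincide identically as functions of $(x_e)_{e\in E}$ under the canonical identification $E(G)=E(G')$ of edges. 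Pulling back the invariant forms $\pff$ and any $\omega\in\cfs$ along the identical maps $\Laplacian_\mathcal{C}=\Laplacian_{\mathcal{C}'}$ gives $\pff_{G'}=\pff_G$ and $\omega_{G'}=\omega_G$, hence \eqref{eq:pfbeta-flip}.

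The only slightly delicate point is the first step, where one has to keep track of two different boundary maps living on the same underlying module of 1-chains and make sure the sign of $\varphi$ on $C_1(G_2)$ is exactly what compensates for the swap $s_\ast$; every other ingredient is a one-line identification or a symmetric computation.
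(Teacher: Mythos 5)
Your proof is correct and follows essentially the same route as the paper: the key point in both is that $\varphi$ acts on edge coordinates by signs $\epsilon_e=\pm 1$, so $\epsilon_e^2=1$ makes the two dual Laplacian matrices literally equal as functions of the $x_e$, and hence the pulled-back forms coincide with no basis-change sign. The only difference is cosmetic: you verify that $\varphi$ preserves the cycle space by comparing boundary maps ($\partial_{G'}=s_*\circ\partial_G$ on $C_1(G_2)$) for an arbitrary cycle, whereas the paper exhibits an explicit adapted cycle basis $(\mathcal{C}_1,\mathcal{C}_2,P_1+P_2)\mapsto(\mathcal{C}_1,-\mathcal{C}_2,P_1-P_2)$; both arguments are valid.
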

\begin{proof}
    Consider a cycle basis $\mathcal{C} = (\mathcal{C}_1, \mathcal{C}_2, P_1+P_2)$ of $G$ consisting of a cycle basis $\mathcal{C}_1$ of $G_1$, a cycle basis $\mathcal{C}_2$ of $G_2$ and a cycle $P_1+P_2$ built from a path $P_1$ from $v$ to $w$ in $G_1$ and a path $P_2$ from $w$ to $v$ in $G_2$. Then $\mathcal{C}' = (\mathcal{C}_1, -\mathcal{C}_2, P_1-P_2)$, where $-\mathcal{C}_2$ traverses the cycles in $\mathcal{C}_2$ in reverse, and $P_1-P_2$ is indeed a cycle of $G'$. Hence $\mathcal{C}'$ is a cycle basis of $G'$, which shows that $\varphi$ is a 2-isomorphism.

    Now choose any edge directions to identify $C_1(G) =C_1(G') \cong \Z^{E(G_1)} \oplus \Z^{E(G_2)}$. Then $\varphi$ is represented by the diagonal matrix $P = \IdMat_{m_1} \oplus -\IdMat_{m_2}$ where $\IdMat_{m_i}$ is an identity matrix block of size $m_i=|E(G_i)|$. Therefore $\mathcal{C'} = P\mathcal{C}$ and the dual Laplacians
    \begin{equation*}
        \Laplacian_{\mathcal{C}'} = \mathcal{C}^{\Transpose} P^{\Transpose} \cdot \mathcal{D} \cdot P \mathcal{C} = \Laplacian_{\mathcal{C}}
    \end{equation*}
    are identical, since the diagonal matrices $\mathcal{D}$ and $P=P^\Transpose=P^{-1}$ commute.
\end{proof}

By definition, any 2-isomorphism $\varphi$ of graphs $G$ and $G'$ induces isomorphisms $E(G)\cong E(G')$ and $H_1(G)\cong H_1(G')$. It therefore also determines an identification
\begin{equation*}
    \varphi_*\colon \det \Z^{E(G)}\otimes \det H_1(G) \cong \det \Z^{E(G')}\otimes \det H_1(G')
\end{equation*}
such that $e_1\wedge\ldots\wedge e_m\otimes C_1\wedge\ldots\wedge C_\ell \mapsto \varphi(e_1)\wedge\ldots\wedge\varphi(e_m)\otimes \varphi(C_1)\wedge\ldots\wedge \varphi(C_\ell)$.
Following \cref{SS:equivor}, this means we can lift 2-isomorphisms to oriented graphs.
\begin{definition}
  Let $(G, o)$ be a connected oriented graph with even loop number. Let $G'$ denote a Whitney flip of $G$ with respect to some edge-disjoint decomposition $G=G_1\cup G_2$.

  The \textbf{oriented Whitney flip} of $(G,o)$ with respect to $G=G_1\cup G_2$ is the oriented graph $(G', o')$, where $o' = \varphi_* o$ for the 2-isomorphism $\varphi$ from \cref{lem:pfbeta-flip}.
\end{definition}
\begin{remark}
    The Whitney flip is not exactly symmetric under $G_1\leftrightarrow G_2$. Let $G''$ denote the Whitney flip with the roles of $G_1$ and $G_2$ swapped, and call $\psi\colon C_1(G)\cong C_1(G'')$ the corresponding 2-isomorphism: the identity on $C_1(G_2)$ and minus the identity on $C_1(G_1)$. Then $G'\cong G''$ are not identical but only isomorphic (need to swap vertices $v\leftrightarrow w$). Since $\psi=-\varphi$, note $\psi_*=(-1)^{\ell(G)}\varphi_*$. An even loop number therefore ensures that the oriented Whitney flips $(G',\varphi_*o)\sim(G'',\psi_*o)$ are equivalent in $\hGC_3$ via the automorphism $\psi\circ\varphi^{-1}$.
\end{remark}
\begin{lemma}\label{lem:Flip-Or-V}
  If we write the orientation $o = o_V \otimes \bigotimes_e o_e$ in terms of a vertex order $o_V=v_1\wedge\ldots \wedge v_n\in\det\Z^{V(G)}$ and edge directions $o_e\in\det \Z^{H(e)}$ as in \cref{def:GCor}, then the orientation of the Whitney flip is $o' = (-1)^{\ell(G_2)+1} o_V \otimes \bigotimes_e o_e$.
\end{lemma}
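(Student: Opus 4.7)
The strategy is to use the canonical isomorphism \eqref{eq:Hor=Vor} to translate both orientations $o$ on $G$ and $o'$ on $G'$ into the edge-cycle form $\det\Z^{E}\otimes\det H_1$, where the action of $\varphi_\ast$ is controlled by the explicit description of $\varphi$ on $C_1$ from \cref{lem:pfbeta-flip}. First I arrange compatible data: set $v_1 = v$ and $v_2 = w$, and order the remaining vertices so that those in $G_1$ come before those in $G_2$, with $n_1 \defas \abs{V(G_1)}$. Following the proof of \cref{lem:pfbeta-flip}, choose cycle bases $\mathcal{B}_G = (\mathcal{C}_1, \mathcal{C}_2, P_1^\circ + P_2^\circ)$ of $G$ and $\mathcal{B}_{G'} = (\mathcal{C}_1, \mathcal{C}_2, P_1^\circ - P_2^\circ)$ of $G'$, with $\mathcal{C}_i$ a cycle basis of $G_i$, $P_1^\circ$ a path in $G_1$ from $v$ to $w$, and $P_2^\circ$ a path in $G_2$ from $w$ to $v$. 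Choose paths $P_i$ from $v_1$ to $v_i$ entirely in $G_1$ for $i\leq n_1$, and of the form $P_i = P_1^\circ + Q_i$ with $Q_i$ a path in $G_2$ from $w$ to $v_i$ for $i > n_1$. A key combinatorial observation is that the chain $Q_i \in C_1(G_2)$, reinterpreted in $C_1(G')$, has boundary $\partial_{G'} Q_i = v_i - v$ because the re-attachment of half-edges swaps the boundary contributions at $v$ and $w$ for any chain in $G_2$. Thus $P'_i \defas Q_i$ is an admissible choice of path from $v_1$ to $v_i$ in $G'$ for $i > n_1$, while for $i \leq n_1$ the same paths serve both graphs.

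Under \eqref{eq:Hor=Vor}, the orientation $o$ maps to $(\det A_G) \cdot (e_1\wedge\ldots\wedge e_m) \otimes \mathcal{B}_G$, where $A_G$ is the matrix with columns $\mathcal{B}_G$ followed by $P_2,\ldots,P_n$. Since $\varphi$ acts as the identity on edges, applying $\varphi_\ast$ only modifies the $\det H_1$ factor: by \cref{lem:pfbeta-flip}, $\varphi(\mathcal{B}_G) = (\mathcal{C}_1, -\mathcal{C}_2, P_1^\circ - P_2^\circ) = (-1)^{\ell(G_2)}\mathcal{B}_{G'}$ in $\det H_1(G')$. Meanwhile, the orientation $\tilde{o}_{G'} \defas o_V \otimes \bigotimes_e o_e$ of $G'$ (with the same data as $o$) maps under \eqref{eq:Hor=Vor} to $(\det A_{G'}) \cdot (e_1 \wedge \ldots \wedge e_m) \otimes \mathcal{B}_{G'}$, with $A_{G'}$ the analogous matrix using $P'_i$. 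Matching both expressions yields
\begin{equation*}
o' = \varphi_\ast o = (-1)^{\ell(G_2)}\cdot\frac{\det A_G}{\det A_{G'}}\cdot \tilde{o}_{G'}.
\end{equation*}

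The final step is to show $\det A_G / \det A_{G'} = -1$ by column operations. Subtracting the column for $P_2 = P_1^\circ$ from each column $P_1^\circ + Q_i$ in $A_G$ (for $i > n_1$) reduces these to $Q_i$, matching the corresponding columns of $A_{G'}$ without changing the determinant. The only remaining discrepancy is the big-cycle column: $P_1^\circ + P_2^\circ$ in $A_G$ versus $P_1^\circ - P_2^\circ$ in $A_{G'}$. Writing $P_1^\circ - P_2^\circ = -(P_1^\circ + P_2^\circ) + 2P_1^\circ$ exhibits this transformation as a column negation (contributing $-1$) followed by adding twice another column (no contribution), so $\det A_{G'} = -\det A_G$. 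Combining yields $o' = (-1)^{\ell(G_2)+1}\,o_V\otimes\bigotimes_e o_e$. The main subtlety is the step identifying $Q_i$ as a path in $G'$: this is where the Whitney flip enters essentially and allows the matrices $A_G$ and $A_{G'}$ to be aligned, reducing the whole calculation to a local adjustment in a single column.
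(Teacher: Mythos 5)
Your proof is correct and follows essentially the same route as the paper: both express the two orientations in the edge--cycle form of \eqref{eq:Hor=Vor} via matrices of cycles and paths adapted to the decomposition $G=G_1\cup G_2$, and extract the sign $(-1)^{\ell(G_2)+1}$ by column operations, with the $-1$ coming from the column $P_1^\circ\pm P_2^\circ$ and the $(-1)^{\ell(G_2)}$ from the $\mathcal{C}_2$ block. The only (immaterial) difference is bookkeeping: the paper absorbs $-\mathcal{C}_2$ into its reference basis for $G'$ and corrects the $G_2$-paths on the $G'$ side by adding $P_1$, whereas you keep $+\mathcal{C}_2$ and build the correction $P_1^\circ+Q_i$ into the paths on the $G$ side.
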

\begin{proof}
  Consider a cycle basis $\mathcal{C} = (\mathcal{C}_1, \mathcal{C}_2, P_1+P_2)$ of $G$ and $\mathcal{C}'=(\mathcal{C}_1,-\mathcal{C}_2,P_1-P_2)$ for $G'$ as in the proof of \cref{lem:pfbeta-flip}.
  Let $o_V = v \wedge w \wedge o_{V_1} \wedge o_{V_2}$ where $o_{V_i}$ is any ordering of the vertices of $G_i \setminus \{v,w\}$. Then by \cref{SS:equivor}, $o_V \otimes \bigotimes_e o_e = (\det A ) \cdot o$ where
  \begin{equation*}
    A = \begin{pmatrix} \mathcal{C}_1 & \mathcal{C}_{2} & P_1+P_2 & P_w & \mathcal{Q}_{1} & \mathcal{Q}_{2} \end{pmatrix}
  \end{equation*}
  for any path $P_w$ from $v$ to $w$ and $\mathcal{Q}_i$ denoting matrices of paths from $v$ to the vertices in $G_i \setminus \{v,w\}$, in the order of $o_{V_i}$.
  We are free to choose $P_w=P_1$ and we may assume that the paths in $\mathcal{Q}_i$ are entirely contained in $G_i$.

  For $o'$ the isomorphism from \cref{SS:equivor} gives $o_V \otimes \bigotimes_e o_e = (\det{A'}) \cdot o'$ for the matrix
  \begin{equation*}
    A' = \begin{pmatrix} \mathcal{C}_1 & -\mathcal{C}_{2} & P_1-P_2 & P_1 & \mathcal{Q}_{1} & \mathcal{Q}_{2}' \end{pmatrix}
  \end{equation*}
  where $\mathcal{Q}_2'$ denotes the matrix obtained by adding $P_1$ to every column of $\mathcal{Q}_2$ (in $G'$, paths from $\mathcal{Q}_2$ originate at vertex $w$, not $v$). We conclude that $\det{A'} = (-1)^{\ell(G_2)+1} (\det{A})$.
\end{proof}
\begin{figure}
    \centering
    $ G = \Graph[0.8]{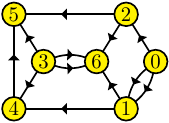} \qquad\qquad G' = \Graph[0.8]{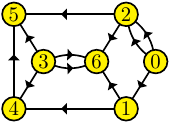} $
    \caption{A graph $G$ that is isomorphic to its Whitney flip $G'$ along the cut $\{1,2\}$.}%
    \label{fig:244-flip}%
\end{figure}
\begin{example}\label{ex:oddflip}
  The graph $G$ denoted $G_{244}$ in \cref{tab:integrals} is illustrated in \cref{fig:244-flip}, together with its Whitney flip $G'$ taking $G_2$ to be the subgraph induced by vertices $\{0,1,2\}$. Since $\ell(G_2)+1=2$, \cref{lem:Flip-Or-V} shows that with the orientations (edge directions and vertex ordering) as drawn, $G'$ is the oriented Whitney flip of $G$.

  In this example, the flip $G'$ happens to be isomorphic to $G$ itself, by swapping the vertices $1\leftrightarrow 2$ and $4\leftrightarrow 5$. This isomorphism reverses the edge $4\rightarrow 5$, so that $G'=-G$ in $\hGC_3$.
  We thus say that $G$ has an odd Whitney flip to itself.
\end{example}

\subsection{Vanishing}
The above graph-theoretic identities imply that under certain conditions, forms $\pff_G\wedge\omega_G$ on the simplex $\oFS_G$ that have top degree (equal to $\dim\oFS_G=m-1$) necessarily vanish:
\begin{lemma}\label{lem:pfbeta-vanish}
  Let $G$ be a graph with $m$ edges, even loop number $\ell$, and no isolated vertices.
  Let $\omega \in \cfs[m - \ell - 1]$.
  Then $\pff_G \wedge \omega_G=0$ if any of the following hold:
  \begin{enumerate}
    \item $G$ has a self-loop
    \item $G$ is disconnected, or $G$ has a vertex whose deletion disconnects $G$
    \item $G$ has a 2-valent vertex
    \item $G$ has a 2-edge cut (two edges whose deletion disconnects $G$)
    \item $G$ has degree $k=\gcdeg{G} > -3$ and is not the single edge ($G\not\cong D_1$)
  \end{enumerate}
\end{lemma}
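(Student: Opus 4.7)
The plan is to treat each of (i)--(v) by reducing to one of two generic mechanisms: either a symmetry of $G$ forces $\pff_G\wedge\omega_G = -\pff_G\wedge\omega_G$, or the dual Laplacian $\Laplacian_\mathcal{C}$ factors through an $\R_+$-equivariant projection onto a coordinate orthant of dimension $m-1$, where the pulled-back form is top-degree and projective and hence identically zero. The key numerical input is that $\pff_G\wedge\omega_G$ has total degree $\ell+(m-\ell-1)=m-1$, matching $\dim\oFS_G$. A recurring elementary fact is that any nonzero projective $k$-form on $\R^{k}_+$ vanishes, since $\iota_{\Evf}$ of any nonzero multiple of $\td x_1\wedge\ldots\wedge\td x_{k}$ is nonzero.

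For (i), the self-loop $e$ admits the automorphism $\alpha$ of $G$ that swaps the two half-edges $H(e)=\{e',e''\}$ and fixes everything else. It acts trivially on edges---so $\alpha^*$ is the identity on $\oFS_G$---but negates the 1-cycle $[e]\in H_1(G)$. Taking a cycle basis in which $[e]$ is the first element, the conjugation matrix $P$ of \Cref{lem:pfbeta-props}~(i) is diagonal with a single $-1$, so $\det P=-1$, giving $\pff_G\wedge\omega_G=-\pff_G\wedge\omega_G$. For (ii), \Cref{lem:pfbeta-props}~(iv) expands $\pff_G\wedge\omega_G$ into a sum over the coproduct of wedges $(\pff_{G_1}\wedge\omega^{(1)}_{G_1})\wedge(\pff_{G_2}\wedge\omega^{(2)}_{G_2})$, in which each factor is a projective form on $\R^{E(G_i)}_+$ of dimension $m_i$. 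For such a product to be nonzero, both factors would need degree at most $m_i-1$; but the degrees sum to $m-1>m_1+m_2-2$, so every term in the sum vanishes.

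Cases (iii) and (iv) are the heart of the argument, and both exploit that $\Laplacian_\mathcal{C}$ factors through an $\R_+$-equivariant map $f\colon\R^{E(G)}_+\To \R^{E'}_+$ with $|E'|=m-1$, writing $\pff_G\wedge\omega_G=f^*\eta$ for a form $\eta$ of degree $m-1$ on $\R^{E'}_+$. In case (iii), a 2-valent vertex identifies $G$ as the subdivision of a graph $G''$ with $m-1$ edges and the same loop number $\ell$, and \Cref{lem:pfbeta-props}~(iii) gives $\eta=\pff_{G''}\wedge\omega_{G''}$ for the series-combination map $f$. In case (iv), the 2-edge cut $\{e_1,e_2\}$ need not correspond to a genuine graph contraction, so I argue directly: the cut defines a cocycle orthogonal to $H_1(G)$, forcing $C(e_1)=\pm C(e_2)$ for every basis cycle, so $\Laplacian_\mathcal{C}$ depends on $x_{e_1},x_{e_2}$ only through $y=x_{e_1}+x_{e_2}$. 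In both cases $\eta$ is $\R_+$-projective (transferred from $\pff_G\wedge\omega_G$ via the equivariance of $f$) and top-degree on $\R^{E'}_+$, hence zero.

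For (v), I combine the previous parts. Assume $G$ is connected (else (ii)) and has no self-loops (else (i)). Rewriting $k=-2m+3n-3>-3$ as $3n>2m$ shows the average vertex degree is strictly below $3$, and since isolated vertices are excluded, some vertex $v$ has degree $1$ or $2$. If $\deg v=2$ with two distinct non-loop incident edges, (iii) applies; if $\deg v=1$ with neighbor $w$, then $G\neq D_1$ forces $w$ to carry another edge, and removing $w$ disconnects $v$ from the rest, so $w$ is a cut vertex and (ii) applies. I expect the main obstacle to be case (iv), since the 2-edge cut is not naturally a contraction operation on graphs, and one has to formulate carefully the sign convention that identifies the cut as a cocycle and implements the reduction from $(x_{e_1},x_{e_2})$ to $x_{e_1}+x_{e_2}$ at the level of the cycle basis.
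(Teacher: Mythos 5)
Your proof is correct, and for cases (ii), (iii) and (v) it coincides with the paper's argument (degree count on the coproduct expansion \eqref{pffgraph-blocks}, pullback under the series map $s_e$, and the average-valence count respectively). The interesting divergence is in cases (i) and (iv). For (i), the paper invokes \cref{cor:pfform} ($\pff$ factorizes over biconnected components and vanishes on the odd-loop component given by the self-loop), whereas you use the odd automorphism swapping the two half-edges of the self-loop: it fixes all edge coordinates but negates the cycle $[e]$, so $\det P=-1$ in \eqref{eq:pffcan-sign}. Both work; they are two faces of the same fact that a self-loop contributes an odd-rank block to the Laplacian. For (iv), the paper performs a Whitney flip along the 2-vertex cut determined by the 2-edge cut, producing a graph with a 2-valent vertex and reducing to (iii) via \cref{lem:pfbeta-flip}; you instead observe directly that the signed indicator of the cut is orthogonal to $H_1(G)$, forcing $C(e_1)=\epsilon C(e_2)$ with a sign $\epsilon$ independent of $C$, so that $\Laplacian_{\mathcal{C}}$ depends on $(x_{e_1},x_{e_2})$ only through $x_{e_1}+x_{e_2}$ and the form is pulled back from an $(m-1)$-dimensional orthant, where a projective top-degree form vanishes. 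Your route is more elementary in that it bypasses the Whitney flip machinery entirely; its only cost is the small case distinction you allude to (if one of the two edges is a bridge, the cocycle is supported on a single edge and forces $C(e_1)=0$, so the Laplacian simply forgets $x_{e_1}$ — the factorization through $\R^{m-1}_+$ holds either way). The paper's route, by contrast, reuses a lemma it needs anyway for the 2-isomorphism invariance of the integrals.
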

Note the second part of (ii) includes graphs $G\not\cong D_1$ with bridges and 1-valent vertices.
\begin{proof}
  First note that the form $\pff_G \wedge \omega_G$ has degree $m-1$.

  Case (i) is immediate from \cref{cor:pfform} (every self-loop is a biconnected component).

  For case (ii), suppose that $G=G_1\cup G_2$ is the union of two subgraphs sharing at most one vertex. Then $\ell(G)=\ell(G_1)+\ell(G_2)$, and the edge partition gives $m=m_1+m_2$ when $G_i$ has $m_i$ edges.
  Expand $\pff_{G} \wedge \omega_G$ as in \eqref{pffgraph-blocks} and let $d_i$ denote the degrees of $\omega^{(i)}_{G_i}\in\cfs$.
  Since $\pff_{G_i} \wedge \omega^{(i)}_{G_i}$ is a projective holomorphic form of degree $\ell(G_i) + d_i$ in $m_i$ variables, we must have $d_i \leq m_i-\ell(G_i)-1$ or else this form is trivially zero. But since $d_1+d_2=|\omega|=m-\ell-1$, both inequalities cannot hold simultaneously, and therefore every summand of $\pff_G\wedge\omega_G$ in \eqref{pffgraph-blocks} is zero.

  For case (iii), let $G'$ be the graph such that $G$ is obtained from $G'$ by subdividing the edge $e$.
  By \cref{lem:pfbeta-props} (iii), $\pff_G \wedge \omega_G = s^*_e \left(\pff_{G'} \wedge \omega_{G'}\right)$ is the pullback of the projective form $\pff_{G'} \wedge \omega_{G'}$ of degree $m-1$ in $m-1$ variables, which must thus be zero.

  For case (iv), suppose $G$ has a 2-edge cut given by edges $\{e,f\}$.
  Pick an endpoint of $e$ and an endpoint of $f$, say vertices $\{v,w\}$, such that they lie in different connected components of $G \setminus \{e,f\}$.
  In particular $\{v,w\}$ is a 2-vertex cut of $G$ and taking a Whitney flip with respect to this cut gives a graph $G'$ with a two-valent vertex adjacent to $\{e,f\}$.
  By \cref{lem:pfbeta-flip}, $\pff_G \wedge \omega_G = \pff_{G'} \wedge \omega_{G'}$, which then vanishes by (iii).

  For (v) recall that $\hGC_3$ is supported in degrees $\leq-3$, so $G$ must be disconnected or have some vertex $v$ with degree less than three. By (ii) and (iii) we may assume that $G$ is connected and free of $2$-valent vertices, so $v$ has degree one. If the neighbour $w$ of $v$ has degree larger than one, then its removal disconnects $G$ and we conclude by (ii). The only remaining case is that $w$ has degree one, which by connectedness implies $G\cong D_1$.\footnote{By convention, $\pff_G = 1$ for graphs $G$ with $\ell = 0$.}
\end{proof}

\begin{remark}\label{rem:bicon-qiso}
It was shown in~\cite{cut} that the quotient of $\hGC_3$ by graphs with 1-vertex cuts is quasi-isomorphic to $\hGC_3$.\footnote{Dually, $\cGC_3$ is quasi-isomorphic to the subcomplex of $\cGC_3$ spanned by biconnected graphs \cite[Appendix F]{Willwacher:GCgrt}, i.e.\ graphs without cut-vertices (also called ``one-vertex irreducible'' (1VI) graphs).}
Property (ii) above ensures that canonical integrals factor through this quotient. Hence, in order to compute the pairing between a cohomology class represented by a canonical integral, and a homology class, it suffices to know a representative of the homology class only modulo graphs with cut-vertices.
\end{remark}

\section{Canonical integrals on the odd graph complex}

In this section we integrate Pfaffian forms to define a bilinear pairing
\begin{equation*}
    \hGC_3\otimes \cfs \longrightarrow \R,\qquad (X,\omega) \mapsto I_X(\omega).
\end{equation*}
For each canonical form $\omega\in\cfs$, we can interpret the linear function
\begin{equation}
    I(\omega)\colon \hGC_3 \longrightarrow \R, \qquad X\mapsto I_X(\omega)
\end{equation}
as an element of the cochain complex $\cGC_3\ctp\R=\Hom(\hGC_3,\R)$ via \eqref{eq:GC-duality}. In suitable loop orders, these cochains are closed, due to Stokes' theorem, hence we obtain cohomology classes of the odd graph complex. We prove the convergence (\cref{thm:intro-convergence}) and Stokes' relations (\cref{thm:intro-Stokes}) in \cref{sec:GC-convergence} and \cref{sec:GC-stokes}, respectively. The vanishing (\cref{lem:intro-gr-canint=0}) and cocycles (\cref{cor:intro-cocycles}) are discussed in \cref{sec:cohom-classes}, and the proof of the non-triviality of the class defined by $I(\beta^5)$ (\cref{thm:intro-gr6I(beta5)<>0}) is given in \cref{sec:gr6H-6}.
\begin{definition}\label{def:canint}
    Given an oriented graph $(G,o)\in\hGC_3$ with edges $E=\{1,\ldots,m\}$ and $\ell$ loops, pick any cycle basis $\mathcal{C}=(C_1,\ldots,C_\ell)$ such that $o=e_1\wedge\ldots\wedge e_m\otimes C_1\wedge\ldots\wedge C_\ell$ in $\det \Z^E \otimes \det H_1(G)$. Then for any canonical form $\omega$, we set (with $\pff_G=\pff_{\Laplacian_{\mathcal{C}}}$)
  \begin{equation}\label{eq:canint}
     I_{(G,o)}(\omega) \defas \frac{1}{(-2\pi)^{\ell/2}}\int_{\oFS_G} \pff_{G} \wedge \omega_G.
  \end{equation}
\end{definition}
To justify this definition, we will show in \cref{thm:convergence} that the integrals \eqref{eq:canint} are absolutely convergent. Furthermore, note that if $\alpha$ is an automorphism of $G$, then by pulling back the integration variables under the corresponding edge permutation, we have
\begin{equation*}
    I_{(G,o')}(\omega)=\int_{(\sgn \alpha)\oFS_G} (\det P) \pff_G\wedge\omega_G
    = \frac{o'}{o} I_{(G,o)}(\omega)
\end{equation*}
due to \eqref{eq:volform-sign} and \eqref{eq:pffcan-sign}. Here $\sgn\alpha$ and $\det P$ denote the action of $\alpha$ on the orientations of edges $\det \Z^E$, and cycles $\det H_1(G)$, respectively, so that $o'=o\cdot(\sgn\alpha)\cdot\det P$. Therefore, $I(\omega)$ is indeed well-defined as a function on the graph complex $\hGC_3$.

In the simplest case $\omega=1$, we can compute the integrals for all graphs explicitly:
\begin{example}\label{ex:dipole-integrals}
For dipole graphs $D_{2i+1}$ with even loop number (oriented as in \cref{ex:dipoleor}), the integral of the Pfaffian form is absolutely convergent and evaluates to
\begin{equation}\label{eq:dipole-integrals}
  I_{D_{2i+1}}(1)=\frac{1}{(-2\pi)^i} \int_{\oFS_{D_{2i+1}}} \pff_{D_{2i+1}} = 1.
\end{equation}
\end{example}
\begin{proof}
    By \cref{ex:dipoleor}, the dipole orientation is represented by the edge ordering $e_1\wedge\ldots\wedge e_{2i+1}$ and the cycle basis from \cref{lem:pff-dipoles}. Hence, apart from the factor $(-1)^i (2i)!/(2^i\cdot i!)=(-1)^i\cdot 1 \cdot 3 \cdots (2i-1)$ in front of \eqref{eq:pff-dipoles}, the integral is
    \begin{equation*}
      \int_{\oFS_{2i+1}} \frac{(x_1\cdots x_{2i+1})^{i-1} \Omega_{2i+1}}{\Symanzik^{i+1/2}}
        =
        \int_{\oFS_{2i+1}} \frac{(x_1\cdots x_{2i+1})^{-1/2} \Omega_{2i+1}}{(x_1+\cdots + x_{2i+1})^{i+1/2}}
    \end{equation*}
    by changing variables $x_e\mapsto 1/x_e$. In the chart $x_1+\ldots+x_{2i+1}=1$ we get $\Omega_{2i+1}=\td x_1\wedge\ldots\wedge\td x_{2i}$ and the integral becomes the beta function of several variables,
    \begin{equation*}
    \int_{\substack{x_1,\ldots, x_{2i}>0 \\ x_1+\ldots+x_{2i}<1}} \frac{\td x_1\cdots\td x_{2i}}{\sqrt{x_1\cdots x_{2i}(1-x_1-\ldots-x_{2i})}}
    =
    \frac{\Gamma(1/2)^{2i+1}}{\Gamma(i+1/2)}
        =\frac{(2\pi)^i}{1\cdot 3 \cdots (2i-1)}
    \end{equation*}
    which combines with the factor $(-1)^i\cdot 1\cdot 3\cdots (2i-1)$ as claimed.
\end{proof}

\begin{corollary}\label{lem:pffint=MC}
    For every oriented graph $G$, the integral $I_G(1)$ is absolutely convergent. For all $X\in\hGC_3$, we have $I_X(1)=\inner{X,\MCdip}$, where $\MCdip\in\cGC_3$ denotes the cochain
    \begin{equation}\label{eq:MCdip}
        \MCdip \defas \sum_{i=1}^{\infty} \frac{D_{2i+1}}{2 \cdot(2i+1)!}.
    \end{equation}
\end{corollary}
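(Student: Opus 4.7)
The plan is to reduce the statement to the dipole computation of \cref{ex:dipole-integrals} together with a direct comparison against the pairing \eqref{eq:GC-duality}. By $\Q$-linearity of both sides, it suffices to treat a single oriented graph $(G,o)\in\hGC_3$. Write $n$, $m$, $\ell=m-n+1$ for the numbers of vertices, edges and loops of $G$. Since $G$ is connected with every vertex of valence at least three, either $n=2$ or $n\geq 3$.

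I would first treat the case $n\geq 3$. Here $\dim\oFS_G=m-1\geq \ell+1$, so $\pff_G$, which has degree $\ell$, is \emph{not} a top form on $\oFS_G$; consequently $\int_{\oFS_G}\pff_G$ vanishes for purely dimensional reasons, and absolute convergence is automatic. For $n=2$, the graph $G$ is a dipole $D_m$, which equals $0$ in $\hGC_3$ unless $m=2i+1$ is odd (\cref{ex:dipoles}); the remaining case $G\cong\pm D_{2i+1}$ is settled by \cref{ex:dipole-integrals}, giving both absolute convergence and $I_{D_{2i+1}}(1)=1$. Assembling the two cases yields $I_G(1)=\pm 1$ exactly when $G\cong\pm D_{2i+1}$ for some $i\geq 1$, and $I_G(1)=0$ otherwise.

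The matching with $\langle G,\MCdip\rangle$ then follows from \eqref{eq:GC-duality}: every dipole satisfies $|\Aut(D_{2i+1})|=2\cdot(2i+1)!$, with the factor $2$ coming from the vertex swap and $(2i+1)!$ from the edge permutations. For $m=2i+1$ odd, none of these automorphisms is odd (the sign of the combined vertex-swap/edge-reversal is $(-1)^{m+1}=+1$, while edge permutations act trivially on the orientation module). Hence $\langle G,D_{2i+1}\rangle=\pm 2\cdot(2i+1)!$ precisely when $G\cong\pm D_{2i+1}$, and $0$ otherwise. The coefficients $1/(2\cdot(2i+1)!)$ in the definition \eqref{eq:MCdip} of $\MCdip$ are chosen exactly to cancel these $|\Aut|$ factors, producing $\langle G,\MCdip\rangle=I_G(1)$ term by term.

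The only subtle point is the zero-by-dimension argument for $n\geq 3$: one must be explicit that $\int_{\oFS_G}$ denotes pairing with the fundamental $(m-1)$-cycle $[\oFS_G]$, which is zero on any form of degree strictly below $m-1$. Equivalently, in any affine chart of $\oFS_G$ the pullback of $\pff_G$ has no $\td x_1\wedge\cdots\wedge\td x_{m-1}$ component. Once this is stated, the rest of the argument is bookkeeping, relying entirely on results already established in \cref{ex:dipoles,ex:dipole-integrals} and the definition \eqref{eq:GC-duality} of the pairing.
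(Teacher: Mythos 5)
Your proof is correct and follows essentially the same route as the paper: a degree/dimension count shows the integral vanishes unless $G$ is a dipole with even loop number, \cref{ex:dipole-integrals} supplies the value $\pm 1$, and $|\Aut(D_{2i+1})|=2\cdot(2i+1)!$ in the pairing \eqref{eq:GC-duality} accounts for the normalization of $\MCdip$. The only (cosmetic) difference is that the paper also disposes of disconnected and self-loop graphs via \cref{lem:pfbeta-vanish} before concluding $n=m-\ell+1=2$, whereas you sidestep this by restricting to connected graphs of minimal valence $3$, which suffices for elements of $\hGC_3$.
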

In other words, $I_G(1)=0$ unless $G$ is isomorphic to a dipole with even loop number, in which case $I_G(1)=\pm 1$ depending on the orientation of $G$.
\begin{proof}
The degree of $\pff_G$ is the loop number $\ell$, and the dimension of $\oFS_G$ is one less than the edge number $m$. Thus the integral is trivially zero if $m\neq \ell+1$ or if $\ell$ is odd. The integrand also vanishes by \cref{lem:pfbeta-vanish} if $G$ is disconnected or has a self-loop. The only remaining graphs have $m-\ell+1=2$ vertices and are thus dipoles (with even loop number). The normalization is given by \cref{ex:dipole-integrals} and $\abs{\Aut(D_{2i+1})}=2\cdot (2i+1)!$.
\end{proof}
More generally, for a canonical form $\omega\in\cfs[d]$ of degree $d$ and a connected graph $G$ with $n$ vertices, $m$ edges and $\ell=m-n+1$ loops, the integrand $\pff_G\wedge\omega_G$ has degree $\ell+d$. This is equal to the dimension $m-1$ of the simplex $\oFS_G$ only if $n=d+2$.
\begin{corollary}\label{cor:deg-match-canint}
    Let $G$ be an oriented graph with $\ell$ loops and $n$ vertices, and let $\omega\in\cfs[d]$ be a canonical form of degree $d$. Then $I_G(\omega)=0$ if $n\neq d+2$, or if $G$ satisfies any of the conditions in \cref{lem:pfbeta-vanish}, or if $G$ has an odd automorphism.
\end{corollary}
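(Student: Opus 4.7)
My plan is to verify each of the three sufficient conditions separately, since none depends on the others and none requires substantial new work.

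For the odd automorphism case, I would invoke \cref{def:hGC} directly: any oriented graph $(G,o)$ with an automorphism $\alpha$ satisfying $\alpha_*(o)=-o$ is identified with its negative in $\hGC_3$, hence $(G,o)\sim 0$ and $I_G(\omega)$ vanishes automatically by linearity. This is consistent with the transformation law immediately following \cref{def:canint}, which already shows that the integral picks up the sign by which any automorphism acts on $o$.

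For the second condition, I would simply apply \cref{lem:pfbeta-vanish}: under any of its hypotheses the form $\pff_G\wedge\omega_G$ vanishes identically on $\oFS_G$, so the integral is trivially zero.

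The third condition is a dimensional count. If $G$ is disconnected, \cref{lem:pfbeta-vanish}~(ii) already applies, so I may assume $G$ is connected and hence $\ell=m-n+1$, where $m=\abs{E(G)}$. The form $\pff_G\wedge\omega_G$ has total degree $\ell+d$, while $\dim\oFS_G=m-1$, and the identity $(\ell+d)-(m-1)=d+2-n$ shows that degree and dimension coincide precisely when $n=d+2$. If $n<d+2$ the form has degree exceeding $\dim\oFS_G$ and hence vanishes as a form on $\oFS_G$; if $n>d+2$ it has degree strictly less than $\dim\oFS_G$, so its integral over the top-dimensional simplex is zero. In either case $I_G(\omega)=0$. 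There is no genuine obstacle; the corollary is essentially bookkeeping, combining \cref{def:hGC}, \cref{lem:pfbeta-vanish}, and the relation $\ell=m-n+1$.
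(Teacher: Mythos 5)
Your proof is correct and follows essentially the same route as the paper, which obtains the corollary from the degree count $(\ell+d)-(m-1)=d+2-n$ for connected graphs in the paragraph preceding the statement, together with \cref{lem:pfbeta-vanish} and the sign behaviour of $I_{(G,o)}(\omega)$ under automorphisms noted after \cref{def:canint}. One cosmetic remark: when $G$ is disconnected you invoke \cref{lem:pfbeta-vanish}~(ii), whose standing hypotheses include $\omega\in\cfs[m-\ell-1]$; if that degree condition fails the lemma does not formally apply, but then the form is not of top degree on $\oFS_G$ and the integral is trivially zero anyway, so the argument is unaffected.
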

\begin{remark}\label{rem:flip-vanishing}
    By \cref{lem:pfbeta-flip} we also have $I_G(\omega)=0$ whenever $G$ is isomorphic to an odd Whitney flip of itself, or when $G$ has a Whitney flip to a graph with an odd automorphism.

    The vanishing of these integrals does not imply the vanishing of the integrands $\pff_G\wedge\omega_G$.

    Indeed, for $G=G_{244}$ from \cref{ex:oddflip}, the odd flip to itself explains why $\tau_1(G_{244}) =I_G(\beta^5)= 0$ in \cref{tab:integrals}. The integrand $\pff_G\wedge\omega_G^5$ of $\tau_1(G_{244})$, however, is non-zero. In the form \eqref{eq:b5pf-psi5/2}, it has numerator $Q_G=10(x_1+x_2)(x_9-x_8)(x_{10}+x_{11})$.

    Similarly, the graph $G_{97}$ in \cref{tab:integrals} has a Whitney flip to a graph with an odd automorphism, which explains why $\tau_1(G_{97})=0$. Its integrand, however, again is non-zero.
\end{remark}
\begin{corollary}
	For a homogeneous form $\omega$, the cochain $I(\omega)$ is supported on graphs whose $\hGC_3$-degree is related to the loop number by $\gcdeg{G}=1+\abs{\omega}-2\ell(G)$. Therefore, the relative signs in the Lie bracket of two such cochains are determined by the forms alone:
	\begin{equation}\label{eq:canint-Lie}
		\left[ I(\omega'), I(\omega'') \right]
		=I(\omega')\circ I(\omega'') - (-1)^{(1+|\omega'|)(1+|\omega''|)} I(\omega'')\circ I(\omega').
	\end{equation}
\end{corollary}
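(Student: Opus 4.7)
The plan is to verify the degree formula by counting, then deduce the Lie bracket sign from the parity of the graph degrees in the support of $I(\omega)$.

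First, I would compute the degree of the integrand. By construction, $\pff_G$ is a form of degree $\ell=\ell(G)$ and $\omega_G$ is a form of degree $d=\abs{\omega}$, so $\pff_G \wedge \omega_G$ has degree $\ell+d$. The domain of integration $\oFS_G \subset \Pro(\R^E)$ has real dimension $m-1$, where $m=\abs{E(G)}$. Therefore the integral $I_G(\omega)$ vanishes trivially unless $\ell+d=m-1$. Assuming this match, the graph-complex degree is
\begin{equation*}
    \gcdeg{G}=m-3\ell(G) = (\ell+d+1)-3\ell = 1 + \abs{\omega} - 2\ell(G),
\end{equation*}
as claimed. For disconnected graphs, the Pfaffian form factorizes over biconnected components by \cref{lem:pfbeta-props}~(iv) and the argument applies component by component; cases where the dimensions mismatch on a component force the integrand to vanish by \cref{lem:pfbeta-vanish}~(ii).

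Next, I would observe that the Lie bracket on $\cGC_3$ from \cref{sec:GC-cohom} reads $[Q_1,Q_2]=Q_1\circ Q_2 - (-1)^{\gcdeg{Q_1}\cdot\gcdeg{Q_2}} Q_2 \circ Q_1$ when $Q_1,Q_2$ are single graphs, and extends bilinearly to general cochains. The key point is that although $I(\omega)$ is generally supported on graphs of varying loop numbers, the parity of $\gcdeg{G}$ for any graph in its support equals
\begin{equation*}
    \gcdeg{G} \equiv 1 + \abs{\omega} \pmod{2},
\end{equation*}
since the $-2\ell(G)$ contribution is always even. Consequently, for every pair of graphs $G_1, G_2$ appearing in the supports of $I(\omega')$ and $I(\omega'')$ respectively, the sign $(-1)^{\gcdeg{G_1}\cdot\gcdeg{G_2}}$ is the \emph{uniform} constant $(-1)^{(1+\abs{\omega'})(1+\abs{\omega''})}$. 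Pulling this common sign out of the bilinear extension yields the formula \eqref{eq:canint-Lie}.

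This argument is essentially bookkeeping; there is no substantive obstacle. The only point that warrants care is ensuring the sign extraction is legitimate when $I(\omega')$ and $I(\omega'')$ are supported in mixed loop orders—but, as noted, the parity of $\gcdeg{G}$ is independent of $\ell(G)$ and depends only on $\abs{\omega}$, which makes the factorization of the sign valid term by term.
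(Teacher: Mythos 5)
Your argument is correct and is essentially the paper's (the corollary is stated there without a separate proof, following immediately from the degree count in \cref{cor:deg-match-canint} and the definition of the bracket): the integral vanishes unless $\ell(G)+\abs{\omega}=m-1$, which forces $\gcdeg{G}=m-3\ell=1+\abs{\omega}-2\ell$, and since this degree is congruent to $1+\abs{\omega}$ mod $2$ uniformly over the support, the sign $(-1)^{\gcdeg{G_1}\cdot\gcdeg{G_2}}$ can be pulled out of the bilinear extension. The aside about disconnected graphs is unnecessary (elements of $\hGC_3$ are connected by definition) but harmless.
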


\subsection{Compactification}\label{sec:GC-convergence}
The forms $\pff_G\wedge\omega_G$ are smooth on the open simplex $\oFS_G\subset\Pro(\R^E)$, but they can diverge on its boundary $\partial\cFS_G=\{x_1\cdots x_m=0\}$ in $\Pro(\R^E)$, because this boundary intersects the polar locus where $\Symanzik_G=0$: For any subset of edges $\gamma\subset E$, the linear subspace
\begin{equation*}
    L_\gamma\defas \bigcap_{e\in\gamma} \{x_e=0\} \subset \Pro(\R^E)
\end{equation*}
is entirely contained in $\{\Symanzik_G=0\}$ whenever $\ell(\gamma)>0$, i.e.\ when the subgraph $\gamma$ contains a loop. Therefore, the forms $\pff_G\wedge\omega_G$ do not extend to smooth forms on the closed simplex $\cFS_G$, and hence the convergence of the integrals \eqref{eq:canint} is not obvious.

Following \cite{BlochEsnaultKreimer:MotivesGraphPolynomials}, we associate to a graph $G$ with edges $E$ an iterated blowup
\begin{equation*}
    \pi\colon P^G\rightarrow \Pro(\R^E)
\end{equation*}
of the arrangement of linear subspaces $L_{\gamma}$ indexed by the bridgeless subgraphs $\gamma\subset E$. Bridgeless means that $\ell(\gamma\setminus\{e\})<\ell(\gamma)$ for every edge $e\in\gamma$; in other words, every edge of $\gamma$ lies in a cycle of $\gamma$. Note that bridgeless subgraphs are not necessarily connected.

The exceptional divisors $D_{\gamma}\subset P^G$ of these blowups, together with the strict transforms $D_e$ of the coordinate hyperplanes $\{x_e=0\}\subset \Pro^{E-1}$, form a divisor $D\subset P^G$:
\begin{equation*}
    D=\bigcup_{\gamma\ \text{bridgeless}} D_{\gamma} \quad\cup\quad \bigcup_{e\in E} D_e.
\end{equation*}
This divisor has simple normal crossings. There are canonical isomorphisms
\begin{equation*}
    D_{\gamma}\cong P^{\gamma}\times P^{G\sslash\gamma}
\end{equation*}
where $G \sslash \gamma$ denotes the graph obtained from $G$ by contracting each connected component of $\gamma$ to a single vertex (so if $\gamma$ is a connected graph, then $G\sslash\gamma=G/\gamma)$. This also applies to single edges $\gamma=\{e\}$, in which case $P^{\gamma}$ is a point, so that $D_e\cong P^{G/e}$.

The closure of the preimage of the simplex is a compact manifold with corners
\begin{equation}\label{eq:FeynmanP}
    \FP_G=\overline{\pi^{-1}(\oFS_G)}\subset P^G,
\end{equation}
called the \emph{Feynman polytope} in \cite{Brown:FeynmanAmplitudesGalois}. The facets of its boundary $\partial\FP_G\subset D$ are in bijection with the irreducible components of $D$. The facet in $D^{\gamma}\cong P^\gamma\times P^{G\sslash\gamma}$ is
\begin{equation}\label{eq:FeynmanP-facet}
        \partial \FP_G \cap D_\gamma = \FP_G \cap D_{\gamma} \cong \FP_{\gamma}\times\FP_{G\sslash\gamma}
\end{equation}
for every $\gamma$ that is bridgeless. This identity also holds for single edges $\gamma=\{e\}$, since then the factor $\FP_\gamma$ is a point and indeed we have a canonical isomorphism $\partial\FP_G\cap D_e=\FP_{G/e}$.

\begin{thm}\label{thm:convergence}
    The integral \eqref{eq:canint} converges absolutely for any graph $G$ and any $\omega\in\cfs$.
\end{thm}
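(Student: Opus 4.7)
The plan is to pull back the form $\pff_G\wedge\omega_G$ via the blowup $\pi\colon P^G\to\Pro(\R^E)$ to the compact manifold with corners $\FP_G$, and establish that the pullback is absolutely integrable by a local analysis at each face of the normal crossings divisor $\partial\FP_G=D\cap\FP_G$. Since $\pi$ restricts to a diffeomorphism $\pi^{-1}(\oFS_G)\to\oFS_G$, the substitution formula gives
\[
  \int_{\oFS_G} \pff_G\wedge\omega_G = \int_{\pi^{-1}(\oFS_G)} \pi^\ast(\pff_G\wedge\omega_G),
\]
and compactness of $\FP_G$ then reduces the question to a local integrability statement in any chart near $\partial\FP_G$.

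First I would treat the strict transforms $D_e$ of single-edge coordinate hyperplanes: by \cref{lem:pfbeta-props}(ii), the pullback extends smoothly across $D_e$ to the form $\pff_{G/e}\wedge\omega_{G/e}$ on $\FP_{G/e}$, so these facets present no singularity. Next, for a bridgeless subgraph $\gamma\subsetneq E$, fix a cycle basis $(C_1,\dots,C_\ell)$ of $H_1(G)$ adapted to the filtration $H_1(\gamma)\subset H_1(G)$, so that $(C_1,\dots,C_{\ell(\gamma)})$ is a basis of $H_1(\gamma)$. Introduce a local coordinate $z$ on $P^G$ vanishing on $D_\gamma$, with the scaling convention $x_e\mapsto z x_e$ for $e\in\gamma$. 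With these choices the dual Laplacian takes exactly the block form~\eqref{eq:SPD-asy-chart} (up to terms which are regular in $z$), with $n_1=\ell(\gamma)$ and $n_2=\ell(G)-\ell(\gamma)=\ell(G\sslash\gamma)$.

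Now I would invoke \cref{lem:pff-asy} and \cref{lem:can-asy}. If $\ell(\gamma)$ and $\ell(G\sslash\gamma)$ are both even, the pullback $\pi^\ast\pff_G$ extends holomorphically at $\{z=0\}$ and restricts to $\pff_\gamma\wedge\pff_{G\sslash\gamma}$, while canonical forms always extend holomorphically. Hence $\pi^\ast(\pff_G\wedge\omega_G)$ is smooth across this facet. If both $\ell(\gamma)$ and $\ell(G\sslash\gamma)$ are odd (so that $\ell(G)$ is even, which is necessary for the integrand to be non-zero), \cref{lem:pff-asy} gives the bound $\pi^\ast\pff_G=\sqrt{z}\,\alpha+\td(\sqrt{z})\wedge\beta$ with $\alpha,\beta$ holomorphic at $z=0$, and \cref{lem:can-asy} shows that $\pi^\ast\omega_G$ remains holomorphic. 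The product therefore has at worst an $(\td z)/\sqrt{z}$ singularity along $D_\gamma$, which is absolutely integrable.

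At a corner of $\partial\FP_G$, several facets $D_{\gamma_1},\dots,D_{\gamma_r}$ meet transversely (normal crossings), providing independent local transverse coordinates $z_1,\dots,z_r$. Iterating the previous argument and using the compatibility of the block-matrix asymptotics with nested blowups (following the pattern of the proof of \cref{prop: gammaextendstoboundaryvanishing} and \cite[\S7]{Brown:FeynmanAmplitudesGalois}), the singularity factorises into a product of independent $\sqrt{z_i}$ and $(\td z_i)/\sqrt{z_i}$ terms; equivalently, after the ramified double cover $z_i=t_i^2$ on the odd-loop factors, the pullback becomes a smooth form. Integrability over the corner then follows from Fubini and $\int_0^1 (\td z)/\sqrt{z}<\infty$. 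The main obstacle is bookkeeping: verifying that the block-matrix asymptotics of \cref{lem:pff-asy,lem:can-asy} persist through a sequence of BEK blowups along a chain of bridgeless subgraphs, so that all transverse blow-up coordinates yield independent $\sqrt{z}$-type behaviour at every corner of $\FP_G$.
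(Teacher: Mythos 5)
Your proposal follows essentially the same route as the paper: pull back to the Bloch--Esnault--Kreimer compactification $\FP_G$, use the block form of the dual Laplacian in a chart transverse to $D_\gamma$, and invoke \cref{lem:pff-asy,lem:can-asy} to see that the only singularities are of the form $\sqrt{z}$ and $(\td z)/\sqrt{z}$, hence integrable. The paper phrases this by precomposing with the global double cover $\sqX\colon x_e\mapsto x_e^2$ so that the pullback becomes genuinely smooth on $\FP_G$, which is the same device as your local substitution $z_i=t_i^2$.

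The one place where you defer to ``bookkeeping'' is exactly where the paper does nontrivial work, so be aware of what is needed there. In the corner chart $x_e=\lambda_i\cdots\lambda_r y_e$ for a chain $\gamma_1\subsetneq\cdots\subsetneq\gamma_r\subsetneq G$, the paper shows that
$\sqX^*\sqrt{\Symanzik_G}=\lambda_1^{\ell(\gamma_1)}\cdots\lambda_r^{\ell(\gamma_r)}\sqrt{\Symanzik_{\gamma_1}\Symanzik_{\gamma_2\sslash\gamma_1}\cdots\Symanzik_{G\sslash\gamma_r}+R}$
where $R$ has positive coefficients and vanishes when any $\lambda_i=0$; positivity of the Symanzik polynomials of the subquotient graphs (each connected graph has a spanning tree) and the fact that $\FP_G$ avoids the strict transform of $\{\Symanzik_G=0\}$ (\cite[Theorem~6.7]{Brown:FeynmanAmplitudesGalois}) are what guarantee that the square root under the radical is bounded away from zero on the face, so that the only possible singularities are the poles in the $\lambda_i$, each of which is controlled by the single-facet analysis. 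Without this factorization and positivity statement, ``the singularity factorises into a product of independent $\sqrt{z_i}$ terms'' is an assertion rather than a proof, since one must rule out additional vanishing of the denominator on the corner strata. With that input supplied, your argument is complete.
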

\begin{proof}
    It was shown in \cite[\S 7.3]{invariant} that the pullback $\pi^*(\omega_G)$ of a canonical form extends smoothly over the boundary of the polytope $\FP_G$. However, this is not the case for $\pi^*(\pff_G)$, which develops (integrable) square root singularities on $\partial\FP_G$.

    We thus change coordinates to the square roots of edge lengths, by the covering map
    \begin{equation*}
     \sqX\colon \Pro(\R^m)\rightarrow\Pro(\R^m),\quad
    [x_1:\cdots:x_m]\mapsto[x_1^2:\cdots:x_m^2].
    \end{equation*}
    Let $\widetilde{\pff}_G=\pi^*(\sqX^* \pff_G)$ and $\widetilde{\omega}_G=\pi^*(\sqX^* \omega_G)$ denote the pullbacks to the blowup $P^G$. We claim that they extend smoothly over the boundary of the compact polytope $\FP_G$. This implies the absolute convergence of $\int_{\FP_G}\widetilde{\pff}_G\wedge\widetilde{\omega}_G$, which thus equals the improper integrals
    \begin{equation*}
        \int_{\FP_G\setminus\partial\FP_G} \widetilde{\pff}_G \wedge\widetilde{\omega}_G
        = \int_{\oFS_G} \pff_G\wedge\omega_G
    \end{equation*}
    since the change of coordinates $\sqX\circ\pi$ restricts to a diffeomorphism of the interior of the polytope with the open simplex: $\FP_G\setminus\partial\FP_G\cong \oFS_G$.

    To establish smoothness near a facet $\partial\FP_G\cap D_\gamma$, consider coordinates on $P^G$ given by
    \begin{equation}\label{eq:facet-chart}
        x_e=\begin{cases}
            \lambda y_e & \text{if $e\in\gamma$,} \\
            z_e  & \text{else,}
        \end{cases}
    \end{equation}
    such that $D_\gamma=\{\lambda=0\}$. By the exact sequence $0\rightarrow H_1(\gamma)\rightarrow H_1(G)\rightarrow H_1(G\sslash\gamma)\rightarrow 0$, every cycle basis $\mathcal{C}=(\mathcal{C}',\mathcal{C}'')$ of $G$ that begins with a cycle basis $\mathcal{C}'$ of $\gamma$ determines a cycle basis of $G\sslash\gamma$, which we denote $\mathcal{C}''\sslash\gamma$, by forgetting from $\mathcal{C}''$ all rows corresponding to edges in $\gamma$.
    In such a basis $\mathcal{C}$, the dual Laplacian of $G$ has the block form
    \begin{equation*}
        \Laplacian_{\mathcal{C}}(x) =
        \begin{pmatrix}
            \lambda A & \lambda B \\
            \lambda B^\Transpose & C \\
        \end{pmatrix}
    \end{equation*}
    where $A=\Laplacian_{\mathcal{C}'}(y)$ is a dual Laplacian of $\gamma$, and $C\equiv \Laplacian_{\mathcal{C}''\sslash\gamma}(z) \mod \lambda$ becomes the dual Laplacian of the quotient graph $G\sslash\gamma$ on $D_\gamma$. Note that \cref{ex:pff2-asy} shows that $\pi^*\pff_G$ can indeed have singularities of the form $(\td\lambda)/\sqrt{\lambda}=2\td(\sqrt{\lambda})$ on $D_\gamma$.

    These singularities disappear by considering the blowup $P^G\rightarrow \Pro(\R^E)$ not on the space of edge lengths $x_e$, but rather on the space of their square roots. Indeed, by \cref{lem:pff-asy},
    \begin{equation*}
        \sqX^*\Laplacian_{\mathcal{C}}(x) =
        \begin{pmatrix}
            \lambda^2 \sqX^*A & \lambda^2 \sqX^*B \\
            \lambda^2 \sqX^*B^\Transpose & \sqX^* C \\
        \end{pmatrix}
    \end{equation*}
    is smooth at $\lambda=0$. Thus $\sqX^* \pff_G$ extends smoothly to the interior of the facet $\partial\FP_G\cap D_\gamma$. Since $D$ has normal crossings, this already implies that $\sqX^*\pff_G$ extends smoothly over the entire boundary of $\FP_G$, i.e.\ also over faces of higher codimension. Here we use that $\FP_G$ does not meet the strict transform of the polar locus $\{\Symanzik_G=0\}$, see \cite[Theorem~6.7]{Brown:FeynmanAmplitudesGalois}.

    In more detail, for any chain $\emptyset\neq\gamma_1\subsetneq\ldots\subsetneq \gamma_r\subsetneq \gamma_{r+1}=G$ of bridgeless subgraphs, $\FP_G$ has a face of codimension $r$, contained in $D_{\gamma_1}\cap\ldots\cap D_{\gamma_r}\cong P^{\gamma_1}\times P^{\gamma_2\sslash \gamma_1}\times\cdots\times P^{G\sslash\gamma_r}$. This face is visible in the chart of the iterated blowup $P^G$ defined by
    \begin{equation*}
        x_e = \lambda_i\cdots\lambda_r y_e
        \quad\text{for}\quad e\in \gamma_i\setminus\gamma_{i-1}.
    \end{equation*}
    In this chart, $D_{\gamma_i}=\{\lambda_i=0\}$ for each $1\leq i\leq r$. The denominator of $\sqX^*\pff_G$ becomes
    \begin{equation*}
        \sqX^*\sqrt{\Symanzik_G}=\lambda_1^{\ell(\gamma_1)}\cdots\lambda_r^{\ell(\gamma_r)}\cdot \sqX^*\sqrt{
        \Symanzik_{\gamma_1}(y)\Symanzik_{\gamma_2\sslash\gamma_1}(y)\cdots\Symanzik_{G\sslash\gamma_r}(y) + R(\lambda,y)
        }
    \end{equation*}
    where the polynomial $R$ has positive coefficients (recall \cref{def:Symanzik}) and vanishes if any $\lambda_i=0$. Singularities of $\sqX^*\pff_G$ on the interior (all $y_e>0$) of the face $\FP_G\cap D_{\gamma_1}\cap\ldots\cap D_{\gamma_r}$ could therefore only arise from poles in $\lambda_i$. The earlier discussion of facets 
    shows that such poles are absent for each $i$.

    For the smoothness of $\widetilde{\omega}_G$, it suffices to consider the primitive generators $\omega=\beta^{4i+1}$ of the algebra $\cfs$. The smoothness of $\pi^*(\sqX^* \beta_G^{4i+1})$ on $\FP_G$ then follows by the same argument as before, with \cref{lem:can-asy} standing in for \cref{lem:pff-asy}.
\end{proof}

\subsection{Stokes' relations}\label{sec:GC-stokes}

\begin{thm}\label{thm:Stokes}
	In Sweedler notation $\sum_{(\omega)} \omega' \otimes \omega'' = \Delta\omega -1\otimes\omega-\omega\otimes 1$, we have for every canonical form $\omega$ the relation ($\MCdip\in\cGC_3$ is defined in \cref{lem:pffint=MC})
\begin{equation}\label{eq:Stokes}
	  0 = \delta I(\omega)
	    + [I(\omega), \MCdip]
	    + \frac{1}{2}\sum_{(\omega)} (-1)^{|\omega'|} \left[I(\omega''), I(\omega')\right].
\end{equation}
\end{thm}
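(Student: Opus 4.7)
The plan is to apply Stokes' theorem to the closed form $\pff_G \wedge \omega_G$ on the compact manifold with corners $\FP_G$, evaluated on each oriented graph $G \in \hGC_3$. Since both sides of \eqref{eq:Stokes} are linear functionals on $\hGC_3$, it suffices to pair them with an arbitrary $G$ and verify equality. Degree considerations force $I(\omega)$ to vanish except on graphs $G'$ with $|E(G')| = \ell(G') + |\omega| + 1$, so $\delta I(\omega)$ lives on graphs $G$ with $|E(G)| = \ell(G) + |\omega| + 2$; restricting to such $G$, the form $\pff_G \wedge \omega_G$ has degree $\dim \FP_G - 1$. As in the proof of \cref{thm:convergence}, its pullback $\widetilde{\pff}_G \wedge \widetilde{\omega}_G$ to $\FP_G$ (via the square-root cover) extends smoothly across the boundary. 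Closedness of $\pff_G$ by \cref{lem:pfprops}(vii) and of $\omega_G$ then combine via Stokes' theorem into
\begin{equation*}
    0 = \int_{\FP_G} d\bigl(\widetilde{\pff}_G \wedge \widetilde{\omega}_G\bigr) = \sum_D \int_D i_D^\ast\bigl(\widetilde{\pff}_G \wedge \widetilde{\omega}_G\bigr),
\end{equation*}
where $D$ ranges over the facets of $\partial \FP_G$.

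By \eqref{eq:FeynmanP-facet}, the facets come in two flavours. Those indexed by an edge $e$ are canonically $\FP_{G/e}$, and by \cref{lem:pfbeta-props}(ii) the restriction of the integrand equals $\pff_{G/e} \wedge \omega_{G/e}$; summing these contributions across edges reconstructs $(-2\pi)^{\ell/2}\,\delta I(\omega)(G)$. Facets of the second type correspond to bridgeless proper subgraphs $\gamma \subset G$ with $\ell(\gamma) \geq 1$ and are canonically $\FP_\gamma \times \FP_{G\sslash\gamma}$. Combining \cref{lem:pff-asy} and \cref{lem:can-asy} identifies the restriction: the Pfaffian factorises as $\widetilde{\pff}_\gamma \wedge \widetilde{\pff}_{G\sslash\gamma}$ when $\ell(\gamma)$ is even (vanishes when odd), while the canonical form restricts via the coproduct as $\widetilde{\omega}_G\rvert_{D_\gamma} = \sum \widetilde{\omega}'_\gamma \wedge \widetilde{\omega}''_{G\sslash\gamma}$, including the trivial pieces $1 \otimes \omega$ and $\omega \otimes 1$ of $\Delta \omega$.

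Because $\FP_\gamma \times \FP_{G\sslash\gamma}$ is a product, the integral over this facet factorises as $I_\gamma(\omega') \cdot I_{G\sslash\gamma}(\omega'')$ times the appropriate power of $-2\pi$. The trivial pieces $\omega'=1$ or $\omega''=1$ involve $I(1)$-integrals, which by \cref{lem:pffint=MC} localise onto even-loop dipole subgraphs; these reassemble into $(-2\pi)^{\ell/2}[I(\omega), \MCdip](G)$ using \cref{def:coboundary} to recognise edge-splittings and \cref{thm:pairing} for the cochain-level composition. The nontrivial pieces of $\Delta\omega$ reassemble into $(-2\pi)^{\ell/2}\cdot\tfrac{1}{2}\sum_{(\omega)}(-1)^{|\omega'|}[I(\omega''), I(\omega')](G)$: the sum over bridgeless $\gamma$ is symmetric under $(\gamma, \omega') \leftrightarrow (G\sslash\gamma, \omega'')$, and graded cocommutativity \eqref{eq:graded-cocom} combined with the bracket formula \eqref{eq:canint-Lie} converts this symmetry into the Lie super-bracket, producing the factor $\tfrac{1}{2}$.

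The principal difficulty will be bookkeeping of signs. Three independent sources must be reconciled: (a) the Koszul sign $(-1)^{|\omega'|}$ arising when one commutes $\omega'_\gamma$ past $\pff_{G\sslash\gamma}$ to arrange the restricted form as a product on $\FP_\gamma \times \FP_{G\sslash\gamma}$; (b) the sign comparing the induced boundary orientation of $\FP_G$ on $D_\gamma$ with the product orientation on $\FP_\gamma \times \FP_{G\sslash\gamma}$, depending on the parity of $\ell(\gamma)$ and the edge ordering on $G$; and (c) the sign from \eqref{eq:pff-sign} comparing $\pff_G$ in a cycle basis of $G$ adapted to $H_1(G) \cong H_1(\gamma) \oplus H_1(G\sslash\gamma)$ with the product $\pff_\gamma \wedge \pff_{G\sslash\gamma}$. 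These must be assembled together with the conventions of \cref{def:coboundary} and the evaluation formula of \cref{thm:pairing} to reproduce \eqref{eq:Stokes} precisely.
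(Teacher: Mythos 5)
Your outline follows the paper's proof exactly: Stokes' theorem on the Feynman polytope $\FP_G$ for the smooth extension $\widetilde{\pff}_G\wedge\widetilde{\omega}_G$, edge facets $D_e\cong\FP_{G/e}$ assembling into $\delta I(\omega)$, subgraph facets $D_\gamma\cong\FP_\gamma\times\FP_{G\sslash\gamma}$ factorising via \cref{lem:pff-asy,lem:can-asy}, the full coproduct governing the restriction of $\omega_G$, \cref{thm:pairing} to recognise the insertion $\circ$, and graded cocommutativity \eqref{eq:graded-cocom} with \eqref{eq:canint-Lie} to produce the brackets and the factor $\tfrac12$, with $I(1)=\MCdip$ handling the trivial pieces. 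So the strategy is right and is the same one the paper uses.

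The gap is that you explicitly defer the sign reconciliation, and that is where the substance of this proof lies; as stated, the argument does not yet establish \eqref{eq:Stokes}. Concretely, two computations are required and not supplied. First, the induced boundary orientation on the facet is $\big[\partial\FP_G\cap D_\gamma\big]=(-1)^{m_\gamma-1}\big[\FP_\gamma\big]\times\big[\FP_{G\sslash\gamma}\big]$ (writing $m_\gamma$ for the number of edges of $\gamma$), and one must convert $(-1)^{m_\gamma-1}$ into the $(-1)^{|\omega'|}$ appearing in \eqref{eq:Stokes}; this only works because $I_\gamma(\omega')$ vanishes unless $m_\gamma-1=\ell(\gamma)+|\omega'|$ with $\ell(\gamma)$ even, so the two signs agree on the support of the sum. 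Second — your point (c) — the orientations of $\gamma$ and $G\sslash\gamma$ entering $I_\gamma$ and $I_{G\sslash\gamma}$ are fixed by edge orderings and the split cycle basis $\mathcal{C}=(\mathcal{C}',\mathcal{C}'')$, whereas \cref{thm:pairing} is formulated for the vertex-order convention on $\gamma\otimes G/\gamma$ from \cref{sec:GC-hom}; translating between the two via \cref{SS:equivor} gives a block-triangular path/cycle matrix whose determinant factorises only up to a residual sign $(-1)^{(n'-1)\ell(G/\gamma)}$ ($n'$ the number of vertices of $\gamma$), which again can be discarded only because $I_{G\sslash\gamma}(\omega'')=0$ when $\ell(G/\gamma)$ is odd. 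Finally, a small correction: the symmetrisation yielding the $\tfrac12$ does not come from any symmetry of the subgraph sum under $(\gamma,\omega')\leftrightarrow(G\sslash\gamma,\omega'')$ — $\gamma$ ranges over subgraphs of the fixed $G$, so such a swap is not defined — but purely from the graded cocommutativity of $\Delta$ applied to half of the coproduct sum before invoking \eqref{eq:canint-Lie}.
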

\begin{proof}
	Above we showed that $\widetilde{\pff}_G\wedge\widetilde{\omega}_G=\pi^*(\sqX^* (\pff_G\wedge\omega_G))$ is a smooth form on the Feynman polytope $\FP_G$. Recall that $\FP_G$ has two types of facets: those labelled by edges (coming from the boundary facets $\partial \cFS_G$ of the original projective simplex), and those coming from blowups, labelled by bridgeless subgraphs.

	Since $\pff_G$ and $\omega_G$ are closed, Stokes' theorem applied to $\FP_G$ thus says
	\begin{equation*}
		0 = \int_{\FP_G} \td (\widetilde{\pff}_G\wedge\widetilde{\omega}_G)
		= \sum_e \int_{\partial\FP_G\cap D_e}(\widetilde{\pff}_G\wedge\widetilde{\omega}_G)\Big|_{D_e}
		+\sum_{\gamma\ \text{bridgeless}} \int_{\partial\FP_G\cap D_\gamma} (\widetilde{\pff}_G\wedge\widetilde{\omega}_G)\Big|_{D_\gamma}
		.
	\end{equation*}
	Topologically, the facet of $\FP_G$ in $D_\gamma$ is the product $\FP_\gamma\times \FP_{G\sslash\gamma}$, see \eqref{eq:FeynmanP-facet}. To determine its orientation, write $m_\gamma=|E(\gamma)|$ for the number of edges in $\gamma$, and suppose that they come first in the edge ordering of $G$. Then in the chart \eqref{eq:facet-chart}, the volume form \eqref{eq:simplex-volform} becomes\footnote{%
	As a polynomial form, $\Omega_G(x)$ also has a term $\lambda^{m_\gamma}\Omega_\gamma(y) \bigwedge_{e\notin\gamma} \td z_e$, but this is zero since the chart \eqref{eq:facet-chart} is understood with $y_i=1$ and $z_j=1$ for some choice of $i$ and $j$. That is, $y$ and $z$ are affine charts on $\Pro\big(\R^{E(\gamma)}\big)$ and $\Pro\big(\R^{E(G\sslash\gamma)}\big)$, respectively. Thus, $\td z_{m_\gamma+1}\wedge\ldots\wedge \td z_{m}=0$.
}
	\begin{equation*}
		\Omega_G(x)=
		\lambda^{m_\gamma-1}\td \lambda \wedge \Omega_\gamma(y)\wedge(-1)^{m_\gamma}\Omega_{G\sslash\gamma}(z).
	\end{equation*}
	Contracting with the outward normal vector field $-\partial/\partial\lambda$, the induced orientation of the facet (which is implied in Stokes' theorem) is represented by the volume form $(-1)^{m_\gamma-1} \Omega_\gamma(y)\wedge\Omega_{G\sslash\gamma}(z)$. So in terms of fundamental classes of oriented manifolds,
	\begin{equation}\label{eq:facet-orientation}
		\big[\partial\FP_G\cap D_\gamma\big]=(-1)^{m_\gamma-1} \cdot \big[\FP_\gamma\big] \times \big[\FP_{G\sslash\gamma}\big].
	\end{equation}
	This identity holds for Feynman polytopes oriented according to any edge orderings $o\in(\det \Z^{E(G)})^\times$, $o'\in(\det\Z^{E(\gamma)})^\times$, and $o''\in(\det\Z^{E(G/\gamma)})^\times$, provided that $o=o'\wedge o''$.

	Restricting to $D_e$ amounts to setting $x_e=0$. So for any cycle basis $\mathcal{C}$ of $G$,
	\begin{equation*}
		\left.\Laplacian_{\mathcal{C}}\right|_{x_e=0}
		=\Laplacian_{\mathcal{C}/e}
	\end{equation*}
	in terms of the cycle basis $\mathcal{C}/e$ of $G/e$ induced by $H_1(G)\cong H_1(G/e)$. Here we may assume $H_1(\{e\})=0$ ($G$ has no self-loops), as otherwise $\widetilde{\pff}_G=0$. We conclude that
	\begin{equation*}
		\int_{\partial\FP_G\cap D_e}(\widetilde{\pff}_G\wedge\widetilde{\omega}_G)\Big|_{D_e}
		=\int_{\FP_{G/e}} \widetilde{\pff}_{G/e} \wedge \widetilde{\omega}_{G/e}
		= I_{G/e}(\omega).
	\end{equation*}
	The sum over edge-type boundaries $\partial\FP_G\cap D_e$ in Stokes' theorem thus produces the first term
	$
		(\delta I(\omega))(G) = I_{\partial G}(\omega)=\sum_e I_{G/e}(\omega)
	$
	in \eqref{eq:Stokes}.

	Restricting to $D_\gamma$ for a bridgeless subgraph $\gamma$, it follows from \cref{lem:can-asy,lem:pff-asy} (see the proof of \cref{thm:convergence}) that
	\begin{equation*}
		\widetilde{\pff}_G\Big|_{D_\gamma} = \widetilde{\pff}_\gamma\wedge\widetilde{\pff}_{G\sslash\gamma}
		\quad\text{and}\quad
		\widetilde{\beta}^{4i+1}_G\Big|_{D_\gamma}=\widetilde{\beta}_\gamma^{4i+1}+\widetilde{\beta}_{G\sslash\gamma}^{4i+1},
	\end{equation*}
	provided that $\pff_G$, $\pff_\gamma$, and $\pff_{G\sslash\gamma}$ are computed with respect to cycle bases $\mathcal{C}=(\mathcal{C}',\mathcal{C}'')$, $\mathcal{C}'$, and $\mathcal{C}''\sslash\gamma$ respectively. The restriction of $\beta_G^{4i+1}$ implies for arbitrary polynomials $\omega\in\cfs$ in primitive canonical forms, that
	\begin{equation*}
		\widetilde{\omega}_G\Big|_{D_\gamma}
		= \widetilde{\omega}_\gamma + \widetilde{\omega}_{G\sslash\gamma}
		+\sum_{(\omega)} \widetilde{\omega}'_{\gamma}\wedge\widetilde{\omega}''_{G\sslash\gamma}
		.
	\end{equation*}
	Using \eqref{eq:facet-orientation}, we can therefore identify the terms from facets $D_\gamma$ in Stokes' formula as
	\begin{equation*}
\int_{\partial\FP_G\cap D_\gamma} (\widetilde{\pff}_G\wedge\widetilde{\omega}_G)\Big|_{D_\gamma}
= \sum_{(\omega)} (-1)^{m_\gamma-1} I_{\gamma}(\omega_{(1)}) I_{G\sslash\gamma}(\omega_{(2)})
	\end{equation*}
	where $\sum_{(\omega)} \omega_{(1)}\otimes \omega_{(2)}=1\otimes\omega+\omega\otimes 1+\sum_{(\omega)} \omega'\otimes \omega''$ denotes the full coproduct. Now recall that $I_\gamma(\omega')=0$ if the dimension $m_\gamma-1$ of $\FP_\gamma$ differs from the degree $\ell(\gamma)+|\omega'|$ of the integrand. Also, $I_\gamma(\omega')$ vanishes if $\ell(\gamma)$ is odd, hence we can replace $(-1)^{m_\gamma-1}$ by $(-1)^{|\omega'|}$.
	Summing over $\gamma$, the contributions from all $D_\gamma$ boundaries to Stokes' formula is
	\begin{equation*}\label{eq:Stokes-as-insertion}\tag{$\ast$}
\sum_{(\omega)} (-1)^{|\omega_{(1)}|} \sum_{\gamma\ \text{bridgeless}} I_{\gamma}(\omega_{(1)}) I_{G\sslash\gamma}(\omega_{(2)})
=\sum_{(\omega)} (-1)^{|\omega_{(1)}|} \Big(I(\omega_{(2)}) \circ I(\omega_{(1)})\Big)(G).
	\end{equation*}
	Here we exploit that $I_{\gamma}(\omega_{(1)})=0$ for subgraphs that have a bridge or are disconnected, hence we may equally sum over \emph{all} (not just bridgeless) subgraphs $\gamma$, and may replace $G\sslash\gamma=G/\gamma$ as if $\gamma$ were always connected, to invoke \cref{thm:pairing}. In fact, we defined the orientations of $\gamma$, $G/\gamma$ and $G$ on the left-hand side of \eqref{eq:Stokes-as-insertion} in terms of edges and cycle bases (subgraph $\gamma$ comes first), whereas on the right-hand side the orientation of the insertion $G/\gamma\circ\gamma$ is defined according to \cref{sec:GC-cohom} by keeping edge directions and ordering the vertices of $\gamma$ to come first (and the insertion vertex $v_*$ is first in $G/\gamma$). Thus tacitly we have used in \eqref{eq:Stokes-as-insertion} that these two ways to relate orientations on $G$ with pairs of orientations of $\gamma$ and $G/\gamma$ are equivalent under our identification from \cref{SS:equivor}.
 
 To see this, let $n'$ denote the number of vertices in $\gamma$. Pick paths $\mathcal{Q}'=(P_2,\ldots,P_{n'})$ from the first vertex $v_1$ in $\gamma$ to the others vertices of $\gamma$, such that each of these paths is contained in $\gamma$ (recall we may assume $\gamma$ connected). Let $\mathcal{Q}''=(P_{n'+1},\ldots,P_n)$ denote paths in $G$ from $v_1$ to the vertices not in $\gamma$. We get an induced list $\mathcal{Q}''/\gamma$ of paths in $G/\gamma$ starting at $v_*$, by skipping from the paths all edges that belong to $\gamma$. Then with respect to an edge ordering of $G$ where the edges of $\gamma$ come first, the determinant
 \begin{equation*}
     \det\begin{pmatrix}
         \mathcal{C}' & \mathcal{C}'' & \mathcal{Q}' & \mathcal{Q}''
     \end{pmatrix}
     = (-1)^{(n'-1)\ell(G/\gamma)}
     \det\begin{pmatrix}
         \mathcal{C'} & \mathcal{Q}'
     \end{pmatrix}
     \cdot
     \det\begin{pmatrix}
         \mathcal{C}''/\gamma & \mathcal{Q}''/\gamma
     \end{pmatrix}
 \end{equation*}
 factorizes because the matrix $(\mathcal{C}'\ \mathcal{Q}'\ \mathcal{C}''\ \mathcal{Q}'')$ is upper block triangular. Since $I_{G\sslash\gamma}(\omega_{(2)})=0$ whenever $G/\gamma$ has odd loop order, the factor $(-1)^{(n'-1)\ell(G/\gamma)}$ relating the orientations on the left- and right-hand sides of \eqref{eq:Stokes-as-insertion} can be omitted.
 
	Exploiting the graded cocommutativity \eqref{eq:graded-cocom}, we can rewrite \eqref{eq:Stokes-as-insertion} with \eqref{eq:canint-Lie} as
	\begin{multline*}
		\frac{1}{2}\sum_{(\omega)}
		(-1)^{|\omega_{(1)}|} I(\omega_{(2)}) \circ I(\omega_{(1)})
		+\frac{1}{2}\sum_{(\omega)}(-1)^{|\omega_{(1)}|\cdot|\omega_{(2)}|}(-1)^{|\omega_{(2)}|} I(\omega_{(1)}) \circ I(\omega_{(2)})
		\\
		=\frac{1}{2}\sum_{(\omega)} (-1)^{|\omega_{(1)}|} \underbrace{\left(
		 I(\omega_{(2)}) \circ I(\omega_{(1)})
		-(-1)^{(1+|\omega_{(1)}|)(1+|\omega_{(2)}|)} I(\omega_{(1)}) \circ I(\omega_{(2)})
	\right)}_{\left[ I(\omega_{(2)}) , I(\omega_{(1)}) \right]}.
	\end{multline*}
	This provides the Lie bracket terms in \eqref{eq:Stokes}; it remains only to identify the primitive terms $1\otimes\omega+\omega\otimes 1$ in the coproduct using
	\begin{equation*}
		\frac{1}{2} \left[ I(\omega),I(1) \right]+\frac{(-1)^{|\omega|}}{2} \left[ I(1),I(\omega) \right]
		= \left[ I(\omega),I(1) \right].
	\end{equation*}
	To conclude, recall from \cref{lem:pffint=MC} that the cochain defined by $I(1)$ is equal to the dipole sum $\MCdip$.
\end{proof}
\begin{example}
	For $\omega=1$ in \cref{thm:Stokes}, the coproduct is $\Delta 1=1\otimes 1$ and thus $\sum_{(\omega)}\omega'\otimes\omega''=-1\otimes 1$. Stokes' relation hence amounts to the Maurer-Cartan equation
\begin{equation*}
    0 = \delta I(1)  + \frac{1}{2}[I(1), \MCdip]
      = \delta \MCdip + \frac{1}{2} [\MCdip,\MCdip]
\end{equation*}
for the dipole sum $\MCdip$ from \eqref{eq:MCdip}. This equation was obtained combinatorially in \cite{diff}, and used there to define the \textbf{twisted differential} $\delta+[\cdot,\MCdip]$.
This twist adds $(2i+1)$-valent vertices and inserts $(2i+1)$-fold multiedges, in all possible ways.

Conversely, without advance knowledge of \cref{lem:pffint=MC}, Stokes' relation for the bare Pfaffian form says $\delta I(1)=-[I(1),I(1)]/2=-I(1)\circ I(1)$. Applied to the triangle graph
\begin{equation*}
	G=\Graph[0.8]{gcTri12k}
	\quad\text{with boundary}\quad
	\partial G=D_{2i+1}
\end{equation*}
and subgraph $\gamma\cong D_{2i-1}$ with quotient $G/\gamma\cong-D_{3}$ (see \cref{ex:tri-quotient}), we find $I_{D_{2i+1}}(1)=I_{D_3}(1) I_{D_{2i-1}}(1)=(I_{D_3})^i$ by induction. Proving \cref{ex:dipole-integrals} then simplifies to computing a single elementary integral: $I_{D_3}(1)=1$ (the integrand is $\pff_{D_3}$ from \cref{sec:intro-gc}).
\end{example}

\begin{corollary}\label{cor:closed-twisted}
	For any integer $i\geq 1$, the cochain $I(\beta^{4i+1})$ is closed with respect to the twisted differential: $\delta I(\beta^{4i+1})=-[I(\beta^{4i+1}), \MCdip]$.
\end{corollary}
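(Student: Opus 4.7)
The proof is an immediate specialisation of the Stokes relation from \cref{thm:Stokes} to the primitive generator $\omega = \beta^{4i+1}$ of the Hopf algebra $\cfs$. My plan is as follows.

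By construction of the coproduct on $\cfs$ in \cref{sec:Hopf-can}, each generator $\beta^{4k+1}$ is primitive, so
\begin{equation*}
    \Delta\beta^{4i+1} = 1\otimes\beta^{4i+1} + \beta^{4i+1}\otimes 1.
\end{equation*}
In Sweedler notation for the reduced coproduct, $\sum_{(\omega)} \omega'\otimes\omega'' = 0$ when $\omega = \beta^{4i+1}$. Therefore the third term in the Stokes identity \eqref{eq:Stokes} vanishes identically, and what remains is
\begin{equation*}
    0 = \delta I(\beta^{4i+1}) + \bigl[I(\beta^{4i+1}), \MCdip\bigr],
\end{equation*}
which is precisely the claimed equation.

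There is no real obstacle here: the entire content of the corollary is the primitivity of $\beta^{4i+1}$ in $\cfs$, combined with \cref{thm:Stokes}. The only thing worth emphasising in the write-up is that this is the very reason primitive generators of $\cfs$ give rise to closed cochains (with respect to the twisted differential $\delta + [\,\cdot\,,\MCdip]$), in contrast to products of generators, which produce additional Lie bracket terms on the right-hand side.
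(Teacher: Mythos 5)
Your argument is exactly the paper's intended one: since $\beta^{4i+1}$ is primitive in $\cfs$, the reduced coproduct vanishes, the bracket sum in \eqref{eq:Stokes} drops out, and the Stokes relation reduces to the claimed identity. This matches the paper's derivation (stated explicitly in the introduction just before \cref{lem:intro-gr-canint=0}), so there is nothing to add.
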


\subsection{Classes in graph cohomology}
\label{sec:cohom-classes}

We now show \cref{lem:intro-gr-canint=0} from the introduction. We write $\lceil x \rceil$ for the smallest integer larger than or equal to $x$.
\begin{lemma}\label{lem:gr-canint=0}
    For a homogeneous canonical form $\omega\in\cfs[d]$ of degree $d$, the integrals $I_G(\omega)=0$ vanish for all graphs with loop order $\ell(G)<2+2\lceil d/4 \rceil$.
\end{lemma}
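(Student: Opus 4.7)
My plan is to reduce the desired bound to three elementary constraints on the integer parameters $(\ell, m, n, d)$ of a graph $G$ with $n$ vertices, $m$ edges, $\ell=m-n+1$ loops, against which we integrate a form $\omega\in\cfs[d]$.

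First, I dispose of the parity constraint. By \cref{def:pff}, $\pff^{\ell}$ vanishes identically whenever $\ell$ is odd, hence $\pff_G=[\Laplacian_\mathcal{C}]^*\pff^{\ell}=0$ and consequently $I_G(\omega)=0$. So I may assume $\ell=\ell(G)$ is even.

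Next, I match degrees. The integrand $\pff_G\wedge\omega_G$ is a smooth form on $\oFS_G$ of degree $\ell+d$, while $\dim\oFS_G = m-1$. If these differ, the form is either of strictly too large degree (hence identically zero) or of too small top-degree part (hence its integral over $\oFS_G$ vanishes). Therefore I may assume $m=\ell+d+1$, equivalently $n=d+2$.

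Now I impose the valence condition. By \cref{def:hGC}, every vertex of $G$ has at least $3$ half-edges, so $2m\geq 3n$. Plugging in $m=\ell+d+1$ and $n=d+2$, this becomes $2(\ell+d+1)\geq 3(d+2)$, i.e.\ $\ell\geq (d+4)/2$. Since $\ell$ is an even integer, the smallest admissible value is $2\lceil(d+4)/4\rceil=2+2\lceil d/4\rceil$, using the identity $\lceil x+1\rceil=\lceil x\rceil+1$. Hence $I_G(\omega)=0$ for all $G$ with $\ell(G)<2+2\lceil d/4\rceil$.

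The only step that requires any thought is the equality $2\lceil (d+4)/4\rceil = 2+2\lceil d/4\rceil$; a brief case split on $d\bmod 4$ (or the identity just cited) settles it. No new analytic input beyond \cref{def:pff} and the valence assumption in \cref{def:hGC} is required, so the argument is essentially combinatorial.
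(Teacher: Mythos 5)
Your proof is correct and follows essentially the same route as the paper: parity of $\ell$ forces $\pff_G=0$ unless $\ell$ is even, degree-matching forces $m=\ell+d+1$ (hence $n=d+2$ for the connected graphs of $\hGC_3$), and the trivalence bound $2m\geq 3n$ yields $\ell\geq 2+d/2$, which together with evenness gives $\ell\geq 2+2\lceil d/4\rceil$. The only cosmetic difference is that the paper packages the degree-matching step as a citation of its Corollary 5.5 rather than rederiving it inline.
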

\begin{proof}
	Suppose $I_G(\omega)\neq 0$ and let $\ell=\ell(G)$. Then $G$ must have $m=1+\ell+d$ edges (\cref{cor:deg-match-canint}). By \cref{lem:pfbeta-vanish}, $G$ must be connected, and therefore $G$ has $n=m-\ell+1=2+d$ vertices, and furthermore each vertex must have degree at least 3, giving $m\geq 3n/2$. This amounts to $\ell\geq 2+d/2$. Since $\ell$ must be even (otherwise $\pff_G=0$), we conclude $\ell=2\lceil \ell/2 \rceil\geq 2\lceil 1+d/4\rceil$.
\end{proof}

We denote the restriction of a cochain $I(\omega)$ to graphs with loop order $\ell$ as
\begin{equation*}
	\gr_\ell I(\omega)\in\cGC_3\otimes\R,\qquad
	G\mapsto\begin{cases}
		I_G(\omega) & \text{if $\ell(G)=\ell$,} \\
		0 & \text{otherwise.}
	\end{cases}
\end{equation*}
Since $\gr_\ell I_G(\omega)$ is supported in a fixed loop order and fixed degree $k=1+|\omega|-2\ell$ (for homogeneous $\omega$), there are only finitely many isomorphism classes of such graphs. Thus $\gr_\ell I(\omega)\in\cGC_3\otimes\R$ can be written with an ordinary (not completed) tensor product.

\begin{corollary}\label{cor:gr-cocycle}
	For any $\omega\in\cfs[d]$,
	the restriction of $I(\omega)$ to graphs with $2+2\lceil d/4\rceil$ loops defines a cocycle in $\cGC_3\otimes\R$ in degree $k\in\{-6,\ldots,-3\}$ with $k\equiv d+1\mod 4$.
\end{corollary}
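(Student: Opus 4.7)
The plan is to derive the cocycle property directly from the Stokes' relation of \cref{thm:Stokes} by showing that, when restricted to loop order $\ell_0 = 2 + 2\lceil d/4 \rceil$, the bracket terms on the right-hand side vanish. The key observation is that the Lie bracket and edge-insertion operation $\circ$ on $\cGC_3$ are additive in loop number: if $G_1 \circ_\rho G_2$ has vertex set $(V(G_1) \setminus \{v\}) \sqcup V(G_2)$ and edge set $E(G_1) \sqcup E(G_2)$, then $\ell(G_1 \circ_\rho G_2) = \ell(G_1) + \ell(G_2)$. The coboundary $\delta$ preserves loop number. So it suffices to show that $[I(\omega),\MCdip]$ and each bracket $[I(\omega''),I(\omega')]$ arising in the reduced coproduct of $\omega$ are supported in loop orders strictly greater than $\ell_0$ (and hence their restriction to $\ell_0$ is zero).

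For the first bracket, every dipole $D_{2k+1}$ contributing to $\MCdip$ has loop number $2k \geq 2$. For $[I(\omega),\MCdip]$ to contribute in loop order $\ell_0$, we would therefore need $I(\omega)$ to be non-zero at some loop order $\leq \ell_0 - 2$, which is ruled out by \cref{lem:gr-canint=0}. For the brackets arising from the reduced coproduct, both factors $\omega'$ and $\omega''$ have positive degree (since $\cfs$ is generated in degrees $\geq 5$), so \cref{lem:gr-canint=0} forces $\ell(A_i) \geq 2 + 2\lceil |\omega^{(i)}|/4 \rceil$ for the supports. Adding these and using subadditivity $\lceil |\omega'|/4 \rceil + \lceil|\omega''|/4\rceil \geq \lceil d/4 \rceil$, one obtains a lower bound $\geq 4 + 2\lceil d/4 \rceil > \ell_0$, so again the restriction vanishes. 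Consequently, \eqref{eq:Stokes} restricted to loop order $\ell_0$ reduces to $\delta \gr_{\ell_0} I(\omega) = 0$.

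Finally, I would compute the degree $k$ in $\cGC_3$ of any graph $G$ on which $\gr_{\ell_0} I(\omega)$ is supported. By \cref{cor:deg-match-canint}, such $G$ must be connected with $n = d + 2$ vertices and $m = \ell_0 + d + 1$ edges, giving
\begin{equation*}
k = m - 3\ell_0 = d + 1 - 2\ell_0 = d - 3 - 4\lceil d/4 \rceil.
\end{equation*}
Writing $d = 4q + r$ with $r \in \{0,1,2,3\}$ then yields $k \in \{-3,-6,-5,-4\}$ respectively, and in each case one checks $k \equiv d + 1 \pmod{4}$.

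I do not anticipate a significant obstacle: the entire argument is essentially bookkeeping on the bigrading by loop number and degree, with the crucial input being the loop-number additivity of $\circ$ and $[\cdot,\cdot]$ (which preserve the $\hGC_3$-grading in the natural way) together with the vanishing bound of \cref{lem:gr-canint=0}. The one subtle point worth verifying carefully is the subadditivity of the ceiling function in the second bracket argument above, but this is elementary.
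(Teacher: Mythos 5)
Your proposal is correct and follows essentially the same route as the paper's proof: restrict the Stokes relation of \cref{thm:Stokes} to loop order $2+2\lceil d/4\rceil$, kill all bracket terms via the additivity of loop number under $\circ$ together with the lower bound of \cref{lem:gr-canint=0} (using superadditivity of the ceiling function), and then compute the degree $k=1+d-2\ell$. Your treatment is just slightly more explicit than the paper's on the $\MCdip$ term and the mod-4 case analysis.
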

\begin{proof}
Define $c= 2 \lceil|\omega|/4\rceil $ and likewise $c',c''$ with $\omega$  replaced by $\omega'$, $\omega''$.
	The Lie bracket $\gr_\ell [I(\omega''),I(\omega')]=\sum_{\ell=\ell'+\ell''} \left[\gr_{\ell''} I(\omega''),\gr_{\ell'} I(\omega')\right]$ preserves loop order. Non-vanishing of a summand requires, by \cref{lem:gr-canint=0}, that
    $\ell'\geq 2+c'$ and $\ell''\geq 2+c''$
	which imply $\ell=\ell'+\ell''\geq 4+c$. Hence, applying $\gr_{2+c}$ to the Stokes' relation \eqref{eq:Stokes} kills all Lie brackets, leaving only $0=\delta \gr_{2+c} I(\omega)$. Since $\ell=2+c=2+2\lceil d/4\rceil$, the degree $k=1+d-2\ell$ is equal to $-6+((d+3)\mod 4)$, where $d+3\mod 4\in\{0,1,2,3\}$.
\end{proof}

\begin{definition}
	For every integer $i\geq 1$, let $\tau_i\in\cGC_3\otimes\R$ denote the cocycle in degree $-6$ given by
	$\tau_i\defas \gr_{4+2i} I(\beta^{4i+1})$.
\end{definition}
In \cref{sec:gr6H-6} we demonstrate that $\tau_1$ is not exact, so its class spans $\gr_6 H^{-6}(\cGC_3)\otimes\R$.
\begin{question}
	Are the classes $[\tau_i]\in \gr_{4+2i} H^{-6}(\cGC_3)\otimes\R$ non-zero for all $i>0$?
\end{question}
The first such class is a bracket $[\tau_1]=[[Y_3],[D_3]]$ of the classes $[D_3],[Y_3]\in H^{-3}(\cGC_3)$ of the dipole and prism graph (see \cref{sec:gr6H-6}).
In fact, it follows from \cite[Theorem~2]{diff} that $\tau_k=\delta c_k+[c_k,\MCdip]$ for some (not necessarily closed) cochains $c_k\in\cGC_3$.\footnote{Explicit $c_k$ should be constructible from integrals of primitives \cite{BismutCheeger:TransgressedEuler} of the Pfaffian forms $\pff_G$.} This suggests that perhaps \emph{all} classes $[\tau_k]$ are brackets with $[D_3]$.
If true, this and $[ [D_3],[D_3] ]=0$ would give another explanation of the following observation.
\begin{lemma}
    For all integers $i>0$, we have $\Big[ [\tau_i],[D_3] \Big] = 0 \in H^{-9}(\cGC_3)$.
\end{lemma}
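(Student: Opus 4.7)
The strategy is to apply Stokes' relation (Theorem \ref{thm:Stokes}) to $\omega=\beta^{4i+1}$ and then read off the bracket $[\tau_i,D_3]$ from its restriction to a single loop order. First, since $\beta^{4i+1}$ is primitive in $\cfs$, its reduced coproduct vanishes, so the Stokes' relation collapses (see Corollary \ref{cor:closed-twisted}) to
\begin{equation*}
    \delta I(\beta^{4i+1}) = -\bigl[I(\beta^{4i+1}),\MCdip\bigr].
\end{equation*}
Both the coboundary $\delta$ and the Lie bracket are homogeneous with respect to the loop grading $\ell$, so the same identity holds after applying $\gr_\ell$ for any value of $\ell$. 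The plan is to specialize $\ell = 6+2i$, since in that loop order the Lie bracket on the right-hand side simplifies dramatically.

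Concretely, expanding $\MCdip = \sum_{k\geq 1} D_{2k+1}/(2(2k+1)!)$, the term of loop order $6+2i$ in the bracket is
\begin{equation*}
    \gr_{6+2i}\bigl[I(\beta^{4i+1}),\MCdip\bigr]
    = \sum_{k\geq 1} \frac{1}{2(2k+1)!}\bigl[\gr_{6+2i-2k} I(\beta^{4i+1}),\, D_{2k+1}\bigr].
\end{equation*}
By \cref{lem:gr-canint=0}, the cochain $\gr_{\ell'} I(\beta^{4i+1})$ vanishes for $\ell'<4+2i$, so all contributions with $k\geq 2$ are zero. Only $k=1$ survives, producing
\begin{equation*}
    \gr_{6+2i}\bigl[I(\beta^{4i+1}),\MCdip\bigr] = \tfrac{1}{12}[\tau_i,D_3].
\end{equation*}

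Combining these two steps yields
\begin{equation*}
    [\tau_i,D_3] = -12\,\delta\bigl(\gr_{6+2i} I(\beta^{4i+1})\bigr),
\end{equation*}
exhibiting $[\tau_i,D_3]$ as a coboundary in $\cGC_3\otimes\R$. Hence its class in $H^{-9}(\cGC_3)\otimes\R$ vanishes, which proves the lemma. There is essentially no obstacle here; the only point requiring care is the bookkeeping of loop orders that forces all but the $D_3$ summand of $\MCdip$ to drop out of the bracket, and the checking that the loop-order restriction $\gr_{6+2i}$ commutes with $\delta$, which is immediate since $\delta$ preserves $\ell$.
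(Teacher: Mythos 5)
Your proof is correct and follows essentially the same route as the paper: restrict the twisted-closedness relation $\delta I(\beta^{4i+1})=-[I(\beta^{4i+1}),\MCdip]$ to loop order $6+2i$ and observe that only the $D_3/12$ term of $\MCdip$ survives, exhibiting $[\tau_i,D_3]$ as a coboundary. The only difference is that you spell out the loop-order bookkeeping (via \cref{lem:gr-canint=0}) that the paper leaves implicit, which is a fine addition.
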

\begin{proof}
	Restrict Stokes' relation from \cref{cor:closed-twisted} to loop order $\ell = 6+2i$. This yields
	\begin{equation*}
		\delta \Big(\gr_{6+2i} I(\beta^{4i+1})\Big)
		= -\left[\gr_{4+2i} I(\beta^{4i+1}), \tfrac{1}{12} D_3\right]
		=-\tfrac{1}{12} \left[\tau_i,D_3\right]
	\end{equation*}
	where $D_3/12=\gr_2 \MCdip$. So, indeed, $[[\tau_i], [D_3]]$ is zero in cohomology.
\end{proof}

To illustrate an example where $\omega$ is not primitive, consider $\omega=\beta^5\wedge\beta^9$. The corresponding cocycle $\gr_{10}I(\omega)$ from \cref{cor:gr-cocycle} lies in degree $-5$ and is trivial, since $\gr_{10} H^{-5}(\cGC_3)=0$ (\cref{table:GC3}). So there is a cochain $c\in\cGC_3\otimes\R$ such that $\gr_{10} I(\omega)=\delta c$. Restricting Stokes' relation to $\ell=12$ loops (instead of $\ell=10$), we find
\begin{equation*}
	0=\delta \big( \gr_{12} I(\omega)\big) + \big[\gr_{10} I(\omega), D_3/12\big]
	= \delta \eta
	\quad\text{where}\quad
	\eta\defas \gr_{12} I(\omega) + [c, D_3/12].
\end{equation*}
Therefore, we have constructed a cocycle in degree $-9$ with $\ell=12$ loops (this is in the unknown region of \cref{table:GC3}). Its cohomology class does not depend on the choice of $c$.
\begin{question}
	Is the class $[\eta]\in H^{-9}(\cGC_3)\otimes\R$ non-zero?
\end{question}
Examining Stokes' relation for $\omega=\beta^5\wedge\beta^9$ at $\ell=14$ loops, we learn that
\begin{equation*}
	0=\delta\big(\gr_{14} I(\omega)\big) + [\gr_{12} I(\omega), D_3^\vee] + [\gr_{10} I(\omega), D_5^\vee] + [\tau_2, \tau_1]
\end{equation*}
where $D_3^\vee\defas D_3/12=\gr_2\MCdip$ and $D_5^\vee \defas D_5/240=\gr_4\MCdip$.
By the (graded) Jacobi identity and $\delta D_5^\vee = -[D_3^\vee, D_3^\vee]/2$ from the Maurer-Cartan equation for $\MCdip$, we can replace
\begin{equation*}
	[\gr_{10} I(\omega), D_5^\vee]
  = \delta [c, D_5^\vee] - [c, \delta D_5^\vee]
  = \delta [c, D_5^\vee] + [[c, D_3^\vee], D_3^\vee].
\end{equation*}
Therefore, the class $[\eta]$ constructed from $\beta^5\wedge\beta^9$ is related to the classes $[\tau_1]$ and $[\tau_2]$ constructed from $\beta^5$ and $\beta^9$ through the Lie algebra structure on graph cohomology as
\begin{equation}
	\Big[ [\tau_1], [\tau_2]\Big] = \tfrac{1}{12} \Big[ [\eta], [D_3] \Big]
	\quad\in H^{-12}(\cGC_3)
	.
\end{equation}

\subsection{A class in degree \texorpdfstring{$-6$}{-6}}\label{sec:gr6H-6}
Let $X\in\hGC_3$ denote the chain shown in \cref{fig:cycle6}.\footnote{In terms of the graphs defined in \cref{tab:integrals} and the \href{https://dx.doi.org/10.5287/ora-ngnborbr4}{ancillary files that accompany this paper}, this element is (row by row, and within each row from left to right)
$
X=G_{266}+G_{103}+G_{106}-2G_{107}-G_{109}
+G_{99}+G_{101}+G_{112}+G_{236}-G_{195}
-\tfrac{1}{3}G_{198}-G_{199}-G_{234}+G_{87}-\tfrac{1}{3}G_{288}
+G_{35}+G_{175}+G_{247}-2 G_7-G_{22}
$.
}
\begin{figure}
\begin{gather*}
    +\;\Graph[0.7]{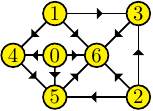}
    +\Graph[0.7]{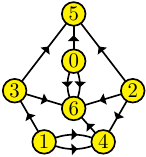}
    +\Graph[0.7]{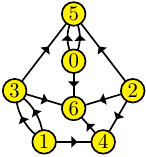}
    -2\times\Graph[0.7]{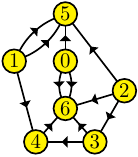}
    -\Graph[0.7]{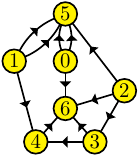}
    \\
    +\;\Graph[0.7]{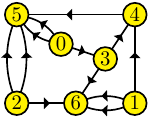}
    +\Graph[0.7]{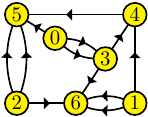}
    +\Graph[0.7]{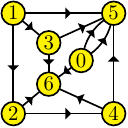}
    +\Graph[0.7]{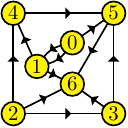}
    -\Graph[0.7]{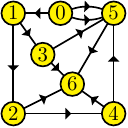}
    \\
    -\frac{1}{3}\times\Graph[0.7]{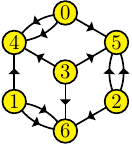}
    -\Graph[0.7]{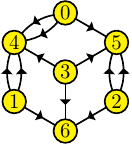}
    -\Graph[0.7]{gc234}
    +\Graph[0.7]{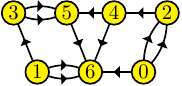} 
    -\frac{1}{3}\times\!\Graph[0.7]{gc288} 
    \\
    +\;\Graph[0.7]{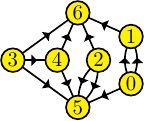} 
    +\Graph[0.7]{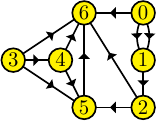} 
    +\Graph[0.7]{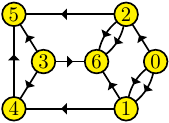} 
    -2\times\Graph[0.7]{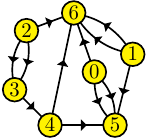} 
    -\Graph[0.7]{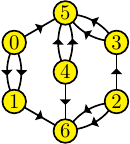} 
\end{gather*}%
    \caption{A graph cycle representing the homology class $\gr_6 H_{-6}(\hGC_3)\cong\Q$.}%
    \label{fig:cycle6}%
\end{figure}
With a computer program we verified that $X$ is a cycle ($\partial X=0$) and that it spans the one-dimensional 6-loop part $\gr_6 H_{-6}(\hGC_3)=\Q\cdot[X]$ of the homology group in degree $-6$.

This cycle pairs non-trivially with the cocycle $[Y_3,D_3]$ obtained in \cref{ex:[D3prism]} from the prism and theta graphs. Only two graphs are shared between the cocycle and $X$, so\footnote{The factors $2$ and $6$ are sizes of automorphism groups ($2$ from the double-edge) in \eqref{eq:GC-duality}.}
\begin{align}
  \inner{[Y_3,D_3],X}
  &= \inner{[Y_3,D_3],\Graph[0.6]{gc112}}
    - \frac{1}{3} \inner{[Y_3,D_3],\Graph[0.6]{gc288}} \nonumber\\
  &= -1\cdot 2 \cdot 72 - \frac{1}{3} \cdot 6 \cdot 24
  = - 192. \label{eq:[Y3,D3](X)}
\end{align}
Therefore, the homology class $[X]\in H_{-6}(\hGC_3)$ and the cohomology class $[ [Y_3],[D_3]]\in H^{-6}(\cGC_3)$ are clearly non-zero. With the knowledge of the ranks from \cref{table:GC3}, we conclude that the bracket with $D_3$ restricts to an isomorphism $\gr_4 H^{-3}(\cGC_3)\cong \gr_6 H^{-6}(\cGC_3)$, as predicted by the spectral sequence from \cite[Table~2, second arrow from the left]{diff}.
\begin{remark}\label{rem:[K4.K4]}
This cohomology class is also representable by $[K_4,K_4]=2K_4\circ K_4$, the bracket of the complete graph $K_4$ with itself. Explicitly, these are
\begin{equation*}
    K_4 = \Graph[0.8]{gcK4}
    \quad\text{and}\quad
  [K_4, K_4] = -192\times \Graph[0.8]{gc288} - 288\times \Graph[0.8]{gc287} 
  \end{equation*}
  and one reads off that
  $\inner{[K_4,K_4],X} =(1/3)\cdot 6\cdot 192 = 384 $. Thus, in the cohomology group $H^{-6}(\cGC_3)$, there is a relation
$ [[K_4], [K_4]] = -2 [[Y_3],[D_3]] $.
\end{remark}

We computed the canonical integrals $\tau_1(G)$ for all 288 graphs that define non-zero elements of $\hGC_3$ at $6$ loops and degree $-6$ (that is, $12$ edges). The calculations are detailed in \cref{sec:integration6}; for each graph, $\tau_1(G)$ is a $\Q$-linear combination of the numbers
\begin{equation}\label{eq:6loop-periods}
    1,\
    \pi^2,\
    \zeta(3),\
    \pi^2 \ln 2,\
    \pi^4,\
    (\ln 2)^4 + 24\Li{4}(1/2),\ \text{and}\
    \pi^2\Im\Li{2}(\iu)+24\Im\Li{4}(\iu).
\end{equation}
Our results show in particular that the cocycle $\tau_1$ does detect the homology class $[X]$:
\begin{thm}\label{thm:tau1(X)}
    For $X\in \hGC_3$ shown in \cref{fig:cycle6}, the canonical integral is
    \begin{equation}\label{eq:tau1(X)}
        \tau_1(X) 
        = I_X(\beta^5)
        = 40 \cdot\left( 13\zeta(3) - 2\pi^2\ln 2\right).
    \end{equation}
\end{thm}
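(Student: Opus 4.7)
The plan is a direct computation: expand
\begin{equation*}
\tau_1(X) = I_X(\beta^5) = \sum_{G} c_G\, I_G(\beta^5)
\end{equation*}
over the 20 oriented graphs $G$ of \cref{fig:cycle6} with their prescribed rational coefficients $c_G$, evaluate each surviving integral, and sum. A first pass filters out many contributions. By \cref{cor:deg-match-canint} the integral vanishes when $G$ has an odd automorphism; by \cref{rem:flip-vanishing} it also vanishes if $G$ admits an odd Whitney flip to itself or a Whitney flip to a graph with an odd automorphism; and by \cref{lem:pfbeta-vanish} it vanishes whenever $G$ has a self-loop, a cut vertex, a bridge, a two-valent vertex, or a two-edge cut. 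Enumerating these symmetries among the 20 graphs of $X$ sharply reduces the list of integrals that must actually be computed.

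For each surviving graph $G$, I would proceed as follows. Choose an ordered cycle basis $\mathcal{C}$ realising the orientation of $G$ via the identification of \cref{SS:equivor}, form $\Laplacian_\mathcal{C}$, and write $\pff_G \wedge \omega_G^5 = Q_G(x)/\Symanzik_G(x)^{7/2}$ with $Q_G$ a polynomial $12$-form in the edge coordinates (the pole order is sharp by \cref{thm:pfbeta-order}). Pass to an affine chart $x_m=1$ on $\oFS_G$ and resolve the square-root singularity on the boundary by the covering $x_e = t_e^2$ on a suitable subset of edges (as in the proof of \cref{thm:convergence}), so that the integrand becomes a smooth form on a compact manifold with corners. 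The resulting absolutely convergent 11-dimensional integral can then be attacked by iterated one-variable integration, using a linear ordering of the remaining edges compatible with the vertex ordering of $G$ and reducing each partial integration to partial fractions and elementary antiderivatives; this generates hyperlogarithms and yields an answer in the $\Q$-span of the basis of periods \eqref{eq:6loop-periods}.

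The principal obstacle is the exact identification of each $I_G(\beta^5)$ in this basis. Generic six-loop graphs produce all the weight-four constants $\pi^4$, $(\ln 2)^4 + 24\Li{4}(1/2)$, and $\pi^2\Catalan + 24\Im\Li{4}(\iu)$, as exemplified by the sample $I_{G_{234}}(\beta^5)$ quoted in the introduction. In practice I would combine symbolic hyperlogarithm integration (e.g.\ via \HyperInt) wherever feasible with high-precision numerical evaluation and PSLQ-style integer-relation detection against the basis otherwise, subsequently re-deriving the detected relations symbolically for rigour. The decisive phenomenon is that, once the 20 contributions are combined with the coefficients from $X$, every weight-four constant cancels and only the weight-three combination $40(13\zeta(3) - 2\pi^2\ln 2)$ survives. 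Partial cancellations of this type are forced by Stokes' relation (\cref{cor:closed-twisted}) together with $\partial X = 0$, but the specific non-zero residue on the right-hand side of \eqref{eq:tau1(X)} is accessible only through the explicit computation outlined above.
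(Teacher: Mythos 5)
Your overall strategy coincides with the paper's: expand $\tau_1(X)$ over the graphs of \cref{fig:cycle6}, kill as many terms as possible using \cref{lem:pfbeta-vanish}, \cref{cor:deg-match-canint} and the Whitney-flip vanishing of \cref{rem:flip-vanishing}, evaluate the survivors exactly in a fixed $\Q$-basis of periods (with PSLQ used to identify high-precision numerics where symbolic evaluation is out of reach), and sum. You also correctly avoid the trap of computing only the simple part $X_s$, which would miss genuine multiedge contributions. The paper additionally exploits Stokes' relations such as \eqref{eq:Stokes-267} and Whitney-flip identities to reduce the 45 non-vanishing integrands to 14 independent integrals, which is a practical economy rather than a conceptual difference.

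Where you diverge is in the integration technology, and here there is a real obstruction to your primary method. The integrand $\pff_G\wedge\omega_G^5 = Q_G\,\Omega_{12}/\Symanzik_G^{5/2}$ carries a \emph{half-integer} power of the Symanzik polynomial (note, incidentally, that the observed exponent is $5/2$, not $7/2$: \cref{thm:pfbeta-order} is only an upper bound, and the paper records that it is not attained for any of these graphs). The substitution $x_e=t_e^2$ used in the proof of \cref{thm:convergence} removes the square-root singularities in the local blow-up parameters at the boundary of $\FP_G$, but it does \emph{not} rationalize $\sqrt{\Symanzik_G}$ globally; the integrand remains an algebraic, non-rational function of the edge coordinates. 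Consequently a direct iterated-integration/partial-fractions attack via \HyperInt{} on the $11$-dimensional parametric integral does not apply as stated. This is precisely why the paper passes instead to momentum- and position-space Feynman-integral representations, where half-integer parametric exponents become ordinary propagators in odd dimension $\FIdim=5$, and then uses hypergeometric representations, IBP/\LiteRed{} reduction to known $4$-loop master integrals in $3-2\dimeps$ dimensions, and Gegenbauer expansions for the simple graphs; \HyperInt{} enters only for auxiliary one-dimensional integrals after an explicit rationalizing change of variables. Your fallback of high-precision numerics plus integer-relation detection is viable --- it is exactly what the paper relies on for the hardest graphs, to $40$-plus digits --- so the theorem is still reachable along your route, but the symbolic branch of your plan needs to be replaced by one of these rationalization or momentum-space devices.
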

Comparing with \eqref{eq:[Y3,D3](X)}, we can formulate this result also as follows: The cohomology class defined by the cocycle $\tau_1=\gr_6 I(\beta^5)\colon \hGC_3\rightarrow \R$ is equal to
\begin{equation*}
    [\tau_1] = \frac{5}{24}\left( 2\pi^2\ln 2-13\zeta(3)\right) \cdot \Big[ [Y_3], [D_3] \Big]
    \in H^{-6}(\cGC_3)\otimes \R.
\end{equation*}
Written in terms of $[K_4,K_4]$ as in \cref{rem:[K4.K4]}, we get the form stated in the introduction.

It is striking to observe the simplicity of $\tau_1(X)$ as compared to the individual integrals $\tau_1(G)$. Most polylogarithms cancel, and only an alternating sum of weight 3 remains:
\begin{equation*}
    2\pi^2\ln 2-13\zeta(3)
    =8\Li{1,2}(-1,-1)
    =8\sum_{0<m<n} \frac{(-1)^{n+m}}{mn^2}.
\end{equation*}
Assuming that the 7 numbers in \eqref{eq:6loop-periods} are linearly independent over $\Q$, the fact that $\tau_1$ is a cocycle (with values in $\R$) implies that each of the 7 coefficient functions $\lambda_i\colon \hGC_3\rightarrow \Z$ in \eqref{eq:period-basis} is a cocycle as well. Indeed, we verified that each $\lambda_i$ as given in \cref{tab:integrals} is a cocycle. The simplification in \eqref{eq:tau1(X)} thus amounts to the exactness of each $\lambda_i$ with $i\in\{1,2,5,6,7\}$, as well as the exactness of the combination $\lambda_3-(4/3)\lambda_4$ from rewriting
\begin{equation*}
\lambda_3 \zeta(3) + \frac{\lambda_4}{6} \Big(2\pi^2\ln 2-21 \zeta(3)\Big)
=\Big(\lambda_3-\frac{4}{3}\lambda_4\Big)\zeta(3) + \frac{\lambda_4}{6}\Big(2\pi^2\ln 2-13\zeta(3)\Big)
\end{equation*}
in order to adapt our basis from \eqref{eq:period-basis} to include the particular combination of $\pi^2\ln 2$ and $\zeta(3)$ that survives in the integral of the cycle $X$.
\begin{remark}
    Similarly, each cocycle $\tau_k(G)=\sum_i \lambda_{k,i}(G) p_{k,i}$ takes values in some finite-dimensional $\Q$-vector space of real numbers that are periods in the sense of \cite{KZ:Periods}. The coefficient functions in any basis $\{p_{k,i}\}$ provide rational cocycles $\lambda_{k,i}$. A \emph{single} cocycle $\tau_k$ might thereby supply \emph{several} rational cohomology classes $[\lambda_{k,i}]$. For example, these coefficients of $[\tau_2]$ might span a subspace of dimension $0$, $1$, or $2$ in $\gr_8 H^{-6}(\cGC_3)\cong\Q^2$.

    Technically, this can be implemented (avoiding transcendence conjectures) by promoting the cocycles to taking values in the ring of motivic periods, similar to \cite[\S9]{invariant}.
\end{remark}

\begin{remark}
  Any element $h\in\hGC_3$ has a unique decomposition $h=h_s+h_m$ into a simple part $h_s$ (no multiedges) and the part $h_m$ from graphs with one or more multiedges. The latter span a subcomplex that is almost acyclic \cite[Theorem~2]{multi}; its homology is spanned by the theta graph $D_3$. For $\ell>2$ loops, the class $[h]\in H_{\bullet}(\hGC_3)$ is therefore determined by $h_s$. In the case of $h=X$ the cycle from \cref{fig:cycle6}, this simple part is
    \begin{equation*}
        X_s = \Graph[0.7]{gc266}-\frac{1}{3}\times\!\!\!\Graph[0.7]{gc288}.
    \end{equation*}
    However, canonical integrals of graphs with multiedges can be non-zero (see \cref{sec:integration6}). Hence to obtain $[\tau_1]([X])$, it was not enough to only compute the integrals of the two graphs in $X_s$; instead, we also needed the multiedge part $X_m=X-X_s$. In contrast, for the acyclic subcomplex of graphs with cut vertices the situation is better, see \cref{rem:bicon-qiso}.
\end{remark}

\appendix

\section{Integrals at 6 loops and 12 edges}
\label{sec:integration6}

This appendix lists the integrals $\tau_1(G)=I_G(\beta^5)$ for all connected graphs $G$ with $6$ loops, $12$ edges, and at least 3 edges incident at any vertex. The number of isomorphism classes of such graphs which do not have any odd automorphisms, i.e.\ the dimension of $\hGC_3$ in this bidegree $(\ell,k)=(6,-6)$, is $288$ by \cite[Table~3]{BNM}. We confirmed this number using Sage \cite{sagemath} and {\nauty} \cite{McKayPiperno:II}, and we computed the corresponding Pfaffians and traces with {\Maple}. For all 288 graphs, we find that
\begin{equation}\label{eq:b5pf-psi5/2}
    \pff_G\wedge \omega^5_G = \frac{Q_G\Omega_{12}}{\Symanzik_G^{5/2}}
\end{equation}
for a homogeneous polynomial $Q_G\in\Z[x_1,\ldots,x_{12}]$ of degree $3$. Thus for these graphs, the bound from \cref{thm:pfbeta-order} is not optimal: we observe the cancellation of an additional power of $\Symanzik_G$.

For most graphs, the product $\pff_G\wedge\omega^5_G$ is in fact zero, and therefore $\tau_1(G)=0$. Most of these zeroes are explained by the presence of a 2-edge cut, according to \cref{lem:pfbeta-vanish}.

Only 45 graphs give a non-zero integrand $\pff_G\wedge\omega^5_G$. These graphs are listed in \cref{tab:integrals}. We write their adjacency lists as $B_0|\cdots|B_6$, with each block $B_i$ encoding the edges pointing away from vertex $i$, represented by the vertex they are pointing towards. For example, the graph $G_{199}$ with adjacency list
$445|446|556|456|||$ has the edges
\begin{equation*}
    0\rightarrow 4,0\rightarrow 4,0\rightarrow 5,
    1\rightarrow 4,1\rightarrow 4,1\rightarrow 6,
    2\rightarrow 5,2\rightarrow 5,2\rightarrow 6,
    3\rightarrow 4,3\rightarrow 5,3\rightarrow 6.
\end{equation*}
Whitney flips relate several of these graphs and such pairs have identical $\tau_1(G)$---up to sign---by \eqref{eq:pfbeta-flip}. We furthermore exploit Stokes' relations $\tau_1(\partial H)=0$. For example, we have $\tau_1(G_{267})=-\tau_1(G_{112}+G_{185}+G_{259}+G_{264}+G_{266})$ from the boundary
\begin{equation}\label{eq:Stokes-267}\begin{aligned}
    \partial\Graph[0.7]{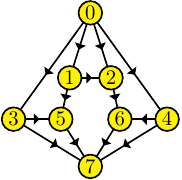}
    &=-2\times\Graph[0.7]{gc112}-2\times\Graph[0.7]{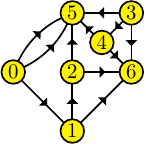}-2\times\Graph[0.7]{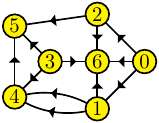}
    \\
    &\quad -2\times\Graph[0.7]{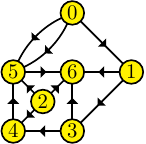}-2\times\Graph[0.7]{gc266}-2\times\Graph[0.7]{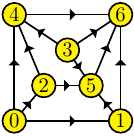}
    .
\end{aligned}\end{equation}
Combining all these relations, the 45 values $\tau_1(G)$ can be expressed in terms of only 14 remaining integrals.
To compute these integrals, we used a range of methods for Feynman integrals, which we illustrate in this appendix. The results are summarized in \cref{tab:integrals}.

For all 45 graphs, there are integer vectors $\lambda\in\Z^7$ such that
\begin{equation}\label{eq:period-basis}\begin{aligned}
    \tau_1(G) &=
    10\cdot\bigg(
        \frac{\lambda_1}{3}
        +\frac{\lambda_2}{9} \pi^2
        +\lambda_3\zeta(3)
        +\frac{\lambda_4}{6} \Big(2\pi^2 \ln 2-21\zeta(3)\Big)
        + \frac{\lambda_5}{180}\pi^4
        \\ &\quad\qquad
        + \frac{\lambda_6}{3} \Big((\ln 2)^4 + 24\Li{4}(1/2)\Big)
        + \frac{\lambda_7}{9}\Big(\pi^2\Catalan+24\Im\Li{4}(\iu)\Big)
    \bigg)
\end{aligned}\end{equation}
is a linear combination of special values of polylogarithms.\footnote{We combine $\pi^2\ln 2$ with $-21\zeta(3)/2$ for two reasons: 1) this sets $\lambda_3=0$ for many graphs; 2) this particular linear combination is a motivic Galois conjugate of $(\ln 2)^4+24\Li{4}(1/2)$ and thus makes it more easily apparent that \cref{tab:integrals} supports a generalisation of the coaction principle \cite{PanzerSchnetz:Phi4Coaction}.} Here $\zeta(3)=\sum_{n=1}^{\infty} 1/n^3\approx \numprint{1.202057}$ is Ap\'{e}ry's constant and $\Catalan=\Im\Li{2}(\iu)$ denotes Catalan's constant, thus
\begin{equation*}
    \Catalan = \sum_{n=0}^{\infty} \frac{(-1)^n}{(2n+1)^2}
    \approx \numprint{0.915966}
    \qquad\text{and}\qquad
    \Im\Li{4}(\iu) = \sum_{n=0}^{\infty} \frac{(-1)^n}{(2n+1)^4}
    \approx \numprint{0.988945}
    .
\end{equation*}
In addition to the derivations explained below, all results in \cref{tab:integrals}, with the exception of the graphs $G_{266}$ and $G_{267}$, have furthermore been confirmed independently within a precision of roughly 5 digits by numeric integration of \eqref{eq:b5pf-psi5/2} using {\pySecDec} \cite{BHJJKSZ:pySecDec}.

All results reported in this appendix, e.g.\ definitions of graphs and values of their canonical integrals (as summarized in \cref{tab:integrals}), the canonical forms (the numerator polynomials $Q_G$ of \eqref{eq:b5pf-psi5/2}) and the program used to compute them, are available and documented in plain text files (in a format compatible with Maple) deposited at
\begin{center}
    \href{https://dx.doi.org/10.5287/ora-ngnborbr4}{https://dx.doi.org/10.5287/ora-ngnborbr4}
\end{center}

\begin{table}
    \centering
    {\small\begin{tabular}{rrrrrrrrrr}
\toprule
$G$ & edges & $\lambda_1$ & $\lambda_2$ & $\lambda_3$ & $\lambda_4$ & $\lambda_5$ & $\lambda_6$ & $\lambda_7$ & $\tau_1(G)\approx$ \\
\midrule
97 & 446|566|346|56|5|| &  &  &  &  &  &  &  & 0.00000 \\
99 & 355|466|556|46|5|| & $-8$ & $-3$ &  &  &  &  & $2$ & 13.26773 \\
100 & 335|466|566|46|5|| & $-8$ & $-3$ &  &  &  &  & $2$ & 13.26773 \\
101 & 335|466|556|46|5|| & $-8$ & $-3$ &  &  &  &  & $2$ & 13.26773 \\
102 & 335|446|556|46|5|| & $-8$ & $-3$ &  &  &  &  & $2$ & 13.26773 \\
103 & 566|344|456|56|6|| & $-8$ & $6$ &  & $-2$ &  &  & $-2$ & 4.83434 \\
104 & 566|334|456|56|6|| & $-8$ & $6$ &  & $-2$ &  &  & $-2$ & 4.83434 \\
105 & 556|344|456|56|6|| & $-8$ & $6$ &  & $-2$ &  &  & $-2$ & 4.83434 \\
106 & 556|334|456|56|6|| & $-8$ & $6$ &  & $-2$ &  &  & $-2$ & 4.83434 \\
107 & 566|455|356|46|6|| & $8$ & $-6$ &  & $2$ &  &  & $2$ & -4.83434 \\
108 & 566|445|356|46|6|| & $8$ & $-6$ &  & $2$ &  &  & $2$ & -4.83434 \\
109 & 556|455|356|46|6|| & $8$ & $-6$ &  & $2$ &  &  & $2$ & -4.83434 \\
110 & 556|445|356|46|6|| & $8$ & $-6$ &  & $2$ &  &  & $2$ & -4.83434 \\
112 & 556|235|46|56|56|| &  & $-3$ & $-16$ & $-8$ &  &  & $2$ & 1.75220 \\
193 & 112|34|56|556|56|| &  & $-4$ &  & $8$ & $-70$ &  & $16$ & 5.83972 \\
194 & 146|56|4456|456||| &  & $-2$ &  & $5$ & $86$ & $-3$ & $-6$ & 2.12214 \\
195 & 155|23|46|56|56|6| &  & $3$ & $16$ & $8$ &  &  & $-2$ & -1.75220 \\
196 & 115|23|46|56|56|6| &  & $3$ & $16$ & $8$ &  &  & $-2$ & -1.75220 \\
198 & 445|466|556|456||| & $-24$ & $6$ &  &  &  &  &  & -14.20264 \\
199 & 445|446|556|456||| & $-24$ & $6$ &  &  &  &  &  & -14.20264 \\
227 & 355|455|346|6|6|6| & $-24$ & $-8$ &  &  & $30$ &  &  & -5.38133 \\
228 & 335|455|346|6|6|6| & $-24$ & $-8$ &  &  & $30$ &  &  & -5.38133 \\
229 & 335|445|346|6|6|6| & $-24$ & $-8$ &  &  & $30$ &  &  & -5.38133 \\
234 & 112|46|56|456|6|6| & $-24$ & $14$ &  & $-6$ & $-24$ & $2$ & $-4$ & -2.07204 \\
235 & 155|46|346|56|5|6| & $8$ & $6$ & $16$ &  & $15$ &  & $-10$ & 1.80196 \\
236 & 115|46|346|56|5|6| & $8$ & $6$ & $16$ &  & $15$ &  & $-10$ & 1.80196 \\
237 & 122|56|35|46|56|6| &  & $1$ &  & $-3$ & $-7$ & $1$ & $-2$ & 0.22470 \\
238 & 112|56|35|46|56|6| &  & $1$ &  & $-3$ & $-7$ & $1$ & $-2$ & 0.22470 \\
239 & 1466|56|356|46|5|| &  & $2$ & $48$ & $19$ & $32$ & $-1$ & $-10$ & -0.34061 \\
240 & 1446|56|356|46|5|| &  & $6$ &  & $-1$ & $-37$ & $1$ & $2$ & -0.16321 \\
241 & 246|346|56|56|5|6| & $-8$ & $-18$ &  & $2$ & $64$ & $-2$ &  & -0.58760 \\
243 & 112|46|56|456|55|| &  & $34$ &  &  & $-70$ &  &  & -5.96141 \\
244 & 112|46|56|4566|5|| &  &  &  &  &  &  &  & 0.00000 \\
245 & 112|46|56|4556|5|| &  & $10$ &  & $-6$ &  &  & $-6$ & 6.77320 \\
246 & 112|46|56|4456|5|| &  & $-10$ &  & $6$ &  &  & $6$ & -6.77320 \\
248 & 112|46|556|456|5|| &  & $-5$ &  & $-2$ & $30$ &  & $-4$ & 0.38791 \\
250 & 112|446|56|456|5|| &  & $5$ &  & $2$ & $-30$ &  & $4$ & -0.38791 \\
256 & 126|46|56|456|55|| &  & $18$ & $32$ & $10$ & $24$ & $-2$ & $-12$ & -2.08860 \\
257 & 126|46|56|4456|5|| &  & $-28$ & $-16$ & $-3$ & $62$ & $-1$ & $4$ & -2.55978 \\
259 & 126|446|56|456|5|| &  & $-1$ &  & $3$ & $7$ & $-1$ & $2$ & -0.22470 \\
261 & 112|35|46|56|56|6| & $-8$ & $-15$ &  & $1$ & $47$ & $-1$ &  & 1.74975 \\
264 & 155|36|456|46|5|6| & $8$ & $15$ &  & $-1$ & $-47$ & $1$ &  & -1.74975 \\
265 & 115|36|456|46|5|6| & $8$ & $15$ &  & $-1$ & $-47$ & $1$ &  & -1.74975 \\
266 & 456|346|356|6|5|6| & $-8$ & $-2$ & $-16$ & $-4$ & $-39$ & $2$ & $8$ & 0.76008 \\
267 & 124|56|45|456|6|6| &  & $-9$ & $32$ & $10$ & $79$ & $-2$ & $-12$ & -0.53784 \\
\bottomrule
\end{tabular}
}
    \caption{The canonical Pfaffian integrals $\tau_1(G)$ of graphs $G$ with $\ell=6$ loops and $12$ edges, given numerically (last column) and exactly in the basis \eqref{eq:period-basis}. The vanishing for $G_{97}$ and $G_{244}$ can be explained by Whitney flips, see \cref{rem:flip-vanishing}.}%
    \label{tab:integrals}%
\end{table}

\subsection{Momentum space propagators}\label{sec:pint}
Canonical integrals can be interpreted as linear combinations of propagator Feynman integrals:
since the Symanzik polynomial $\Symanzik_G=x_e \Symanzik_{G\setminus e} + \Symanzik_{G/e}$ is linear with respect to any variable, the integral over a single variable in a graph with $N$ edges gives
\begin{equation*}
    \int_{\FS_N}\frac{\Omega_{N}}{\Symanzik_G^{\FIdim/2}} \prod_{i=1}^{N} x_i^{n_i-1}
    = \frac{\Gamma(n_e)\Gamma(\FIdim/2-n_e)}{\Gamma(\FIdim/2)} \int_{\FS_{N-1}}\frac{\Omega_{N-1}}{\Symanzik_{G\setminus e}^{n_e\phantom{/}}\Symanzik_{G/e}^{\FIdim/2-n_e}}\prod_{i\neq e} x_i^{n_i-1}
\end{equation*}
for any monomial $\prod_i x_i^{n_i-1}$ in the numerator. The right-hand side can be identified with a massless Feynman integral of the graph $G\setminus e$, considered with two external legs attached at the endpoints of the deleted edge $e$, carrying a momentum $p$ with norm $\norm{p}=1$ \cite{Smirnov:AnalyticToolsForFeynmanIntegrals,Nakanishi:GraphTheoryFeynmanIntegrals}. Such integrals are heavily used in field theory calculations, for an overview of techniques see \cite{KotikovTeber:MultiTechniques}. They admit a momentum space representation
\begin{equation}\label{eq:pint}
    \pInt(G,n,\FIdim) =
    \left(\prod_{i=1}^{\ell(G)} \int_{\R^{\FIdim}} \frac{\td[\FIdim] q_i}{\pi^{\FIdim/2}}\right)
    \prod_{j=1}^{E(G)} \frac{1}{\norm{k_j}^{2n_j}}
\end{equation}
where the integral is over vectors $q_i\in\R^{\FIdim}$ that parametrize the loop space $H_1(G)\otimes\R^{\FIdim}$. The vectors $k_j\in\R^{\FIdim}$ are assigned to edges and denote a linear combination of loop momenta $q_i$ and the external momentum $p$. For example, the ``bubble'' integral is
\begin{align}
    \pInt\left(\Graph[0.5]{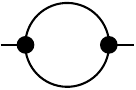},n_1,n_2,\FIdim\right)
    &=\left.\int_{\R^{\FIdim}}\frac{\td[\FIdim] q_1}{\pi^{\FIdim/2}} \frac{1}{\norm{q_1}^{2n_1}\norm{q_1-p}^{2n_2}}\right|_{\norm{p}=1}
    \nonumber\\
    &= \frac{\Gamma(\FIdim/2-n_1)\Gamma(\FIdim/2-n_2)\Gamma(n_1+n_2-\FIdim/2)}{\Gamma(n_1)\Gamma(n_2)\Gamma(\FIdim-n_1-n_2)} %
    \label{eq:bubble} %
\end{align}
Accounting for all prefactors, the precise relation between a canonical integral and a propagator Feynman integral of the form \eqref{eq:pint} is
\begin{equation}\label{eq:cut-pint}
    \int_{\FS_{N}}\frac{\Omega_{N}}{\Symanzik_G^{\FIdim/2}} \prod_{i=1}^{N} x_i^{n_i-1}
    =
    \pInt(G\setminus e,n,\FIdim)
    \cdot\frac{1}{\Gamma(\FIdim/2)}\cdot
    \prod_{i=1}^{N} \Gamma(n_i)
    .
\end{equation}
For example, the integral of the invariant Pfaffian form for the 3-edge dipole graph is
\begin{equation*}
    \int_{\FS_3} \pff_{D_3}
    = -\int_{\FS_3} \frac{\Omega_3}{\Psi_{D_3}^{3/2}}
    = -\frac{\Gamma(1)^3}{\Gamma(3/2)} \pInt\left(\Graph[0.5]{bubble}, 1, 1, 3\right)
    = -\frac{\Gamma(1/2)^3}{\Gamma(3/2)}
    = -2\pi
\end{equation*}
in agreement with \cref{ex:dipole-integrals}.

Consider now the integral \eqref{eq:b5pf-psi5/2} for the graph $G_{199}$ with adjacency list $445|446|556|456|||$.
The numerator of $\pff_G\wedge\omega^5_G$ is in this case $Q_G=10(x_1+x_2)(x_4+x_5)(x_7+x_8)$. Since $x_1$ and $x_2$ correspond to parallel edges $\{0,4\}$, swapping $x_1\leftrightarrow x_2$ leaves $\Symanzik_G$ invariant, and similarly for $x_4\leftrightarrow x_5$ and $x_7\leftrightarrow x_8$. All 8 monomials obtained by expanding $Q_G$ therefore yield the same integral and can be combined into
\begin{equation*}
    \tau_1\left(\Graph[0.8]{gc199}\right)
    =\frac{10}{(-2\pi)^3}\int_{\FS_{12}}\frac{8x_1x_4x_7\Omega_{12}}{\Symanzik_G^{5/2}}
    = \frac{80}{(-2\pi)^3} \frac{1}{\Gamma(5/2)}\pInt\left( \Graph[0.6]{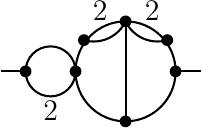}, 5 \right).
\end{equation*}
Here we chose to delete the edge $\{2,6\}$, and we adopt the convention that $n_i=1$ for all edges, unless a different value is indicated explicitly next to an edge.
We may replace
\begin{itemize}
    \item a sequential pair $\Graph[0.4]{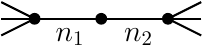}$ of edges with a single edge of index $n_1+n_2$, and
    \item a parallel pair $\Graph[0.4]{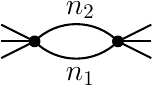}$ with a single edge of index $n_1+n_2-\FIdim/2$ and a factor \eqref{eq:bubble}.
\end{itemize}
Applying these series-parallel reductions, the example simplifies to
\begin{equation*}
    \pInt\left( \Graph[0.6]{pf199p}, 5 \right)
    = \left[\pInt\left(\Graph[0.5]{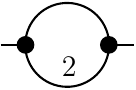},5\right)\right]^3 \pInt\left(\Graph[0.6]{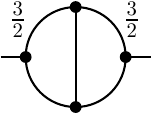},5\right).
\end{equation*}
The bubble integrals on the right-hand side each evaluate to $\pi^{3/2}/2$ via \eqref{eq:bubble}. We have therefore expressed the canonical integral of the graph $G_{199}$ as a 2-loop propagator,
\begin{equation*}
    \tau_1(G_{199}) = -10\cdot\frac{\pi}{6}\cdot \pInt\left(\Graph[0.6]{pf199pred},5\right).
\end{equation*}

\subsection{Hypergeometric functions}\label{sec:3F2}
The 2-loop propagator integral above can be expressed in terms of hypergeometric functions. One such formula was derived in \cite{BroadhurstGraceyKreimer:PositiveKnots}, which in this case shows that
\begin{equation}\label{eq:p199}
    \pInt\left(\Graph[0.6]{pf199pred},5-2\dimeps\right)
    =-8\pi+\frac{16}{\pi\dimeps}\left(
        \dimeps-\frac{1}{2}
        +\pFq{3}{2}{\tfrac{1}{2},1,-\tfrac{1}{2}+3\dimeps}{\tfrac{3}{2}+\dimeps,\tfrac{1}{2}+2\dimeps}{1}
    \right)
    +\asyO(\dimeps).
\end{equation}
In order to take the limit $\dimeps\rightarrow 0$, we expand the $_3 F_2$ function in $\dimeps$ using the algorithms from \cite{HuberMaitre:HypExp2}. Their implementation {\HypExpTwo} readily calculates that
\begin{equation*}
    \pFq{3}{2}{\tfrac{1}{2},1,-\tfrac{1}{2}+3\dimeps}{\tfrac{3}{2}+\dimeps,\tfrac{1}{2}+2\dimeps}{1}
    = \frac{1}{2} + \dimeps\left(2+\frac{\pi^2}{4}\right) + \dimeps^2\left(4-\frac{\pi^2}{2}-\frac{21\zeta(3)}{2}\right) + \asyO(\dimeps^3)
\end{equation*}
and therefore the propagator integral \eqref{eq:p199} evaluates at $\dimeps=0$ to $48/\pi-4\pi$. In conclusion, the canonical integral of $\pff_G\wedge\omega^5_G/(-2\pi)^3$ for the graph $G_{199}$ takes the value
\begin{equation*}
    \tau_1(G_{199}) = 10\cdot\left(\frac{2}{3}\pi^2-8\right)
    \approx -\numprint{14.202637}
    .
\end{equation*}

In the same way we can reduce another canonical integral to a 2-loop propagator. The graph $G_{100}$ with adjacency list $335|466|566|46|5||$ has numerator $Q_G=-10(x_1+x_2)(x_5+x_6)(x_8+x_9)$ which again raises the index to $n_i=2$ for one edge in each of the 3 pairs of double-edges. Therefore,
\begin{equation*}
    \tau_1(G_{100})
    =\tau_1\left(\Graph[0.8]{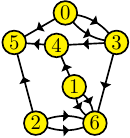}\right)
    =10\cdot\frac{\pi}{6}\cdot\pInt\left(\Graph[0.6]{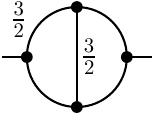},5\right).
\end{equation*}
Using again the formula from \cite{BroadhurstGraceyKreimer:PositiveKnots}, we can express the propagator integral in terms of hypergeometric functions, after introducing a regulator $\dimeps$. This gives a linear combination
\begin{align*}
    \pInt\left(\Graph[0.6]{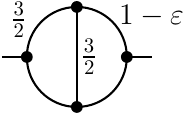},5-2\dimeps\right)
    &= A_1(\dimeps)
    +A_2(\dimeps)\cdot
    \pFq{3}{2}{1,\tfrac{1}{2},-\tfrac{1}{2}+2\dimeps}{\tfrac{3}{2},1+2\dimeps}{1}
    \\ & \quad
    +A_3(\dimeps)\cdot
    \pFq{3}{2}{1,-\dimeps,-\tfrac{1}{2}+2\dimeps}{\tfrac{3}{2},\tfrac{1}{2}+\dimeps}{1}
\end{align*}
where $A_i(\dimeps)$ are expressions in terms of $\Gamma$-functions that are easily expanded in $\dimeps$ and which have triple poles at $\dimeps=0$. We thus need to expand these $_3 F_2$ functions to order $\dimeps^3$. Unfortunately the implementation {\HypExpTwo} does not cover these functions, which belong to the case referred to as $3^1_2$ in \cite[\S B.5]{HuberMaitre:HypExp2}.
We therefore apply a different strategy: use a standard integral representation of hypergeometric functions, concretely
\begin{equation*}
    \pFq{3}{2}{1,\tfrac{1}{2},-\tfrac{1}{2}+2\dimeps}{\tfrac{3}{2},1+2\dimeps}{1}
    =
    \frac{\Gamma(1+2\dimeps)H(\dimeps)}{2\Gamma(\tfrac{1}{2}+2\dimeps)\sqrt{\pi}},
    \quad
    H(\dimeps)
    =
    \int_0^1 \!\frac{\td t_1}{\sqrt{1-t_1}} \int_0^1 \!\frac{\td t_2}{\sqrt{t_2}} \left(\frac{1-t_1t_2}{1-t_2}\right)^{\tfrac{1}{2}-2\dimeps},
\end{equation*}
and expand the integrand in $\dimeps$ to integrate term-by-term. These integrands are polynomials in $\log(1-t_2)$ and $\log(1-t_1t_2)$, with coefficients that are rational functions of $t_1,t_2$ and the square roots $\sqrt{t_2}$, $\sqrt{1-t_1}$, $\sqrt{1-t_2}$, and $\sqrt{1-t_1t_2}$. We parametrize
\begin{equation}\label{eq:3f2-linearization}
    t_1 = 1-\left(\frac{(1-u^2)v}{(1-v^2)u}\right)^2
    \quad\text{and}\quad
    t_2 = \left(\frac{2u}{1+u^2}\right)^2
\end{equation}
so that all four square roots become rational functions of the coordinates $u$ and $v$. After this change of variables, we obtain
\begin{equation*}
    H(\dimeps)
    =8\int_0^1 \frac{\td u}{u} \int_0^u \!\td v \frac{(1+v^2)^{2-4\dimeps}(1-u^2)^2}{(1-v^2)^{3-4\dimeps}(1+u^2)^2}
    =8\int_0^1 \!\td v \frac{(1+v^2)^{2-4\dimeps}}{(1-v^2)^{3-4\dimeps}} \left(
        \frac{v^2-1}{1+v^2}
        -\ln v
    \right).
\end{equation*}
We used {\HyperInt} \cite{Panzer:HyperInt} to express the $\dimeps$-expansion of this integral in terms of hyperlogarithms (iterated integrals) with letters in $\{0,\pm1,\pm\iu\}\subset\C$. This produces a large expression, but such iterated integrals with 4th roots of unity are well-understood \cite{Deligne:GroupeFondamentalMotiviqueN} and fulfil many linear relations. We used the implementation in {\HyperlogProcedures} \cite{Schnetz:HyperlogProcedures} of the decomposition algorithm \cite{Brown:DecompositionMotivicMZV} into the \emph{parity $f$-alphabet} \cite{PanzerSchnetz:Phi4Coaction} to simplify the expressions.

In a similar way, we can compute the $\dimeps$-expansion of the other $_3 F_2$ function
\begin{equation*}
    \pFq{3}{2}{1,-\dimeps,-\tfrac{1}{2}+2\dimeps}{\tfrac{3}{2},\tfrac{1}{2}+\dimeps}{1}
    =
    \frac{\Gamma(\tfrac{1}{2}+\dimeps)}{2\Gamma(-\dimeps)\Gamma(\tfrac{1}{2}+2\dimeps)}
    \int_0^1 \!\frac{\td t_1}{\sqrt{1-t_1}} \int_0^1 \!\frac{\td t_2}{t_2^{1+\dimeps}} \left(\frac{1-t_1t_2}{1-t_2}\right)^{\tfrac{1}{2}-2\dimeps}.
\end{equation*}
An additional complication here is caused by the divergence of the integral over $t_2$ when $\dimeps=0$. This translates into a singularity near $u=0$ in the parametrization \eqref{eq:3f2-linearization}. To expand in $\dimeps$, we add $0=u-\int_0^u \td v$ to split the integral as
\begin{multline*}
\int_0^1 \frac{2^{-\dimeps}\td u}{u^{2+2\dimeps}} \int_0^u \!\td v \frac{(1+v^2)^{2-4\dimeps}(1-u^2)^2}{(1-v^2)^{3-4\dimeps}(1+u^2)^{1-2\dimeps}}
 \\
 =
 \int_0^1 \frac{2^{-\dimeps}\td u}{u^{2+2\dimeps}} \int_0^u \!\td v \left(\frac{(1+v^2)^{2-4\dimeps}(1-u^2)^2}{(1-v^2)^{3-4\dimeps}(1+u^2)^{1-2\dimeps}} - 1 \right)
 -\frac{1}{\dimeps2^{1+\dimeps}}.
\end{multline*}
The integral in the second row can be expanded in $\dimeps$ on the integrand level, since the inner $\td v$-integral vanishes with order $u^2$ for small $u$, cancelling the singularity of the outer $\td u$-integration. Applying the same steps as before, we can thus calculate the $\dimeps$-expansion in terms of hyperlogarithms at 4th roots of unity.

Putting everything together, we obtain a surprisingly simple result, namely
\begin{equation}\label{eq:int_100(b5pf)}
    \tau_1(G_{100}) = 10\cdot\left(
        -\frac{8}{3}
        -\frac{\pi^2}{3}
        + \frac{2}{9}\left[
            \pi^2\Catalan + 24 \Im \Li{4}(\iu)
        \right]
    \right)
    \approx \numprint{13.267735}
    .
\end{equation}
The appearance of the specific combination $\pi^2\Catalan+24\Im\Li{4}(\iu)$ in Feynman integrals in odd dimensions was observed already in \cite{Hager:phi6eps3}, using very high precision numerics.

\subsection{Integration by parts}
Consider the graph $G_{110}$ with edge list $556|445|356|46|6||$. It has two double-edges, $\{0,5\}$ (variables $x_1,x_2$) and $\{1,4\}$ (variables $x_4,x_5$). The numerator is $Q=10x_{11}(x_1+x_2)(x_4+x_5)$, where the Schwinger parameter $x_{11}$ corresponds to the edge $\{3,6\}$.
Proceeding as in \cref{sec:pint}, we find
\begin{equation*}
    \tau_1(G_{110})
    =\tau_1\left(\Graph[0.8]{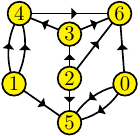}\right)
    =\frac{10\cdot 4}{(-2\pi)^3\Gamma(\tfrac{5}{2})}\pInt\left(\Graph[0.5]{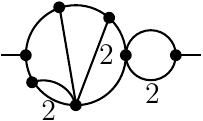}, 5\right)
    =\frac{-5}{3\sqrt{\pi}}\pInt\left(\Graph[0.5]{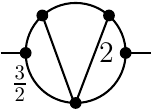}, 5\right)
\end{equation*}
where we chose to delete edge $\{1,5\}$ and integrated out the bubbles using \eqref{eq:bubble}. To compute this 3-loop propagator integral, we apply the triangle rule \cite[\S4.3]{ChetyrkinTkachov:IBP}. It gives
\begin{equation*}
\pInt\left(\Graph[0.5]{pf110pred}, D\right)
=\frac{
    2\pInt\left(\Graph[0.4]{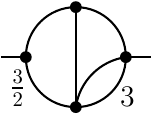}, D\right)
    -2\pInt\left(\Graph[0.4]{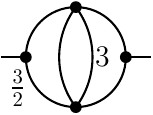}, D\right)
    +\pInt\left(\Graph[0.4]{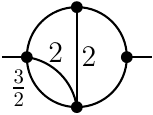}, D\right)
    -\pInt\left(\Graph[0.4]{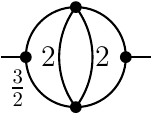}, D\right)
}{5-D}
\end{equation*}
which we expand in $D=5-2\dimeps$. Integrating out the parallel edges (bubbles) with \eqref{eq:bubble}, this expression simplifies to
\begin{align*}
    \pInt\left(\Graph[0.5]{pf110pred}, 5\right)
    &=
    \lim_{\dimeps=0} \bigg[
        \frac{\pi^{3/2}}{4}\Big(-\frac{1}{\dimeps} + \EM-\ln 4+2\Big)
        \pInt\left(\Graph[0.45]{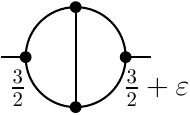}, 5-2\dimeps\right)
    \\ & \quad\quad
        +\frac{2}{\sqrt{\pi}}\Big(\frac{1}{\dimeps} - \EM-\ln 4+4\Big)
        \pInt\left(\Graph[0.45]{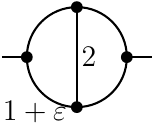}, 5-2\dimeps\right)
    \bigg]
    -\pi^{3/2}  \pInt\left(\Graph[0.45]{pf100p}, 5\right)
\end{align*}
where $\EM\approx\numprint{0.577216}$ denotes the Euler-Mascheroni constant. The last integral is \eqref{eq:int_100(b5pf)} divided by $5\pi/3$, and the $\dimeps$-expansions of the two integrals inside the limit can be computed with {\HypExpTwo} as explained in \cref{sec:3F2} for \eqref{eq:p199}. The final result is
\begin{equation*}
    \tau_1(G_{110})
    =10\cdot\left(
        \frac{8}{3}
        -\frac{2\pi^2}{3}
        -7\zeta(3)+\frac{2}{3}\pi^2\ln 2
        +\frac{2}{9}\left[
            \pi^2\Catalan + 24 \Im \Li{4}(\iu)
        \right]
    \right)
    \approx -\numprint{4.834340}
    .
\end{equation*}
The same strategy applies to the graph $G_{103}$ with edge list $566|344|456|56|6||$. The result turns out to be exactly the same, up to the sign: $\tau_1(G_{103})=-\tau_1(G_{110})$.

The graph $G_{234}$ with edge list $112|46|56|456|6|6|$ has numerator $Q=10x_{11}x_{12}(x_1+x_2)$. This amounts to doubled propagators $n_{11}=n_{12}=2$ on the edges $\{4,6\}$ and $\{5,6\}$. After cutting edge $\{0,2\}$ and integrating out the bubble of parallel edges $\{0,1\}$, the corresponding Feynman integral is
\begin{equation*}
    \tau_1(G_{234})
    =\tau_1\left(\Graph[0.8]{gc234}\right)
    =-\frac{5}{3\pi^2} \pInt\left(\Graph[0.5]{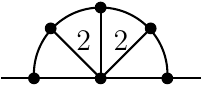}, 5\right)
    .
\end{equation*}
This can be reduced to 2-loop propagators by applying the triangle rule twice, and those remaining propagators can again be expanded in $\dimeps$ as in \cref{sec:3F2}. The result reads
\begin{align*}
    \tau_1(G_{234})
    &=10\cdot\bigg(
    -8
    +\frac{14\pi^2}{9}
    +21\zeta(3)-2\pi^2\ln 2
    -\frac{2\pi^4}{15}
    \\ & \quad\quad\quad\quad
    -\frac{4}{9}\left[
            \pi^2\Catalan + 24 \Im \Li{4}(\iu)
    \right]
    +\frac{2}{3}\left[
            (\ln 2)^4+24\Li{4}\left(\tfrac{1}{2}\right)
    \right]
    \bigg)
    \approx -\numprint{2.072043}
    .
\end{align*}

Consider now the graph $G_{227}$ with edge list $355|455|346|6|6|6|$. Its numerator polynomial is $Q=10x_9(x_2+x_3)(x_5+x_6)$. Cut edge $\{1,4\}$ and integrate out both bubbles, to get
\begin{equation*}
    \tau_1(G_{227})
    =\tau_1\left(\Graph[0.8]{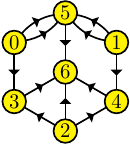}\right)
    =-\frac{5}{3\sqrt{\pi}} \pInt\left(\Graph[0.5]{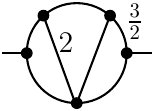}, 5\right)
    .
\end{equation*}
Applying the triangle rule to this graph directly does not help, since one of the 4 terms it produces still has the full set of edges.\footnote{The indices change only by integers, hence the edge with a half-integer index can never be contracted.}
Instead, we can apply the method of uniqueness \cite{Kazakov:MethodOfUniqueness,Gracey:MasslessByUniqueness} to replace trivalent vertices whose indices sum up to $n_1+n_2+n_3=5/2$ by triangles (denoted $Y\mapsto \Delta$) with indices $5/2-n_i$ (summing up to $5$). Therefore,
\begin{align*}
    \pInt\left(\Graph[0.5]{pf227p}, 5\right)
    &=\pInt\left(\Graph[0.5]{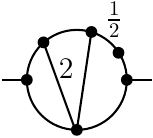}, 5\right)
    \underset{Y\mapsto\Delta}{=}\frac{\sqrt{\pi}}{4} \pInt\left(\Graph[0.5]{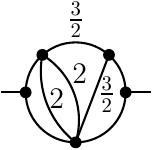}, 5\right)
    =\frac{\pi^2}{8} \pInt\left(\Graph[0.5]{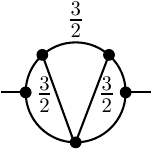}, 5\right)
    \\
    &=\frac{\sqrt{\pi}}{4} \pInt\left(\Graph[0.5]{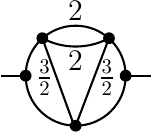}, 5\right)
    \underset{\Delta\mapsto Y}{=}\pInt\left(\Graph[0.5]{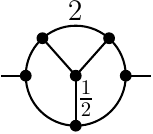}, 5\right)
    \underset{Y\mapsto\Delta}{=}\frac{\sqrt{\pi}}{4} \pInt\left(\Graph[0.5]{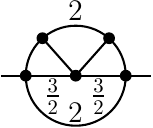}, 5\right)
    \\
    &=\pInt\left(\Graph[0.5]{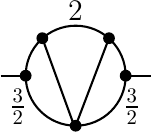}, 5\right)
    .
\end{align*}
In the last graph, all 3 edges whose indices get reduced by the triangle rule have integer indices, hence by applying the triangle rule twice, this integral can again be reduced to 2-loop propagators and thus computed as before. The final result is
\begin{equation*}
    \tau_1(G_{227})
    =10\cdot\left(
        -8-\frac{8}{9}\pi^2+\frac{1}{6}\pi^4
    \right)
    \approx -\numprint{5.381332}
    .
\end{equation*}

With the techniques illustrated above we also computed the canonical integrals of the graphs $G_{243}$ and $G_{246}$ (see \cref{tab:integrals} for their definition and results).

For the remaining integrals, we use more involved integration-by-parts relations than just the triangle rule. Assume that $G$ has 6 loops and a pair of parallel edges, then:
\begin{enumerate}
    \item Following \cref{sec:pint}, cut an edge and integrate out the parallel pair to write $\tau_1(G)$ as a linear combination of 4-loop propagators $\pInt(G',n,5)$.
    \item Introduce dimensional regularization: $\pInt(G',n,5)=\lim_{\dimeps=0} \pInt(G',n,5-2\dimeps)$.
    \item Use {\LiteRed} \cite{Lee:LiteRed1.4} to reduce the propagators $\pInt(G',n,5-2\dimeps)$ to the basis of 4-loop master integrals $M_i(5-2\dimeps)$ defined in \cite[Fig.~2]{BaikovChetyrkin:FourLoopPropagatorsAlgebraic}.
    \item Use {\LiteRed}'s dimension shift to replace the master integrals $M_i(5-2\dimeps)$ near 5 dimensions by the same integrals in $3-2\dimeps$ dimensions.
    \item Substitute the $\dimeps$-expansions of the master integrals $M_i(3-2\dimeps)$ near 3 dimensions computed in \cite{LeeMingulov:SummerTime}.
\end{enumerate}
To illustrate this process, consider again the graph $G_{100}$ from \cref{sec:pint}: Integrating out only one of the 3 bubbles, we have
\begin{equation*}
    \tau_1(G_{100})
    =10\cdot\frac{\pi}{6}\cdot\pInt\left(\Graph[0.6]{pf100p},5\right)
    =10\cdot\frac{2}{3\pi^2}\cdot\pInt\left(\Graph[0.6]{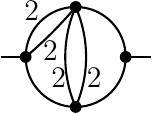},5\right).
\end{equation*}
Applying {\LiteRed} to this 4-loop propagator\footnote{The concrete function call is \texttt{IBPReduce[LoweringDRR[p4,0,0,0,0,0,2,2,2,1,0,1,0,2,1]]}.} produces the reduction
\begin{align*}
\pInt\left(\Graph[0.5]{M21b},D+2\right)
&=\frac{(D-4)(7D-18)}{4(D-1)(3D-10)} M_{13}(D)
-\frac{D-4}{4D-4} M_{21}(D)
\\ & \quad
+\frac{(2D-5)(5D-12)(9D^2-50D+72)}{4(D-3)(D-1)(3D-10)(3D-8)} M_{01}(D)
\end{align*}
in terms of 3 master integrals defined by
\begin{align*}
    M_{01}(D) &
    = \pInt\left(\Graph[0.5]{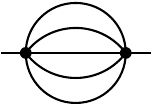}, D\right)
    = \frac{\Gamma(D/2-1)^5\Gamma(5-2D)}{\Gamma(5D/2-5)}
    \\
    M_{13}(D) &
    = \pInt\left(\Graph[0.5]{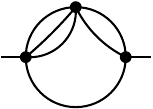}, D\right)
    = \frac{\Gamma(D/2-1)^6\Gamma(2-D/2)\Gamma(3-D)\Gamma(2D-5)\Gamma(6-2D)}{\Gamma(D-2)\Gamma(3D/2-3)\Gamma(5-3D/2)\Gamma(5D/2-6)},
    \\
    M_{21}(D) &
    = \pInt\left(\Graph[0.5]{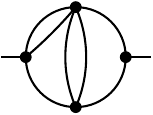}, D\right).
\end{align*}
The above expressions for $M_{01}$ and $M_{13}$ in terms of $\Gamma$-functions follow from series-parallel reductions. Expanding in $D=3-2\dimeps$, these calculations result in
\begin{equation*}
    \tau_1(G_{100})=10\cdot \left( -\frac{8}{3}-\frac{\pi^2}{3} + \frac{M_{21}(3)}{12\pi^2} \right).
\end{equation*}
Similarly, we can express all other canonical integrals (of graphs that have at least one pair of parallel edges) in terms of 4-loop master integrals in $3-2\dimeps$ dimensions. The latter $\dimeps$-expansions were computed in \cite{LeeMingulov:SummerTime}.\footnote{In that paper the $\dimeps$-expansions are computed numerically to 55 significant digits and then fitted to a basis of the expected transcendental numbers. Our calculation only uses very few of those coefficients, for example the expansion of $M_{36}$ up to the finite part, which was computed exactly in \cite{DamourJaranowski:4Lstat2}.}

\begin{remark}
In the above example, we can reverse the logic, and infer the value of $M_{21}(3)$ from our calculation of $\tau_1(G_{100})$. Our result \eqref{eq:int_100(b5pf)} amounts to
\begin{equation*}\begin{split}
    M_{21}(3-2\dimeps)
    &=
    \frac{8\pi^2}{3}\big[
        \pi^2\Catalan + 24 \Im \Li{4}(\iu)
    \big]
    +\asyO(\dimeps) \\
    &= M_{01}(3-2\dimeps)\cdot\left(
    -8\dimeps\big[
        \pi^2\Catalan + 24 \Im \Li{4}(\iu)
    \big]
    +\asyO(\dimeps^2)
    \right)
\end{split}\end{equation*}
and therefore identifies the numeric constant $M_{21}^{(1)}\approx \numprint{-262.199098}$ from \cite[eq.~(A.3)]{LeeMingulov:SummerTime} with $-8[\pi^2\Catalan + 24 \Im \Li{4}(\iu)]$. This identity was pointed out explicitly in \cite[eq.~(8)]{PikelnerGusyninKotikovTeber:4qQED}, and in fact observed already implicitly in \cite{Hager:phi6eps3}.
\end{remark}

\subsection{Gegenbauer polynomials}
The only remaining graphs not amenable to the above techniques are the simple graphs. The corresponding 5-loop propagators and their master integrals are not known in 5 dimensions, and without any parallel edges, we cannot simply reduce them to 4-loop propagators as we did for all other graphs.

Instead of momentum space, we now use the position space representation
\begin{equation}\label{eq:xint}
    \int_{\FS_{N}}\frac{\Omega_{N}}{\Symanzik_G^{\FIdim/2}} \prod_{i=1}^{N} x_i^{n_i-1}
    =
    \frac{1}{\Gamma(\FIdim/2)}
    \left(\prod_{v\neq v_0,v_1} \int_{\R^{\FIdim}} \frac{\td[\FIdim] z_v}{\pi^{\FIdim/2}}\right)
    \prod_{i=1}^{N} \frac{\Gamma(n_i^\star)}{\norm{z_{a(i)}-z_{b(i)}}^{2n_i^\star}}
\end{equation}
where we set $n_i^\star\defas \FIdim/2-n_i$. The integral on the right-hand side is over variables $z_v\in\R^\FIdim$ associated to each vertex, except for two arbitrary vertices $v_0$ and $v_1$ which are fixed at the origin $z_{v_0}=0\in\R^\FIdim$ and a unit vector $z_{v_1}=(1,0,\ldots,0)$. With $a(i)$ and $b(i)$ we denote the two endpoints (vertices) of edge $i$. We write $z_i=r_i\hat{z}_i$ in polar coordinates
\begin{equation*}
    r_i \defas \norm{z_i}
    \quad\text{and}\quad
    \hat{z}_i\defas z_i/r_i.
\end{equation*}

The method \cite{ChetyrkinKataevTkachov:Gegenbauer} is based on expanding all propagators into the series
\begin{equation}\label{eq:Gegen-expansion}\begin{aligned}
    \frac{1}{\norm{z_i-z_j}^{2\alpha}}
    &= \sum_{n=0}^{\infty}
    C^{\alpha}_n(\hat{z}_i\cdot\hat{z}_j)
    R^{\alpha}_n(r_i,r_j)
    \quad\text{with}\quad
    R^{\alpha}_n(s,t)=\begin{cases} \frac{s^n}{t^{n+2\alpha}} & \text{if $s<t$,} \\ \frac{t^n}{s^{n+2\alpha}} & \text{if $t<s$.} \\ \end{cases}
\end{aligned}\end{equation}
This form separates the dependence on the lengths of the vectors $z_i$ and $z_j$ from the dependence on the enclosed angle $\phi$ (the scalar product $\hat{z}_i\cdot\hat{z}_j=\cos\phi$). The Gegenbauer polynomials $C^{\alpha}_n$ are defined by the generating series
\begin{equation*}
    \sum_{n=0}^{\infty} C_n^{\alpha}(u) t^n = (1-2ut+t^2)^{-\alpha}.
\end{equation*}
The spherical angles in the measure $\td[\FIdim] z_i=r_i^{\FIdim-1}\td r_i\, \td[\FIdim-1]\hat{z}_i$ can be integrated out, thanks to the orthogonality relation
\begin{equation}\label{eq:gegen-ortho}
    \int_{\norm{z_i}=1} C_n^{\alpha}(\hat{z}_i\cdot\hat{z}_j) C_m^{\alpha}(\hat{z}_i \cdot \hat{z}_k)  \,\frac{\td[\FIdim-1]\hat{z}_i}{\pi^{\FIdim/2}}
    = \frac{2\alpha}{\Gamma(\FIdim/2)}
    \frac{C^{\alpha}_n(\hat{z}_j\cdot\hat{z}_k)}{\alpha+n}
    \,\delta_{n,m}
\end{equation}
and further identities, whereas the integrals over the lengths $r_i$ produce rational functions of the summation indices $n$ in \eqref{eq:Gegen-expansion}. The end result is a formula for the integral \eqref{eq:xint} as a multiple sum, which can be evaluated numerically to high precision.

\begin{figure}
    \centering
    $G_{241}=\Graph[0.8]{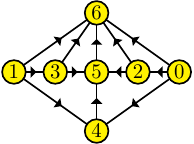}\qquad 
    G_{266}=\Graph[0.8]{gc266} \qquad
    G_{267}=\Graph[0.8]{gc267}$
    \caption{The simple graphs with non-zero integrand $\pff_G\wedge\omega_G^5$.}%
    \label{fig:simple-gegen}%
\end{figure}
Consider for example the graph $G_{266}$ from \cref{fig:simple-gegen} with edge list $456|346|356|6|5|6|$; the variables $x_2,x_{10},x_{12}$ thus correspond to the edges $\{0,5\},\{3,6\},\{5,6\}$, respectively. The numerator is $Q=10\big[x_{10}x_{12}(x_2-x_{11})-x_2x_6x_9\big]$ and we choose $v_0=6$ and $v_1=5$. For the first of the 3 numerator monomials, $x_2 x_{10} x_{12}$, the integral \eqref{eq:xint} then reads
\begin{multline*}
    \frac{\Gamma(3/2)^9\Gamma(1/2)^3}{\Gamma(5/2)}
    \int_{\R^5} \frac{\td[5]z_4}{\pi^{5/2}}
    \frac{1}{\norm{z_4-z_5}^3}
    \int_{\R^5} \frac{\td[5]z_0}{\pi^{5/2}}
        \frac{1}{r_0^3\norm{z_0-z_4}^3\norm{z_0-z_5}}
    \\
    \times
    \int_{\R^5} \frac{\td[5]z_1}{\pi^{5/2}} \int_{\R^5} \frac{\td[5]z_2}{\pi^{5/2}} \int_{\R^5} \frac{\td[5]z_3}{\pi^{5/2}}
    \frac{1}{r_1^3r_2^3r_3\norm{z_5-z_2}^3\norm{z_2-z_3}^3\norm{z_3-z_1}^3\norm{z_1-z_4}^3}
    .
\end{multline*}
The angular integrals over vertices $1,2,3$ are straightforward with \eqref{eq:gegen-ortho}:
\begin{multline}\label{eq:gegen-266-a}
    \int\frac{\td[4]\hat{z}_1}{\pi^{5/2}}\int\frac{\td[4]\hat{z}_2}{\pi^{5/2}}\int\frac{\td[4]\hat{z}_3}{\pi^{5/2}}
    \frac{1}{\norm{z_5-z_2}^3\norm{z_2-z_3}^3\norm{z_3-z_1}^3\norm{z_1-z_4}^3}
    \\
    =
    \frac{64}{\pi^{3/2}}
    \sum_{n=0}^{\infty}
    \frac{C^{3/2}_n(\hat{z}_4\cdot\hat{z}_5)}{(n+3/2)^3}
       R_n^{3/2}(1,r_2) R_n^{3/2}(r_2,r_3) R_n^{3/2}(r_3,r_1) R_n^{3/2}(r_1,r_4)
\end{multline}
where the prefactor is $(4/\sqrt{\pi})^3=(3/\Gamma(5/2))^3$.
To compute the angular integral over $\hat{z}_0$, we replace $C^{1/2}_n(\hat{z}_0\cdot\hat{z}_5)$ in the expansion \eqref{eq:Gegen-expansion} of $1/\norm{z_0-z_5}$ by the identity
\begin{equation*}
    C^{1/2}_n = \frac{C^{3/2}_n-C^{3/2}_{n-2}}{2n+1}
\end{equation*}
so that we can apply the orthogonality relation \eqref{eq:gegen-ortho} with the polynomials $C^{3/2}_n(\hat{z}_0\cdot\hat{z}_4)$ from the expansion of $1/\norm{z_0-z_4}^3$. We therefore obtain that
\begin{multline}\label{eq:gegen-266-b}
    \int\frac{\td[4]\hat{z}_0}{\pi^{5/2}}
    \frac{1}{\norm{z_0-z_4}^3\norm{z_0-z_5}}
    \\
    =\frac{4}{\sqrt{\pi}} \sum_{m=0}^{\infty} \frac{C^{3/2}_{m}(\hat{z}_4\cdot\hat{z}_5)}{m+3/2}
    R_m^{3/2}(r_0,r_4)
    \left[
        \frac{R_m^{1/2}(1,r_0)}{2m+1}-\frac{R_{m+2}^{1/2}(1,r_0)}{2m+5}
    \right]
    .
\end{multline}
The final angular integral over $\td[4]\hat{z}_4$ involves the product of three Gegenbauer polynomials: one from the expansion of $1/\norm{z_4-z_5}^3$, one from \eqref{eq:gegen-266-a}, and one from \eqref{eq:gegen-266-b}. This
\begin{equation*}
    \int C^{3/2}_n(\hat{z}_i\cdot \hat{z}_j)C^{3/2}_m(\hat{z}_i\cdot \hat{z}_j)C^{3/2}_l(\hat{z}_i\cdot \hat{z}_j) \frac{\td[4]\hat{z}_i}{\pi^{5/2}}
    = \frac{4}{\sqrt{\pi}} \Delta(n,m,l)
\end{equation*}
can be computed as follows \cite[Appendix~A]{ChetyrkinKataevTkachov:Gegenbauer}: If $k=(n+m+l)/2$ is not an integer, or if any of the triangle inequalities $n\leq m+l$, $m\leq n+l$, $l\leq n+m$ is violated, then we have $\Delta(n,m,l)=0$. Otherwise,
\begin{equation*}
    \Delta(n,m,l)
    =\frac{2\Gamma(k+3)}{\pi\Gamma(k+5/2)} \frac{\Gamma(k-n+3/2)}{\Gamma(k-n+1)}\frac{\Gamma(k-m+3/2)}{\Gamma(k-m+1)}\frac{\Gamma(k-l+3/2)}{\Gamma(k-l+1)}
    .
\end{equation*}
In conclusion, we obtain a triple sum
\begin{equation*}
    \int_{\FS_{12}}\frac{x_2 x_{10} x_{12}\Omega_{12}}{\Symanzik_{G_{266}}^{5/2}}
    =\frac{(2\pi)^3}{3} \sum_{n,m,l=0}^{\infty} \frac{\Delta(n,m,l)F(n,m,l)}{(n+3/2)^3(m+3/2)}
\end{equation*}
where $F(n,m,l)$ is a rational function defined by the radial integrals:
\begin{multline*}
    F(n,m,l) = \int_0^{\infty} \!\!r_4^4 \td r_4\; R_l^{3/2}(r_4,1)
        \int_0^{\infty}\!\! r_0 \td r_0 \;
        R_m^{3/2}(r_0,r_4)
        \bigg[
            \frac{R_m^{1/2}(1,r_0)}{2m+1}-\frac{R_{m+2}^{1/2}(1,r_0)}{2m+5}
        \bigg]
    \\
    \times
        \int_0^{\infty} \!\! r_1 \td r_1\; \int_0^{\infty} \!\!r_2\td r_2\;\int_0^{\infty} \!\!r_3^3\td r_3\;
         R_n^{3/2}(1,r_2) R_n^{3/2}(r_2,r_3) R_n^{3/2}(r_3,r_1) R_n^{3/2}(r_1,r_4)
    .
\end{multline*}
These integrals are easily computed in closed form by subdividing the integration domain into $6!$ regions according to the total order of $\{r_0,r_1,r_2,r_3,r_4,1\}$, since in each such region, all functions $R_k^{\alpha}(r_i,r_j)$ are given by a fixed monomial.

Treating the remaining terms analogously, we obtain a triple sum for $\tau_1(G_{266})$. With a {\Maple} program we compute the truncations $S(M)=\sum_{n,m\leq M} \sum_{l=\abs{n-m}}^{n+m} (\text{summand})$ of these sums up to $M=700$. These truncations exhibit a power law remainder term,
\begin{equation*}
    S(M) = \tau_1(G_{266}) + \frac{1}{M^3}\left( c + \asyO\left(\frac{1}{M}\right) \right)
\end{equation*}
and thus the precision improves only very slowly with growing $M$. With methods to accelerate convergence\footnote{We used \cite[eq.~(21)]{Broadhurst:IrreducibleEulerSums} with $C=3$ for $G_{266}$, and with $C=5$ for $G_{241}$.} we can nevertheless reliably identify 41 significant figures,
\begin{equation*}
    \tau_1(G_{266}) \approx \numprint{0.76007971277105870470418001694287511301749}.
\end{equation*}
This is ample precision to confirm the relation (found with PSLQ) that
\begin{multline}\label{eq:pfb5-266}
    \tau_1(G_{266}) = 10 \cdot  \bigg(
        -\frac{8}{3}-\frac{2}{9}\pi^2-\frac{4}{3}\pi^2\ln(2)-2\zeta(3)-\frac{13}{60}\pi^4
    \\
        +\frac{2}{3}\Big(\ln^4(2)+24\Li{4}(\tfrac{1}{2})\Big)
        +\frac{8}{9}\Big(\pi^2\Catalan+24\Im\Li{4}(\iu)\Big)
    \bigg).
\end{multline}
With the same method we obtain 41 digits for $\tau_1(G_{241})$ from another triple sum (choosing again to fix the vertices $v_0=6$ and $v_1=5$). This approximation fits the expression
\begin{equation}\label{eq:pfb5-241}
    \tau_1(G_{241}) = 10 \cdot\left(
    -\frac{8}{3}-2\pi^2+\frac{2}{3}\pi^2\ln(2)-7\zeta(3)
        +\frac{16}{45}\pi^4
        -\frac{2}{3}(\ln^4(2)+24\Li{4}(\tfrac{1}{2}))
    \right).
\end{equation}
These identifications of $\tau_1(G_{241})$ and $\tau_1(G_{266})$ are further confirmed by a Stokes relation, which relates their sum to previously computed integrals:
\begin{equation*}
    \tau_1(G_{241}+G_{266})
    =\tau_1(G_{238}-G_{236}-G_{265})
\end{equation*}
We checked that the values from \eqref{eq:pfb5-266} and \eqref{eq:pfb5-241} fulfil this identity. 

The integral of the 3rd simple graph with non-vanishing integrand, $G_{267}$, is determined by another Stokes relation, namely the boundary given in \eqref{eq:Stokes-267}.

\printbibliography

\end{document}